\documentclass[french]{smfart}
\usepackage[utf8]{inputenc}
\usepackage[english,francais]{babel}
\usepackage{smfthm}
\usepackage{mathrsfs}
\usepackage{graphics}
\usepackage{hyperref} 
\usepackage{amssymb, amsmath, mathabx}
\usepackage{lmodern}
\usepackage[all]{xy}
\usepackage{multicol}
\usepackage[shortlabels]{enumitem}
\usepackage{scalerel}
\usepackage{tikz-cd}
\makeindex
\DeclareUnicodeCharacter{00A0}{\relax}

\author{Antoine Ducros}
\address{Sorbonne Université, Université Paris-Diderot, CNRS, Institut de Mathématiques de Jussieu-Paris
Rive Gauche, IMJ-PRG, Département de mathématiques et applications, École normale supérieure, CNRS, PSL University,
F-75005, Paris, France
}

\email{antoine.ducros\at imj-prg.fr}
\urladdr{http://www.imj-prg.fr/$\sim$antoine.ducros}
\title{Dévisser, découper, éclater
et aplatir les espaces de Berkovich}
\alttitle{Blow-ups flattening in non-archimedean analytic geometry}
\dedicatory{À la mémoire de Laurent Gruson et Michel Raynaud}
\subjclass{14G22, 14G99}
\keywords{Espaces de Berkovich, aplatissement par éclatements}
\altkeywords{Berkovich spaces, Flattening by blowing-up}
\NumberTheoremsAs{subsection}
\SwapTheoremNumbers


\newcommand{\eg}{e.\@g.\@}

\newcommand{\ie}{i.\@e.\@}

\newcommand{\abs}[1]{\mathopen|#1\mathclose|}
\newcommand{\adhp}[2]{\overline{ \{#1\}}^{#2_{\mathrm {Zar}}}}
\newcommand{\adht}[2]{\overline{ #1}^{#2}}
\newcommand{\adhz}[2]{\overline{ #1}^{#2_{\mathrm {Zar}}}}
\newcommand{\akg}[3]{\mathfrak A_{#1,#2}^{\leq #3}}
\newcommand{\an}{^{\mathrm{an}}}
\newcommand{\al}{^{\mathrm{al}}}

\newcommand{\bkg}[3]{\mathfrak D_{#1,#2}^{\leq #3}}

\newcommand{\dimc}[2]{\mathrm{dim}_{\mathrm c}(#1,#2)}
\newcommand{\dimcg}[2]{\mathrm{dim}_{\mathrm c}(#1_{\mathrm G},#2)}

\newcommand{\dex}[2]{\mathsf E_{#2}(#1)}
\newcommand{\ecl}[2]{\mathsf B_{#2}(#1)}
\newcommand{\gpm}{^{\times}}
\newcommand{\grot}{_{\mathrm G}}
\newcommand{\hotimes}{\hat \otimes}
\newcommand{\hr}[1]{\mathscr H(#1)}

\newcommand{\inv}{^{-1}}

\newcommand{\pl}[2]{\mathsf P(#1/#2)}
\newcommand{\ql}[2]{\mathsf Q(#1/#2)}
\newcommand{\qln}[3]{\mathsf Q(#1/#2)_{\geq #3}}
\newcommand{\qlns}[3]{\mathsf Q(#1/#2)_{\geq #3}^{\mathrm {sat}}}

\newcommand{\spec}{\mathrm{Spec}\;}
\newcommand{\supp}[1]{\mathrm{Supp}(\mathscr #1)}


\newcommand{\A}{\mathbf A}

\newcommand{\N}{\mathbf Z_{\geq 0}}
\renewcommand{\P}{\mathbf P}
\newcommand{\Q}{\mathbf Q}
\newcommand{\R}{\mathbf R}
\newcommand{\Z}{\mathbf Z}


\renewcommand{\d}{\mathrm d}
\renewcommand{\phi}{\varphi}
\renewcommand{\epsilon}{\varepsilon}
\renewcommand{\leq}{\leqslant}
\renewcommand{\geq}{\geqslant}


\SetEnumerateShortLabel{1}{\textnormal{(\arabic{enumi})}}
\SetEnumerateShortLabel{s}{\textnormal{(\arabic{enumi}$^\ast$)}}
\SetEnumerateShortLabel{i}{\textnormal{(\roman{enumi})}}
\SetEnumerateShortLabel{a}{\textnormal{(\alph{enumi})}}
\SetEnumerateShortLabel{A}{\textnormal{(\Alph{enumi})}}
\SetEnumerateShortLabel{2}{\textnormal{(\arabic{enumii})}}
\SetEnumerateShortLabel{j}{\textnormal{(\roman{enumii})}}
\SetEnumerateShortLabel{b}{\textnormal{(\arabic{enumi}\alph{enumii})}}
\SetEnumerateShortLabel{c}{\textnormal{(\alph{enumii})}}
\SetEnumerateShortLabel{B}{\textnormal{(\Alph{enumii})}}

\begin{document}

\thanks{Lors
de la rédaction de cet article, l'auteur a bénéficié du soutien de l'ANR à travers les projets {\em Valuations, combinatoire et théorie des modèles} (ANR-13-BS01-0006), 
et  {\em Définissabilité en géométrie non archimédienne}
(ANR-15-CE40-0008), ainsi que de celui de l'IUF dont il était membre junior d'octobre 2012 à octobre 2017. Il a aussi profité en mars 2019 de l'hospitalité de l'université hébraïque de Jérusalem, avec le soutien du projet ERC Consolidator  770922 (BirNonArchGeom) de Michael Temkin}

\begin{abstract}
Nous développons dans cet article des techniques d'aplatissement des faisceaux cohérents en géométrie de Berkovich, en nous
inspirant de la stratégie générale que Raynaud et Gruson ont mise en œuvre pour traiter le problème analogue en théorie des schémas.
Nous donnons ensuite quelques applications à l'étude des morphismes entre espaces analytiques compacts, et
obtenons notamment une description de l'image d'un tel morphisme. 
\end{abstract}

\begin{altabstract}
We develop in this article flattening techniques for coherent sheaves in the realm of Berkovich spaces; we are inspired by the general strategy that Raynaud and Gruson have used for dealing with the analogous problem in scheme theory. We then give some applications to the study of morphisms between compact analytic spaces; among other things, we get a description of the image of such a morphism. 
\end{altabstract}
\maketitle

\tableofcontents

\setcounter{section}{-1}

\section{Introduction}

Dans cet article, nous développons dans le contexte des espaces de Berkovich des techniques d'aplatissement
par éclatements inspirées de celles que Raynaud et Gruson
ont introduites dans leur travail fondateur \cite{raynaud-g1971}, puis
nous en donnons quelques applications, notamment à la description de l'image d'un morphisme arbitraire entre espaces
analytiques compacts ; nous espérons ultérieurement les utiliser
avec Amaury Thuillier pour étudier les images de squelettes. 

\subsection*{Les résultats de \cite{raynaud-g1971}}
Avant de décrire un peu plus précisément nos résultats, nous allons rappeler ceux de Raynaud et Gruson, en en donnant une version simplifiée. Soit $f\colon Y\to X$ un morphisme de type fini entre schémas noethériens,
soit $\mathscr F$ un faisceau cohérent sur $Y$
et soit $U$ le plus grand ouvert de $X$ au dessus duquel $\mathscr F$ est $X$-plat. 
Le théorème 5.2.2 de \cite{raynaud-g1971}
(dont Quentin Guignard a donné récemment une preuve
complètement différente dans \cite{guignard2018})
assure alors qu'il existe un sous-schéma fermé $F$ de $X$ de support
$X\setminus U$ tel que la transformée  stricte
correspondante
$\widetilde {\mathscr F}$ de $\mathscr F$ soit plate sur l'éclaté $\widetilde X$ de $X$ le long de $F$. 
(Rappelons que 
$\widetilde {\mathscr F}$ est
le quotient de l'image réciproque de $\mathscr F$ sur $Y\times_X \widetilde X$ par son sous-faisceau
des sections à support contenu ensemblistement dans $Y\times_X S\subset Y\times_X \widetilde X$, où $S$
est le diviseur de Cartier $\widetilde X\times_X F$
de $\widetilde X$.)

Notons que l'image du morphisme $\widetilde X\to X$ est égale à l'adhérence $\overline U$ de $U$
dans $X$ ; c'est donc lorsque $U$ est dense que ce théorème de Raynaud-Gruson est le plus puissant ; 
dans le cas opposé où $U$
est vide, $\widetilde X$ l'est aussi et le théorème est sans contenu. Ainsi, les techniques 
d'aplatissement ne créent pas de platitude \emph{ex-nihilo} : elles se contentent de propager un peu
la platitude déjà présente.

Posons $Y'=Y\times_X \widetilde X$, notons $\mathsf P$ l'ouvert de $Y$ formé des points en lesquels $\mathscr F$ est $X$-plat, et $\mathsf Q$ son
fermé complémentaire ; remarquons que $F$ est ensemblistement égal à $\overline{f(\mathsf Q)}$. 
Nous allons maintenant
reformuler
quelques propriétés de l'éclatement $\widetilde X\to X$
d'une façon qui peut sembler un peu laborieuse, mais
dont nous verrons l'intérêt plus bas.

\begin{enumerate}[a]
\item Le quotient de l'image réciproque de $\mathscr F$ sur $Y'$ par son
sous-faisceau des sections à support ensemblistement contenu dans $Y'\times_{\widetilde X}S$
est plat sur $\widetilde X$ . 
\item L'ouvert $Y'\setminus (Y'\times_{\widetilde X}S)$ de 
$Y'$ est situé au-dessus de $\mathsf P$. 
\item Le fermé $Y'\times_{\widetilde X} S$ de  $Y'$
est situé ensemblistement
au-dessus de
$f\inv (\overline{f(\mathsf Q)})$. 
\item L'image de $Y'\to Y$
contient $Y\setminus f\inv(\overline{f(\mathsf Q)})$. 
\item L'image de $Y'\times_{\widetilde X} S\to Y$ contient
ensemblistement $f\inv(G)$ pour toute composante irréductible $G$
de $\overline{f(\mathsf Q)}$ qui n'est pas une composante irréductible de $X$.
\end{enumerate}

\subsection*{Notre théorème principal}
Venons-en maintenant aux méthodes d'aplatissement mises au point
dans le présent travail. Précisons d'emblée
que nous ne nous intéresserons qu'à des morphismes
entre espaces compacts, mais  même 
avec cette restriction, 
on ne peut pas espérer disposer du même énoncé \emph{mutatis mutandis}
qu'en théorie des schémas, en raison 
du comportement sauvage de la topologie de Zariski en géométrie analytique. Par exemple, considérons un $3$-disque fermé
$D$ , un $3$-disque fermé $D'\subsetneq D$ et une courbe $C$ tracée sur $D'$ et Zariski-dense dans $D$. Soit $\Delta$ l'éclaté 
de $D'$ le long de $C$. Il est clair que le morphisme induit $\Delta\to D$ ne pourra pas être aplati par un éclatement de $D$ le long d'un de ses sous-espaces analytiques fermés ; il peut par contre être aplati par lui-même, mais il s'agit d'un éclatement de $D'$ et non de $D$.
On doit donc au minimum autoriser dans notre procédure d'aplatissement, en plus des éclatements, des immersions de domaines analytiques
compacts ; mais nous devrons
plus généralement
faire appel aux morphismes quasi-étales (à source compacte), et nous avons de bonnes raisons de
croire qu'on ne peut pas 
s'en passer (voir les commentaires en \ref{comment-final}). 

On fixe un corps ultramétrique
complet $k$. Soit $f\colon Y\to X$ un morphisme entre espaces $k$-analytiques compacts.
Nous appellerons (dans cette introduction uniquement)
 \emph{triplet admissible}
un triplet $(Z,S,V)$ constitué d'un espace analytique compact $Z$ muni d'un morphisme $Z\to X$, 
d'un diviseur de Cartier $S$ de 
$Z$
et d'un domaine analytique
compact $V$ de $Y\times_X Z$
tel qu'il existe une factorisation
\[Z=Z_m\to T_{m-1}\to Z_{m-1}\to\ldots \to Z_1\to T_0\to Z_0=X\]
et pour tout $i$ un diviseur de Cartier $\Sigma_i$ de $Z_i$
ainsi qu'un sous-espace analytique fermé $C_{i-1}$ de $T_{i-1}$ (si $i\geq 1$)
satisfaisant les propriétés suivantes : 
\begin{itemize}[label=$\diamond$] 
\item les $T_i$ et les $Z_i$ sont tous compacts ; 
\item pour tout $i$, le morphisme
$T_i\to Z_i$ est quasi-étale ; 
\item $\Sigma_0=\emptyset$ et $\Sigma_m=S$ ; 
\item pour tout $i$ supérieur ou égal à $1$, le sous-espace analytique fermé
$C_{i-1}$ de $T_{i-1}$ majore $T_{i-1}\times_{Z_{i-1}}\Sigma_{i-1}$, 
le morphisme $Z_i\to T_{i-1}$
est l'éclatement de centre $C_{i-1}$ et 
$\Sigma_i=Z_i\times_{Z_{i-1}}C_{i-1}$. 
\end{itemize}

Soit $\mathscr F$
un faisceau cohérent sur $Y$, soit
$\mathsf P$ le lieu de $X$-platitude de $\mathscr F$ 
et soit $\mathsf Q$ le
fermé complémentaire de $\mathsf P$
dans $Y$.
Notre théorème \ref{flat-eclat} (ou plutôt sa déclinaison dans un cas particulier bien précis, voir le commentaire
en \ref{aplat-strictosensu})
assure qu'il existe une famille finie 
$((Z_i,S_i,Y_i))_i$ de
triplets admissibles telle que :  

\begin{enumerate}[1]
\item pour tout $i$, le quotient de l'image réciproque de $\mathscr F$ sur $Y_i$
par son sous-faisceau des sections à support ensemblistement contenu dans $Y_i\times_{Z_i}S_i$
est plat sur $Z_i$ ; 

\item pour tout $i$, l'ouvert $Y_i\setminus (Y_i\times_{Z_i}S_i)$
de $Y_i$ est situé au-dessus de $\mathsf P$ ; 
\item pour tout $i$, le fermé
$Y_i\times_{Z_i}S_i$ 
de $Y_i$ est contenu dans l'image réciproque de $f\inv(f(\mathsf Q))$ ; 
\item la réunion des images des morphismes $Y_i\to Y$ contient $Y\setminus f\inv(f(\mathsf Q))$ ; 
\item la réunion des images des morphismes $Y_i
\times_{Z_i}S_i\to Y$
contient tout point de $\mathsf Q$
dont l'image sur $X$ n'est pas
adhérente à $f(\mathsf Q)\cap \mathsf A(X)$. 
\end{enumerate}
L'énoncé (5) appelle une explication. La notation $\mathsf A(X)$ désigne l'ensemble des «points
d'Abhyankar de rang maximal de $X$» (\ref{not-ax}), qui jouent en gros en géométrie analytique
le rôle des points génériques de composantes irréductibles en théorie des schémas. En particulier, 
si $T$ est un fermé de Zariski de $X$, l'adhérence de $T\cap \mathsf A(X)$ est la réunion 
des composantes irréductibles de $X$ contenues dans $T$ (\ref{sss-ax-compirr}).

On peut voir les propriétés (1), (2), (3), (4) et (5) comme des avatars respectifs des propriétés (a), (b), (c), (d) et (e)
énoncées plus haut dans le cadre schématique (en
ce qui concerne l'analogie entre (5) et (e), elle se fonde sur la remarque qui précède ; notons également qu'ici $\mathsf Q$ est compact, si bien que $\overline{f(\mathsf Q)}=f(\mathsf Q)$).

\subsection*{À propos de nos méthodes}
Nous suivons dans les grandes lignes la stratégie de Raynaud et Gruson,
fondée sur ce qu'ils appellent les \emph{dévissages} ;  
mais sa mise en œuvre dans le contexte des espaces de Berkovich plutôt que des schémas se heurte à un nombre important de difficultés. 

Une partie d'entre elles ont été résolues par l'auteur dans des
articles antérieurs. Par exemple, c'est au chapitre 5 de \cite{ducros2018}
que sont jetées les bases de la théorie
des dévissages en géométrie analytique (que nous rappelons et complétons à la section \ref{s-complement-devissages}). 

D'autres le sont
dans le présent article. Ainsi, à la section \ref{s-assassin}, nous définissons et étudions les composantes 
immergées d'un espace analytique, et plus généralement d'un faisceau cohérent
$\mathscr F$ sur celui-ci. Plus précisément, nous définissons la notion 
d'{\emph{assassin}} de $\mathscr F$
(la terminologie est empruntée à Raynaud et Gruson)
puis, dans la foulée, celle de \emph{composante assassine} 
(définition \ref{defi-ass}) de $\mathscr F$ ; les composantes irréductibles 
de $\supp F$ sont toujours des composantes assassines de $\mathscr F$, et ce sont les autres composantes assassines de $\mathscr F$ qu'on appelle composantes immergées. Nous décrivons le
comportement
des composantes assassines par restriction à un domaine analytique (prop. \ref{prop-equiv-ass}), et
établissons quelques principes GAGA à leur sujet (prop. \ref{pro-compass-aff} et plus généralement lemme \ref{lem-ass-gaga}). Nous utilisons par ailleurs les composantes assassines pour introduire un analogue analytique de la notion d'adhérence schématique, que nous avons appelé (faute d'avoir trouvé une dénomination moins ambiguë) \emph{adhérence analytique} (lemme-définition \ref{lem-adh-analytique}).

Et à la section \ref{s-ideal}, nous montrons (théorème \ref{theo-id-coeff})
l'existence
d'un «idéal des coefficients» associé à un sous-espace analytique fermé
$Z$ 
de la source d'un morphisme quasi-lisse et compact $Y\to X$
à fibres géométriquement irréductibles (ceci implique notamment que
l'ensemble des points $x$ de $X$ tels que la fibre $Y_x$
soit entièrement contenue dans $Z$ est un fermé de Zariski de
$X$). Ce théorème joue un rôle absolument crucial 
dans notre procédure d'aplatissement, car les centres de nos
éclatements sont définis par des idéaux de coefficients bien choisis,
mais pour l'appliquer nous nous heurtons à un obstacle
de taille :
en effet,
si
nos dévissages font apparaître des morphismes quasi-lisses,
ils ne fournissent aucun contrôle sur les composantes
irréductibles géométriques de
leurs fibres (contrairement à ce qui se passe
chez Raynaud et Gruson). C'est ce problème qui est
à l'origine de la présence de morphismes quasi-étales
dans notre procédure d'aplatissement. En effet, il
peut être contourné, mais \emph{localement pour la topologie
quasi-étale sur la base}. Plus précisément, nous
montrons que
si $Y\to X$ est un morphisme plat et à fibres géométriquement réduites (par exemple, un morphisme quasi-lisse)
entre espaces
$k$-analytiques compacts, il existe un espace $k$-analytique compact $X'$,
une surjection quasi-étale $X'\to X$, et un
\emph{découpage}
de $Y\times_X X'$ au-dessus de $X'$, 
c'est-à-dire un recouvrement
fini $(Y_i)$ de $Y\times_X X'$ par des domaines analytiques
compacts tels que les fibres
de $Y_i\to X'$ soient
géométriquement connexes
pour tout $i$. C'est le théorème 
\ref{exist-decoupages}, qui
repose pour l'essentiel sur le théorème
3.5 de \cite{ducros2019}. Ce dernier est une sorte de substitut
au théorème de la fibre réduite de Bosch, Lütkebohmert et Raynaud
(\cite{frg4}, thm. 2.5), substitut
qui
a l'avantage d'être valable pour les espaces de
Berkovich non nécessairement stricts et sans hypothèse
d'équidimensionalité relative ; il est 
démontré dans \cite{ducros2019}
sans utiliser le théorème original de la fibre réduite
ni recourir à la géométrie formelle.

\subsection*{Quelques applications}
À la section \ref{s-appli}, nous donnons quelques 
applications de notre théorème principal. 

Nous montrons tout d'abord
un théorème d'équidimensionalisation (thm. \ref{thm-equidim}) dont l'énoncé
est le suivant : soit $Y\to  X$ un morphisme entre espaces $k$-analytiques compacts ; supposons
qu'il existe un entier $\delta$ tel que la réunion des fibres 
de $Y\to X$ qui sont purement de dimension $\delta$
soit dense dans $Y$ ; 
il existe alors une famille finie $((Z_i, S_i, Y_i))_i$ de triplets admissibles telle que pour tout $i$, l'adhérence analytique $Y'_i$ de
$Y_i\setminus Y_i\times_{Z_i}S_i$ dans $Y_i$ soit purement de dimension relative $\delta$ sur $Z_i$, et telle que les images 
des morphismes $Y'_i\to Y$ recouvrent $Y$. 

Nous abordons ensuite le problème de la factorisation d'un morphisme 
nulle part plat par un fermé de Zariski du but. Donnons quelques
explications. Soit $f\colon Y\to X$ un morphisme de type fini entre schémas noethériens, avec $X$ intègre ; si $f$ n'est plat en aucun point de $Y$, l'image $f(Y)$ ne contient pas le point générique de $X$ et son adhérence de Zariski est donc un fermé \emph{strict}
de $X$. Un tel résultat n'a aucune chance d'être vrai en géométrie analytique (penser à une courbe tracée sur un petit bidisque et Zariski-dense dans un bidisque plus grand), mais nous montrons
l'assertion suivante (thm. \ref{thm-dominant-factor}) qui
en est un substitut : soit $Y\to  X$ un morphisme entre espaces $k$-analytiques compacts ; supposons que $X$ est réduit et que
le lieu de $X$-platitude de $Y$ est vide ; il existe alors une famille finie $((Z_i, S_i, Y_i))_i$ de triplets admissibles
telle que pour tout $i$, le morphisme $Y_i\to Z_i$ se factorise ensemblistement par $S_i$, et telle que les images
des morphismes $Y_i\to Y$ recouvrent $Y$. 

Grâce à cet énoncé, nous démontrons le dernier
résultat de l'article par récurrence sur la dimension de la base et réduction au cas génériquement plat (qui lui-même se traite par une application directe du théorème principal). Ce dernier résultat s'énonce comme suit (thm. \ref{thm-images-genplat} et thm. \ref{thm-lipshitz} ; voir aussi les commentaires en \ref{ss-comment-images} sur son intérêt et ses limites) : soit $f\colon Y\to X$ un morphisme entre espaces $k$-analytiques compacts, avec $X$ réduit ; il existe une famille finie de morphismes $(f_i\colon V_i\to X)_i$ telle que chacun des $f_i$ s'écrive comme une composée (dans n'importe quel ordre) d'éclatements, de morphismes quasi-étales, et d'immersions fermées, tous à source et but compacts et réduits, et telle que l'image $f(Y)$
soit la réunion des $\bigcup_i f_i(V_i)$ ; si de plus le lieu de platitude de $f$ est dense dans $Y$, on peut faire en sorte que chacun des $f_i$ soit simplement
une composée d'éclatements et de morphismes quasi-étales, tous à source et but compacts et réduits (les immersions fermées ne sont plus nécessaires). 

\subsection*{Liens avec des travaux antérieurs}
Différents auteurs ont déjà mis en œuvre des travaux d'aplatissement
dans
le contexte de la géométrie analytique. 

\begin{itemize}[label=$\diamond$]

\item
Bosch et Lütkebohmert ont utilisé 
la variante \emph{formelle}
des techniques de Raynaud et Gruson pour fabriquer
des modèles formels plats de morphismes plats entre espaces strictement analytiques compacts (\cite{frg2}, thm. 5.2 ; voir aussi 
\cite{abbes2010}, thm. 5.8.1). Mais ces résultats sont en un sens «orthogonaux» aux nôtres, car les éclatements des auteurs ne modifient que la fibre spéciale, et n'ont aucun effet aplatissant au niveau générique ;  et nous ne les utilisons pas dans notre preuve (ni dans nos travaux antérieurs auxquels nous faisons parfois appel). 

\item Hironaka a démontré divers théorèmes d'aplatissement en géométrie analytique \emph{complexe}, par des techniques qui n'ont rien à voir
avec celles de Raynaud et Gruson (ni
par conséquent avec les nôtres).

Dans \cite{hironaka1973}  il démontre un théorème d'aplatissement \emph{local} pour un morphisme entre espaces analytiques, dont notre théorème \ref{flat-eclat} peut être plus ou moins vu comme un analogue ultramétrique. Il en donne ensuite une application en géométrie analytique \emph{réelle} (en utilisant une procédure de complexification), et plus précisément à l'étude des images de parties semi-analytiques réelles sous un morphisme analytique réel propre. 

Dans \cite{hironaka1975}, 
il démontre un théorème d'aplatissement \emph{global}
pour un morphisme \emph{propre} $Y\to X$ entre espaces analytiques (avec $X$ réduit) : 
si $\mathscr F$ est un faisceau cohérent sur $Y$, il prouve l'existence d'un
morphisme propre et biméromorphe $X'\to X$ tel que
la transformée stricte correspondante de $\mathscr F$
soit plate sur $X'$. Il
serait très intéressant
d'établir un résultat analogue dans le contexte analytique ultramétrique.  
\end{itemize}

Par ailleurs, les images des morphismes entre espaces analytiques compacts ont fait l'objet de nombreuses études du point de vue de la théorie des modèles des corps valués, et nous pensons que la description de ces images que nous obtenons et avons décrite ci-dessus est une reformulation géométrique d'un théorème obtenu par Cluckers et Lipshitz dans \cite{cluckers-l2017}. Nous discutons ce point plus avant 
en \ref{comment-final} (dont la lecture requiert une certaine familiarité avec la théorie des modèles).

\subsection*{Remerciements}
Je tiens à manifester ma reconnaissance aux rapporteurs anonymes pour leur lecture très attentive du manuscrit et les corrections qu'ils ont suggérées.

\section{Rappels, conventions, notations}

\subsection{La cadre général}
On fixe pour toute la suite du texte un corps ultramétrique complet $k$ (la valuation peut être triviale). Nous travaillerons avec la notion d'espace $k$-analytique \emph{au sens de Berkovich} et considérerons comme connues les bases de la théorie, exposées par exemple dans les textes fondateurs 
\cite{berkovich1990} et \cite{berkovich1993} (notre définition d'espace $k$-analytique sera celle de \cite{berkovich1993} ; 
avec ce choix, les espaces définis dans \cite{berkovich1990}
sont les \emph{bons} espaces $k$-analytiques, c'est-à-dire ceux dont tout point possède un voisinage affinoïde).

On fixe également un sous-groupe $\Gamma$ de $\R_{>0}$ tel que $\abs {k\gpm}\cdot \Gamma\neq \{1\}$ (autrement dit
$\Gamma$ est non trivial si la valeur absolue de $k$ est triviale).
Nous utiliserons la notion d'espace $k$-analytique 
\emph{$\Gamma$-strict}, pour laquelle nous renvoyons au paragraphe 3.1
de \cite{ducros2018}. Intuitivement, un espace $k$-analytique est $\Gamma$-strict s'il peut être défini en utilisant uniquement des paramètres réels appartenant à $\Gamma$. Ainsi, si $\Gamma=\{1\}$ (ce qui n'est possible que lorsque $k$ n'est pas trivialement valué) les espaces $k$-analytiques $\Gamma$-stricts sont les espaces strictement $k$-analytiques ; et si $\Gamma=\R_{>0}$, tout espace $k$-analytique est $\Gamma$-strict. 

\subsection{Notations de base}
Soit $X$ un espace $k$-analytique. 

Si $E$ est une partie de $X$ on notera $\adht EX$ l'adhérence de $E$ dans $X$. 

Si $L$ est une extension complète de $k$, on notera $X_L$
l'espace $L$-analytique obtenu par extension des scalaires de $k$ à $L$.
On dispose d'un $k$-morphisme $X_L\to X$ qui est surjectif (\emph{cf.}
\cite{ducros2007}, 0.5). Lorsque $X=\mathscr M(A)$, ce morphisme est induit
par la flèche $A\to A\hotimes_k L$, qui est fidèlement plate (\cite{berkovich1993}, lemme 2.1.2).

Si $x$ est un point de $X$ on notera $\hr x$ le corps résiduel complété de $x$. 
Si $f\colon Y\to X$ est un morphisme d'espaces $k$-analytiques, on notera $f\inv(x)$ ou $Y_x$ la fibre de $f$ en $x$ ; c'est un 
espace $\hr x$-analytique.

\subsection{Topologie et G-topologie}
Soit $X$ un espace $k$-analytique. Il est fourni avec une topologie au sens usuelle, et une topologie de Grothendieck ensembliste qui la raffine et est appelée la G-topologie ; le site correspondant à cette dernière
est noté $X\grot$ (\cite{berkovich1993}, 1.3). Le site $X\grot$ est muni d'un faisceau de $k$-algèbres que nous noterons $\mathscr O_X$ (nous nous écartons ici des notations de Berkovich, qui le note $\mathscr O_{X\grot}$ et désigne par $\mathscr O_X$ sa restriction au site topologique usuel de $X$). Ce faisceau $\mathscr O_X$ est cohérent (\cite{ducros2009}, Lemma 0.1 ; J. Poineau nous a signalé une erreur dans la preuve de ce lemme, mais elle est aisément réparable -- voir la note de bas de page de
\cite{ducros2018}, 1.3.1).  Nous appellerons \emph{faisceau cohérent}
sur $X$
tout $\mathscr O_X$-module cohérent sur le site $X\grot$. 

\subsection{Notations relatives aux faisceaux cohérents}
Soit $\mathscr F$ un faisceau cohérent sur un 
espace $k$-analytique $X$. 

\subsubsection{Supposons que $X$ est bon} Soit $x$ un point de $X$. 
On notera $\mathscr F_x$ la colimite des $\mathscr F(U)$ où $U$ parcourt l'ensemble
des voisinages \emph{ouverts}
de $x$ dans $X$ (autrement dit, $\mathscr F_x$ est la fibre en $x$ du faisceau $\mathscr F$ restreint au site topologique usuel de $X$) ; 
en particulier, $\mathscr O_{X,x}$ désignera la colimite des $\mathscr O_X(U)$ où $U$ parcourt l'ensemble
des voisinages ouverts 
de $x$ dans $X$ ; c'est un
anneau local excellent hensélien (\cite{berkovich1993}, thm. 2.1.4 et 2.1.5 pour la noethérianité et l'hensélianité, et \cite{ducros2009}, thm.
2.13 pour l'excellence). 

\subsubsection{On ne suppose plus que $X$ est bon}
Soit $Y$ un espace analytique défini sur une extension complète $L$ de $k$, 
et soit $f\colon Y\to X$ un $k$-morphisme. Le faisceau cohérent $f^*\mathscr F$ sera également noté $\mathscr F_Y$ (s'il n'y a pas
d'ambiguïté sur $f$) ; en particulier, si $V$ est un domaine
analytique de $X$, la restriction de $\mathscr F$ à $V$
sera notée $\mathscr F_V$.

\subsection{GAGA}
Soit $X=\mathscr M(A)$ un espace $k$-affinoïde.

\subsubsection{}
Nous désignerons par $X\al$ le schéma $\spec A$
(ici «al» est une abréviation de «algébrique») ; il est muni d'un morphisme d'espaces localement annelés
$X\to X\al$.

\subsubsection{Analytification
d'un schéma}
Soit $\mathscr X$ un $A$-schéma de type fini. La catégorie des diagrammes commutatifs 

\[\begin{tikzcd}
Y\ar[r]\ar[d]&\mathscr X\ar[d]\\
X\ar[r]&X\al
\end{tikzcd}
\]
où $Y$ est un bon espace analytique défini sur une extension complète de $k$, où $Y\to X$ est un $k$-morphisme 
d'espaces analytiques et où $Y\to \mathscr X$ est un morphisme d'espaces localement annelés, admet un objet final 
\[\begin{tikzcd}
\mathscr X\an\ar[r]\ar[d]&\mathscr X\ar[d]\\
X\ar[r]&X\al
\end{tikzcd}
\]
(\cite{berkovich1993}, prop. 2.6.2). L'espace $\mathscr X\an$ est $k$-analytique
et est appelé \emph{l'analytifié}
de $\mathscr X$ (notons que si $\mathscr X=X\al$ alors $\mathscr X\an=X$). Le morphisme
structural $\mathscr X\an \to \mathscr X$ est surjectif, et les applications
qu'il induit entre anneaux locaux sont 
régulières (\cite{ducros2009}, thm.3.3 ; la platitude et la surjectivité sont dues à Berkovich, \cite{berkovich1993}
prop. 2.6.2). L'image sur $\mathscr X$ d'un point $x$ de $\mathscr X\an$ sera notée $x\al$ ; plus généralement, l'image directe sur $\mathscr X$ d'une partie $E$ de $\mathscr X\an$ sera notée $E\al$, et l'image réciproque sur 
$\mathscr X\an$ d'une partie $F$ de $\mathscr X$ sera notée $F\an$. 

\subsubsection{Analytification d'un faisceau cohérent}
Si $\mathscr F$ est un faisceau cohérent sur $\mathscr X$, le faisceau cohérent induit sur $\mathscr X\an$ sera noté $\mathscr F\an$. Lorsque $\mathscr X$ est un $A$-schéma propre, $\mathscr F\mapsto \mathscr F\an$ est une équivalence de catégories dont on notera $\mathscr F\mapsto \mathscr F\al$ un quasi-inverse ; on trouvera une preuve de cette assertion dans l'appendice A de \cite{poineau2010}. Toutefois, le seul cas qui nous servira ici sera celui où $\mathscr X=X\al$,
dans lequel le résultat est essentiellement dû à Kiehl et Tate ; le faisceau $\mathscr F\al$ est alors simplement le faisceau cohérent
sur $X\al$ associé au $A$-module de type fini $\mathscr F(X)$, et $\mathscr F$ est lui-même le faisceautisé de $V\mapsto \mathscr F(X)\otimes_A \mathscr O_X(V)$ (\emph{cf}. \cite{berkovich1993}, 1.2).

\subsubsection{}
Soit $Y$ un espace $k$-analytique quelconque, soit $V$ un domaine affinoïde de $Y$
et soit $x$ un point de $V$. L'image de $x$ par l'application $V\to V\al$ sera parfois notée $x_V\al$ pour bien indiquer qu'on voit $x$ comme un point de $V$ (il pourrait y avoir une ambiguïté si $Y$ était lui-même affinoïde, ou plus généralement
si c'était l'analytifié d'un schéma de type fini sur une algèbre affinoïde). 

\subsection{Validité de certaines propriétés en un point}
On étudie de manière systématique au chapitre 2 de \cite{ducros2018}
(et notamment aux sections 2.2, 2.3 et 2.4)
la notion de validité en un point d'un espace $k$-analytique d'une propriété
$\mathsf P$ d'algèbre commutative sujette à un certain nombre d'axiomes.
Nous allons simplement traiter ici le cas de
quelques
propriétés spécifiques. On fixe un
espace $k$-analytique $X$, 
un point
$x$ de $X$, et un diagramme
$\mathscr F\to \mathscr G\to \mathscr H$ 
dans
la catégorie des faisceaux cohérents sur $X$.

\subsubsection{Validité en un point : la définition}\label{ss-valid-prop}
On dira que $X$ est régulier, (resp. normal, resp. réduit) en $x$ si pour tout bon domaine analytique $V$ de $X$ contenant $x$
l'anneau local $\mathscr O_{V,x}$ est régulier, (resp. \ldots) ;
on dit que $\mathscr F$ est $S_n$ (resp. libre de rang $n$) en $x$
si pour tout bon domaine analytique $V$
de $X$ contenant $x$
le $\mathscr O_{V,x}$-module
$\mathscr F_{V,x}$
est $S_n$ (resp. \ldots) ; on dit
que $\mathscr F\to \mathscr G$ est injectif (resp. surjectif, resp. bijectif) en $x$ si
pour tout bon domaine analytique $V$ de $X$ contenant $x$ le morphisme 
$\mathscr F_{V,x}\to \mathscr G_{V,x}$ est injectif (resp. \ldots) ; on dit que
$\mathscr F\to \mathscr G\to \mathscr H$
est exacte en $x$ si pour tout bon domaine analytique $V$ de $X$ contenant $x$
la suite $\mathscr F_{V,x}\to \mathscr G_{V,x}\to \mathscr H_{V,x}$ est exacte. 

\subsubsection{}Ces définitions sont en fait plus souples qu'il n'y paraît, car il
suffit à chaque fois de vérifier la condition requise
pour \emph{un} bon domaine analytique donné $V$ de $X$ contenant $x$ : 
voir les lemmes 2.4.1 et 2.4.3
de \cite{ducros2018}, qui reposent sur le fait que si $W\subset V$ sont deux bons domaines analytiques de $X$ contenant $x$,
le morphisme $\mathscr O_{V,x}\to \mathscr O_{W,x}$ est régulier (\cite{ducros2009}, thm. 3.3 ; la platitude
est une conséquence directe de \cite{berkovich1990}, prop. 2.2.4). 
Il s'ensuit que si $V$ est un domaine analytique quelconque de $X$ contenant $x$ et si $\mathsf P$
est l'une des propriétés évoquées ci-dessus alors 
$X$ (ou $\mathscr F$, ou $\mathscr F\to \mathscr G$, ou $\mathscr F\to \mathscr G\to \mathscr H$)
satisfait $\mathsf P$
en $x$ si et seulement si $V$ (ou $\mathscr F_V$, ou $\mathscr F_V\to \mathscr G_V$,
ou $\mathscr F_V\to \mathscr G_V \to \mathscr H_V$) satisfait $\mathsf P$ en $x$. 

\subsubsection{Validité globale}
On dira que $X$ (ou $\mathscr F$, ou $\mathscr F\to \mathscr G$, ou $\mathscr F\to \mathscr G\to \mathscr H$)
satisfait $\mathsf P$ si celle-ci est satisfaite en tout point de $X$. Avec
toutefois une exception : nous continuerons à employer
l'expression «libre de rang $n$ en tout point» et réserverons évidemment
«libre de rang $n$» au cas des faisceaux \emph{globalement}
libres. Mentionnons que le faisceau cohérent $\mathscr F$ est libre de rang $n$ en tout point de $X$ si et seulement si il 
est G-localement libre de rang $n$ (cela découle de la définition et de \cite{berkovich1993}, prop. 1.3.4 (iii)). 

\subsubsection{GAGA pour les propriétés d'algèbre commutative}
\label{ss-valid-gaga}
Soit $\mathscr X$ un schéma de type fini sur une algèbre $k$-affinoïde. On dispose
(\cite{ducros2018}, Lemma
2.4.5) d'un principe GAGA
concernant la validité en un point d'une des propriétés ci-dessus ; par exemple, $\mathscr X\an$ est régulier en un point $x$ si et seulement si $\mathscr X$ est régulier en $x\al$ ; si $\mathscr F$ est un faisceau cohérent sur $\mathscr X$
alors $\mathscr F\an$ est $S_n$ en $x$ si et seulement si $\mathscr F$ est $S_n$ en $x\al$, etc. Le point clef
de la preuve est la régularité des morphismes entre anneaux locaux induits par $\mathscr X\an \to \mathscr X$. 

\subsubsection{Espaces réguliers, normaux
et réduits}
On déduit du principe GAGA ci-dessus
qu'un espace $k$-analytique $X$ est régulier (resp. normal, resp. réduit) si et seulement si $\mathscr O_X(V)$ est régulier (resp. normal, resp. réduit) pour tout domaine affinoïde $V$ de $X$ ; il suffit de le vérifier pour un recouvrement affinoïde ensembliste de $X$. 

\begin{rema}
Soit $X$
un espace $k$-analytique.
Les notions d'injectivité, surjectivité et bijectivité d'un morphisme
de faisceaux cohérents sur $X$, ou d'exactitude d'une suite de faisceaux
cohérents sur $X$, sont \emph{a priori}
ambigües : pour chacune d'elles on pourrait en effet se référer ou bien 
la définition de \ref{ss-valid-prop} (par
la validité en tout point de $X$) ou bien à celle de la théorie générale
des faisceaux sur un site. 
Mais il n'y a pas d'inquiétude à avoir : les deux définitions sont en fait
à chaque fois équivalentes
(\cite{ducros2018}, 2.5.5).

\end{rema}

\subsection{Topologie de Zariski}
Soit $X$ un espace $k$-analytique. 

\subsubsection{Définition de la topologie de Zariski}Si $\mathscr I$ est un faisceau cohérent d'idéaux sur $X$, on note $\mathrm V(\mathscr I)$
l'ensemble des $x\in X$ tels que $f(x)=0$ pour toute section $f$ de $\mathscr I$. Les parties de $X$
de la forme $\mathrm V(\mathscr I)$, où $\mathscr I$ est un faisceau cohérent d'idéaux sur $X$, sont les 
fermés d'une topologie
plus grossière que la topologie usuelle et appelée la \emph{topologie de Zariski de $X$}. 
Si $E$ est une partie de $X$, on notera $\adhz EX$ l'adhérence de $E$ dans $X$ pour la topologie de Zariski. 
Lorsque 
$X$ est affinoïde, la topologie de Zariski de $X$ est l'image réciproque de la topologie de Zariski de $X\al$ par l'application naturelle $X\to X\al$. 
Un espace $k$-analytique sera dit irréductible s'il est irréductible
\emph{pour la topologie de Zariski}. 

\subsubsection{Sous-espaces
analytiques fermés}
Soit $\mathscr I$ un faisceau cohérent d'idéaux sur $X$. La catégorie formée des
couples $(Y,f)$ où $Y$ est un espace
analytique défini sur une extension complète de $k$
et où $f\colon Y\to X$ est un $k$-morphisme tel que $f\inv\mathscr I=0$, admet un objet final
$(Z,\iota)$ ; 
l'espace $Z$ est $k$-analytique, son ensemble sous-jacent est le lieu des zéros $\mathrm V(\mathscr I)$ de
l'idéal $\mathscr I$ (c'est donc un fermé de
Zariski de
$X$), et $\iota_*\mathscr O_Z=\mathscr O_X/\mathscr I$
(\emph{cf.} \cite{ducros2018}). 
On dit que $Z$ est le \emph{sous-espace analytique fermé} associé à $\mathscr I$ ; une
\emph{immersion fermée} est un morphisme $S\to X$ induisant un isomorphisme entre $S$ et un sous-espace
analytique fermé de $X$. 

On
munit l'ensemble des sous-espaces analytiques fermés de $X$
de la relation d'ordre opposée à la relation d'inclusion entre les faisceaux cohérents d'idéaux qui les définissent. De cette façon, l'application qui envoie un sous-espace analytique fermé de $X$ sur son ensemble sous-jacent est croissante. 

Si $Z$ est un sous-espace analytique fermé et si $V$ est un domaine analytique de $X$, on notera par abus $Z\cap V$ 
le produit fibré $Z\times_X V$ ; c'est à la fois un sous-espace analytique fermé de $V$ et un domaine analytique de $Z$. 

\subsubsection{Structure réduite et caractère G-local
de la toplogie de Zariski}Si $Y$ est un fermé de Zariski de $X$, il possède une plus petite structure de sous-espace analytique fermé ; le faisceau
cohérent d'idéaux associé est celui des sections de $\mathscr O_X$ qui s'annulent \emph{ponctuellement}
sur $Y$ ; la structure correspondante est aussi l'unique structure de sous-espace analytique fermé sur $Y$ pour laquelle celui-ci est réduit ; 
on l'appelle la \emph{structure réduite} sur $Y$. Lorsque $Y=X$, l'espace correspondant est noté $X_{\mathrm{red}}$ ; le faisceau cohérent d'idéaux de $\mathscr O_X$
qui définit  $X_{\mathrm{red}}$ est celui des fonctions localement nilpotentes. Pour une référence, on pourra 
se reporter à \cite{ducros2009}, prop. 4.1. 

L'existence de cette structure réduite et sa canonicité ont une conséquence importante : le fait pour une partie de $X$
d'être un fermé (ou un ouvert) de Zariski est de nature G-locale (\emph{cf.}\cite{ducros2018}, 1.3.19). 

\subsubsection{Lieux de validité des propriétés d'algèbre
commutative}
Le lieu de validité d'une des propriétés considérée au \ref{ss-valid-prop}
est toujours un ouvert de Zariski de $X$ : en effet on peut par ce qui précède raisonner G-localement, donc
se ramener au cas affinoïde ; l'assertion résulte alors du principe GAGA rappelé en
\ref{ss-valid-gaga}
et de l'excellence des algèbres affinoïdes (\cite{ducros2009}, thm. 2.13).

\subsection{Théorie de la dimension}
On dispose en géométrie analytique ultramétrique
d'une théorie de la dimension avec des variantes globale, locale
et relative. Elle est due à Berkovich, et nous renvoyons le lecteur au chapitre 2 de \cite{berkovich1990}
ainsi qu'à la section 1 de \cite{ducros2007} pour les définitions de base.

\subsubsection{}
Soit $X$ un espace $k$-analytique. La dimension de $X$ sera notée $\dim X$
et sa dimension en un point $x\in X$ sera notée $\dim_x X$.
Il résulte des définitions que si $V$ est un domaine analytique de $X$ 
on a $\dim_x V=\dim_x X$ pour tout $x\in V$
et que $\dim X$
est égal à $\sup_{x\in X}\dim_x X$
(la dimension de l'espace vide est par convention
égale à $-\infty$).
Si $L$ est une extension complète de $k$,
si $x\in X$ et si $y$ est un antécédent de $x$ sur $X_L$
on a $\dim_y X_L=\dim_x X$ (\cite{ducros2007}, prop. 1.2.2).

\subsubsection{Espaces équidimensionnels}
Si $d$ est un entier, on dit que $X$ est \emph{purement de dimension $d$} si $\dim_x X=d$ pour tout $x\in X$. 
Si $X$ est purement de dimension $d$ tout domaine analytique $V$ de $X$ l'est aussi (car on aura $\dim_x V=\dim_x X=d$ pour tout $x\in V$) et si $X$ est non vide, $X$ est de dimension $d$. On dit que $X$ est \emph{équidimensionnel}
s'il est purement de dimension $d$ pour un certain $d$ (uniquement déterminé si $X\neq \emptyset$). 

\subsubsection{À propos de la dimension
relative}
Si $f\colon Y\to X$ est un morphisme entre espaces $k$-analytiques et si $y$ est un point de $Y$ dont on note $x$ l'image sur $X$, la \emph{dimension relative
de $Y$ sur $X$} en $y$ est par définition la dimension de $f\inv(x)$ en $y$, qu'on notera aussi $\dim_y f$. 
La fonction $Y\to \N, y\mapsto \dim_y f$, est semi-continue supérieurement pour la topologie de Zariski de $Y$
(\cite{ducros2007}, thm. 4.9).

\subsubsection{La quantité $d_k(x)$}
Si $x\in X$, on notera $d_k(x)$ la somme du rang rationnel de $\abs{\hr x\gpm}/\abs {k\gpm}$ et du degré de transcendance résiduel de $\hr x$ sur $k$ ; l'intérêt de cette quantité est la formule
$\dim X=\sup_{x\in X}d_k(x)$ (\cite{berkovich1993}, lemme 2.5.2).

\subsection{Les composantes
irréductibles en géométrie analytique}\label{rappel-irred}
La topologie de Zariski d'un espace $k$-analytique $X$ n'a aucune raison d'être noethérienne en général (elle l'est dès que $X$ est compact). On peut néanmoins développer une théorie des composantes irréductibles en géométrie de Berkovich. C'est l'objet du chapitre 4 de \cite{ducros2009} ; nous allons expliquer brièvement ici de quoi il retourne. 

\subsubsection{Le cas affinoïde}
Si $X$ est affinoïde sa topologie est noethérienne, et il possède donc des composantes irréductibles au sens habituel ; un fermé de Zariski $T$ de $X$ en est une composante irréductible si et seulement si $T\al$ est une composante irréductible de $X\al$. 

\subsubsection{Le cas général}
Dans le cas général on définit les composantes irréductibles de $X$ comme les parties de la forme $\adhz TX$ où $T$ est une composante irréductible d'un domaine affinoïde de $X$. On démontre qu'un domaine affinoïde de $X$ ne rencontre qu'un nombre fini de composantes irréductibles de $X$, que les composantes irréductibles de $X$ sont exactement les fermés de Zariski irréductibles maximaux de $X$, et que tout fermé de Zariski irréductible de $X$ est contenu dans une composante irréductible de $X$ (\cite{ducros2009}, thm. 4.20). 

\subsubsection{Composantes irréductibles et domaines
analytiques}\label{compirr-domaines}
On démontre également (\emph{ibid}, prop. 4.22) que si $V$ est un domaine analytique de $X$ alors : 
\begin{enumerate}[1] 
\item pour toute composante irréductible $Z$ de $X$, l'intersection $Z\cap V$ est une union $\bigcup_i T_i$ de composantes irréductibles de $V$ ; pour tout $i$ on a $\dim T_i=\dim Z$ et $\adhz {T_i}X=Z$ ;

\item pour toute composante irréductible $T$ de $V$, l'adhérence $\adhz TX$ est une composante irréductible de $X$, de même dimension que $T$, et $T$ est une composante irréductible de $\adhz TX\cap V$. 
\end{enumerate}

On en déduit que $\adhz VX$ est la réunion des composantes irréductibles de $X$ rencontrant $V$. 

\subsubsection{Composantes irréductibles et théorie
de la dimension}
Si $X$ est un espace $k$-analytique irréductible il est purement de dimension $d$ pour un certain $d\geq 0$, et tout fermé de Zariski strict de $X$ est de dimension $<d$ (\cite{ducros2009}, cor. 4.14). 
Si l'on ne suppose plus $X$ irréductible, l'entier $\dim_x X$ est égal pour tout point $x$ de $X$ au maximum des dimensions des composantes irréductibles de $X$ contenant $x$ (\emph{ibid.}, lemme 4.21). 

\subsubsection{Composantes irréductibles et codimension}
Soit $X$ un espace $k$-analytique, soit $Y$ un fermé de Zariski de $X$.
Si $x$ est un point de $Y$ alors la \emph{codimension de $Y$ dans $X$ en $x$}
est définie comme 
dans le cas des schémas par la formule
\[\inf_{Z\;\text{comp. irr. de }Y, \;\;x\in Z}\;\;\;\;\;
 \sup_{T\;\text{comp. irr. de}\;X, \;\;Z\subset T} (\dim T-\dim Z)\]
(en particulier, elle est égale à $\dim X-\dim Y$ dès que $X$ et $Y$ sont irréductibles). 

On dispose d'un principe GAGA pour la codimension : si $\mathscr X$ est un schéma de type
fini sur une algèbre $k$-affinoïde et si $\mathscr Y$ est un fermé de Zariski de $\mathscr X$, 
alors pour tout point $x$ de $\mathscr  Y\an$
la codimension de $\mathscr Y\an$ dans $\mathscr X\an$
en $x$ est égale à celle de $\mathscr Y$ dans $\mathscr X$ en $x\al$
(\cite{ducros2018}, cor. 2.7.13). 

\subsubsection{Composantes irréductibles d'un espace normal}
Si $X$ est un espace $k$-analytique normal, ses composantes irréductibles sont ses
composantes connexes (\cite{ducros2009}, propr. 5.14).

\subsubsection{Adhérence des ouverts de Zariski}
Si $U$ est un ouvert de Zariski de $X$, son adhérence $\adhz UX$ est égale à la réunion des composantes irréductibles de $X$ que $U$ rencontre (c'est immédiat) ; elle coïncide
avec l'adhérence topologique $\adht UX$ de $U$ dans $X$ (\cite{ducros2009}, Lemma 5.1.12), et sa
formation commute à l'extension des scalaires (\emph{ibid}., Cor. 5.14).  Il découle par ailleurs des propriétés (1) et (2) de \ref{compirr-domaines}
que pour tout domaine analytique $V$ de $X$ on a
$\adhz UX\cap V=\adhz{(U\cap V)}V$. Nous nous servirons à plusieurs reprises du cas particulier
suivant : si $U$ est Zariski-dense dans $X$ alors
$U\cap V$ est Zariski-dense dans $V$ pour tout domaine
analytique $V$ de $X$. 

\subsubsection{Principe GAGA
pour les composantes irréductibles} 
Soit $\mathscr X$ un schéma de type fini sur une algèbre affinoïde et soit
$(\mathscr X_i)_i$
la famille des composantes irréductibles de $\mathscr X$. La famille $(\mathscr X_i\an)_i$ est
alors la famille des composantes irréductibles de $\mathscr X\an$ ; en particulier, si $\mathscr X$
est irréductible alors $\mathscr X\an$ est irréductible (\cite{ducros2018}, prop. 2.7.16). 

\begin{lemm}\label{lem-ouvzar-connexe}
Soit $X$ un espace $k$-analytique irréductible et soit $U$ un ouvert de Zariski non vide de $X$. 
L'espace $U$ est irréductible. 
\end{lemm}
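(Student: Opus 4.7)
My plan is to reduce the assertion to the classical topological fact that a non-empty open subset of an irreducible topological space is irreducible. For this I would identify the Zariski topology on $U$ (viewed as an analytic space in its own right) with the subspace topology inherited from the Zariski topology on $X$.

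One direction is immediate: for any coherent ideal $\mathscr I$ on $X$, the restriction $\mathscr I|_U$ is coherent on $U$ and satisfies $\mathrm V(\mathscr I|_U) = \mathrm V(\mathscr I) \cap U$; hence the trace on $U$ of any Zariski-closed subset of $X$ is Zariski-closed in $U$. For the converse, given a Zariski-closed subset $F \subseteq U$, I would prove the equality $F = \adhz FX \cap U$. This is G-local on $X$ by the G-local nature of the Zariski topology recalled in section \ref{rappel-irred}, so it reduces to the case where $X = \mathscr M(A)$ is affinoid; there, the Zariski topology on $X$ is the pullback of that on $\spec A$ via the natural map $X \to X\al$, and the claim becomes the standard scheme-theoretic statement that a closed subscheme of a Zariski-open subscheme of $\spec A$ is the trace of its scheme-theoretic closure in $\spec A$, which is elementary commutative algebra.

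Granted this identification of topologies, the lemma follows from a standard general-topology argument: writing $U = F_1 \cup F_2$ with each $F_i$ Zariski-closed in $U$, one has $F_i = \adhz{F_i}X \cap U$, so that $X = \adhz{F_1}X \cup \adhz{F_2}X \cup (X \setminus U)$ is a union of three Zariski-closed subsets of $X$. Since $X$ is irreducible and $X \setminus U \neq X$ (as $U$ is non-empty), one of the three must equal $X$; it cannot be $X \setminus U$, so some $\adhz{F_i}X = X$, whence $F_i = \adhz{F_i}X \cap U = U$.

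The main obstacle is the G-local step identifying the two Zariski topologies on $U$: a coherent ideal on $U$ does not always extend to a coherent ideal on $X$, so one cannot directly lift Zariski-closed subsets of $U$ to Zariski-closed subsets of $X$. However, the equality $F = \adhz FX \cap U$ only concerns the topological closure, which is more flexible, and via the reduction to an affinoid cover it becomes a purely scheme-theoretic statement about $\spec A$ that follows from the existence and trace property of the scheme-theoretic closure.
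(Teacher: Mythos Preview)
Your approach has a genuine gap: the claimed equality $F = \adhz FX \cap U$ for $F$ Zariski-closed in $U$ is \emph{false} in general, even when $X$ is affinoid and irreducible. The Zariski topology on $U$ can be strictly finer than the subspace topology inherited from $X$, so the classical topological argument does not transfer.

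Here is a concrete counterexample. Take $X = \mathscr M(k\{T\})$ the closed unit disc (with $\abs{k^\times}\neq\{1\}$), and $U = X \setminus \{0\}$, which is Zariski-open. Choose $c \in k$ with $0 < \abs c < 1$ and set $F = \{c^n : n \geq 1\}$. On each annulus $\{r \leq \abs T \leq 1\}$ the infinite product $g(T) = \prod_{n\geq 1}(1 - c^n T^{-1})$ converges, and all but finitely many factors are units there; thus $g$ is a global section of $\mathscr O_U$ whose zero locus is exactly $F$. So $F$ is Zariski-closed in $U$. But $F$ is infinite, while every proper Zariski-closed subset of the affinoid $X$ is finite (Weierstrass preparation). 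Hence $\adhz FX = X$, and $\adhz FX \cap U = U \neq F$. Your G-local reduction cannot repair this: the ``scheme-theoretic statement'' you invoke concerns coherent ideals on $\mathscr U \subset \spec A$, but the coherent ideal $g\mathscr O_U$ on $\mathscr U\an$ does not come from $\mathscr U$ (GAGA fails for non-proper $A$-schemes), so the affinoid case itself already fails.

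Consequently, in your final step, from $\adhz{F_1}X = X$ you cannot deduce $F_1 = U$. The paper circumvents this by passing to the normalisation $\widetilde X$: it suffices to show the preimage $\widetilde U$ is connected, since $\widetilde U$ is normal (hence connected $\Rightarrow$ irreducible) and surjects onto $U$. The space $\widetilde X$ is normal and connected, and $\widetilde X \setminus \widetilde U$ is a nowhere-dense Zariski-closed subset; connectedness of $\widetilde U$ then follows from the analytic Riemann extension theorem. This input is what replaces the missing subspace-topology identification.
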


\begin{proof}
Soit $Z$ le complémentaire de $U$ dans $X$. Soit $\widetilde X$ le normalisé de $X$
(\cite{ducros2018}, déf. 5.10 et thm. 5.13) et soit $\widetilde U$
(resp. $\widetilde Z$)  l'image réciproque de $U$ (resp. $Z$) sur $\widetilde X$. 
L'ouvert $\widetilde U$ de $\widetilde X$ est le normalisé de $U$ (\emph{op. cit.}, lemme 5.11)
et il suffit pour conclure de démontrer que $\widetilde U$
est connexe (ceci entraînera l'irréductibilité de $\widetilde U$
puisqu'il est normal, et donc
celle de $U$ par surjectivité du morphisme de normalisation). 
Or l'espace $\widetilde X$ est normal, et il est connexe
par irréductibilité de $X$ (\emph{op. cit.}, prop. 5.16) ; par ailleurs, 
la surjectivité du morphisme de 
normalisation entraîne que $\widetilde Z$ est un fermé de Zariski strict, et partant d'intérieur vide, de $\widetilde X$. 
En utilisant l'avatar en théorie de Berkovich du théorème d'extension de Riemann  (\cite{berkovich1990}, prop. 3.3.14, elle-même
fondée sur la version rigide-analytique de ce résultat due à Lütkebohmert, \cite{lutkebohmert1974}), on en déduit que $\widetilde U$
est connexe. 
\end{proof}

\begin{lemm}\label{zar-transitive}
Soit $X$ un espace $k$-analytique, soit $U$ un ouvert de Zariski de $X$, et soit $V$ un ouvert fermé de $U$. L'ouvert $V$ est alors
un ouvert de Zariski de $X$.
\end{lemm}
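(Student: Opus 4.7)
Le plan est de se ramener au cas où $X$ est affinoïde puis, dans ce cas, d'exhiber $V$ comme intersection explicite de deux ouverts de Zariski de $X$. D'après le caractère G-local du fait d'être un ouvert de Zariski (rappelé plus haut), il suffira de montrer que pour tout domaine affinoïde $W$ de $X$, $V\cap W$ est un ouvert de Zariski de $W$ ; or $U\cap W$ est un ouvert de Zariski de $W$, dont $V\cap W$ est un ouvert fermé (intersection avec $W$ d'un ouvert, \resp d'un fermé, de $U$). Cela permettra de supposer $X$ affinoïde.

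Dans ce cas, $X$ n'aura qu'un nombre fini de composantes irréductibles $X_1,\ldots,X_N$. Pour chaque $i$, $X_i\cap U$ sera Zariski-fermée dans $U$ et, si elle est non vide, le lemme~\ref{lem-ouvzar-connexe} assurera qu'elle est irréductible, étant un ouvert de Zariski non vide de l'espace irréductible $X_i$ ; elle sera donc en particulier connexe pour la topologie usuelle.

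Puisque $V$ est un ouvert fermé de $U$, chaque $X_i\cap U$ connexe sera entièrement incluse dans $V$ ou entièrement disjointe de $V$. Je noterai $J$ l'ensemble (fini) des indices $i$ tels que $X_i\cap U$ soit non vide et disjointe de $V$, et je poserai $Y=\bigcup_{i\in J}X_i$, qui est un fermé de Zariski de $X$ comme union finie. Une vérification ensembliste directe donnera alors $V=U\cap(X\setminus Y)$, ce qui exhibera $V$ comme intersection de deux ouverts de Zariski de $X$, d'où le résultat. Le point crucial, et seule étape vraiment non formelle, sera l'identification des $X_i\cap U$ comme parties irréductibles (donc connexes) de $U$, qui reposera de manière essentielle sur le lemme~\ref{lem-ouvzar-connexe}.
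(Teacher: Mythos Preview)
Your proof is correct and takes a genuinely different route from the paper's. The paper also reduces to an affinoid domain $W$, but then argues that $V\cap W$ is a union of irreducible components of $U\cap W$; it invokes the GAGA principle for irreducible components to see that these are finitely many and Zariski-constructible in $W$, and concludes via Berkovich's result (\cite{berkovich1993}, Cor.~2.6.6) that a Zariski-constructible open subset of an affinoid is Zariski open. Your argument instead works with the irreducible components $X_i$ of the affinoid $X$ itself, uses lemma~\ref{lem-ouvzar-connexe} to show each $X_i\cap U$ is irreducible (hence connected), and then produces an explicit description $V=U\cap(X\setminus Y)$ with $Y$ a finite union of components of $X$. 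This is more elementary in that it avoids the constructibility argument and the external reference to \cite{berkovich1993}; the price is the dependence on lemma~\ref{lem-ouvzar-connexe}, which the paper's proof does not need. Both approaches ultimately rely on the fact that an irreducible analytic space is connected for the usual topology (implicit in the passage ``réunion de composantes connexes $\Rightarrow$ réunion de composantes irréductibles'' in the paper, and in your ``irréductible donc connexe'').
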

\begin{proof}
Soit $W$ un domaine affinoïde de $X$. L'ouvert fermé $V$ de $U$ est réunion de composantes connexes de $U$, et est \emph{a fortiori}
réunion de composantes irréductibles de $U$. Par conséquent, $V\cap W$ est réunion de composantes irréductibles de $U\cap W$. 
Par le principe GAGA pour les composantes irréductibles
appliqué à l'ouvert $(W\cap U)\al$ de $W\al$, les composantes
irréductibles de $W\cap U$ sont précisément les parties de la forme $\mathscr Y\an$ où $\mathscr Y$ est une composante
irréductible de $(W\cap U)\al$ ; elles sont donc en nombre fini, et sont
Zariski-constructibles. L'intersection $W\cap V$ est donc Zariski-constructible et ouverte dans $W$ ; c'est en
conséquence un ouvert de Zariski de $W$
(\cite{berkovich1993}, Cor. 2.6.6). Ceci valant pour tout $W$, l'ouvert $V$ est un ouvert de Zariski de $X$.
\end{proof}

\begin{rema}
Les  lemmes précédents ne sont pas simplement des énoncés triviaux de topologie générale (comme le seraient leurs analogues schématiques). En effet, si $U$ est un domaine analytique d'un espace $k$-analytique $X$, la topologie de Zariski de $U$ est en général plus fine que la topologie induite par la topologie de $X$, et ce même dans le cas où $U$ est lui-même un ouvert de Zariski de $X$. Par exemple, si la valeur absolue de $k$ n'est pas triviale, une suite d'éléments de $k$ tendant vers l'infini en valeur absolue définit un fermé de Zariski de $\A^{1,\mathrm{an}}_k$ qui n'est pas la trace d'un fermé de Zariski de $\P^{1,\mathrm{an}}_k$ (ce phénomène n'a rien de spécifiquement non archimédien : on le rencontre tout aussi bien en géométrie analytique complexe). 
\end{rema}

\subsection{Quelques propriétés géométriques}\label{rapp-plat}
Soit $X$ un espace $k$-analytique. Les assertions suivantes 
sont équivalentes : 
\begin{enumerate}[i]
\item $X_L$ est connexe (resp. irréductible, resp. réduit) pour toute extension
complète $L$ de $k$. 
\item Il existe une extension complète $L$ de $k$ qui est séparablement
close\footnote{Précisions que si la valuation de $L$ n'est pas triviale, $L$ est séparablement close si et seulement si elle est algébriquement close.}
(resp. séparablement close, resp. parfaite) telle que 
$X_L$ soit connexe (resp. irréductible, resp. réduit). 
\end{enumerate}
Si elles sont satisfaites, on dit que $X$
est \emph{géométriquement}
connexe (resp. irréductible, resp. réduit).

Pour les preuves, on pourra
consulter \cite{ducros2009} et plus précisément le thm. 7.14 pour la connexité, le thm. 7.16 pour l'irréductibilité, 
et la prop. 6.3 pour le caractère réduit. 

\subsection{Platitude en géométrie analytique}
Nous allons brièvement présenter la théorie de la platitude en géométrie de Berkovich, qui
est introduite et étudiée systématiquement dans \cite{ducros2018}. 
Soit $f\colon Y\to X$ un morphisme entre espaces $k$-analytiques, soit $y$ un point de $Y$ et soit $x$ son image sur $X$. 
Soit $\mathscr F$ un faisceau cohérent sur $Y$. 

\subsubsection{Le cas bon}
Supposons que $Y$ et $X$ sont bons. On dit que $\mathscr F$ est \emph{naïvement} $X$-plat en $y$ si $\mathscr F_y$ est un $\mathscr O_{X,x}$-module plat. Cette notion présente un grave défaut, qui explique le choix de l'adverbe «naïvement» : elle n'est stable ni par changement de base $k$-analytique, ni par extension des scalaires -- nous décrivons en détail un contre-exemple à la section 4.4 de \cite{ducros2018}. 
Pour y remédier, on \emph{impose}
la stabilité par extension des scalaires et changement de base
$k$-analytique : on dit que $\mathscr F$ est $X$-plat
(ou plat sur $X$) en $y$ si pour tout bon espace analytique
$X'$ défini sur une extension complète quelconque de $k$,
tout $k$-morphisme $X'\to X$ et tout antécédent
$y'$ de $y$ sur $Y':=Y\times_X X'$,
le faisceau cohérent $\mathscr F_{Y'}$ est
naïvement
$X'$-plat en $y'$.
Soit $U$ 
un bon domaine analytique de $X$ contenant $x$ et soit $V$
un bon domaine analytique de $Y\times_X U$ contenant $y$. 
Les anneaux locaux $\mathscr O_{V,y}$ et $\mathscr O_{U,x}$
sont respectivement plats sur $\mathscr O_{Y,y}$
et $\mathscr O_{X,x}$ (c'est une conséquence de la proposition
2.2.4 de \cite{berkovich1990}). On en déduit facilement que
$\mathscr F$ est $X$-plat en $y$
si et seulement si $\mathscr F_V$ est $U$-plat en $y$
(cette équivalence est énoncée au paragraphe 
4.1.7  \cite{ducros2018} mais M. Daylies
a remarqué que la preuve qu'on en donne est erronée ; pour un
raisonnement correct, voir 
les errata mentionnés dans la bibliographie). 

\subsubsection{Le cas général}
On ne suppose plus que $Y$ et $X$ sont bons. On dit que $\mathscr F$ est $X$-plat en $y$ si pour tout bon domaine analytique $U$ de $X$ contenant $x$ et tout bon domaine analytique $V$ de $Y\times_X U$ contenant $y$, le faisceau cohérent $\mathscr F_V$ est $U$-plat en $y$ ; et il suffit en fait de le vérifier pour \emph{un} tel couple $(U,V)$ donné. Si $\mathscr F=\mathscr O_Y$ on dit aussi que $Y$ est $X$-plat en $y$ ou que $f$ est plat en $x$. On dit que $\mathscr F$ est $X$-plat s'il est plat en tout point de $Y$ (si $\mathscr F=\mathscr O_Y$ on dira aussi que $Y$ est $X$-plat, ou que $f$ est plat) 

Soit $U$ un domaine analytique de $X$ contenant $x$ et soit $V$ un domaine analytique de $Y\times_X U$ contenant $y$ (on ne suppose pas $U$ ou $V$ bons). Il est immédiat que $\mathscr F$ est $X$-plat en $y$
si et seulement si $\mathscr F_V$ est $U$-plat en $y$. En particulier $U\hookrightarrow X$ est plat
(appliquer ce qui précède au morphisme $\mathrm{Id}_X\colon X\to X$, qui est évidemment plat). 

\subsubsection{Platitude du morphisme
structural}
Le morphisme structural $X\to \mathscr M(k)$ est plat (\cite{ducros2018}, lemme 4.1.13 ; il y a quelque chose à démontrer !).

\subsubsection{Changement de base et descente}
Soit $X'$ un espace analytique
défini sur une extension complète
$L$ de $k$, soit $X'\to X$ un $k$-morphisme et soit $y'$ un point de $Y':=Y\times_x X'$ situé au-dessus de $y$, dont on note $x'$ l'image sur $X'$. Si $\mathscr F$ est $X$-plat en $y$ alors $\mathscr F_{Y'}$ est $X'$-plat en $y'$ : la définition permet en effet de se ramener au cas où tous les espaces en jeux sont bons, dans lequel cette propriété a été imposée. 
La réciproque est vraie si le morphisme $X'\to X_L$ induit par $X'\to X$ est plat
en $x'$ (combiner les propositions 4.5.5 et 4.5.6
de \cite{ducros2018}). 

\subsubsection{Platitude et suite exacte}
Soit $0\to \mathscr L\to \mathscr M\to \mathscr N\to 0$ une suite de faisceaux cohérents sur $X$. 
Si elle est exacte en $x$ et si $Y$ est $X$-plat en $y$ alors 
$0\to \mathscr L_Y\to \mathscr M_Y\to \mathscr N_Y\to 0$ est exacte en $y$
(\cite{ducros2018}, prop. 4.5.7 (1) pour une preuve directe).

\subsubsection{Platitude et dimension}
Si $\mathscr F$ est $X$-plat en $y$ on a
\[\dim_y \supp F=\dim_y (\supp F)_x+\dim_x X\]
(\cite{ducros2018}, Lemma 4.5.11). 

\subsubsection{Lieu de platitude}
L'ensemble des points de $Y$ en lesquels $\mathscr F$ est $X$-plat est un ouvert de Zariski de $Y$
(\cite{ducros2018}, thm. 10.3.2). 

\subsubsection{Image d'un espace compact par un morphisme plat}
Si $X$ et $Y$
sont compacts et $\Gamma$-stricts et si $\mathscr F$ est $X$-plat, l'image 
$f(\supp F)$ est un domaine analytique compact et $\Gamma$-strict de 
$X$ (\cite{ducros2018}, thm. 9.2.2 ; le cas où $\Gamma=\{1\}$ et où $\mathscr F=\mathscr O_Y$ est dû à Raynaud, \emph{cf.} 
\cite{frg2} Cor. 5.11).

\subsubsection{Platitude automatique au-dessus de certains points «génériques»}
Soit $Y\to X$ un morphisme d'espaces $k$-analytiques, soit $\mathscr F$ un faisceau 
cohérent sur $Y$ et soit $x$ un point de $X$. Supposons que $X$ est réduit en $x$ et que $d_k(x)=\dim_x X$. 
Le faisceau $\mathscr F$ est alors $X$-plat en chaque point de la fibre $Y_x$
(\cite{ducros2018}, thm. 10.3.7). 

\subsubsection{Platitude et composantes irréductibles}
Si $f$ est plat, $\adhz {f(Z)}X$ est une composante irréductible de $X$ pour toute composante
irréductible $Z$
de $Y$ (\cite{ducros2009}, lemme 5.7). 

\subsubsection{Principes GAGA
pour la platitude}
Soient $\mathscr Y$ et $\mathscr X$ deux schémas de type fini sur une même algèbre $k$-affinoïde, et soit $\mathscr G$ un faisceau cohérent sur $\mathscr Y$. Le faisceau $\mathscr G\an$ est alors $\mathscr X\an$-plat en un point $t$ de $\mathscr Y\an$ si et seulement si $\mathscr G$ est $\mathscr X$-plat en $t\al$ (\cite{ducros2018}, lemme 4.2.1 et prop. 4.2.4). 

Par ailleurs, soit $Y\to X$ un morphisme entre espaces $k$-affinoïdes, soit $\mathscr F$ un faisceau cohérent sur $Y$
et soit $y$ un point de $Y$. Si $\mathscr F$ est $X$-plat en $y$ alors $\mathscr F\al$ est $X\al$-plat en $y\al$ ; si 
$\mathscr F\al$ est $X\al$-plat en $y\al$ et si $y$ vit sur un sous-espace
analytique fermé de $Y$ qui est fini sur $X$ (c'est par exemple le cas si $y$ est un point rigide ou si $Y$ est fini sur $X$) 
alors $\mathscr F$ est $X$-plat en $y$ (\cite{ducros2018}, lemme 4.2.1
et thm. 8.3.7).

\subsection{Morphismes
quasi-lisses et quasi-étales}
Soit $f\colon Y\to X$ un morphisme
entre espaces $k$-analytiques, soit $y$ un point de $Y$ et soit $x$ son image sur $X$.

\subsubsection{}
Au chapitre 5 de \cite{ducros2018},
on définit la notion de \emph{quasi-lissité} de $f$ en $y$ par un critère jacobien mettant en jeu le faisceau $\Omega_{Y/X}$
des différentielles de Kähler (\cite{ducros2018}, déf. 5.2.4 ; voir \emph{ibid.}, 5.1 pour les rappels sur $\Omega_{Y/X}$) ;
on dit que $f$ est quasi-lisse s'il est quasi-lisse en tout point de $Y$. Un espace $k$-analytique est dit quasi-lisse (en un point donné
ou globalement) si son morphisme structural vers $\mathscr M(k)$ est quasi-lisse (en le point donné, ou globalement). 
On emploie l'expression «quasi-étale» 
pour «quasi-lisse de dimension relative nulle» (cette définition est compatible avec celle donnée par Berkovich dans \cite{berkovich1994}, \emph{cf.} \cite{ducros2018}, lemme 5.4.11). 

\subsubsection{Propriétés élémentaires}
La quasi-lissité est stable par composition et changement de base ; les immersions de domaines analytiques sont quasi-lisses ; plus généralement si $U$ est un domaine analytique de $X$ contenant $x$ et si $V$ est un domaine analytique de $Y\times_X U$ contenant $y$ alors $Y\to X$ est quasi-lisse en $y$ si et seulement si $V\to U$ est quasi-lisse en $y$ (\cite{ducros2018}, 
5.2.10--5.2.13).

\subsubsection{Quasi-lissité sur le corps de base}
Soit
$Z$ un espace $k$-analytique et soit $z\in Z$.
On dit que $Z$ est géométriquement régulier en $z$  si $Z_L$ est régulier en tout antécédent de $z$ pour toute extension complète $L$ de $k$ ; il suffit de le vérifier pour \emph{une} extension \emph{parfaite} $L$ de $k$ donnée et \emph{un} antécédent
donné de $z$ sur $Z_L$, \emph{cf.}
\cite{ducros2018}, 2.6.9

Les assertions suivantes sont équivalentes (\emph{cf.} \cite{ducros2018} 5.1.9, 5.1.10) :
\begin{enumerate}[i]
\item l'espace $Z$ est quasi-lisse en  $z$ ; 
\item $\Omega_{Z/k}$ est libre de rang $\dim_z Z$ en $z$ ; 
\item $Z$ est \emph{géométriquement régulier} en $z$. 
\end{enumerate}

\subsubsection{Quasi-lissité et platitude}
Le morphisme $f\colon Y\to X$ est quasi-lisse en $y$ si et seulement si $f$ est plat en $y$ et $Y_x$ est quasi-lisse en $y$. Pour une preuve de ces affirmations, voir \cite{ducros2018} 5.1.9, 5.1.10 et thm. 
5.3.4.

\subsubsection{Lieu de quasi-lissité}
Si $d$ est un entier, l'ensemble des points $z$ de $Y$
tels que $f$  soit quasi-lisse en $z$ et tels que $\dim_z f=d$
est un ouvert de Zariski de $Y$ (\cite{ducros2018}, thm. 10.7.2). 

\subsubsection{Quasi-lissité et propriétés
d'algèbre commutative}
Les propriétés évoquées en \ref{ss-valid-prop} «descendent par morphisme
plat et sont préservées par morphisme quasi-lisse». Plus précisément, soit
$\mathscr F\to \mathscr G\to \mathscr H$
un diagramme dans la catégorie des faisceaux cohérents sur $X$
et soit $\mathsf P$ l'une des propriétés
considérées en \ref{ss-valid-prop}.
Si $f$ est plat en $y$ et si $Y$ (ou $\mathscr F_Y$, ou
$\mathscr F_Y\to \mathscr G_Y$, ou $\mathscr F_Y\to \mathscr G_Y\to \mathscr H_Y$) satisfait $\mathsf P$ en $y$ alors
$X$ (ou $\mathscr F$, ou $\mathscr F\to \mathscr G$, ou $\mathscr F\to \mathscr G\to \mathscr H$) satisfait $\mathsf P$ en $x$. La réciproque vaut si $f$ est quasi-lisse en $y$ (\cite{ducros2018}, lemmes 4.5.1 et 4.5.2, prop. 
5.5.4 et 5.5.5).

\subsubsection{Quasi-lissité et principes GAGA}
Sii $\mathscr Y\to \mathscr X$ est un morphisme entre 
schémas de type fini sur une même algèbre affinoïde et si $y$
est un point de $\mathscr Y\an$ alors $\mathscr Y\an \to \mathscr X\an$ est quasi-lisse (resp. quasi-étale)
en $y$ si et seulement si $\mathscr Y\to \mathscr X$ est lisse (resp. étale) en $y\al$
(\cite{ducros2018}, prop. 2.2.7 et cor. 5.3.6). 

\begin{rema}
Le préfixe «quasi» 
dans les expressions «quasi-lisse» et «quasi-étale» indique que les morphismes concernés
peuvent avoir du bord, ce qui n'est pas le cas des morphismes lisses et étales définis
par Berkovich au chapitre 3 de \cite{berkovich1993}. 
Plus précisément, 
soit $Y\to X$ un morphisme d'espaces $k$-analytiques et soit $y\in Y$.
Le morphisme $Y\to X$ est étale en $y$
si et seulement si
$f$ est quasi-étale en $y$ et $y\notin \partial (Y/X)$ ; si $Y\to X$ est lisse en $y$, 
il est quasi-lisse en $y$ et $y\notin \partial (Y/X)$ ; et la réciproque de cette dernière assertion
vaut lorsque $X$ et $Y$ sont bons, mais probablement pas en général (\cite{ducros2018}, cor. 5.4.8 et rem. 5.4.9 ; 
pour la définition du bord $\partial (Y/X)$, voir \cite{berkovich1993},1.5.4). 
\end{rema}

\subsection{À propos du théorème de Gerritzen-Grauert}\label{ss-ger-grau}
Soit $X$ un espace $k$-affinoïde. Rappelons qu'un domaine affinoïde $V$ de $X$ est dit \emph{rationnel}
s'il peut être défini par une conjonction d'inégalités de la forme
\[\abs {f_1}\leq \lambda_1 \abs g, \ldots, \abs{f_n}\leq \lambda_n \abs g\]
où les $f_i$ et $g$ sont des fonctions analytiques sur $X$ sans zéro commun et où les $\lambda_i$ sont des réels strictement positifs. 
Si $X$ est $\Gamma$-strict et si les $\lambda_i$ peuvent être choisis dans $\Gamma$, nous dirons que $V$ est $\Gamma$-rationnel. 

On dispose d'une version $\Gamma$-stricte du théorème de Gerritzen-Grauert : si $X$ est $\Gamma$-strict, tout domaine affinoïde $\Gamma$-strict de $X$ est réunion finie de domaines $\Gamma$-rationnels. Lorsque $\Gamma=\{1\}$ c'est le théorème de Gerritzen-Grauert classique (\emph{cf.} \cite{bosch-g-r1984}, \S 7.3.5, Thm. 1., Cor. 3) ; lorsque 
$\Gamma=\R_{>0}$ c'est établi dans \cite{ducros2003}, lemme 2.4
par réduction au cas $\Gamma=\{1\}$, mais la preuve de ce lemme s'adapte en fait sans
la moindre difficulté au cas
où $\Gamma$ est quelconque.

\begin{lemm}\label{lem-gerr-grau}
Soit $X$ un espace $k$-analytique $\Gamma$-strict et compact et soit $Y$ un sous-espace
analytique fermé de $X$. Soit $V$ un domaine analytique compact et $\Gamma$-strict de $Y$. 
Il existe un domaine analytique compact et $\Gamma$-strict $W$ de $X$ tel que $V=Y\cap W$.
\end{lemm}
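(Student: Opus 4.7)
Le plan est de se ramener au cas affinoïde par un recouvrement fini, puis de traiter ce cas via le théorème de Gerritzen-Grauert $\Gamma$-strict rappelé en \ref{ss-ger-grau}. Par compacité et $\Gamma$-strictesse, on recouvre $X$ par un nombre fini de domaines $\Gamma$-affinoïdes $X_1,\ldots,X_N$. Posons $Y_i:=Y\cap X_i$ et $V_i:=V\cap X_i$. Chaque $Y_i$ est un sous-espace analytique fermé du $\Gamma$-affinoïde $X_i$, donc à la fois un espace $\Gamma$-affinoïde et un domaine analytique de $Y$ ; par suite $V_i=V\cap Y_i$ est un domaine analytique compact et $\Gamma$-strict de $Y_i$. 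S'il existe, pour chaque $i$, un domaine analytique compact et $\Gamma$-strict $W_i$ de $X_i$ tel que $W_i\cap Y=V_i$, la réunion $W:=\bigcup_i W_i$ convient. On peut donc supposer $X$ lui-même $\Gamma$-affinoïde.

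Par le théorème de Gerritzen-Grauert $\Gamma$-strict, on écrit $V=\bigcup_j V_j$ comme une réunion finie de domaines $\Gamma$-rationnels de $Y$ ; quitte à construire un relèvement pour chacun et à prendre ensuite la réunion, on se ramène au cas où $V$ est lui-même $\Gamma$-rationnel, soit
\[V=\{y\in Y\mid \abs{f_\ell(y)}\leq \lambda_\ell \abs{g(y)},\;\ell=1,\ldots,n\},\]
avec $f_1,\ldots,f_n,g\in\mathscr O_Y(Y)$ sans zéro commun sur $Y$ et $\lambda_\ell\in\Gamma$. La surjectivité de $\mathscr O_X(X)\to\mathscr O_Y(Y)$ (Kiehl-Tate) permet de relever les $f_\ell$ et $g$ en $\tilde f_\ell,\tilde g\in\mathscr O_X(X)$ ; on choisit par ailleurs un système fini de générateurs $h_1,\ldots,h_r$ de l'idéal de $\mathscr O_X(X)$ définissant $Y$. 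On pose alors
\[W:=\{x\in X\mid \abs{\tilde f_\ell(x)}\leq \lambda_\ell\abs{\tilde g(x)}\;(1\leq\ell\leq n),\;\abs{h_s(x)}\leq \abs{\tilde g(x)}\;(1\leq s\leq r)\}.\]

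L'observation clef est que les fonctions $\tilde f_1,\ldots,\tilde f_n,\tilde g,h_1,\ldots,h_r$ sont sans zéro commun sur $X$ : un tel zéro commun appartiendrait à $Y=\mathrm V(h_1,\ldots,h_r)$, et les $\tilde f_\ell$ et $\tilde g$ y coïncident respectivement avec les $f_\ell$ et $g$, ce qui contredirait l'hypothèse. Il en résulte d'une part que $\tilde g$ ne s'annule pas sur $W$ (sinon les inégalités définissant $W$ imposeraient l'annulation simultanée de toutes les fonctions en jeu), de sorte que $W$ est un authentique domaine $\Gamma$-rationnel de $X$ de dénominateur $\tilde g$ (on utilise ici que $1\in\Gamma$) ; et d'autre part que $W\cap Y=V$, puisque sur $Y$ les $h_s$ sont identiquement nuls et les inégalités correspondantes sont automatiques, ne subsistant que les contraintes $\abs{f_\ell}\leq\lambda_\ell\abs g$ qui définissent précisément $V$. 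Le seul point légèrement délicat est justement cette observation d'absence de zéro commun : c'est elle qui justifie à la fois que $W$ soit un domaine $\Gamma$-rationnel légitime et que son intersection avec $Y$ soit bien $V$, et c'est pour l'assurer qu'on a ajouté les inégalités auxiliaires $\abs{h_s}\leq\abs{\tilde g}$.
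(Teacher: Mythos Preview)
Your proof is correct and follows essentially the same strategy as the paper's: reduce to the affinoid case by a finite $\Gamma$-strict cover, apply the $\Gamma$-strict Gerritzen--Grauert theorem to reduce to a single $\Gamma$-rational domain $V$, then lift the defining data to $X$. The only difference is in how you guarantee that the lifted domain is a genuine $\Gamma$-rational domain: you adjoin the auxiliary inequalities $|h_s|\leq|\tilde g|$ coming from generators of the ideal of $Y$, whereas the paper instead bounds $|g|$ from below on $V$ by some $\mu\in\Gamma$ (after a harmless scalar rescaling) and adjoins the single inequality $|1|\leq\mu^{-1}|\psi|$. Your trick is arguably cleaner since it avoids the scaling step and uses only $1\in\Gamma$; both achieve the same goal of forcing the denominator $\tilde g$ to be invertible on the lifted domain.
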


\begin{proof}
Soit $(X_i)$ un recouvrement fini de $X$ par des domaines affinoïdes $\Gamma$-stricts. Si l'on construit pour tout $i$ un domaine
analytique compact et $\Gamma$-strict $W_i$ de $X_i$ tel que $W_i\cap Y=V\cap X_i$ il suffira de poser $W=\bigcup_i W_i$ pour conclure. 
On peut donc supposer $X$ affinoïde. Dans ce cas $Y$ est également affinoïde, et la version $\Gamma$-stricte du théorème de Gerritzen-Grauert assure alors que $V$ est une union finie de domaines $\Gamma$-rationnels ; il suffit de démontrer le lemme pour chacun d'eux, ce qui permet de se ramener au cas où $V$ est un domaine $\Gamma$-rationnel
de $Y$. Choisissons un système d'inégalités 
\[\abs {f_1}\leq \lambda \abs g, \ldots, \abs{f_n}\leq \lambda_n \abs g\] décrivant $V$, où les $f_i$ et $g$ sont des fonctions
analytiques sur $Y$ sans zéro commun et où les $\lambda_i$ appartiennent à $\Gamma$. Comme les $f_i$ et $g$ sont sans zéro commun, $g$ ne s'annule pas sur $V$. Il y est alors minoré par un élément $\mu\in \abs{k\gpm}\cdot \Gamma$ ; quitte à multiplier $g$ et les $f_i$ par un scalaire non nul convenable, on peut supposer que $\mu \in \Gamma$, et rajouter l'inégalité $\abs 1\leq \mu\inv \abs g$ au système décrivant $V$. 

Relevons chacune des $f_i$ en une fonction analytique $\phi_i$ sur $X$, et relevons $g$ en une fonction analytique $\psi$ sur $X$. Le domaine $\Gamma$-rationnel $W$ de $X$ défini par les inégalités
\[\abs {\phi_1}\leq \lambda \abs \psi, \ldots, \abs{\phi_n}\leq  \lambda_n \abs \psi, \abs 1 \leq \mu\inv \abs \psi\]
répond alors à nos exigences. 
\end{proof}

\begin{rema}
Lorsque $\Gamma=\R_{>0}$ le lemme \ref{lem-gerr-grau} est
une conséquence de la version du théorème de Gerritzen-Grauert prouvée par Temkin, voir
la proposition 3.5 de
\cite{temkin2005}. Et la preuve de Temkin s'adapte en fait
au cas d'un groupe
$\Gamma$ quelconque, à condition de remplacer sa théorie de la réduction $\R_{>0}$-graduée par sa variante 
$\Gamma$-graduée, \emph{cf.} le chapitre 3
de \cite{ducros2018} et plus particulièrement 
les sections 3.4 et 3.5 ; on obtient ainsi une autre démonstration du lemme \ref{lem-gerr-grau}.
\end{rema}

\subsection{Idéaux de Fitting}\label{ss-fitting}
Nous nous servirons de manière cruciale dans la démonstration
de notre théorème principal de la notion d'idéal de Fitting ; rappelons brièvement
ce en quoi elle consiste.  
Soit $A$ un anneau commutatif et soit $M$ un $A$-module de type fini. Soit $r$ un entier positif ou nul. Donnons-nous une présentation 
\[\begin{tikzcd}
A^{(I)}\ar[r,"u"]&A^n\ar[r]&M\ar[r]&0
\end{tikzcd}
\]
de $M$. 
Le \emph{$r$-ième idéal de Fitting de $M$}
est l'idéal de $A$ engendré par les mineurs de taille $n-r$ 
de la matrice de $u$
(dans les bases canoniques de $A^{(I)}$ et $A^n$) si $r\leq n$, et l'idéal $A$ si $r>n$. La terminologie est justifiée car on montre que
cet idéal ne dépend
pas de la présentation choisie de $M$ (voir par exemple
\cite[\href{https://stacks.math.columbia.edu/tag/07Z8}{Tag 07Z8}]{stacks-project}).
Soit $X$ un schéma (resp. un espace $k$-analytique)
et soit $\mathscr F$ un faisceau quasi-cohérent de type fini 
(resp. un faisceau cohérent) sur $X$.
Par recollement à partir du cas affine (resp. affinoïde) on peut définir le $r$-ième idéal de Fitting $\mathscr I$ de $\mathscr F$ sur $X$. C'est un faisceau quasi-cohérent (resp. cohérent) d'idéaux, dont le lieu des des zéros est exactement l'ensemble des points de $X$ en lesquels le rang ponctuel de $\mathscr F$ est $>r$.

\section{L'assassin analytique}\label{s-assassin}

\begin{enonce}[remark]{Convention}
Si $M$ est un module non nul sur un anneau local noethérien $A$,
la dimension $\dim M$ de $M$ sera par définition la dimension de Krull
de son support sur $\spec A$.
\end{enonce}

\subsection{Platitude et profondeur}\label{ss-plat-prof}
Soit $A\to B$ un morphisme local entre anneaux locaux noethériens et soit $F$ le corps résiduel de $A$. 
Soit $M$ un $A$-module non nul de type fini et soit $N$ un $B$-module non nul de type fini et
plat sur $A$. 
Considérons $N\otimes_A M$, $N\otimes_A F$ et $M$ comme
des modules sur $B, B\otimes_AF$ et $A$ respectivement.
On a alors les égalités : 

\begin{eqnarray}
\dim(N\otimes_A M)&=&\dim(N\otimes_A F)+\dim M\label{eq-dimplat}\\
\mathrm{prof}(N\otimes_A M)&=&\mathrm{prof}(N\otimes_A F)+\mathrm{prof}\;M
\label{eq-profplat}
\end{eqnarray}
(voir \cite{ega42}, Cor. 6.1.2 et Prop. 6.3.1).
Indiquons deux conséquences immédiates de ces égalités qui nous
seront utiles. 

\subsubsection{}\label{sss-transfert-prof0gen}
On déduit de l'égalité (\ref{eq-profplat})
que $N\otimes_A M$ est de profondeur nulle si et seulement si
$N\otimes_AF$ et $M$ sont de profondeur nulle. 

\subsubsection{}\label{sss-transfert-prof0part}
Supposons que $N=B$ (et donc que $B$ est plat sur $A$), et que
$B\otimes_A F$ est de Cohen-Macaulay, c'est-à-dire que
$\mathrm{prof}(B\otimes_AF)=\dim (B\otimes_A F)$. 
Par ce qui précède, $\mathrm{prof}(B\otimes_A M)=0$ 
si et seulement si $\mathrm{prof}(B\otimes_A F)=\mathrm{prof}\;M=0$.  
Au vu de l'égalité (\ref{eq-dimplat}) on a par ailleurs
$\mathrm{prof}(B\otimes_A F)=\dim(B\otimes_A F)=\dim (B)-\dim (A)$,
de sorte que finalement 
\[\mathrm{prof}(B\otimes_A M)=0\iff(\mathrm{prof}\;M=0\;\;\text{et}\;\;\dim B=\dim A).\]

\subsection{Dimension centrale}\label{ss-dimcent}
Soit $X$ un espace $k$-analytique et soit $x$ un point de $X$.

\subsubsection{}\label{sss-defdimcent}
Rappelons (\cite{ducros2018}, déf.  3.2.2)
que la \emph{dimension centrale}
du germe $(X,x)$ est le minimum des dimensions de
$\adhp x V$ où $V$ parcourt l'ensemble des \emph{voisinages}
analytiques de $x$ dans $X$ ; nous la noterons $\dimc Xx$. 

Nous allons avoir besoin dans ce qui suit d'une « G-version » de cette notion : 
nous définirons $\dimcg Xx$ comme le  minimum des dimensions de
$\adhp x V$ où $V$ parcourt l'ensemble des domaines analytiques de $X$ contenant $x$.

L'application $V\mapsto \dim \adhp x V$ 
est une fonction croissante du domaine analytique $V$ contenant $x$. Dans les définitions
de $\dimc Xx$ (resp. $\dimcg Xx$), on peut donc se contenter de faire parcourir à $V$ une base de voisinages
analytiques de $x$ (resp. une famille cofinale de domaines analytiques de $X$ contenant $x$) : dans le premier cas, on pourra
ainsi se
limiter aux ouverts, ou aux voisinages analytiques compacts, ou aux voisinages affinoïdes si $X$ est bon ; dans le second, on pourra se limiter
aux domaines affinoïdes.

\subsubsection{}\label{dimcent-dx}
On a les inégalités 
\[d_k(x)\leq \dimcg Xx\leq \dimc Xx\leq \dim \adhp x X.\] Si $d_k(x)=\dim \adhp x X$ on a donc
\[d_k(x)=\dimcg Xx= \dimc Xx= \dim \adhp x X.\]

\subsubsection{}\label{dimcent-fermzar}
Si $V$ est un voisinage analytique de $x$ on a $\dimc Vx=\dimc Xx$ ; si $V$ est un domaine analytique
de $X$
contenant $x$ on a
$\dimcg Vx=\dimcg Xx$ et $\dimc Vx\leq \dimc Xx$, avec égalité si $\dimcg Xx=\dimc Xx$
car
on a alors $\dimc Xx=\dimcg Xx=\dimcg Vx\leq \dimc Vx$. 

Si $Y$ est un fermé de Zariski de $X$ contenant $x$
on a $\dimc Yx=\dimc Xx$ et $\dimcg Yx=\dimcg Xx$.
La première égalité est évidente ; la seconde vient du fait
que si $W$ est un domaine analytique de $Y$ contenant $x$,
 il existe en vertu
du
lemme \ref{lem-gerr-grau}
(à appliquer ici avec $\Gamma=\R_{>0}$) 
un domaine analytique $V$ de $X$ contenant $x$ tel que $V\cap Y\subset W$.

\subsubsection{}\label{dimcent-dimkrull} Si $X$ est affinoïde, on a 
$\dim \mathscr O_{X,x}=\dim \mathscr O_{X\al, x\al}$ si et seulement si 
$\dimc Xx=\dim \adhp x X$ (\cite{ducros2018}, lemme 3.2.8) ; et si $X$ est bon, 
on a l'égalité $\dim \mathscr O_{X,x}+\dimc Xx=\dim_x X$ (\emph{ibid.}, cor. 3.2.9).

\subsection{}
Soit $X$ un espace
$k$-affinoïde, soit $V$ un domaine 
affinoïde de $X$ et soit $\mathscr F$ un faisceau cohérent sur $X$. 

\subsubsection{}\label{para-diag-prof}
Soit $x$ un point de $V$ appartenant
au support de $\mathscr F$. Dans le diagramme commutatif
\[\xymatrix{
{\spec \mathscr O_{V,x}}\ar[r]\ar[d]\ar[rd]&{\spec \mathscr O_{X,x}}\ar[d]\\
{\spec \mathscr O_{V\al,x_V\al}}\ar[r]&{\spec \mathscr O_{X\al,x\al}}
}\]
toutes les flèches sont régulières (\cite{ducros2009}, Th. 3.3), et en particulier plates et à fibres de Cohen-Macaulay. 

\subsubsection{}\label{sss-equiv1-prof0}
En vertu de \ref{sss-transfert-prof0part}, il s'ensuit que 
$\mathscr F_x$ est de profondeur nulle si et seulement si 
$\mathscr F\al_{x\al}$ est de profondeur nulle et $\dim \mathscr F\al_{x\al}=\dim \mathscr F_x$.
D'après \ref{dimcent-fermzar}
et \ref{dimcent-dimkrull},
cela revient à demander
que 
$\mathscr F\al_{x\al}$ soit de profondeur nulle
et que $\dimc Xx=\dim \adhp x X$. 

\subsubsection{}\label{sss-equiv2-prof0}
De même, $\mathscr F_{V,x}$ est de profondeur nulle si et seulement 
si $\mathscr F_x$ est de profondeur nulle et
$\dim \mathscr F_{V,x}=\dim \mathscr F_x$. D'après
\ref{dimcent-fermzar}
et \ref{dimcent-dimkrull},
cela revient à demander que 
$\mathscr F_x$ soit de profondeur nulle et que 
$\dimc Vx=\dimc Xx=\dim \adhp xX$ (la dernière égalité provient de \ref{sss-equiv1-prof0}).

\subsection{}\label{defi-ass-schema}
Soit $X$ un schéma noethérien et soit 
$\mathscr F$ un faisceau cohérent sur $X$.
Nous suivrons la terminologie de
Raynaud et Gruson en appelant \emph{assassin}
de $\mathscr F$
l'ensemble des points $x$ de $\supp F$
tels que le $\mathscr O_{X,x}$-module
$\mathscr F_x$ soit de profondeur nulle, ce qui revient à demander
que l'idéal maximal $\mathfrak m_x$
de $\mathscr O_{X,x}$ soit l'annulateur d'un élément de $\mathscr F_x$, ou encore que tout élément de $\mathfrak  m_x$ soit un diviseur de zéro du module $\mathscr F_x$ ; 
l'assassin de $\mathscr F$ sera noté
$\mathrm{Ass}(\mathscr F)$ ; c'est un ensemble fini
qui contient l'ensemble des points maximaux de $\supp F$. 
L'ensemble $\mathrm{Ass}(\mathscr O_X)$ sera
également appelé l'assassin de $X$ et noté $\mathrm{Ass}(X)$.

Une \emph{composante assassine}
de $\mathscr F$ (ou de $X$ si $\mathscr F=\mathscr O_X$)
est un fermé irréductible de $X$ dont le point générique  appartient à 
$\mathrm{Ass}(\mathscr F)$. Toute composante irréductible de $\supp F$ est une composante assassine de $\mathscr F$. Les autres composantes assassines de $\mathscr F$ (ou de $X$ si $\mathscr F=\mathscr O_X$)
sont traditionnellement appelées ses composantes \emph{immergées}.

\begin{rema}
Notre définition de l'assassin est
spécifique au cas noethérien, le seul dont nous aurons besoin ici. Sur un schéma quelconque, la bonne définition est celle de Raynaud et Gruson (\cite{raynaud-g1971}, définition 3.2.1). 
\end{rema}

Le but de ce qui suit est de développer une théorie des composantes assassines et immergées en géométrie analytique. 

\begin{defi}\label{defi-ass}
Soit $X$ un espace $k$-analytique et soit $\mathscr F$ un faisceau cohérent sur $X$. On a appelle \emph{assassin}
de $\mathscr F$, et l'on note $\mathrm{Ass}(\mathscr F)$, l'ensemble des points $x$ de $\supp F$ tel que 
que le $\mathscr O_{V,x}$-module $\mathscr F_{V,x}$ soit de profondeur nulle pour tout bon domaine analytique $V$
de $X$ contenant $x$ (il suffit que ce soit le cas pour tout domaine affinoïde contenant $x$). 
L'ensemble $\mathrm{Ass}(\mathscr O_X)$ sera souvent noté plus simplement $\mathrm{Ass}(X)$ et appelé l'assassin de $X$.

Un fermé de Zariski $Y$ de $X$ sera appelé une \emph{composante assassine}
de $\mathscr F$ (ou de $X$ si $\mathscr F=\mathscr O_X$)
s'il est de la forme $\adhp xX$ pour un certain $x\in \mathrm{Ass}(X)$ (remarquons qu'une telle
composante est toujours contenue dans $\supp F$).
\end{defi}

\subsection{}\label{assx-dom}
Soit $X$ un espace
$k$-analytique et soit $\mathscr F$ un faisceau cohérent sur $X$. Soit
$V$ un domaine analytique de $X$. Il découle immédiatement de la définition que
l'intersection $\mathrm{Ass}(\mathscr F)\cap V$
est contenue dans $\mathrm{Ass}(\mathscr F_V)$, mais on a en fait égalité. 
En effet, soit $x\in \mathrm{Ass}(\mathscr F_V)$ et soit $U$ un domaine affinoïde de $X$ contenant $x$.
Soit $W$ un domaine affinoïde de $U\cap V$ contenant $x$. Comme 
$x$ appartient à $\mathrm{Ass}(\mathscr F_V)$, le module $\mathscr F_{W,x}$ est de profondeur nulle, 
et il découle alors de \ref{sss-equiv2-prof0}
que $\mathscr F_{U,x}$ est aussi de profondeur nulle. 
Ainsi, $x\in \mathrm {Ass}(\mathscr F)$.

\begin{prop}\label{pro-compass-aff}
Soit $X$ un espace $k$-affinoïde
et soit $\mathscr F$ un faisceau cohérent sur $X$. 

\begin{enumerate}[1]

\item Soit $x\in \supp F$. Les assertions suivantes sont équivalentes. 

\begin{enumerate}[j]
\item Le point $x$ appartient à $\mathrm{Ass}(\mathscr F)$. 
\item La profondeur de $\mathscr F_x$ est nulle, et $\dimcg Xx=\dim \adhp xX$. 
\item La profondeur de $\mathscr F_{x\al}\al$ est nulle, et $\dimcg Xx=\dim \adhp xX$. 
\end{enumerate}

\item Soit $Z$ un fermé de Zariski de $X$. 
Le fermé $Z$ est une composante assassine de $\mathscr F$ si et seulement si $Z\al$ est une
composante assassine de $\mathscr F\al$. 

\item Soit $Z$ une composante assassine de $\mathscr F$ et soit $z\in Z$. Les assertions suivantes sont équivalentes : 
\begin{enumerate}[B]
\item $z\in \mathrm{Ass}(\mathscr F)$ et $\adhp zX=Z$ ; 
\item $\dimcg Xz=\dim Z$. 
\end{enumerate}

\item Soit $V$ un domaine affinoïde de $X$. 
\begin{enumerate}[b]

\item Soit $Z$ une composante assassine de $\mathscr F$. Toute composante
irréductible de $Z\cap V$ est une composante assassine de $\mathscr F_V$. 
\item Soit $T$ une composante assassine de $\mathscr F_V$. L'adhérence $\adhz TX$ est une composante
assassine de $\mathscr F$ de même dimension que $T$, et $T$ est une composante irréductible
de $\adhz TX\cap V$. 
\end{enumerate}
\end{enumerate}
\end{prop}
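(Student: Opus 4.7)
My plan is to prove the four parts in the order stated, with (2) and later relying on (1), and (4) invoking the point-construction technique used in (2). The key tools throughout will be the pair of GAGA-type depth comparisons \ref{sss-equiv1-prof0} and \ref{sss-equiv2-prof0}, supplemented by the dimension chain in \ref{dimcent-dx} and Berkovich's formula $\dim = \sup d_k$ recalled earlier.

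For (1), I would unwind the definition of $\mathrm{Ass}(\mathscr F)$: by Definition \ref{defi-ass}, $x$ lies in it iff $\mathscr F_{V,x}$ has zero depth for every affinoid $V\ni x$. By \ref{sss-equiv2-prof0}, each such condition is equivalent to the conjunction of $\mathrm{prof}(\mathscr F_x) = 0$ with $\dimc Vx = \dim \adhp xX$. Since affinoid domains are cofinal among analytic domains containing $x$, quantifying the dimension condition over all such $V$ amounts exactly to $\dimcg Xx = \dim \adhp xX$, giving (i) $\Leftrightarrow$ (ii). For (ii) $\Leftrightarrow$ (iii), one applies \ref{sss-equiv1-prof0}, noting that the condition $\dimcg Xx = \dim \adhp xX$ automatically forces $\dimc Xx = \dim \adhp xX$ via the chain in \ref{dimcent-dx}.

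For (2), the forward direction follows at once from (1)(iii) together with GAGA-compatibility of analytification with Zariski closures, so that $x\al$ is the generic point of $Z\al$. The reverse direction requires producing, for a composante assassine $Z\al$ of $\mathscr F\al$ with generic point $\xi$, an analytic point over $\xi$ that is ``maximally generic''. I would apply Berkovich's formula to the closed subspace $\mathscr M(A/\xi)\subseteq X$ (of dimension $\dim Z$) to obtain $y\in\mathscr M(A/\xi)$ with $d_k(y)=\dim Z$; the image $x\in X$ then satisfies $x\al=\xi$ (since $y\al$ is the generic point of $\spec(A/\xi)$) and $d_k(x)=\dim Z$, whence $\dimcg Xx = \dim\adhp xX = \dim Z$ by \ref{dimcent-dx}, so (1)(iii) yields $x\in\mathrm{Ass}(\mathscr F)$ with $\adhp xX=Z$. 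For (3), (A) $\Rightarrow$ (B) is immediate from (1); conversely, the inequalities $d_k(z)\leq\dimcg Xz\leq\dim\adhp zX$ combined with $\adhp zX\subseteq Z$ and $\dimcg Xz=\dim Z$ force $\dim\adhp zX=\dim Z$, hence $\adhp zX=Z$ by noetherian irreducibility of $X\al$; writing $Z=\adhp yX$ for some $y\in\mathrm{Ass}(\mathscr F)$, one gets $z\al=y\al=\xi$, so $\mathscr F\al_{z\al}$ has zero depth and (1)(iii) concludes.

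For (4), the essential step is (a). Let $T$ be an irreducible component of $Z\cap V$; by GAGA for irreducible components, it corresponds to a prime $\mathfrak p\subseteq A_V$ minimal over $\xi A_V$, where $\xi$ defines $Z\al$. Flatness of $A\to A_V$ yields $\mathfrak p\cap A=\xi$, and minimality of $\mathfrak p$ over $\xi A_V$ makes the fiber local ring $\mathscr O_{V\al,\mathfrak p}/\xi\mathscr O_{V\al,\mathfrak p}$ zero-dimensional, hence of zero depth. Choosing $t\in V$ with $t_V\al=\mathfrak p$ and $d_k(t)=\dim T$ by the method of (2), paragraph \ref{sss-transfert-prof0gen} applied to $N=\mathscr O_{V\al,\mathfrak p}$ (flat over $A=\mathscr O_{X\al,\xi}$) and $M=\mathscr F\al_\xi$ yields $\mathrm{prof}((\mathscr F_V)\al_{t_V\al})=0$, and (1)(iii) produces $t\in\mathrm{Ass}(\mathscr F_V)$, so $T$ is a composante assassine of $\mathscr F_V$. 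For (b), starting from $t\in\mathrm{Ass}(\mathscr F_V)$ with $T=\adhp tV$ and $Z:=\adhz TX=\adhp tX$, \ref{sss-equiv2-prof0} applied with ambient $X$ and subdomain $V$ gives $\dim T=\dim Z$ and $\mathscr F\al_{t\al}$ of zero depth; then \ref{dimcent-fermzar} yields $\dimcg Xt=\dimcg Vt=\dim Z$, so (1)(iii) promotes $t$ to $\mathrm{Ass}(\mathscr F)$ and $Z$ to a composante assassine of $\mathscr F$. Finally, any component $T'$ of $Z\cap V$ containing $T$ must satisfy $\dim T\leq\dim T'\leq\dim Z=\dim T$, forcing $T'=T$. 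The chief technical hurdle is the schematic-to-analytic bridge in (4)(a): once flatness and minimality of $\mathfrak p$ unlock the zero-dimensionality of the fiber, everything else reduces to bookkeeping with the dimension chain.
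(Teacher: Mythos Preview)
Your proof is correct. Parts (1)--(3) follow essentially the same path as the paper: the depth comparisons \ref{sss-equiv1-prof0} and \ref{sss-equiv2-prof0} together with the dimension chain \ref{dimcent-dx} are exactly the tools the paper invokes, and your construction of a point with prescribed $d_k$ in (2) is the paper's argument as well.

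The genuine divergence is in (4). For (4a), the paper observes that any irreducible component $T$ of $Z\cap V$ satisfies $\dim T=\dim Z$ (because $Z$ is irreducible, hence equidimensional, hence so is $Z\cap V$), picks $t\in T$ with $d_k(t)=\dim T=\dim Z$, and then simply \emph{applies the already-proved part (3)} to conclude $t\in\mathrm{Ass}(\mathscr F)$, whence $t\in\mathrm{Ass}(\mathscr F_V)$. Your route instead goes through the schematic picture: you show directly that $\mathfrak p\in\mathrm{Ass}((\mathscr F_V)\al)$ via going-down and the depth-transfer formula \ref{sss-transfert-prof0gen}, exploiting that the fiber of $\mathscr O_{V\al,\mathfrak p}$ over $\xi$ is artinian. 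This works, but it reproves by hand the content that (3) already packages. Similarly for (4b): the paper invokes the equality $\mathrm{Ass}(\mathscr F)\cap V=\mathrm{Ass}(\mathscr F_V)$ from \ref{assx-dom} to get $t\in\mathrm{Ass}(\mathscr F)$ immediately, then reads off the dimensions from (3); you instead unwind \ref{sss-equiv2-prof0} once more to reach the same conclusion. Both approaches are valid; the paper's is shorter because it leans on (3) and \ref{assx-dom}, while yours is more self-contained for (4) at the cost of redoing the depth transfer.
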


\begin{proof}
Prouvons (1). Si (i) est vraie alors par définition, $\mathscr F_{V,x}$ est de profondeur nulle
pour tout domaine affinoïde $V$ de $X$ contenant $x$. On en déduit que $\mathscr F_x$ est de profondeur nulle 
et, grâce à 
\ref{sss-equiv2-prof0}, que 
\[\dimc Vx=\dimc Xx=\dim \adhp xX\] pour tout domaine affinoïde $V$ de $X$
contenant $x$, ce qui entraîne que 
que $\dimcg Xx=\dim \adhp xX$ ; ainsi (ii) est vraie. 
Si (ii) est vraie, alors (iii) est vraie d'après \ref{sss-equiv1-prof0}. Enfin, supposons que (iii)
soit vraie. Soit $V$ un domaine affinoïde de $X$ contenant $x$.
L'hypothèse (iii) entraîne 
que $\dimc Vx=\dimc Xx=\dim \adhp xX$. 
La seconde égalité et le fait que $\mathscr F\al_{x\al}$ soit de profondeur nulle
entraînent que $\mathscr F_x$ est de profondeur nulle (\ref{sss-equiv1-prof0}) ; et ceci implique au vu
de la première égalité que $\mathscr F_{V,x}$ est de profondeur nulle (\ref{sss-equiv2-prof0}), 
d'où (i). 

Prouvons (2). Supposons que $Z$ soit une composante assassine de $\mathscr F$. Il existe alors
un point $x\in \mathrm{Ass}(\mathscr F)$ tel que $Z=\adhp xX$. Le point $x\al$ est le point
générique de $Z\al$. Comme $x\in \mathrm{Ass}(\mathscr F)$, il résulte de (1)
que $\mathscr F\al_{x\al}$ est de profondeur nulle ; par conséquent, $Z\al$ est une composante
assassine de $\mathscr F\al$.
Réciproquement, supposons que $Z\al$ soit une composante assassine de $\mathscr F\al$. 
Dans ce cas, $Z$ est irréductible
et contenu dans le support de $\mathscr F$ ; soit $n$ sa dimension. Choisissons un point $x$ sur $Z$
tel que $d_k(x)=n$. On a alors $\adhp xX=Z$, et $x\al$ est le point générique de $Z\al$.
Comme $Z\al$ est une composante assassine de $\mathscr F\al$, la profondeur
de $\mathscr F\al_{x\al}$ est nulle ; et l'égalité $d_k(x)=\dim \adhp xX$
assure par ailleurs que $\dimcg Xx=\dim \adhp xX$ (\ref{dimcent-dx}). On déduit alors de (1)
que $x\in \mathrm{Ass}(\mathscr F)$ ; par conséquent, 
$Z=\adhp xX$ est une composante assassine de $\mathscr F$.

Montrons (3).
L'implication (A)$\Rightarrow$(B)
est une conséquence immédiate de (1). 
Supposons maintenant que (B) est vérifié. On a
alors clairement $\adhp zX=Z$, et $z\al$ est donc le point
générique de $Z\al$. Puisque $Z$ est une composante assassine de $\mathscr F$, 
le fermé $Z\al$ est une composante assassine de $\mathscr F\al$ d'après (2), et $\mathscr F\al_{z\al}$ est donc
de profondeur nulle. Joint à l'égalité $\dimcg Xz=\dim Z=\dim \adhp z X$, ceci entraîne en vertu de (1)
que $z\in \mathrm{Ass}(\mathscr F)$.

Montrons (4a). Soit $T$ une composante irréductible de $Z\cap V$. 
Soit $t$ un point de $T$ tel que $d_k(t)=\dim T=\dim Z$. On a alors 
$\adhp t V=T$ et $\dimcg Xt=\dim Z$. En vertu de (3), cette dernière
égalité implique que $t$ appartient à $\mathrm{Ass}(\mathscr F)$ et donc
à $\mathrm{Ass}(\mathscr F_V)$. Par conséquent, $T=\adhp tV$ est une composante
assassine de $\mathscr F_V$. 

Montrons (4b). Choisissons un point $t$ de $\mathrm{Ass}(\mathscr F_V)$ tel que
$\adhp tV=T$ ; posons $Z=\adhz TX=\adhp tX$. Le point $t$ appartenant à $\mathrm{Ass}(\mathscr F_V)$ appartient également à $\mathrm{Ass}(\mathscr F)$ 
(\ref{assx-dom}),
et $Z$ est donc une composante assassine de $X$. 
En appliquant (3) à $T$ et $Z$ on voit que 
 $\dim T=\dimcg Vt=\dimcg Xt=\dim Z$.
 Pour des raisons de dimension, $T$ est nécessairement une composante irréductible
 de $\adhz TX\cap V$. 
 \end{proof}

La proposition suivante étend au cas d'un espace $X$ quelconque une partie des résultats de la proposition précédente. 
On peut la voir comme une généralisation aux composantes
assassines des résultats
sur les composantes irréductibles rappelés au \ref{compirr-domaines}.

\begin{prop}\label{prop-equiv-ass}
Soit $X$ un espace $k$-analytique et soit $\mathscr F$ un faisceau cohérent sur $X$. 

\begin{enumerate}[1]

\item Soit $Z$ une composante assassine de $\mathscr F$ et soit $x\in Z$. Les assertions
suivantes sont équivalentes : 
\begin{enumerate}[j]
\item $x$ appartient à $\mathrm{Ass}(\mathscr F)$ et $\adhp xX=Z$ ; 
\item $\dimcg Xx=\dim Z$. 
\end{enumerate}

\item Soit $V$ un domaine analytique de $X$. 
\begin{enumerate}[b]
\item Soit $Z$ une composante assassine de $\mathscr F$. Toute composante irréductible de
$Z\cap V$ est une composante assassine de $\mathscr F_V$.
\item Soit $T$ une composante assassine
de $\mathscr F_V$. L'adhérence $\adhz TX$ est une 
composante assassine de $\mathscr F$
de même dimension que $T$, et $T$ est une composante irréductible
de $\adhz TX \cap V$. 
\end{enumerate}

\item Soit $V$ un domaine analytique compact de
$X$. Le domaine $V$ ne rencontre qu'un nombre fini de composantes
assassines de $\mathscr F$. 
\end{enumerate}

\end{prop}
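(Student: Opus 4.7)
La stratégie est de se ramener, pour chacune des trois assertions, au cas affinoïde traité dans la proposition \ref{pro-compass-aff}, en s'appuyant sur deux outils G-locaux déjà établis : l'invariance $\dimcg{V}{x}=\dimcg{X}{x}$ pour $V$ domaine analytique de $X$ contenant $x$ (\ref{dimcent-fermzar}), et l'égalité $\mathrm{Ass}(\mathscr F_V)=\mathrm{Ass}(\mathscr F)\cap V$ (\ref{assx-dom}). Le rappel \ref{compirr-domaines}, appliqué à l'espace irréductible $Z$ et à son domaine analytique $Z\cap V$, sera aussi crucial : les composantes irréductibles de $Z\cap V$ sont toutes de dimension $\dim Z$ et d'adhérence de Zariski dans $X$ égale à $Z$ ; de plus, comme tout fermé de Zariski strict de $Z$ est de dimension strictement plus petite que $\dim Z$, un fermé irréductible contenu dans l'une de ces composantes $T_i$ et de dimension $\dim T_i$ lui est nécessairement égal.

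Le point réellement délicat est l'assertion (1), dont la preuve passe par un domaine affinoïde $V$ de $X$ contenant $x$. Dans le sens direct, si $x\in \mathrm{Ass}(\mathscr F)$ et $\adhp{x}{X}=Z$, alors $x\in \mathrm{Ass}(\mathscr F_V)$ par \ref{assx-dom}, et $\adhp{x}{V}$ est une composante assassine de $\mathscr F_V$ ; par \ref{pro-compass-aff}~(4b), c'est une composante irréductible de $Z\cap V$, donc de dimension $\dim Z$, et la proposition \ref{pro-compass-aff}~(3) jointe à \ref{dimcent-fermzar} fournit alors $\dimcg{X}{x}=\dim Z$. Réciproquement, l'hypothèse $\dimcg{V}{x}=\dim Z$ donne $\dim \adhp{x}{V}\geq \dim Z$ ; comme $\adhp{x}{V}$ est irréductible et contenu dans l'une des composantes irréductibles $T_i$ de $Z\cap V$ (toutes de dimension $\dim Z$), la remarque du paragraphe précédent force $\adhp{x}{V}=T_i$. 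On conclut via \ref{pro-compass-aff}~(4a) et (3) que $x\in \mathrm{Ass}(\mathscr F_V)=\mathrm{Ass}(\mathscr F)\cap V$ et que $\adhp{x}{X}=\adhz{T_i}{X}=Z$.

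Les assertions (2) et (3) se déduisent alors assez mécaniquement de (1) et du cas affinoïde. Pour (2a), on choisit dans une composante irréductible $T$ de $Z\cap V$ un point $t$ tel que $d_k(t)=\dim T$ ; on a alors $\adhp{t}{V}=T$, $\adhp{t}{X}=Z$ et $\dimcg{X}{t}=\dim Z$ via l'encadrement $d_k(t)\leq \dimcg{X}{t}\leq \dim \adhp{t}{X}$, de sorte que (1) livre $t\in \mathrm{Ass}(\mathscr F)\cap V=\mathrm{Ass}(\mathscr F_V)$. Pour (2b), on part d'un $t\in \mathrm{Ass}(\mathscr F_V)$ d'adhérence de Zariski $T$ dans $V$ ; alors $t\in \mathrm{Ass}(\mathscr F)$ par \ref{assx-dom} et $Z:=\adhp{t}{X}=\adhz{T}{X}$ est une composante assassine de $\mathscr F$, et en appliquant (1) au couple $(Z,t)$ et en utilisant $\dimcg{V}{t}=\dimcg{X}{t}$ on trouve $\dim T=\dim Z$, puis $T$ est une composante irréductible de $Z\cap V$ par stricte décroissance de la dimension. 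Pour (3) enfin, on recouvre $V$ par un nombre fini de domaines affinoïdes $V_j$ ; chaque $\mathrm{Ass}(\mathscr F_{V_j})$ est fini (conséquence de \ref{pro-compass-aff}~(2) et de la finitude des points associés en théorie des schémas noethériens), et toute composante assassine $Z$ de $\mathscr F$ rencontrant $V$ rencontre un $V_j$ ; par (2a)-(2b) elle est l'adhérence de Zariski dans $X$ d'une composante assassine de $\mathscr F_{V_j}$, ce qui fournit la finitude voulue.
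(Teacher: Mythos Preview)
Your proofs of (2) and (3), assuming (1), are correct and essentially match the paper's approach. The gap is in your proof of (1): you invoke \ref{pro-compass-aff}~(4a) and (4b), but that proposition has as standing hypothesis that the ambient space is $k$-affinoïde. Here $X$ is an arbitrary $k$-analytic space and only $V$ is affinoid, so those citations do not apply; what you would actually need is precisely assertion (2) of the present proposition, which you have not yet established and which in your layout depends on (1).

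Concretely, in (i)$\Rightarrow$(ii) you claim that the assassine component $\adhp xV$ of $\mathscr F_V$ is an irreducible component of $Z\cap V$; this amounts to knowing $\dim\adhp xV=\dim Z$, and the only easy inequality is $\dim\adhp xV\leq\dim Z$ (via $\adhp xV\subset Z\cap V$ and \ref{compirr-domaines}). The reverse inequality $\dim Z\leq\dim\adhp xV$ needs a genuine global argument. The paper obtains it by gluing: with $n:=\dim\adhp xV$, one checks via \ref{pro-compass-aff}~(4) --- applied now legitimately to pairs $W\subset U$ of affinoid domains of $X$ --- that $U\mapsto U_n:=\{\text{union of the $n$-dimensional assassine components of }\mathscr F_U\}$ is compatible with restriction, hence glues to a global Zariski closed $Y\subset X$ purely of dimension $n$. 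Since $x\in Y$ and $Y$ is Zariski closed, $Z=\adhp xX\subset Y$, whence $\dim Z\leq n$. The same gluing, run from a point $z\in\mathrm{Ass}(\mathscr F)$ with $\adhp zX=Z$, shows for (ii)$\Rightarrow$(i) that for every affinoid $U$ the intersection $Z\cap U$ is a union of $n$-dimensional assassine components of $\mathscr F_U$ --- exactly the input your appeal to \ref{pro-compass-aff}~(4a) was meant to provide.
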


\begin{proof}
Montrons d'abord (1). Supposons que (i) est vérifiée, et soit $V$ un domaine
affinoïde de $X$ contenant $x$ ; posons $T=\adhp xV$. Puisque $x$ appartient
à $\mathrm{Ass}(\mathscr F)$, il appartient à $\mathrm{Ass}(\mathscr F_V)$ ; par conséquent 
$T$ est une composante assassine de $\mathscr F_V$ ; soit $n$ sa dimension.
Pour tout domaine affinoïde $U$ de $X$, notons $U_n$ la réunion des composantes assassines
de $\mathscr F_U$ de dimension $n$. Si $U$ et $W$ sont deux domaines affinoïdes de $X$
tels que $W\subset U$, il découle de l'assertion (4) de la proposition \ref{pro-compass-aff}
que $U_n\cap W=W_n$ ; si l'on pose $Y=\bigcup_U U_n$, où $U$ parcourt l'ensemble des domaines
affinoïdes de $U$, on a donc $Y\cap U=U_n$ pour tout tel $U$. Par conséquent, $Y$ est un fermé de Zariski de $X$, 
purement de dimension $n$ ; pour tout domaine affinoïde $U$ de $X$, les composantes irréductibles de $Y\cap U$
sont exactement les composantes assassines de $\mathscr F_U$
de dimension $n$.
Puisque $Y$ contient $x$, il contient $Z$ qui est donc de dimension au plus $n$. Mais comme
$Z$ contient $T$ qui est de dimension $n$, on a $\dim Z=n$
(et $Z$ est donc une composante irréductible de $Y$).
Et comme $x$ appartient 
à $\mathrm{Ass}(\mathscr F_V)$, l'assertion (3) de la proposition \ref{pro-compass-aff}
assure que $\dimcg Vx=\dim T=n$ ; il vient
\[\dimcg Xx=\dimcg Vx=n=\dim Z,\]
d'où (ii). 

Supposons réciproquement que (ii) est vérifiée, et notons
$n$ la dimension de $Z$. Puisque $Z$ est égale par
définition à $\adhp zX$ pour un certain $z\in \mathrm{Ass}(\mathscr F)$, le raisonnement 
suivi ci-dessus (avec $z$ au lieu de $x$) montre que pour tout domaine affinoïde $U$ de $X$, l'intersection
$U\cap Z$ est une union finie de composantes assassines de $\mathscr F_U$ de dimension $n$. 
Choisissons un domaine affinoïde $U$ de $X$ qui contient $x$. Par ce qui précède, $x$ est situé
sur une composante assassine $T$ de $\mathscr F_U$ qui est de dimension $n$. 
On a $\dimcg Ux=\dimcg Xx=n=\dim T$ ; d'après l'assertion (3) de la proposition \ref{pro-compass-aff}, 
le point $x$ appartient à $\mathrm{Ass}(\mathscr F_U)$, et partant à $\mathrm{Ass}(\mathscr F)$
(\ref{assx-dom}). Par ailleurs l'égalité $\dimcg Xx=\dim Z$ entraîne que $\adhp xX=Z$, 
d'où (i). 

L'assertion (2) se démontre alors \emph{mutatis mutandis}
comme l'assertion (4) de la proposition \ref{pro-compass-aff}
(qui en est un cas particulier, mais nous nous en sommes servi ci-dessus pour démontrer (1) ; c'est 
la raison pour laquelle nous l'avions établie
au préalable). 

Montrons enfin (3). On peut supposer que $V$ est affinoïde. Soit $Z$ une composante
assassine de $\mathscr F$ rencontrant $V$ et soit $T$ une composante irréductible de $Z\cap V$. 
On a alors $Z=\adhz TX$, et l'assertion (2) déjà établie
assure par ailleurs que $T$ est une composante
assassine de $\mathscr F_V$. On conclut en remarquant que $\mathscr F_V$ n'a qu'un 
nombre fini de composantes assassines, en vertu de l'assertion (2)
de la proposition \ref{pro-compass-aff}
et du fait que l'assassin d'un faisceau cohérent sur un schéma noethérien est fini. 
\end{proof}

\begin{rema}\label{compass-autre-descrip}
Soit $X$ un espace
$k$-analytique, soit $\mathscr F$ un faisceau cohérent sur $X$.
Si $V$ est un domaine analytique de $X$ et si $T$ est une composante
assassine de $\mathscr F_V$, la proposition \ref{prop-equiv-ass}
assure que $\adhz TX$ est une composante assassine 
de $\mathscr F$. 

Réciproquement, soit $Z$ une composante assassine 
de $\mathscr F$. Elle est non vide
par définition, donc il existe un domaine affinoïde $V$ de $X$ rencontrant $Z$. 
Choisissons une composante irréductible $T$ de $Z\cap V$. On a alors
$Z=\adhz TX$, et la proposition \ref{prop-equiv-ass}
ci-dessus assure par ailleurs que $T$ est une composante
assassine de $\mathscr F_V$.

Ainsi, une partie $Z$ de $X$ est une composante assassine
de $\mathscr F$ si et seulement si elle est de la forme
$\adhz TX$ où $T$ est une composante assassine de $\mathscr F_V$
pour un certain domaine affinoïde $V$ de $X$. (Lorsque $X$ est $\Gamma$-strict, le domaine affinoïde $V$ peut être choisi $\Gamma$-strict, puisqu'il suffit par ce qu'il précède qu'il rencontre $Z$.)

Et rappelons que si $V$ est un domaine affinoïde de $X$, un fermé de Zariski
$T$ de $V$ est une composante assassine de $\mathscr F_V$ si et seulement si
$T\al$ est une composante assassine de $\mathscr F\al_V$ (prop. \ref{pro-compass-aff}). 

Ceci permettra assez souvent de ramener des questions portant sur les composantes 
assassines en géométrie analytique à des problèmes analogues en géométrie algébrique. Nous allons
en voir ci-dessous plusieurs exemples. 

\end{rema}

\begin{exem}\label{ex-irr-ass}
Soit $X$ un espace
$k$-analytique, soit $\mathscr F$ un faisceau cohérent sur $X$ et soit  $Z$
une composante irréductible du support de $\mathscr F$. Choisissons un domaine affinoïde
$V$
de $X$ qui rencontre $Z$, et soit $T$ une composante irréductible de $Z\cap V$. 
Par construction, $T\al$ est une composante irréductible de $\mathrm{Supp}(\mathscr F_V\al)$, 
et c'est donc une composante assassine de $\mathscr F_V\al$ ; par conséquent, $T$ est une composante
assassine de $\mathscr F_V$ (prop. \ref{pro-compass-aff}) et $Z=\adhz TX$ est une composante
assassine de $\mathscr F$ (prop. \ref{prop-equiv-ass}).

On appelle \emph{composante immergée}
de $\mathscr F$ (ou de $X$ si $\mathscr F=\mathscr O_X$) toute composante assassine de $\mathscr F$ qui n'est pas
une composante irréductible de son support. 
\end{exem}

\begin{lemm}\label{fonction-annulation}
Soit $X$ un espace 
$k$-analytique, 
soit $U$ un domaine analytique de $X$ rencontrant toutes les composantes
assassines de $X$ et soit $Z$ un sous-espace analytique fermé de $X$ tel que
$Z\cap U=U$ ; on a alors $Z=X$. \end{lemm}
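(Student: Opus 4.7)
\emph{Esquisse d'une preuve possible.} Soit $\mathscr I$ le faisceau cohérent d'idéaux de $\mathscr O_X$ définissant $Z$ ; la condition $Z\cap U=U$ équivaut à $\mathscr I_U=0$, et il s'agit de démontrer que $\mathscr I=0$. Le plan est de raisonner par l'absurde, en supposant $\mathscr I\neq 0$, et de produire une composante assassine de $X$ contenue dans $\supp I$ ; puisque $\mathscr I_U=0$ implique que $U$ est disjoint de $\supp I$ (tout $x\in U$ possède un voisinage affinoïde $W\subset U$ sur lequel $\mathscr I$ est nul, d'où $x\notin\supp I$), l'existence d'une telle composante contredira l'hypothèse du lemme.

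Pour la construire, je commencerais par me ramener au cas affinoïde : si $\mathscr I\neq 0$, il existe un domaine affinoïde $V$ de $X$ tel que $\mathscr I(V)\neq 0$ dans l'algèbre noethérienne $A:=\mathscr O(V)$. Cet idéal non nul admettrait un premier associé $\mathfrak p\in \spec A=V\al$, fournissant une composante assassine $\mathrm V(\mathfrak p)$ du faisceau cohérent $\mathscr I\al$ sur le schéma $V\al$. Le principe GAGA affinoïde pour les composantes assassines (prop.~\ref{pro-compass-aff}~(2)) donnerait alors un fermé de Zariski irréductible $T$ de $V$ vérifiant $T\al=\mathrm V(\mathfrak p)$, qui serait une composante assassine de $\mathscr I_V$ sur $V$.

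La globalisation s'effectuerait via la proposition~\ref{prop-equiv-ass}~(2b), selon laquelle $Z':=\adhz TX$ est une composante assassine du faisceau $\mathscr I$ sur $X$. Il resterait alors à observer que $\mathrm{Ass}(\mathscr I)\subset\mathrm{Ass}(\mathscr O_X)$ — conséquence immédiate de la définition~\ref{defi-ass} et du fait algébrique élémentaire que tout premier associé d'un sous-module est un premier associé du module ambiant — pour en déduire que $Z'$ est effectivement une composante assassine de $X$. Enfin, $Z'$ est contenu dans $\supp I$ puisque ce dernier est un fermé de Zariski de $X$ qui contient $T$ par définition de l'assassin ; ceci produirait la contradiction recherchée.

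Je n'anticipe pas d'obstacle véritable : tous les outils sont disponibles, et la démonstration se réduit essentiellement à articuler avec soin le principe GAGA affinoïde pour les composantes assassines (prop.~\ref{pro-compass-aff}) et son avatar global (prop.~\ref{prop-equiv-ass}) afin de passer d'une composante assassine de $\mathscr I_V$ à une composante assassine de $\mathscr I$, puis de $\mathscr O_X$, sur $X$ tout entier.
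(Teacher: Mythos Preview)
Your argument is correct and the tools you invoke (prop.~\ref{pro-compass-aff} and prop.~\ref{prop-equiv-ass}) are exactly the right ones, but the route differs from the paper's. The paper does not argue by contradiction via an assassin component of $\mathscr I$: instead it sets $\Omega=X\setminus\supp I$, fixes an arbitrary affinoid $V$, and shows that every assassin component $T$ of $V$ meets $\Omega$ (because $\adhz TX$ is an assassin component of $X$ by prop.~\ref{prop-equiv-ass}, hence meets $U\subset\Omega$, and density propagates back to $T$). Translating through GAGA, $(\Omega\cap V)\al$ then meets every associated point of $V\al$, so every section of $\mathscr I_V\al$ vanishes on a schematically dense open and is therefore zero. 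Your approach is arguably cleaner: rather than checking schematic density affinoid by affinoid, you produce directly a global assassin component of $\mathscr I$ and use the elementary inclusion $\mathrm{Ass}(\mathscr I)\subset\mathrm{Ass}(\mathscr O_X)$ to contradict the hypothesis on $U$. The paper's version has the minor advantage of never needing to speak of the assassin of $\mathscr I$ itself, staying entirely with assassin components of $\mathscr O_X$ and $\mathscr O_V$.
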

\begin{proof}

Soit $\mathscr I$ le faisceau cohérent d'idéaux décrivant $Z$ ; nous allons montrer que $\mathscr I=0$.
Soit 
$\Omega$ le complémentaire de $\mathrm{Supp}(\mathscr I)$ ; par hypothèse, $\Omega$
contient $U$.

Soit $V$ un domaine affinoïde de $X$ et soit $T$ une composante
assassine de $V$. L'adhérence $\adhz TX$ est une composante
assassine de $X$ (prop. \ref{prop-equiv-ass}); elle rencontre donc $U$ par hypothèse, et \emph{a fortiori}
$\Omega$. 
L'intersection $\Omega\cap \adhz TX$ est alors un ouvert Zariski-dense de $\adhz TX$, et 
$\Omega \cap V\cap \adhz TX$ est de ce fait Zariski-dense dans $V\cap \adhz TX$ ; en particulier,
$\Omega$ rencontre $T$ (qui est une composante irréductible de $V\cap \adhz TX$). 

Compte-tenu de la caractérisation algébrique des composantes assassines
de $V$ (prop. \ref{pro-compass-aff}), il résulte de ce qui précède
que $(\Omega\cap V)\al$ rencontre toutes les composantes assassines de $V\al$ ;
par conséquent, toute section globale de $\mathscr I_V\al$ est nulle sur un ouvert schématiquement
dense de $V\al$, et est dès lors nulle. Il vient $\mathscr I_V=0$ ; comme ceci vaut quel que soit $V$, 
on a $\mathscr I=0$. 
\end{proof}

\begin{defi}\label{def-densal}
Soit $X$ un espace $k$-analytique et soit $U$ un domaine analytique de $X$. Nous dirons que $U$ est \emph{analytiquement dense
dans $X$} s'il rencontre toutes les composantes assassines de $X$.
\end{defi}

\subsection{}\label{andense-zardense}
Soit $X$ un espace $k$-analytique. Si $U$ est un domaine analytique analytiquement dense de $X$,
il est Zariski-dense dans $X$ : en effet, comme il rencontre toutes les composantes assassines de $X$, 
il rencontre en particulier toutes ses composantes irréductibles, si bien que $\adhz UX=X$.

\subsection{}\label{ss-andense-interv}
Soit $X$ un espace $k$-analytique et soit $U$ un
ouvert de Zariski de $X$.
Supposons $U$ analytiquement dense. Dans ce cas $U\cap V$ est analytiquement dense dans $V$ pour tout domaine
analytique $V$ de $X$. En effet, soit $T$ une composante assassine de $V$. Son adhérence $Z:=\adhz TX$ est une composante assassine de $X$ (prop. \ref{prop-equiv-ass}), et elle rencontre donc $U$. L'ouvert de Zariski $U\cap Z$ de $Z$ est donc dense ; son intersection avec $Z\cap V$ est par conséquent dense dans ce dernier, et elle rencontre donc $T$ qui en est une composante irréductible. 

\begin{enonce}[theorem]{Lemme-définition}\label{lem-adh-analytique}
Soit $X$ un espace
$k$-analytique
et soit $U$ un domaine
analytique de $X$.
Soit $\mathscr I$
le faisceau d'idéaux de $\mathscr O_X$
qui envoie un domaine analytique $V$ de $X$
sur l'ensemble des $f\in \mathscr O_X(V)$ telle
que $f|_{V\cap U}=0$. 

\begin{enumerate}[1]
\item Le faisceau $\mathscr I$ est  cohérent ; on note $Y$
le sous-espace analytique fermé de $X$
qu'il définit. 

\item
Les composantes
irréductibles de $Y$ sont les composantes
irréductibles de $X$ qui rencontrent $U$ et
ses composantes assassines sont les 
composantes assassines de $X$ qui rencontrent $U$. 

\item 
L'immersion $U\hookrightarrow X$ se factorise par $Y$, et $Y$
est le plus petit sous-espace analytique fermé de $X$ possédant cette propriété. 
Le morphisme induit $U\to Y$ identifie $U$ à un domaine analytique de $Y$ analytiquement dense dans ce dernier. 

\end{enumerate}

On dit que $Y$ est \emph{l'adhérence analytique}
de $U$. 
\end{enonce}

\begin{proof}
Commençons par remarquer que la conjonction de (1) et (2) entraîne (3). Supposons en effet avoir montré (1) et (2). 
Si $\mathscr J$ est un faisceau cohérent sur $X$, l'immersion $U\hookrightarrow X$ se factorise par le sous-espace
analytique fermé correspondant à $\mathscr J$ si et seulement si $\mathscr J|_U=0$, ce qui revient par définition de $\mathscr I$ à demander que $\mathscr J\subset \mathscr I$ ; par conséquent, $Y$ est bien le plus petit sous-espace analytique fermé de 
$X$ par lequel $U\hookrightarrow X$ se factorise. L'égalité $\mathscr I_U=0$ signifie que $Y\cap U=U$, si bien que $U$ s'identifie à un domaine analytique de $Y$. Et l'assertion (2) assure que toute composante assassine de $Y$ rencontre $U$ ; par conséquent, ce 
dernier est analytiquement dense dans $Y$. 

Il reste donc à montrer (1) et (2). On procède en deux étapes. 

\subsubsection{Supposons que $U$ est un ouvert de Zariski de $X$}
\label{caspart-ouvzar}
Grâce à la proposition \ref{prop-equiv-ass}
on peut montrer (1) et (2) G-localement. 
On peut donc supposer $X$ affinoïde. 
On note $Y$ le sous-espace analytique fermé de $X$ défini par $\mathscr I(X)$ et
$\mathscr J$ le faisceau cohérent associé. Par définition de $Y$, le sous-schéma fermé $Y\al$ de
$X\al$ est l'adhérence schématique de $U\al$ ; ses composantes irréductibles
sont donc les composantes irréductibles de $X\al$ rencontrant $U\al$, ses composantes assassines sont les composantes assassines de $X\al$ qui rencontrent $U\al$, et $U\al$ s'identifie à un sous-schéma ouvert de $Y\al$. 
En vertu des principes GAGA (et notamment de la proposition \ref{pro-compass-aff}),
les composantes irréductibles de $Y$ sont les composantes irréductibles de $X$ rencontrant $U$, ses composantes
assassines sont les composantes assassines de $X$ rencontrant $U$, et $U$ s'identifie (morphiquement) à un 
ouvert de Zariski de $Y$ ; puisque toute composante assassine de $Y$ rencontre $U$, ce dernier est 
analytiquement dense dans $Y$. 

Il suffit donc pour conclure de démontrer que $\mathscr I$
coïncide avec le faisceau cohérent $\mathscr J$.
L'égalité $\mathscr J(V)=\mathscr I(V)$ implique que $\mathscr J\subset \mathscr I$  ; montrons
l'inclusion réciproque. 

Soit $V$ un domaine affinoïde de $X$ et soit $f\in \mathscr I(V)$.
Par définition, $f|_{V\cap U}=0$. Comme $U$ est un ouvert de Zariski analytiquement dense de $Y$, 
l'intersection $V\cap U$ est un ouvert de Zariski analytiquement dense de $V\cap Y$ (\ref{ss-andense-interv}). 
L'égalité $f|_{V\cap U}=0$ implique alors en vertu du lemme \ref{fonction-annulation} (à appliquer
au faisceau cohérent d'idéaux de $Y\cap V$ engendré par $f|_{Y\cap V}$)
que $f|_{Y\cap V}=0$, ce qui signifie précisément
que $f$ appartient à $\mathscr J(V)$ et achève la démonstration
de (1) et (2) lorsque $U$ est un ouvert de Zariski de $X$. 

\subsubsection{Preuve de \textnormal{(1)} et \textnormal{(2)} dans le cas général}
Soit $Z$ la réunion des composantes assassines de $X$ qui ne rencontrent pas $U$ ; posons $\Omega=X\setminus Z$.
Par construction $Z\cap \Omega=\emptyset$, si bien que $U\subset \Omega$. 
Soit $T$ une composante assassine de $\Omega$. Son adhérence $\adhz TX$ est une composante assassine de $X$
(prop. \ref{prop-equiv-ass}) qui n'est pas contenue dans $Z$, et qui par conséquent rencontre $U$. 
En vertu de la proposition \ref{prop-equiv-ass}, $T$ est une composante
irréductible de $\Omega\cap \adhz TX$ ; par ailleurs ce dernier, en tant qu'ouvert de Zariski non vide de l'espace
irréductible $\adhz TX$, est lui-même irréductible (lemme \ref{lem-ouvzar-connexe}) ; il
vient $\Omega \cap \adhz TX=T$. 

Puisque $\Omega\cap \adhz TX$ est dense dans $\adhz TX$ (qui est irréductible),
$\Omega\cap \adhz TX\cap U=T\cap U$ est dense dans $ \adhz TX\cap U$. Comme ce dernier est non vide, 
$T\cap U$ est non vide. Ainsi, toute composante assassine de $\Omega$ rencontre $U$ ; autrement dit, $U$
est analytiquement dense dans $\Omega$. 

Il s'ensuit d'après le lemme \ref{lem-adh-analytique}
que pour tout domaine analytique $V$ de $X$, l'idéal $\mathscr I(V)$ de $\mathscr O_X(V)$ peut également
se décrire comme l'ensemble des $f\in \mathscr O_X(V)$ telles que $f|_{V\cap \Omega}=0$. 
Il résulte alors du cas particulier traité en \ref{caspart-ouvzar},
que
$\mathscr I$ est cohérent et
que le sous-espace analytique fermé $Y$ qu'il définit a pour composantes irréductibles
(resp. assassines) les composantes irréductibles (resp. assassines) de $X$ qui rencontrent $\Omega$. 

Mais une composante assassine de $X$ qui rencontre $\Omega$ n'est pas contenue dans $Z$, ce qui implique
par définition de $Z$ qu'elles rencontre $U$. Ainsi  $Y$ a pour composantes irréductibles
(resp. assassines) les composantes irréductibles (resp. assassines) de $X$ qui rencontrent $U$. 
\end{proof}

\begin{rema}
Soit $X$ un espace $k$-analytique et soit $U$ un domaine analytique de $X$. 
On déduit de \ref{andense-zardense} que le support de l'adhérence analytique de
$U$
dans $X$ est égal à $\adhz UX$.
\end{rema}

\begin{lemm}\label{lem-ass-s1}
Soit $X$ un espace
$k$-analytique et soit $\mathscr F$ un faisceau cohérent sur $X$. Le faisceau $\mathscr F$
est $S_1$ si et seulement s'il ne possède aucune composante immergée.
\end{lemm}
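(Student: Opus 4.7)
Le plan est de G-localiser, de descendre au cas affinoïde, puis de transporter la question sur le schéma $X\al$ via les principes GAGA, où l'énoncé deviendra un résultat classique d'algèbre commutative.

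Je commencerais par observer que la question est G-locale sur $X$. Pour la propriété $S_1$, cela résulte directement de sa définition (\ref{ss-valid-prop}). Pour les composantes immergées, j'utiliserais la proposition \ref{prop-equiv-ass} et le comportement des composantes irréductibles par restriction à un domaine analytique (\ref{compirr-domaines}) pour établir l'équivalence suivante : $\mathscr F$ possède une composante immergée si et seulement s'il existe un domaine affinoïde $V$ de $X$ tel que $\mathscr F_V$ en possède une. En effet, si $Z$ est une composante immergée de $\mathscr F$ et si $V$ est un domaine affinoïde rencontrant $Z$, toute composante irréductible $T$ de $Z\cap V$ est une composante assassine de $\mathscr F_V$ ; elle ne peut être irréductible dans $\supp F\cap V$ sans que $Z$ ne soit irréductible dans $\supp F$ (par \ref{compirr-domaines}), ce qui contredirait l'immersivité de $Z$. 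Réciproquement, une composante immergée $T$ de $\mathscr F_V$ donne par adhérence de Zariski une composante assassine $\adhz TX$ de $\mathscr F$ qui ne saurait être irréductible dans $\supp F$, sinon $T$ le serait dans $\supp F\cap V$.

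Je pourrais alors supposer $X$ affinoïde. Le principe GAGA pour la propriété $S_1$ (\ref{ss-valid-gaga}), joint à la surjectivité de $X\to X\al$, entraîne que $\mathscr F$ est $S_1$ si et seulement si $\mathscr F\al$ l'est sur le schéma $X\al$. La proposition \ref{pro-compass-aff} (2) fournit par ailleurs une bijection $Z\mapsto Z\al$ entre composantes assassines de $\mathscr F$ et celles de $\mathscr F\al$, qui préserve les composantes irréductibles du support, et donc les composantes immergées.

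Je serais finalement ramené au fait classique d'algèbre commutative suivant : un module de type fini sur un anneau noethérien est $S_1$ si et seulement s'il ne possède aucun premier associé immergé. Cela découle immédiatement de la caractérisation d'un premier associé $\mathfrak p$ de $M$ par l'égalité $\mathrm{prof}\;M_{\mathfrak p}=0$, l'hypothèse $S_1$ signifiant précisément qu'aucun tel $\mathfrak p$ n'est strictement contenu dans un autre premier du support. Je n'anticipe pas d'obstacle majeur ; la seule étape demandant un peu de soin est la G-localisation des composantes immergées esquissée ci-dessus.
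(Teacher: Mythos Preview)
Your proposal is correct and follows essentially the same route as the paper: both reduce to the affinoid case via the propositions \ref{prop-equiv-ass} and \ref{compirr-domaines}, then invoke GAGA (proposition \ref{pro-compass-aff} and \ref{ss-valid-gaga}) to transport the question to $X\al$, where it becomes the classical scheme-theoretic fact. The only difference is organizational: you isolate the G-localization of ``avoir une composante immergée'' as a separate preliminary step before passing to the affinoid case, whereas the paper weaves the affinoid reduction directly into each of the two implications.
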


\begin{proof}
Supposons que $\mathscr F$ soit $S_1$ et soit $Z$ une composante assassine de $\mathscr F$. 
Il existe un domaine affinoïde $V$ de $X$ et un fermé de Zariski $T$ de $V$ tel que
$T\al$ soit une composante assassine de $\mathscr F_V\al$ et tel que $\adhz TX=Z$ (remarque \ref{compass-autre-descrip}).
Comme
$\mathscr F$ est $S_1$, le faisceau $\mathscr F_V\al$ est $S_1$, et $T\al$ est dès lors
une composante irréductible de $\mathrm{Supp}(\mathscr F_V\al)$ ; par conséquent, $T$ est une composante
irrédutible de $\mathrm{Supp}(\mathscr F_V)$, et $Z$ est dès lors une composante
irréductible de $\mathrm{Supp}(\mathscr F)$.

Réciproquement, supposons que toute composante assassine de $\mathscr F$ soit une composante irréductible de
$\mathrm{Supp}(\mathscr F)$, 
et soit $V$ un domaine affinoïde de $X$. Soit $T$ un fermé de Zariski de $V$ tel que $T\al$ soit une composante
assassine de $\mathscr F_V\al$. Le fermé $T$ est une composante assassine de
$\mathscr F_V$ par la proposition \ref{pro-compass-aff} ; l'adhérence $Z:=\adhz TX$ est alors une composante
assassine de $\mathscr F$ et $T$ est une composante irréductible de $Z\cap V$ (prop. \ref{prop-equiv-ass}). 
Par hypothèse sur $\mathscr F$, le fermé $Z$ est une composante irréductible de $\mathrm{Supp}(\mathscr F)$, et $T$ est donc
une composante irréductible de $\mathrm{Supp}(\mathscr F_V)$. Ainsi, $T\al$ est une composante irréductible de
$\mathrm{Supp}(\mathscr F_V\al)$, et $\mathscr F_V\al$ est donc $S_1$. Il s'ensuit que $\mathscr F_V$ est $S_1$ ; ceci valant pour tout
domaine affinoïde $V$ de $X$, le faisceau $\mathscr F$ est $S_1$. 
\end{proof}

\begin{prop}\label{prop-compass-morph}
Soit $X$ un espace $k$-analytique et soit $Y\to X$
un morphisme dont la source $Y$ est $L$-analytique pour une certaine extension complète
$L$ de $k$. 
Soit $\mathscr F$ un faisceau
cohérent sur $X$ et soit $\mathscr G$ un faisceau cohérent sur $Y$.
On suppose que pour tout domaine affinoïde $V$ de $X$ et tout domaine
affinoïde $W$ de $Y\times_X V$, le faisceau $\mathscr G_W\al$ est plat sur $V\al$ 
et que sa restriction à chaque fibre de $W\al \to V\al$ est $S_1$. 
Soit $Z$ un fermé de Zariski de $Y$. 
Les assertions suivantes sont équivalentes : 

\begin{enumerate}[i]
\item $Z$ est une composante assassine de
$\mathscr F\boxtimes \mathscr G:=\mathscr F_Y\otimes_{\mathscr O_Y}
\mathscr G$ ; 
\item il existe une composante assassine $T$ de $\mathscr F$ telle que $Z$
soit une composante irréductible de $\mathrm{Supp}(\mathscr G)\times_X T$. 
\end{enumerate}
\end{prop}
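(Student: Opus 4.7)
My plan is to reduce the statement to a purely scheme-theoretic assertion via the GAGA principles for assassin components established earlier (Proposition \ref{pro-compass-aff} and Remark \ref{compass-autre-descrip}), and then to deduce that assertion from the depth formula \eqref{eq-profplat}.

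First I would use Remark \ref{compass-autre-descrip} to describe the assassin components of $\mathscr{F}\boxtimes\mathscr{G}$ affinoid-locally: $Z$ is such a component if and only if $Z = \adhz{Z_{0}}{Y}$ for some affinoid domain $W$ of $Y$ meeting $Z$ and some irreducible component $Z_{0}$ of $Z \cap W$ that is an assassin component of $(\mathscr{F}\boxtimes\mathscr{G})_{W}$. Since any affinoid domain of $Y$ can be covered by affinoids lying above affinoid domains of $X$, I may shrink $W$ and assume that $W \subset Y \times_{X} V$ for an affinoid $V$ of $X$. The irreducible components of $\supp G \times_{X} T$ admit a similar affinoid-local description through \ref{compirr-domaines}, so the statement reduces to the case where $X$ and $Y$ are replaced by these affinoids.

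Now set $\mathscr{X} = V\al$, $\mathscr{Y} = W\al$, $\mathscr{M} = (\mathscr{F}|_{V})\al$ and $\mathscr{N} = (\mathscr{G}_{W})\al$, with $f \colon \mathscr{Y} \to \mathscr{X}$ the induced scheme morphism. The hypothesis then says exactly that $\mathscr{N}$ is flat over $\mathscr{X}$ with $S_{1}$ fibers, and $(\mathscr{F}\boxtimes\mathscr{G})_{W}\al$ identifies with $f^{*}\mathscr{M} \otimes_{\mathscr{O}_{\mathscr{Y}}} \mathscr{N}$. Applying Proposition \ref{pro-compass-aff} on both sides, the statement becomes the following algebraic claim: a point $w \in \mathscr{Y}$ lies in $\mathrm{Ass}(f^{*}\mathscr{M} \otimes \mathscr{N})$ if and only if $t := f(w)$ lies in $\mathrm{Ass}(\mathscr{M})$ and $w$ is a generic point of an irreducible component of $\supp N \times_{\mathscr{X}} \overline{\{t\}}$.

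This algebraic claim is immediate from formula \eqref{eq-profplat}. With $A = \mathscr{O}_{\mathscr{X},t}$, $B = \mathscr{O}_{\mathscr{Y},w}$, $M = \mathscr{M}_{t}$ and $N = \mathscr{N}_{w}$, flatness of $N$ over $A$ gives
\[\mathrm{prof}(N \otimes_{A} M) = \mathrm{prof}(N \otimes_{A} \kappa(t)) + \mathrm{prof}\, M,\]
so the left-hand side vanishes precisely when both summands do. The vanishing of $\mathrm{prof}\, M$ is exactly the condition $t \in \mathrm{Ass}(\mathscr{M})$; the vanishing of $\mathrm{prof}(N \otimes_{A} \kappa(t))$, combined with the $S_{1}$ hypothesis on the fiber, forces this fiber to have dimension zero at $w$, which means $w$ is a generic point of the support of the fiber of $\mathscr{N}$ over $t$, i.e., of an irreducible component of $\supp N \times_{\mathscr{X}} \overline{\{t\}}$.

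The main obstacle will be bookkeeping in the reduction step: one has to verify that the affinoid $W$ can be chosen to detect simultaneously both the assassin components of $(\mathscr{F}\boxtimes\mathscr{G})_{W}$ and the irreducible components of $(\supp G \times_{X} T) \cap W$, and that the two parallel applications of GAGA (one on $\mathscr{X}$ for $\mathscr{F}$, one on $\mathscr{Y}$ for $\mathscr{F}\boxtimes\mathscr{G}$) match up coherently under the analytic Zariski-closure operation $Z_{0} \mapsto \adhz{Z_{0}}{Y}$. Once these compatibilities are in place, the algebraic argument above concludes.
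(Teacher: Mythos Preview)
Your proposal is correct and follows essentially the same approach as the paper: first establish the affinoid case via the depth formula \eqref{eq-profplat} together with the $S_1$ hypothesis on the fibers and GAGA for assassin components (Proposition~\ref{pro-compass-aff}), then globalize using Proposition~\ref{prop-equiv-ass} and~\ref{compirr-domaines}. The ``bookkeeping'' you flag is exactly where the paper spends its effort---in particular, in the passage from $T_1$ (an assassin component of $\mathscr F_V$) to $T\cap V$ (which may have several components), the paper invokes flatness of $\mathscr G_W\al$ over $V\al$ to ensure that irreducible components of $\mathrm{Supp}(\mathscr G_W)\times_V T_1$ remain irreducible components of $\mathrm{Supp}(\mathscr G_W)\times_V(T\cap V)$; this is the one point your sketch leaves implicit.
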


\begin{rema}\label{rema-compass-morph}
Les hypothèses de la proposition 
sont notamment satisfaites dans les cas importants qui suivent. 

\begin{enumerate}[1]
\item \emph{L'espace $Y$ est $k$-analytique,
$\mathscr G$ est $X$-plat et 
$\mathscr G_{Y_x}$ est $S_1$ pour tout $x\in X$}. 
En effet, soient $W$ et $V$ comme dans l'énoncé. La $X$-platitude de $\mathscr G$ entraîne que $\mathscr G_W\al$ est plat sur $V\al$. Soit $\xi\in V\al$, soit $\eta$ un antécédent de $\xi$ sur $W\al$, et soit $\mathscr X$ l'adhérence de $\xi$ dans $V\al$, munie de sa structure réduite. Comme $\mathscr G_{W\times_V\mathscr X\an}$ est plat sur $\mathscr X\an$ et $S_1$ en restriction à chaque fibre, on déduit de \cite{ducros2018}, Th. 11.3.3 (2b) que $\mathscr G_{W\times_V \mathscr X\an}$ est $S_1$ en tout point de $W$ situé au-dessus du lieu $S_1$ de $\mathscr X\an$. Choisissons maintenant un antécédent $y$ de $\eta$ sur $W$, et soit $x$ l'image de $y$ sur $V$ ; le point $x$ est situé au-dessus de $\xi$. Comme $\xi$ est le point générique du schéma $\mathscr X$, le schéma $\mathscr X$ est $S_1$ en $\xi$, et $\mathscr X\an$ est dès lors $S_1$ en $x$ ; par ce qui précède, $\mathscr G_{W\times_V \mathscr X\an}$ est $S_1$ en $y$ ; il s'ensuit que $(\mathscr G_W\al)_{W\al\times_{V\al}\mathscr X}$ est $S_1$ en $\eta$ ; puisque $\xi$ est le point générique de $\mathscr X$, cela signifie exactement que $(\mathscr G_W\al)_{W\al_\xi}$ est $S_1$ en $\eta$.

\item \emph{L'espace $Y$ est $k$-analytique,
$Y\to X$ est quasi-lisse, 
$\mathscr F=\mathscr O_X$ et $\mathscr G=\mathscr O_Y$}. En effet, on 
est alors dans un cas particulier de (1) puisqu'un morphisme quasi-lisse est plat à fibres géométriquement régulières (et en particulier $S_1$) ; on pourrait aussi invoquer directement le th. 5.5.3 (2) de \cite{ducros2018}. 

\item \emph{L'espace $Y$ est égal à $X_L$, 
le faisceau $\mathscr G$ est égal à $\mathscr O_Y$ et $\mathscr F=\mathscr O_X$}.
En effet, soient $V$ et $W$ comme dans la proposition. Comme $W$ s'identifie à un domaine
affinoïde de $V_L$, le morphisme $W\al \to {V_L}\al$ 
est régulier (\cite{ducros2009}, thm. 3.3). D'après \cite{ducros2018}, thm. 5.5.3  (2) de \emph{op. cit.}, 
le morphisme $V_L\al \to V\al$ est plat et à fibres d'intersection complète, et en particulier $S_1$. 
Il s'ensuit (\cite{ega42}, prop. 6.4.1) que $W\al \to V\al$ est plat et à fibres $S_1$. 
\end{enumerate}
\end{rema}

\begin{proof}[Démonstration de la proposition \ref{prop-compass-morph}]
Traitons d'abord
le cas particulier où $X$ et $Y$ sont affinoïdes. 
Puisque $\mathscr G\al$ est
plat sur $X\al$ par hypothèse, on déduit
de \ref{ss-plat-prof} qu'un
point $\eta$ de $Y\al$ appartient
à $\mathrm{Ass}(\mathscr F\boxtimes \mathscr G)$ si et seulement si son image $\xi$ sur $X\al$
appartient à $\mathrm{Ass}(\mathscr F)$ et $\eta$ appartient à $\mathrm{Ass}(\mathscr G\al_{Y\al_\xi})$. Mais
comme $\mathscr G\al_{Y\al_\xi}$ est $S_1$
par hypothèse, il est sans composante immergée, et
$\eta$ appartient donc à $\mathrm{Ass}(\mathscr G\al_{Y\al_\xi})$
si et seulement si $\eta$ est un point générique d'une composante irréductible du support de $\mathscr G\al_{Y\al_\xi}$ ;
par platitude, cette dernière condition
revient à demander que $\eta$ soit le point générique d'une composante irréductible
de $\mathrm{Supp}(\mathscr G\al)\times_{X\al}\adht {\{\xi\}} {X\al}$. 
Ainsi, un fermé $\mathscr Z$ de $Y\al$ est une composante
assassine de $(\mathscr F\boxtimes \mathscr G)\al$  si et seulement si il existe une composante
assassine $\mathscr T$ de $\mathscr F\al$ telle que $\mathscr Z$ soit une composante
irréductible de $\mathrm{Supp}(\mathscr G\al)\times_{X\al}\mathscr T$ ; la proposition s'en déduit alors
par le principe GAGA pour les composantes assassines (proposition \ref{pro-compass-aff} (2)). 

On ne suppose plus maintenant que $X$ et $Y$ sont affinoïdes. Supposons que (i) est vraie ; notons
que $Z\subset \mathrm{Supp}(\mathscr G)$ par définition. 
Choisissons un domaine affinoïde $V$ de $X$ qui rencontre l'image de $Z$ et un domaine
affinoïde $W$ de $Y\times_X V$ qui rencontre $Z$. Soit $Z_1$ une composante irréductible
de $Z\cap W$. C'est une composante assassine de $\mathscr F_V\boxtimes \mathscr G_W$, 
et $Z=\adhz {Z_1}{\supp G}$
(prop. \ref{prop-equiv-ass}). D'après le cas
affinoïde déjà traité, il existe donc une composante assassine $T_1$ de $\mathscr F_V$ telle que $Z_1$
soit une composante irréductible de $\mathrm{Supp}(\mathscr G_W)\times_V T_1$. Posons $T=\adht {T_1}X$. C'est une composante assassine de $\mathscr F$
et $T_1$ est une composante irréductible de $T\cap V$
(prop. \ref{prop-equiv-ass}).
Par platitude
de $\mathscr G_W\al$ sur $V\al$, les composantes irréductibles de $\mathrm{Supp}(\mathscr G_W\al)\times_{V\al}T_1\al$ 
sont des composantes irréductibles de $\mathrm{Supp}(\mathscr G_W\al)\times_{V\al}(T\cap V)\al$. Par conséquent, $Z_1$ est une composante
irréductible de $\mathrm{Supp}(\mathscr G_W)\times_V (T\cap V)$, c'est-à-dire encore de $(\mathrm{Supp}(\mathscr G)\times_X T)\cap W$. 
On en déduit que $Z=\adht{Z_1}{\mathrm{Supp}(\mathscr G)}$ est une composante irréductible de 
$\mathrm{Supp}(\mathscr G)\times_X T$.

Supposons maintenant que (ii) est vraie. Soit $V$ un domaine affinoïde de $X$ rencontrant l'image de $Z$, et soit $W$ un domaine affinoïde
de $Y\times_X V$ rencontrant $Z$. Soit $Z_1$ une composante irréductible de $Z\cap W$ ; c'est une composante irréductible de $(\mathrm{Supp}(\mathscr G)\times_X T)\cap W$, c'est-à-dire de $\mathrm{Supp}(\mathscr G_W)\times_V (T\cap V)$. Si on désigne par $T_1$ une
composante irréductible  de $V\cap T$ contenant l'image de $Z_1$
alors  $Z_1$ est une composante irréductible de $\mathrm{Supp}(\mathscr G_W)\times_V T_1.$ (Par platitude, $T_1$ est unique et $Z_1\al \to T_1\al$ est dominant, mais peu importe ici). 

Or $T$ est une composante assassine de $\mathscr F$. Il s'ensuit que $T_1$ est une composante assassine de $\mathscr F_V$
(prop. \ref{prop-equiv-ass}) ; par le cas affinoïde déjà traité,
$Z_1$ est une composante assassine de $\mathscr F_V\boxtimes \mathscr G_W$ ; en utilisant une dernière
fois la proposition \ref{prop-equiv-ass} on en déduit que
$Z=\adht {Z_1}Y$ est une composante
assassine de  $\mathscr F\boxtimes \mathscr G$.
\end{proof}

Nous avons vu plus haut (prop. \ref{pro-compass-aff})
un résultat de type GAGA pour les composantes assassines lorsque l'espace
ambiant est affinoïde. Cela s'étend en fait au cadre plus général de l'analytifié d'un schéma
de type fini sur un espace affinoïde. 

\begin{lemm}\label{lem-ass-gaga}
Soit $A$ une algèbre
$k$-affinoïde, soit $\mathscr X$ un $A$-schéma
de type fini, et soit $\mathscr F$ un faisceau cohérent sur $\mathscr X$. 
Les composantes assassines de $\mathscr F\an$ sont exactement les fermés de $\mathscr X\an$
de la forme $\mathscr Y\an$, où $\mathscr Y$ est une composante assassine de $\mathscr F$. 
\end{lemm}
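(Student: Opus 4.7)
L'idée est de se ramener au cas affinoïde traité par la proposition \ref{pro-compass-aff}(2), en combinant la description G-locale des composantes assassines fournie par la remarque \ref{compass-autre-descrip} et la formule classique du changement de base plat pour les idéaux associés en théorie des schémas noethériens.

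La première étape consiste à observer que tout domaine affinoïde $V$ de $\mathscr X\an$ est muni d'un morphisme de schémas $\phi_V\colon V\al\to\mathscr X$, qui se factorise G-localement sur $V$ par l'analytifié d'un ouvert affine de $\mathscr X$, et qui est \emph{régulier} d'après \cite{ducros2009}, thm.~3.3 ; il est en particulier plat à fibres géométriquement régulières, donc $S_1$. On vérifie par ailleurs que $(\mathscr F\an|_V)\al$ s'identifie canoniquement à $\phi_V^*\mathscr F$. La formule standard pour les idéaux associés sous un morphisme plat à fibres $S_1$ (\cite{ega42}, prop. 3.3.1) identifie alors les points de $\mathrm{Ass}(\phi_V^*\mathscr F)$ aux points génériques des composantes irréductibles des produits $\mathscr Y\times_{\mathscr X}V\al$, pour $\mathscr Y$ parcourant les composantes assassines de $\mathscr F$.

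Je traiterais ensuite les deux implications du lemme. Pour le sens direct, soit $\mathscr Y$ une composante assassine de $\mathscr F$ ; le principe GAGA pour les composantes irréductibles (rappel \ref{rappel-irred}) assure que $\mathscr Y\an$ est un fermé de Zariski irréductible de $\mathscr X\an$. Choisissant un domaine affinoïde $V$ qui rencontre $\mathscr Y\an$ puis une composante irréductible $T$ de $\mathscr Y\an\cap V$, l'identification du paragraphe précédent montre que le point générique de $T\al$ appartient à $\mathrm{Ass}(\phi_V^*\mathscr F)=\mathrm{Ass}((\mathscr F\an|_V)\al)$ ; la proposition \ref{pro-compass-aff}(2) fait alors de $T$ une composante assassine de $\mathscr F\an|_V$, et la proposition \ref{prop-equiv-ass}(2b) garantit que $\adhz T{\mathscr X\an}$ est une composante assassine de $\mathscr F\an$. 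Les résultats rappelés en \ref{compirr-domaines}, appliqués à l'espace irréductible $\mathscr Y\an$, donnent enfin $\adhz T{\mathscr Y\an}=\mathscr Y\an$, d'où l'égalité $\mathscr Y\an=\adhz T{\mathscr X\an}$.

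Pour le sens réciproque, je partirais d'une composante assassine $Z$ de $\mathscr F\an$, écrite sous la forme $\adhz T{\mathscr X\an}$ avec $T$ composante assassine de $\mathscr F\an|_V$ pour un certain $V$ affinoïde (remarque \ref{compass-autre-descrip}) ; par la proposition \ref{pro-compass-aff}(2), $T\al$ est une composante assassine de $\phi_V^*\mathscr F$, et la formule du premier paragraphe fournit une composante assassine $\mathscr Y$ de $\mathscr F$ dont le point générique est l'image par $\phi_V$ de celui de $T\al$ ; le sens direct déjà prouvé donne alors $\mathscr Y\an=\adhz T{\mathscr X\an}=Z$. L'obstacle technique principal réside dans la vérification de la régularité de $\phi_V$ et de l'identification $(\mathscr F\an|_V)\al=\phi_V^*\mathscr F$, mais ces points se ramènent G-localement sur $V$ à des énoncés bien connus, en plongeant le domaine considéré dans l'analytifié d'un ouvert affine de $\mathscr X$ contenant l'image de son morphisme vers $\mathscr X\an$.
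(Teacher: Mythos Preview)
Your approach is correct and, for the reverse implication in particular, more economical than the paper's. Both arguments rely on the regularity of $V\al\to\mathscr X$ for an affinoid domain $V$ of $\mathscr X\an$ (the paper invokes this explicitly in the first half of its proof). Where you apply the flat base-change formula for associated primes to describe $\mathrm{Ass}((\mathscr F\an|_V)\al)$ directly and then quote proposition~\ref{pro-compass-aff}(2) and proposition~\ref{prop-equiv-ass}, the paper instead treats the reverse implication by a three-step reduction: first the case where $\mathscr X$ is proper over $A$ (using coherent GAGA to write the given assassin component as $\mathscr Y\an$ \emph{a priori}, then descending depth zero along the flat analytification map), then the affine case by compactification and restriction \emph{via} lemme~\ref{lem-ouvzar-connexe}, and finally the general case by localizing to an affine open. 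Your route bypasses this stratification.

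Two points deserve to be spelled out. First, in your last step you need $T$ to be an \emph{irreducible component} of $\mathscr Y\an\cap V$, not merely contained in it: this follows because the generic point of $T\al$ is a generic point of $\phi_V^{-1}(\mathscr Y)=(\mathscr Y\an\cap V)\al$ by the Ass formula, and then~\ref{compirr-domaines} gives $\adhz T{\mathscr X\an}=\mathscr Y\an$; the sentence ``le sens direct déjà prouvé donne alors\ldots'' is a bit telegraphic on this point. Second, to have $\phi_V$ defined as an honest scheme morphism and to identify $(\mathscr F\an|_V)\al$ with $\phi_V^*\mathscr F$, it is simplest to choose $V$ inside $\mathscr U\an$ for some affine open $\mathscr U$ of $\mathscr X$; since the $\mathscr U\an$ form an open cover and remarque~\ref{compass-autre-descrip} lets you take for $V$ any affinoid domain meeting the given assassin component, this is always possible, and the ``obstacle technique'' you flag evaporates.
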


\begin{proof}
Nous allons procéder par double inclusion.

\subsubsection{}
Soit $\mathscr Y$ une composante assassine de $\mathscr F$.
Comme $\mathscr Y$ est irréductible, $\mathscr Y\an$ l'est aussi  ; 
soit $n$ la dimension de $\mathscr Y\an$. Soit $x$ un point de $\mathscr Y\an$ 
tel que $d_k(x)=n$ ; son adhérence $\adhp x{\mathscr X\an}$ est égale à
$\mathscr Y\an$ (et $x\al$ est donc le point générique de $\mathscr Y$).
Choisissons un domaine affinoïde $V$
de $\mathscr X\an$ contenant $x$, et posons
$Z=\adhp xV$; puisque $d_k(x)=n$, on a $\dim Z=n$.
Comme la liste des dimensions des composantes irréductibles de
$\mathrm{Supp}( \mathscr F\an_V)$ contenant $x$ est la même que la liste des dimensions
des composantes irréductibles de $\mathrm{Supp}( \mathscr F\an)$
contenant $x$, on
en déduit
que la codimension de Krull
de $Z$ dans $\mathrm{Supp}( \mathscr F\an_V)$ est égale à la codimension de $\mathscr Y\an$
dans $\mathrm{Supp}(\mathscr F\an)$,
c'est-à-dire à celle de $\mathscr Y$ dans $\mathrm{Supp}(\mathscr F)$ (\cite{ducros2018}, Cor. 2.7.13).
Autrement dit, $\mathscr F_{V\al,x_V\al}$ et $\mathscr F_{\mathscr X,x\al}$ ont même dimension.
Comme
$V\al \to \mathscr X$
est régulier (th. 3.3 de \cite{ducros2009}), 
et
comme
$\mathscr F_{\mathscr X,x\al}$ est de profondeur nulle
(puisque $x\al$ est le point générique de la composante assassine $\mathscr Y$ de $\mathscr F$),
la profondeur de $\mathscr F_{V\al, x_V\al}$ est nulle par \ref{sss-equiv2-prof0}.
Comme
$d_k(x)=\dim Z$, on a $\dimcg Vx=\dim Z$ (\ref{dimcent-dx}), et $x$ appartient donc à $\mathrm{Ass}(\mathscr F_V\an)
\subset \mathrm{Ass}(\mathscr F\an)$
en vertu de la proposition \ref{prop-equiv-ass} ; par conséquent, 
$\mathscr Y\an$ est une composante assassine de $\mathscr F\an$. 

\subsubsection{}
Soit $Y$ une composante assassine de $\mathscr F\an$. Nous allons montrer qu'elle
est de la forme requise, en commençant par traiter deux cas particulier. 

\paragraph{Supposons $\mathscr X$ propre sur $A$} Par GAGA, la composante $Y$ est égale à $\mathscr Y\an$ pour un certain
fermé de Zariski irréductible $\mathscr Y$ de $\mathscr X$. Soit $x$ un point de $\mathrm{Ass}(\mathscr F\an)$
tel que $\overline{\{x\}}^{\mathscr X\an_{\mathrm{Zar}}}=Y$ ; le point $x\al$ est alors le point générique de $\mathscr Y$.
Comme $x\in \mathrm{Ass}(\mathscr F\an)$, la profondeur de $\mathscr F\an_{\mathscr X\an,x}$ est nulle ; par platitude
du morphisme d'espaces annelés $\mathscr X\an \to \mathscr X$ (\cite{berkovich1993}, Prop. 2.6.2)
et par \ref{sss-equiv2-prof0}, la profondeur
de $\mathscr F_{\mathscr X,x\al}$ est nulle ; par conséquent $x\al \in \mathrm{Ass}(\mathscr F)$ et
$\mathscr Y$ est une composante assassine de $\mathscr F$.

\paragraph{Supposons $\mathscr X$ affine}
Choisissons une compactification
projective $\overline{\mathscr X}$ du $A$-schéma $\mathscr X$,
et un prolongement
$\overline{\mathscr F}$ de $\mathscr F$ à $\overline{\mathscr X}$
(\cite{ega1} Cor. 9.4.8). 
Par la proposition \ref{prop-equiv-ass}, l'adhérence
$Z:=\adht Y{\mathscr X\an_{\mathrm{Zar}}}$ est une composante assassine de 
$\overline{\mathscr F}\an$, et $Y$ est une composante irréductible de $Z\cap \mathscr X\an$ ; 
par ailleurs $Z\cap \mathscr X\an$ est irréductible en vertu du lemme \ref{lem-ouvzar-connexe}, 
si bien que $Y=Z\cap \mathscr X\an$. 
Par le cas propre déjà traité, $Z=\mathscr Z\an$ pour une certaine composante assassine
$\mathscr Z$ de $\overline {\mathscr F}$. 
Il vient $Y=Z\cap \mathscr X\an=(\mathscr Z\cap \mathscr X)\an$, ce qui termine la preuve dans le cas
affine puisque $\mathscr Z\cap \mathscr X$ est une composante assassine de $\mathscr F$.

\paragraph{Preuve dans le cas général}
Choisissons un ouvert affine $\mathscr U$ de $\mathscr X$ tel que
$\mathscr U\an$ rencontre $Y$. L'intersection $Y\cap \mathscr U\an$ est irréductible
d'après le lemme \ref{lem-ouvzar-connexe}, et c'est donc une composante assassine
de $(\mathscr F_{\mathscr U})\an$ par la proposition \ref{prop-equiv-ass}, qui assure aussi
que $\adht{(Y\cap \mathscr U\an)}{\mathscr X\an_{\mathrm{Zar}}}=Y$. 

Par le cas affine déjà traité , $Y\cap \mathscr U\an$
est égale à $\mathscr Z\an$ pour une certaine
composante assassine $\mathscr Z$ de $\mathscr F_{\mathscr U}$. Soit $\mathscr Y$ l'adhérence de $\mathscr Z$
dans $\mathscr X$ ; c'est une composante assassine de $\mathscr F$. Puisque $\mathscr Y$
est irréductible, $\mathscr Y\an$ est un fermé de Zariski irréductible de $\mathscr X\an$ 
et son ouvert de Zariski non vide $Y\cap \mathscr U\an=\mathscr Z\an$ est donc Zariski-dense. 
On a par conséquent
\[\mathscr Y\an= \adht{(Y\cap \mathscr U\an)}{\mathscr X\an_{\mathrm{Zar}}}=Y,\]
ce qui achève la démonstration. 
\end{proof}

\subsection{Fonctions régulières et fonctions méromorphes}
Soit $X$ un espace $k$-analytique. Nous allons brièvement
rappeler les définitions 
de fonction régulière et de fonction méromorphe sur $X$, et établir  quelques énoncés élémentaires à leur sujet qui sont certainement bien connus, mais dont les preuves ne sont à notre connaissance pas disponibles dans la littérature 
(une partie de ce qui suit figure toutefois dans la prépublication \cite{chambertloir-d2012}). 

\subsubsection{}
On dit qu'une fonction $f\in \mathscr O_X(X)$ est \emph{régulière} si la
multiplication par $f$ est un endomorphisme injectif de $\mathscr O_X$.
C'est une propriété qu'on peut vérifier G-localement ; si $X$ est affinoïde, $f$ est régulière si et seulement si c'est un élément régulier de $\mathscr O_X(X)$ au sens usuel de l'algèbre commutative, c'est-à-dire un élément non diviseur de zéro.

\subsubsection{}\label{sss-reg-ass}
Une   fonction $f\in \mathscr O_X(X)$
est régulière si et seulement si son lieu des zéros ne contient aucune composante assassine de $X$. En effet, le caractère G-local de la régularité et la proposition \ref{prop-equiv-ass} (2) permettent de se ramener au cas affinoïde, dans lequel l'assertion découle du résultat analogue en théorie des schémas et du principe GAGA pour les composantes assassines (\ref{pro-compass-aff} (2)). 

\subsubsection{}
Soit $\mathscr S$ le sous-faisceau de $\mathscr O_X$ qui associe à un domaine analytique $U$ l'ensemble des sections régulières de $\mathscr O_X(U)$. On appelle \emph{faisceau des fonctions méromorphes sur $X$} et l'on note $\mathscr K_X$ le faisceautisé (pour la G-topologie) 
de $U\mapsto \mathscr S(U)\inv \mathscr O_X(U)$. 
Il résulte de la définition de $\mathscr S$ que le morphisme naturel 
$\mathscr O_X\to \mathscr K_X$ est injectif. On se permettra donc
d'identifier $\mathscr O_X$ à un sous-faisceau de $\mathscr K_X$. 

\subsubsection{}\label{sss-kx-ka}
Supposons $X$ affinoïde, disons $X=\mathscr M(A)$. \emph{L'application naturelle de l'anneau total des fractions de $A$ dans $\mathscr K_X(X)$ est un isomorphisme}.

L'injectivité est une conséquence formelle de l'injectivité de $A\to \mathscr K_X(X)$. Prouvons la surjectivité. 
Soit $f\in \mathscr K_X(X)$ et soit $\mathscr D$ le «faisceau des dénominateurs de $f$», c'est-à-dire le faisceau d'idéaux de $\mathscr O_X$ qui envoie un domaine analytique $U$
de $X$ sur l'ensemble 
des $h\in \mathscr O_X(U)$ tels que $hf\in \mathscr O_X(U)\subset \mathscr K_X(U)$.

Montrons que 
$\mathscr D$ est cohérent. On peut raisonner G-localement et partant supposer que $f=g/h$ où $g$
et $h$ appartiennent à $\mathscr O_X(X)$ et où
$h$ est régulière.
Mais dans ce cas $\mathscr D$ est le noyau du morphisme $\mathscr O_X\to \mathscr O_X/(h)$ induit par la multiplication par $g$, d'où l'assertion. 

Il suffit maintenant pour conclure de montrer que $\mathscr D$ possède une section globale qui est régulière. Procédons par l'absurde, en supposant que toute section globale de
$\mathscr D$ est diviseur de zéro. Le lemme d'évitement des idéaux premiers
couplé au principe GAGA pour les composantes
assassines (prop. \ref{pro-compass-aff} (2)) 
assure alors qu'il existe une composante assassine $Y$ de $X$
contenue dans le lieu des zéros de $\mathscr D$. Soit $x\in Y$ et soit $V$ un domaine affinoïde de $X$ contenant $y$
tel que $f|_V$ soit de la forme $g/h$ où
$g$
et $h$ appartiennent à $\mathscr O_X(V)$ et où
$h$ est régulière. Le lieu des zéros de $\mathscr D_V$ contient $Y\cap V$, qui est non vide (il contient $x$)
et est en vertu de la proposition \ref{prop-compass-morph} (2) une union de composantes assassines de $V$. Il en résulte 
en vertu de \ref{sss-reg-ass}
que $\mathscr D(V)$ ne contient aucune fonction régulière, contredisant le fait que $h$
appartient manifestement
à $\mathscr D(V)$. 

\subsubsection{}
On dit qu'un faisceau cohérent d'idéaux $\mathscr I$ sur $X$
est \emph{inversible} si $\mathscr I$ est G-localement libre de rang $1$, c'est-à-dire encore si $\mathscr I$ est G-localement
engendré par une 
section régulière de $\mathscr O_X$.

Soit $\mathscr I$ un faisceau cohérent d'idéaux sur $X$. 

\paragraph{}\label{par-i-inv}
Supposons $\mathscr I$ inversible. On note $\mathscr I\inv$ le sous-$\mathscr O_X$-module
de $\mathscr K_X$ constitué des sections $g$ telles que $gf$ soit une section de $\mathscr O_X$ pour toute section $f$ de $\mathscr I$. C'est un $\mathscr O_X$-module cohérent, et même G-localement libre de rang $1$ : si $h$ est une fonction régulière engendrant $\mathscr I$ sur un domaine analytique $U$ de $X$ alors $h\inv$ engendre $\mathscr I\inv$ sur $U$. 
Cette dernière remarque montre que le sous-faisceau $\mathscr I\cdot \mathscr I\inv$ de $\mathscr K_X$ est égal à 
$\mathscr O_X$. 

\paragraph{}\label{par-prod-inv}
Réciproquement, supposons qu'il existe un sous-$\mathscr O_X$-module cohérent $\mathscr J$ de $\mathscr K_X$
tel que $\mathscr I\cdot \mathscr J=\mathscr O_X$ ; nous allons montrer que $\mathscr I$ est inversible.
On peut raisonner G-localement. On peut donc supposer que $X=\mathscr M(A)$ pour une certaine algèbre
$k$-affinoïde $A$, que $\mathscr J(A)\subset h\inv A$ pour un certain élément
régulier $h$ de $A$, et que $1\in \mathscr I(A)\cdot \mathscr J(A)$.
Cette dernière condition assure que $\mathscr I(A)\cdot \mathscr J(A)
=A$, 
puis que  $\mathscr I(A)\cdot (h\mathscr J(A))=hA$. Comme $h\mathscr J(A)$
est un idéal de $A$ par choix de $h$,  on déduit du lemme
\cite[\href{https://stacks.math.columbia.edu/tag/0801}{Tag 09ME}]{stacks-project}
que $\mathscr I(A)$ est un $A$-module projectif de rang $1$ ; par conséquent, $\mathscr I$
est localement libre de rang $1$.

On déduit de ce qui précède que pour qu'un produit
fini $\mathscr I_1\cdot\ldots\cdot \mathscr I_n$ de faisceaux cohérents d'idéaux sur $X$ soit
inversible, il faut (et il suffit) que chacun des $\mathscr I_j$ le soit.

\subsection{}
Soit $X$ un espace $k$-analytique, soit $L$ une extension complète de $k$ et soit $X'$ un espace $L$-analytique. Soit $\phi \colon X'\to X$ un morphisme tel que le morphisme induit $X'\to X_L$ soit plat. 

\subsubsection{}\label{sss-plat-fonctreg}
Pour tout domaine affinoïde
$U$ de $X$ et
tout domaine affinoïde $V$ de $\phi\inv(U)$, la $\mathscr O_X(U)$-algèbre $\mathscr O_{X'(V)}$ est plate. On en déduit aussitôt que si $f$ est une fonction régulière sur un domaine analytique $W$ de $X$, son image réciproque $\phi^*f$ est une fonction régulière sur $\phi\inv(W)$. 

\subsubsection{}\label{sss-plat-idinv}
Soit $\mathscr I$ un faisceau cohérent d'idéaux sur $X$. Si $\mathscr I$ est inversible, il résulte de \ref{sss-plat-fonctreg} que 
$\phi\inv(\mathscr I)$ est inversible. 
\section{Idéal des coefficients}
\label{s-ideal}

\subsection{}
Nous dirons qu'un morphisme $f\colon Y\to X$ entre espaces analytiques est \emph{compact}
si $f\inv(V)$ est compact pour tout domaine affinoïde $V$ de $X$. C'est par exemple le cas
si $Y$ est compact et $X$ topologiquement séparé, ou si $f$ est propre.

\subsection{}
Un \emph{polyrayon}
est une famille finie de réels strictement positifs. Un polyrayon $r=(r_1,\ldots, r_n)$
est dit \emph{$k$-libre}
si les $r_i$ forment une famille libre du $\Q$-espace vectoriel $\R_{>0}/\abs{k\gpm}^\Q$.

Si $r$ est $k$-libre, on notera $k_r$
la $k$-algèbre 
constituée des séries $\sum_{I\in \Z^n}a_I T^I$ à coefficients dans $k$
telles que $\abs{a_I}r^I$ tende vers zéro quand $\abs I$ tend vers l'infini.
La norme $\sum a_I T^I\mapsto \max \abs{a_I}r^I$ fait de $k_r$
à la fois une algèbre de Banach $k$-affinoïde et une extension
complète de $k$.
Si $A$ (resp. $X$) est une algèbre $k$-affinoïde (resp. un espace
$k$-analytique) on écrira $A_r$ (resp. $X_r$) au lieu de $A\hotimes_k k_r$ (resp. $X_{k_r}$).

\begin{theo}[Descente fidèlement plate des faisceaux cohérents]\label{theo-desc-coh}
Soit $Y\to X$ un morphisme compact, plat et surjectif entre espaces
$k$-analytiques.

\begin{enumerate}[1]

\item Soit $\mathscr F$ un faisceau cohérent sur $X$. La suite

\begin{tikzcd}
0\ar[r]&\mathscr F(X)\ar[r]&\mathscr F_Y(Y)
\ar[r, shift left=1ex]\ar[r, shift right=1ex]&\mathscr
 F_{Y\times_XY}(Y\times_XY)
\end{tikzcd}

est exacte. 
\item 
La catégorie
des faisceaux cohérents sur $X$ est naturellement équivalente à celle des faisceaux cohérents sur $Y$
munis d'une donnée de descente relativement à $Y\to X$. 
\end{enumerate}
\end{theo}

\begin{rema}
Dans le cas strictement $k$-analytique, ce théorème est dû à Bosch et 
Görz (\cite{bosch-g1998}, thm. 3.1) mais nous n'utilisons pas leur résultat. Nous redémontrons donc
celui-ci, par des
méthodes complètement différentes (les leurs font appel à la géométrie
formelle suivant le point de vue de Raynaud). 
\end{rema}

\begin{rema}
Le théorème est faux sans hypothèse de compacité. Pour le voir, fixons deux réels strictement positifs $r$ et $s$ avec $r<s$, et notons $X$ la couronne $r\leq \abs T\leq s$. Pour tout $t$ compris entre $r$ et $s$, notons $X_t$ la couronne $\abs T=t$, qui est un domaine affinoïde de $X$ ; enfin, posons $Y=\coprod_{r\leq t\leq s}X_t$. Le morphisme naturel $Y\to X$ est alors plat et surjectif, mais ne satisfait visiblement ni (1) ni (2). 
\end{rema}

\begin{proof}[Démonstration du théorème \ref{theo-desc-coh}]
Il suffit de prouver (2) : en effet, (1)
en est une conséquence formelle car on a pour tout espace
analytique $Z$ et tout faisceau cohérent $\mathscr G$ sur $Z$ un isomorphisme
$\mathscr G(Z)\simeq \mathrm{Hom}(\mathscr O_Z,\mathscr G)$
fonctoriel en $Z$ et $\mathscr G$. 

Montrons donc (2). L'assertion est G-locale sur $X$, ce qui permet de le supposer affinoïde ; l'espace $Y$ est alors compact. 
On note $A$ l'algèbre des fonctions
analytiques sur $X$, et l'on se donne deux faisceaux cohérents
$\mathscr F$ et $\mathscr G$ sur $Y$ munis de données de descente relativement
à $X$, et une application $\mathscr O_Y$-linéaire $u \colon \mathscr F\to \mathscr G$ compatible aux données de descentes considérées. 
Le but de ce qui suit est de montrer que $\mathscr F$ et $\mathscr G$ proviennent de deux faisceaux cohérents $\mathscr F_0$ et $\mathscr G_0$
sur $X$, et que $u_0$ provient d'une unique application $\mathscr O_X$-linéaire de $\mathscr F_0$ dans $\mathscr G_0$. 

\subsubsection{}\label{des-coh-xr}
Supposons tout d'abord que $Y$ est de la forme $X_r$ pour un certain polyrayon $k$-libre $r$. Soit $A$ 
l'algèbre des fonctions analytiques sur $X$. Les faisceaux cohérents
$\mathscr F$ et $\mathscr G$ correspondent respectivement à des $A_r$-modules
de Banach finis $M$ et $N$. 

Notons $T$ la famille des fonctions coordonnées de $k_r$, et $T_1$ et $T_2$ les deux familles de fonctions coordonnées de $k_r\hotimes_k k_r$, et posons
$B=A_r\hotimes_A A_r$. On dispose d'identifications naturelles
\[A_r=A\{r\inv T, rT\inv\}=A\{r\inv T_1, rT_1\inv\}=A\{r\inv T_2, rT_2\inv\}.\]
Lorsqu'un «objet mathématique»
défini sur $A_r$
sera vu comme défini sur l'anneau $A\{r\inv T_i, rT_i\inv\}$ pour $i=1$ ou $2$,
nous l'indiquerons par un $i$ en indice.  
On a
\[B=A_{r,1}\{r\inv T_2, rT_2\inv\}=A_{r,2}\{r\inv T_1, rT_1\inv\}.\]
Nous allons tout d'abord montrer que $M$ provient d'un $A$-module de Banach fini $M_0$
La donnée de descente dont $\mathscr F$ est muni
consiste en un isomorphisme de $B$-modules $\iota$ entre 
$M_1\{r\inv T_2, rT_2\inv\}$ et $M_2\{r\inv T_1, rT_1\inv\}$. 

Donnons-nous un élément $\sum_{I,J}m_{I,J,1}T_2^IT_3^J$
de $M_1\{r\inv T_2, rT_2\inv, r\inv T_3, rT_3\inv\}$. Pour tout $I$,
écrivons $\iota(\sum_J m_{I,J,1}T_2^J)=\sum_J n_{I,J,2}T_1^J$. Pour tout $J$, écrivons
$\iota(\sum_I m_{I,J,1}T_2^I)=\sum_I\ell_{I,J,2}T_1^I$ ; et écrivons enfin
$\iota(\sum_J \ell_{I,J,1}T_2^J)=\sum_J\lambda_{I,J,2}T_1^J$ pour tout $I$. 
Lorsqu'on applique la condition de cocycle satisfaite par $\iota$
à $\sum_{I,J}m_{I,J,1}T_2^IT_3^J$, il vient $n_{I,J}=\lambda_{J,I}$ pour tout $(I,J)$.

Soit $M_0$ le sous-ensemble de $M$ constitué des éléments $m$ tels 
$\iota(m_1)=m_2$ ; c'est un $A$-module
de Banach. Soit $m\in M$ ; écrivons $\iota(m_1)=\sum_I \mu_{J,2} T_1^J$. 
Appliquons ce qui précède avec $m_{I,J}=m$ si $(I,J)=(0,0)$ et $0$ sinon. 
On obtient $n_{I,J}=\mu_J$ si $I=0$, et $0$ sinon ; puis $\ell_{I,J}=\mu_I$ si $J=0$ et $0$ sinon. 
On a $\lambda_{I,J}=n_{J,I}$ pour tout $(I,J)$, c'est-à-dire
$\lambda_{I,J}=\mu_I$ si $J=0$ et $0$ sinon. Il vient $\iota(\mu_{J,1})=\mu_{J,2}$ 
pour tout $J$ ; autrement dit, $\mu_J$ appartient à $M_0$ pour tout $I$. Comme $\iota$ est un morphisme
injectif de $B$-modules, on a nécessairement $m_1=\sum \mu_{J,1} T_1^J$. Le morphisme naturel
$M_0\{r\inv T, r T\inv\}\to M_1$ qui envoie $\sum \alpha_J T^J$ sur $\sum \alpha_{J,1}T_1^J$ 
est donc surjectif ; et en vertu de l'égalité
\[\iota(\sum \alpha_{J,1}T_1^J)=\sum \alpha_{J,2} T_1^J\in M_2\{r\inv T_1,rT_1\inv\},\]
il est injectif et admissible ; le module $M$ s'identifie donc à $M_0\hotimes_A A_r$. Comme $M$ est un $A_r$-module
de Banach fini, $M_0$ est un $A$-module de Banach fini, d'après \cite{berkovich1990}, Prop. 2.1.11.

Modulo l'isomorphisme $M\simeq M_0\hotimes_A A_r$ construit ci-dessus, l'isomorphisme $\iota$ est simplement l'isomorphisme
évident 
\[M_0\{r\inv T_1,rT_1\inv\}\{r\inv T_2,rT_2\inv\}\simeq M_0\{r\inv T_2,rT_2\inv\}\{r\inv T_1,rT_1\inv\}.\]

On fabrique par le même procédé un $A$-module de Banach fini $N_0$ et un isomorphisme
$N\simeq N_0\hotimes_A A_r$ modulo lequel la donnée de descente fournie avec $N$ est l'isomorphisme évident
\[N_0\{r\inv T_1,rT_1\inv\}\{r\inv T_2,rT_2\inv\}\simeq N_0\{r\inv T_2,rT_2\inv\}\{r\inv T_1,rT_1\inv\}.\]

Soit $m\in M_0$ ; écrivons $u(m)=\sum n_I T^I$.
Appliquons la commutation de $u$ aux données de descente à l'élément
$m$ de $M_0\subset M_0\{r\inv T_1,rT_1\inv\}\{r\inv T_2,rT_2\inv\}$ ; il vient
$\sum n_I T_1^I=\sum n_IT_2^I$, ce qui entraîne que tous les $n_I$ sont nuls à l'exception de $n_0$. 
Ainsi $u(M_0)\subset N_0$ et $u$ induit donc une application $A$-linéaire $u_0$ de $M_0$ dans $N_0$ ; 
comme $u$ est bornée, $u_0$ est bornée. Il est alors immédiat que $u$ est l'application déduite de $u_0$
par changement de base de $A$ à $A_r$, et que $u_0$ est la seule
application $A$-linéaire bornée de $M_0$ dans $N_0$ qui ait cette propriété; ceci achève la preuve du cas où $Y=X_r$. 

\subsubsection{}En vertu du cas particulier traité en \ref{des-coh-xr}
ci-dessus, on peut démontrer le théorème après extension des scalaires à $k_r$ pour n'importe
quel polyrayon $k$-libre $r$ ; cela permet de supposer que la valeur absolue de $k$ n'est pas triviale
et que $Y$ et $X$ sont strictement $k$-analytiques. On déduit alors des théorèmes
8.4.6 et 9.1.3 de \cite{ducros2018}
qu'il existe : 
\begin{itemize}[label=$\bullet$]
\item une famille finie $f_i \colon Z_i\to X$ de morphismes quasi-étales de source affinoïde, 
telle que $X=\bigcup f_i(Z_i)$ ; 
\item pour tout $i$, un morphisme fini, plat et surjectif $Z'_i\to Z_i$ et un $X$-morphisme $Z_i \to Y$. 
\end{itemize}
Posons $Z=\coprod Z_i$ et $Z'=\coprod Z'_i$.
Puisque $Y\times_X Z'\to Z'$ a par construction une section, il suffit de démontrer le
théorème lorsque $Y=Z'$ ; le morphisme $Y\to X$ se factorise alors par une flèche finie et plate $Y\to Z$ suivie d'une flèche
quasi-étale 
$Z\to X$. Par descente
finie et plate schématique, on peut supposer que $Y=Z$ ; il reste donc à traiter le cas où $Y\to X$ est quasi-étale. 
En raisonnant localement sur $X$, on peut supposer qu'il existe un revêtement fini
galoisien $X'\to X$ tel que $Y$ s'écrive comme une union 
finie $\bigcup Y_i$ où chaque $Y_i$ est un domaine affinoïde d'un quotient $X'_i$ de $X'$. En utilisant encore une fois
encore la descente étale schématique, on voit qu'on peut démontrer le théorème après changement de base de $X$ à $X'$, 
et l'on se ramène ainsi au cas où $Y$ s'écrit  $\bigcup Y_i$ où chaque $Y_i$ s'identifie à un domaine affinoïde de $X$. 

Puisqu'un faisceau cohérent est un faisceau pour la G-topologie, et puisque la cohérence est une notion G-locale, les faisceaux
cohérents satisfont la descente relativement aux G-recouvrements. En appliquant ceci sur l'espace $Y$, on voit qu'on peut remplacer ce dernier
par $\coprod Y_i$ ; et on conclut alors par descente pour la G-topologie sur $X$.
\end{proof}

Nous nous proposons maintenant de montrer l'existence d'un
«idéal des coefficients» associé à un sous-espace analytique fermé
de la source d'un morphisme quasi-lisse et compact
à fibres géométriquement irréductibles (théorème \ref{theo-id-coeff}). 
Ce résultat jouera un rôle crucial dans notre construction d'éclatements à effet
aplatissant ; en effet, les centres de ces éclatements seront tous définis par des idéaux
de coefficients bien choisis (selon le procédé général décrit à l'étape \ref{platif-eclat-premier}
de la preuve du théorème principal). 

Ce théorème sur l'existence d'un idéal des coefficients est l'analogue 
du théorème 4.1.1 de \cite{raynaud-g1971}, ou plutôt d'un 
cas particulier de celui-ci (celui où $X$
est lisse à fibres géométriquement irréductibles sur $S$
et où $\mathscr M=\mathscr O_X$). Lorsqu'on 
suit la preuve de ce théorème de Raynaud et Gruson en 
simplifiant ce qui peut l'être dans le cas particulier évoqué, on voit qu'elle repose \emph{in fine}
sur le résultat suivant, aussi joli que surprenant :
\emph{si $A\to B$ est un morphisme lisse
entre anneaux tel
que les fibres de $\spec B\to \spec A$ soient toutes géométriquement irréductibles de même dimension $n$, 
alors $B$ est libre comme $A$-module
(évidemment de rang
$\aleph_0$ si $A\neq \{0\}$ et $n>0$)}.
Ce résultat s'obtient 
en combinant 
la prop. 3.3.1,  le th. 3.3.5  et le cor. 3.3.11 de
\cite{raynaud-g1971}, 
du moins dans le cas $n>0$ ; dans le cas $n=0$,
on peut remarquer directement que $A\to B$ est un isomorphisme. 

Or nous ignorons totalement si une variante banachique de cet énoncé
vaut pour les morphismes quasi-lisses entre algèbres $k$-affinoïdes, et n'avons pas
de piste pour en démontrer éventuellement
une. 

Toutefois, il y a un exemple évident de morphisme quasi-lisse $A\to B$ entre algèbres $k$-affinoïde tel que
les fibres de $\mathscr M(B)\to \mathscr M(A)$ soient toutes géométriquement irréductibles de même dimension, et tel que $B$ soit
somme directe banachique d'une famille dénombrables de $A$-modules libres de rang $1$ : c'est celui où $B=\mathscr M(A\{T/r\})$ pour un certain polyrayon $r$. Et cela nous sera suffisant pour suivre plus ou moins la preuve par \cite{raynaud-g1971} de l'existence d'un idéal de coefficients, grâce au lemme \ref{ql-section-disque}
ci-dessous. 

\begin{enonce}[remark]{Notation}
Pour tout espace $k$-analytique $X$ et tout polyrayon
$r$ on notera $\mathbf D_X(r)$ le polydisque relatif fermé de rayon $r$ sur $X$, 
c'est-à-dire l'espace $k$-analytique
$X\times_k \mathscr M(k\{T/r\})$. 
\end{enonce}

\begin{lemm}\label{ql-section-disque}
Soit $Y\to X$ un morphisme quasi-lisse et séparé
entre espaces $k$-analytiques, et soit $\sigma$
une section de $Y\to X$. Il existe un G-recouvrement affinoïde $(X_i)$ de $X$,
et pour tout $i$ un voisinage affinoïde $V_i$ de $\sigma(X_i)$ dans $Y\times_X X_i$,
un polyrayon $r_i$ 
et un $X_i$-isomorphisme
$V_i\simeq \mathbf D_{X_i}(r_i)$
modulo
lequel $\sigma$ est la section nulle. 
\end{lemm}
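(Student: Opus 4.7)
Le plan est le suivant. Tout d'abord, comme $Y\to X$ est séparé, la section $\sigma$ est une immersion fermée : elle se déduit en effet par changement de base de la diagonale $Y\to Y\times_X Y$, qui est une immersion fermée par définition de la séparation. Soit $\mathscr I$ le faisceau cohérent d'idéaux de $\mathscr O_Y$ qui la définit. On dispose d'une identification canonique $\mathscr I/\mathscr I^2\simeq \sigma^*\Omega_{Y/X}$, analogue à celle bien connue en théorie des schémas. Comme $Y\to X$ est quasi-lisse, le faisceau $\Omega_{Y/X}$ est libre en chaque point de $Y$ de rang la dimension relative en ce point. En raisonnant G-localement sur $X$, on se ramène au cas où $X$ est affinoïde, où la dimension relative de $Y$ sur $X$ est constante égale à un entier $d$ au voisinage de $\sigma(X)$, où $\sigma(X)$ est contenu dans un domaine affinoïde $V$ de $Y$, et où $\sigma^*\Omega_{Y/X}$ est libre de rang $d$ sur $X$.

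Je choisirai ensuite des générateurs $\bar f_1,\ldots,\bar f_d$ de $\mathscr I/\mathscr I^2$ comme $\mathscr O_X$-module libre, puis je les relèverai en des fonctions analytiques $f_1,\ldots,f_d$ sur $V$, nulles sur $\sigma(X)$. Les $f_i$ étant bornées sur $V$, on choisira un polyrayon $r=(r_1,\ldots,r_d)$ tel que $\abs{f_i(y)}\leq r_i$ pour tout $y\in V$, ce qui fournira un $X$-morphisme
\[g=(f_1,\ldots,f_d)\colon V\to \mathbf D_X(r)\]
envoyant la section $\sigma$ sur la section nulle $\zeta$ du polydisque $\mathbf D_X(r)$.

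L'étape-clef suivante consistera à vérifier que $g$ est quasi-étale en tout point de $\sigma(X)$ : en un tel point $z$ d'image $x$ sur $X$, les différentielles $df_i$ engendrent $\Omega_{V/X}\otimes \hr z$ par construction, donc le morphisme naturel $g^*\Omega_{\mathbf D_X(r)/X}\to \Omega_{V/X}$ est surjectif en $z$, et partant un isomorphisme pour des raisons de rang ; le critère jacobien (\cite{ducros2018}, chapitre 5) permettra alors de conclure à la quasi-lissité de $g$ en $z$ avec $\dim_z g=0$, c'est-à-dire à sa quasi-étalité. De plus, comme $\sigma$ est une section, on a $\hr z=\hr x=\hr{\zeta(x)}$, de sorte que $g$ induit un isomorphisme sur les corps résiduels complétés en chaque point de $\sigma(X)$.

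L'obstacle principal que je prévois réside dans le passage de cette information ponctuelle à un isomorphisme G-global sur un polydisque de rayon éventuellement plus petit. Je compte l'aborder en invoquant la structure locale des morphismes quasi-étales en géométrie de Berkovich (\cite{berkovich1994}, ainsi que le chapitre 5 de \cite{ducros2018}) : en chaque point $z$ de $\sigma(X)$, le morphisme $g$ étant quasi-étale et induisant un isomorphisme résiduel en $z$, il induit un isomorphisme d'un voisinage affinoïde de $z$ dans $V$ sur un voisinage affinoïde de $\zeta(x)$ dans $\mathbf D_X(r)$. En raffinant une dernière fois en un G-recouvrement $(X_i)$ de $X$ et en choisissant un polyrayon $r_i\leq r$ suffisamment petit, on garantira alors que la composante de $g\inv(\mathbf D_{X_i}(r_i))$ contenant $\sigma(X_i)$ est un voisinage affinoïde $V_i$ de $\sigma(X_i)$ dans $Y\times_X X_i$ s'envoyant isomorphiquement sur $\mathbf D_{X_i}(r_i)$ via $g$ ; modulo cet isomorphisme, $\sigma$ correspondra bien, par construction, à la section nulle.
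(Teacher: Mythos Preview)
Your outline coincides with the paper's: produce functions $f_1,\ldots,f_d$ vanishing on $\sigma(X)$ whose differentials trivialize $\Omega_{Y/X}$ along $\sigma(X)$, and show the resulting map to a polydisk is a local isomorphism near $\sigma(X)$. Your route to the $f_i$ via $\mathscr I/\mathscr I^2\simeq\sigma^*\Omega_{Y/X}$ is a perfectly good alternative to the paper's (pick a basis of $\Omega_{Y/X}$, then subtract $\sigma^*f_i$).

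There is, however, a genuine gap at the step where you pass from ``$g$ is quasi-étale with trivial residual extension at $z=\sigma(x)$'' to ``$g$ induit un isomorphisme d'un voisinage affinoïde de $z$ dans $V$ sur un voisinage affinoïde de $\zeta(x)$''. This is \emph{not} a general property of quasi-étale morphisms: if $V$ is an affinoïde domain of $W$ that fails to be a neighbourhood of some point $z\in V$, the inclusion $V\hookrightarrow W$ is quasi-étale with trivial residual extension at $z$, yet no neighbourhood of $z$ in $V$ maps onto a neighbourhood of $z$ in $W$. What is needed is that $g$ be \emph{étale} (quasi-étale and boundaryless) along $\sigma(X)$; then, and only then, does Berkovich's proposition~4.3.4 of \cite{berkovich1993} apply to give an isomorphism from an open neighbourhood of $\sigma(X)$ onto an open neighbourhood of the zero section, inside which one chooses $\mathbf D_X(r)$.

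The ingredient you are missing, and which the paper invokes explicitly, is that $\sigma(X)\subset\mathrm{Int}(Y/X)$ because $\sigma$ is a section (\cite{berkovich1994}, prop.~1.5.5~(i)). This is used twice. First, it implies that the germ $(Y,\sigma(x))$ is good, so one can take $V$ to be an affinoïde \emph{neighbourhood} of $\sigma(x)$ in $Y$; in your reduction you only take $V$ to be a domain containing $\sigma(X)$, and then your final $V_i$ will only be a neighbourhood of $\sigma(X_i)$ inside $V\times_X X_i$, not inside $Y\times_X X_i$ as the lemma requires. Second, once $V$ is a neighbourhood of $\sigma(X)$ one has $\sigma(X)\subset\mathrm{Int}(V/X)$; the paper then maps to the \emph{open} affine space $\mathbf A^n_X$ rather than directly to a closed polydisk, precisely so that the interior condition transfers and forces $g$ to be étale along $\sigma(X)$. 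Your invocation of ``la structure locale des morphismes quasi-étales'' does not substitute for this argument.
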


\begin{proof}
On peut raisonner G-localement sur $X$, ce qui autorise à le supposer affinoïde. 
Soit $x$ un point de $X$ et soit $n$ la dimension de $Y\to X$
en $\sigma(x)$. Le point $\sigma(x)$ appartient à l'intérieur relatif
de $Y$ sur $X$ (\cite{berkovich1994}, déf. 1.5.4 et prop. 1.5.5 (i)),
ce qui entraîne que le germe $(Y,\sigma(x))$ est bon. Il existe donc un voisinage
affinoïde $U$
de $\sigma(x)$ dans $Y$
tel que $U\to X$ soit purement de dimension $n$ et tel que $\Omega_{U/X}$
soit libre de rang $n$, avec une base de la forme $(\mathrm{d} f_1,\ldots, \mathrm{d}f_n)$ où les
$f_i$ sont des fonctions analytiques sur $U$. 
Posons $X'=\sigma\inv(U)$. Quitte à remplacer $X$ par $X'$
(ce qui est licite puisqu'on peut raisonner localement sur $X$)
et $U$ par $U\times_X X'$, on peut supposer que $\sigma(X)$ est contenu dans $U$. 
Comme $\sigma(X)$ est par ailleurs contenu dans l'intérieur relatif de $Y$ sur $X$, il est contenu dans l'intérieur
relatif de $U$ dans $Y$, c'est-à-dire dans l'intérieur topologique de $U$ dans $Y$
(\cite{berkovich1994}, prop. 1.5.5 (ii)). 

On peut dès lors remplacer $Y$ par $U$,
c'est-à-dire se ramener au cas où $Y$ est affinoïde, où $Y\to X$ est purement de dimension $n$, et où
$\Omega_{Y/X}$ est libre de rang $n$, avec une base de la forme $(\mathrm{d} f_1,\ldots, \mathrm{d}f_n)$ où les
$f_i$ sont des fonctions analytiques sur $Y$. Pour tout $i$, posons $g_i=\sigma^*f_i$. C'est une fonction sur $X$, 
et l'on note encore $g_i$ son image dans $\mathscr O_Y(Y)$. 
Quitte à remplacer chacune des $f_i$ par $f_i-g_i$, on peut alors supposer que $f_i|_{\sigma(X)}=0$ 
pour tout $i$. 
Soit $f\colon Y\to \mathbf A^n_X$ le morphisme induit par les $f_i$. Il est quasi-étale
(\cite{ducros2018}, Lemma 5.4.5), et la composée $\tau :=f\circ \sigma$ est la section nulle
de $\mathbf A^n_X\to X$. Comme $\sigma(X)$ est contenu dans l'intérieur relatif de $Y$
sur $X$, il est contenu dans l'intérieur du morphisme $f$, qui est donc étale au voisinage de $\sigma(X)$. 
Puisque $f$ induit un isomorphisme de $\sigma(X)$ sur $\tau(X)$, il résulte de la proposition 
4.3.4 de \cite{berkovich1993}
que $f$ induit un isomorphisme d'un voisinage ouvert $W$ de $\sigma(X)$ dans $Y$ sur un voisinage 
ouvert $\Omega$ de $\tau(X)$ dans $\mathbf A^n_X$. Puisque $\tau$ est la section nulle, il existe un polyrayon 
$r$ de longueur $n$ tel que $\mathbf D_X(r)\subset \Omega$. Soit $V$ l'image réciproque
de $\mathbf D_X(r)$ par l'isomorphisme $W\simeq \Omega$. C'est par construction un voisinage affinoïde de $\sigma(X)$
muni d'un $X$-isomorphisme $V\simeq \mathbf D_X(r)$ qui identifie $\sigma$ à la section nulle. 
\end{proof}

Énonçons dès maintenant
un corollaire technique du lemme précédent, dont
nous aurons besoin plus tard. 

\begin{coro}\label{coro-tube-transverse}
Soit $Y\to X$ un morphisme quasi-lisse entre espaces $k$-analytiques compacts et soit $Z$
un fermé de Zariski de $Y$ tel que $Z_x$ soit d'intérieur vide dans $Y_x$ pour tout $x$. Il existe alors un domaine
analytique compact $V$ de $Y$ rencontrant toutes les fibres de $Y\to X$ et disjoint de $Z$.
\end{coro}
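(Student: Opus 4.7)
The plan is to use a compactness argument: for each $x \in f(Y)$, construct a compact analytic domain $V_x \subset Y$ with $V_x \cap Z = \emptyset$ such that $f(V_x)$ contains a topological neighborhood of $x$ in $X$. Granted this, $\{f(V_x)\}_{x \in f(Y)}$ is a cover of the compact space $f(Y)$ by sets each containing a neighborhood of its basepoint, so a finite subcover $\{f(V_{x_1}),\ldots,f(V_{x_n})\}$ exists, and $V := V_{x_1} \cup \ldots \cup V_{x_n}$ is a compact analytic domain (as a finite union), disjoint from $Z$, with $f(V) \supset f(Y)$, i.e.\ meeting every non-empty fiber of $f$.

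For the local construction, fix $x \in f(Y)$ and pick $y_x \in Y_x \setminus Z_x$: this is possible because $Y_x$, being quasi-smooth over $\hr x$, is geometrically regular hence reduced, while $Z_x$ has empty interior in $Y_x$ by hypothesis. Since $Z$ is Zariski-closed (hence topologically closed) in $Y$, the point $y_x$ lies in the open set $Y \setminus Z$; and since $Y$ admits a finite cover by affinoid domains, one of which contains $y_x$, the point $y_x$ has a fundamental system of affinoid neighborhoods, among which I choose a compact analytic $V_x$ contained in $Y \setminus Z$. The composition $V_x \hookrightarrow Y \to X$ is quasi-smooth (hence flat), so by the result on images under compact flat morphisms, $f(V_x)$ is a compact analytic domain of $X$ containing $x$.

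The crux is to upgrade ``$f(V_x) \ni x$'' to ``$f(V_x)$ contains a topological neighborhood of $x$'', i.e.\ the statement that $f$ is topologically open at $y_x$. I would deduce this from Lemma \ref{ql-section-disque} after base change: arrange a quasi-étale morphism $X' \to X$ whose image contains a neighborhood of $x$, such that $Y' := Y \times_X X' \to X'$ admits a section through a lift of $y_x$. The lemma then supplies a polydisk neighborhood $\mathbf D_{X'}(r)$ of this section in $Y'$, whose projection to $X'$ is manifestly topologically open; composing with the quasi-étale map $X' \to X$ shows that $f(V_x)$ contains a neighborhood of $x$, after shrinking $V_x$ to descend the polydisk. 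The main obstacle is the existence of such a Henselian-type quasi-étale lifting of a section through $y_x$; in pathological situations where $Y$ lies entirely in $\partial(Y/X)$ (for instance, $Y$ a closed sub-domain of $X$), the empty-interior hypothesis on $Z_x$ forces $Z = \emptyset$ (fibers are then essentially discrete), and one concludes trivially by taking $V = Y$.
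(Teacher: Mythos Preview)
Your overall plan (pointwise choice of $y_x\in Y_x\setminus Z_x$, then a compactness argument) is natural, but the proof breaks precisely at the point you yourself identify as the crux: you have not shown that $f(V_x)$ is a neighborhood of $x$ in $f(Y)$. This would amount to some openness of $f$ at $y_x$, which is simply false at relative boundary points of a quasi-smooth morphism. Your proposed fix via a quasi-\'etale base change producing a section through $y_x$ is not justified: the lemma on polydisk neighborhoods of sections applies \emph{once a section is given}, but you have not explained why a section through a chosen $y_x$ exists even quasi-\'etale-locally on the base. Your fallback ``if $Y\subset\partial(Y/X)$ then fibers are discrete and $Z=\emptyset$'' is also not established (and it is not clear that these are the only problematic cases). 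As it stands, the compactness step does not go through: the $f(V_x)$ are only compact analytic domains, and a covering of $f(Y)$ by such domains need not admit a finite subcover.

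The paper's proof avoids this openness issue completely by taking a different route. After reducing to $Y$ affinoid and replacing $X$ by $f(Y)$, one picks generators $f_1,\ldots,f_m$ of the ideal defining $Z$ and sets $\Phi=\max_i|f_i|$; the sought $V$ will be $\{\Phi\geq\epsilon/2\}$ for suitable $\epsilon>0$. The whole problem then becomes showing that $\Psi(x):=\max_{Y_x}\Phi$ is bounded below on $X$ by some $\epsilon>0$. This can be checked after any surjective base change, so one may assume (via the lemma on polydisk neighborhoods of sections, applied after an \'etale-local base change producing a section) that $Y\to X$ has a section $\sigma$ with a neighborhood $W\simeq\mathbf D_X(r)$. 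One then uses the \emph{Shilov section} $\mathfrak s\colon X\to W$, sending $x$ to the Shilov point of $W_x\simeq\mathbf D_{\hr x}(r)$: this map is continuous, and $\mathfrak s(x)\notin Z$ since $Z\cap W_x$ is a strict Zariski closed subset of the polydisk. Hence $\Phi\circ\mathfrak s$ is continuous and strictly positive on the compact $X$, giving the bound. The key difference from your approach is that the paper never needs $f$ to be open anywhere: the continuity of the Shilov section replaces the missing openness and furnishes the required uniformity directly.
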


\begin{proof}
Comme une union finie de  domaines analytiques compacts est un domaine analytique
compact et comme le fait d'être d'intérieur vide est, pour un fermé de Zariski,
préservé par la restriction à un domaine analytique, on peut raisonner G-localement sur $Y$,
et partant le supposer affinoïde. Comme $Y\to X$ est quasi-lisse il est plat,
et l'image du morphisme $Y\to X$ est
par conséquent
un domaine analytique compact de $X$ ; quitte à remplacer $X$ par ce dernier,
on peut supposer que $Y\to X$ est surjective. Soit $(f_1,\ldots, f_m)$
une famille de fonctions analytiques sur $Y$ engendrant
l'idéal qui décrit le fermé $Z$ ; posons $\Phi=\max(\abs{f_1},\ldots, \abs{f_m})$.
C'est une fonction continue de $Y$ dans $\mathbf R_{\geq 0}$ dont le lieu d'annulation est $Z$.
Pour tout $x\in X$, notons $\Psi(x)$ le maximum de $\Phi$ sur la fibre $Y_x$. Le but de ce qui suit
est de montrer que $\Psi$ est minorée sur $X$ par un réel $\epsilon>0$ ;
on pourra alors prendre pour $V$ le domaine défini par l'inégalité $\Phi\geq \epsilon/2$. 

La minoration qu'on cherche à établir sur $\Psi$ peut se montrer après
n'importe quel changement de base surjectif $X'\to X$, ce qui autorise
à supposer tout d'abord que le corps de définition de $X$ n'est pas trivialement valué et que $Y$ et $X$ sont stricts, puis,
en vertu du lemme précédent,
que $X$ est affinoïde et que
$Y\to X$ possède une section $\sigma$
telle qu'il existe un voisinage analytique $W$ de $\sigma(X)$ dans $Y$ et un $X$-isomorphisme $W\simeq \mathbf D_X(r)$ pour un certain polyrayon $r=(r_1,\ldots, r_n)$ (isomorphisme
qui identifie $\sigma$ à la section nulle). 

Pour tout $x$, notons $\mathfrak s(x)$ le point de Shilov de
$W_x\simeq \mathbf D_{\hr x}(r)$ (c'est l'unique point en lequel toute fonction analytique
sur $\mathbf D_{\hr x}(r)$ atteint son maximum en norme ; il correspond
à la norme de l'algèbre $\hr x\{T/r\}$, qui est multiplicative).
Le fermé $Z_x$ est d'intérieur vide dans $Y_x$, et $Z\cap W_x$ est donc un fermé de Zariski strict de $\mathbf D_{\hr x}(r)$ ;  en particulier, il ne contient pas $\mathfrak s(x)$. 
L'application $\mathfrak s$
est continue (\cite{berkovich1990}, lemme
3.3.2 (i)).
La composée $\Phi\circ \mathfrak s$ est encore continue, et partout strictement positive par ce qui précède. Cette fonction est donc minorée sur $X$ par un réel $\epsilon>0$ ; il en va
\emph{a fortiori}
de même pour $\Psi$.
\end{proof}

\begin{theo}[Idéal des coefficients]\label{theo-id-coeff}
Soit $Y\to X$ un morphisme quasi-lisse et compact entre espaces $k$-analytiques, à fibres
géométriquement irréductibles. Soit $Z$ un sous-espace analytique 
fermé de $Y$. La catégorie des espaces analytiques $T$ définis sur une extension complète de $k$ et munis d'un $k$-morphisme
$T\to X$ tel que l'immersion fermée
$Z\times_X T\hookrightarrow Y\times_X T$ soit un isomorphisme
admet un objet final $S$. L'espace $S$ est $k$-analytique et $S\to X$ est une immersion fermée. La formation de $S$ commute aux changements de base. 

Si $\mathscr I$ désigne le faisceau d'idéaux définissant $Z$, le faisceau d'idéaux $\mathscr J$ qui définit $S$ est appelé \emph{l'idéal des coefficients}
de $\mathscr I$ ; si $\mathscr J$ est inversible alors le sous-faisceau cohérent 
$(\mathscr J\mathscr O_Y)\inv \mathscr I$ de $\mathscr K_Y$ est contenu dans
$\mathscr O_Y$, et son idéal des coefficients est $\mathscr O_X$. 
\end{theo}

\begin{proof}
Commençons par une remarque.
Soit $S\hookrightarrow X$ une immersion fermée, donnée par un faisceau cohérent d'idéaux $\mathscr I$
sur $X$. Soient $T$ un espace analytique défini sur une extension 
complète de $k$, et soit $X'$ un espace $k$-analytique. Soit
$T\to X$ un $k$-morphisme
et soit $X'\to X$ une surjection plate et compacte. Le morphisme $T\to X$ se factorise alors par $S$ si et seulement
si $T'\to X'$ se factorise par $S'$, en notant $T'$ et $S'$ les espaces déduits de $T$ et $S$ par le changement de base $X'\to X$. En effet, 
$T\to X$ se factorise par $S$ si et seulement si l'image réciproque de $\mathscr I$ dans $\mathscr O_T$ est nulle, et $T'\to X'$ se factorise
par $S'$ si et seulement si l'image réciproque de $\mathscr I$ dans $\mathscr O_{T'}$ est nulle, et on conclut en remarquant que 
par descente fidèlement plate pour les sections globales des faisceaux cohérents
(\ref{theo-desc-coh} (1)) $\mathscr O_T\to p_*\mathscr O_{T'}$ est injective, où $p$ désigne le morphisme plat, compact
et surjectif $T'\to T$. 

Combiné à la descente fidèlement plate pour les faisceaux cohérents eux-mêmes (th. \ref{theo-desc-coh} (2)), cette remarque nous montre qu'il suffit
d'établir le théorème 
après un changement de base compact, plat et surjectif. Comme il est par ailleurs de nature G-locale, on peut procéder aux réductions
successives suivantes. 

\begin{itemize}[label=$\bullet$]
\item On peut supposer que $X$ est compact ; l'espace $Y$ est alors compact. 
\item Quitte à remplacer $X$ par $X_r$ pour un polyrayon $r$ convenable,  
on peut supposer que
$k$ n'est pas trivialement valué et que 
$X$ et $Y$ sont stricts. 
\item Comme $Y\to X$ est surjectif (ses fibres sont intègres et en particulier non vides), on 
déduit du th. 9.1.3 de \cite{ducros2018} qu'il existe un espace $k$-analytique compact $X'$ et un morphisme
quasi-étale $X'\to X$ tel que $Y\times_X{X'}\to X'$ admette une section. Quitte à remplacer $X$ par $X'$, 
on peut supposer que $Y\to X$ admet une section $\sigma$. 
\item Enfin en utilisant le lemme \ref{ql-section-disque}, on se ramène au cas où $X$ est affinoïde et
où $\sigma(X)$ possède un voisinage
affinoïde $V$ dans $Y$ tel qu'il existe un $X$-isomorphisme entre $V$ et un polydisque relatif sur $X$
identifiant $\sigma(X)$ à la section nulle. Notons $n$ la dimension relative de ce polydisque. Comme
les fibres de $Y\to X$ sont géométriquement intègres, elles sont purement de dimension $n$. 

\end{itemize} 
Soit $A$ l'algèbre des fonctions analytiques sur $X$ ; l'algèbre des fonctions analytiques sur $V$ est
isomorphe à $A\{r\inv \tau\}$ pour un certain polyrayon $r$. Le sous-espace analytique fermé $Z\cap V$ de $V$
est alors défini par une famille finie $(\sum_I a_{I,j}\tau^I)_j$ de fonctions. Notons $S$ le sous-espace analytique fermé
de $X$ défini par l'idéal $(a_{I,j})_{I,j}$ ; nous allons montrer qu'il répond à la question. 

Soit $T\to X$ un $k$-morphisme d'espaces
analytiques, où $T$ est défini sur une extension complète de $k$ ; nous désignons désormais par $a_{I,j}$ l'image réciproque de $a_{I,j}$ sur $T$. Il s'agit de démontrer que
$Z\times_X T\simeq Y\times_X T$ si et seulement si tous les $a_{I,j}$ sont nuls. L'assertion étant G-locale sur $T$, on peut le supposer
affinoïde ; quitte à remplacer $X$ par $T$, on peut alors
supposer que $T=X$. 

Supposons que $Z=Y$. Dans ce cas $Z\cap V=V$, et l'idéal engendré par les $\sum_I a_{I,j}\tau^I$ pour $j$ variable est alors nul.
Cela signifie que 
$\sum_I a_{I,j}\tau^I=0$ quel que soit $j$, c'est-à-dire encore
que $a_{I,j}=0$ quel que soit $(I,j)$. 

Réciproquement, supposons que tous les coefficients
$a_{I,j}$ soient nuls. On a alors $Z\cap V=V$. Pour montrer que $Z=Y$, il suffit en vertu
du lemme \ref{fonction-annulation}
de vérifier que $V$ rencontre toutes les composantes assassines de $Y$. Soit $Y'$ une telle composante. D'après la proposition \ref{prop-compass-morph}
et la remarque \ref{rema-compass-morph} (2), il existe une composante assassine $X'$ de $X$ telle que $Y'$ soit une composante
irréductible de $Y\times_X X'$. Soit $Y''$ l'ouvert de $Y\times_X X'$ formé des points de $Y'$ qui ne sont situés sur aucune autre composante de $Y\times_X X'$. Soit $y\in Y''$ et soit $x$
son image sur $X'$. La fibre $Y''_x$ est un ouvert non vide de l'espace irréductible $Y_x$, et $Y''_x\subset Y'_x$ ; par conséquent $Y'_x=Y_x$. En particulier, $Y'$ rencontre $\sigma(x)\in V$.

Montrons maintenant la dernière assertion. On suppose donc que $\mathscr J$ est inversible. En raisonnant G-localement, on peut se ramener au cas où il existe
un élément
régulier $f$ de $A$ tel que $(a_{I,j})_{I,j}=(f)$. Pour tout $(I,j)$, notons $b_{I,j}$ l'unique élément de $A$ tel que $a_{I,j}=fb_{I,j}$. 
Le sous-$\mathscr O_Y$-module $(\mathscr J\mathscr O_Y)\inv \mathscr I$
de $\mathscr K_Y$ est alors engendré par $(\sum_I b_{I,j}\tau^I)_j$ et est en particulier contenu dans $\mathscr
O_Y$. Et son idéal des coefficients
est engendré par les $b_{I,j}$, et est donc égal à $\mathscr O_X$ puisque $(b_{I,j})_{I,j}=A$ par construction. 
\end{proof}

Ce théorème sur l'existence d'un idéal de coefficients s'applique à un morphisme
quasi-lisse \emph{à condition que ses fibres soient géométriquement irréductibles}, c'est-à-dire
encore non vides et géométriquement connexes (puisqu'un espace quasi-lisse est normal). 
Nous allons maintenant décrire un procédé permettant de se ramener à cette situation. 

\begin{defi}
Soit $Y\to X$ un morphisme d'espaces $k$-analytiques. 
Un \emph{découpage
de $Y$ relativement à $X$}, ou encore un \emph{$X$-découpage de $Y$}, 
est un G-recouvrement de $Y$ par des domaines
analytiques compacts dont les fibres au-dessus de $X$ sont géométriquement connexes. 
Si $Y$ et $X$ sont $\Gamma$-stricts, nous dirons qu'un découpage est $\Gamma$-strict s'il
est constitué de domaines analytiques $\Gamma$-stricts. 
\end{defi}

\begin{theo}\label{exist-decoupages}
Soit $Y\to X$ un morphisme d'espaces $k$-analytiques
$\Gamma$-stricts. On suppose que les deux assertions suivantes sont satisfaites : 
\begin{enumerate}[1]
\item $Y\to X$
est plat à fibres géométriquement réduites ; 
\item $Y\to X$ est topologiquement propre.
\end{enumerate}
ll existe alors un recouvrement $(Z_i\to X)$ de $X$ pour la topologie quasi-étale tel que pour tout indice $i$, les deux propriétés suivantes
soient satisfaites : 

\begin{itemize}[label=$\diamond$]
\item l'espace $Z_i$ est affinoïde et $\Gamma$-strict ;
\item l'espace 
$Y\times_X Z_i$
possède un $Z_i$-découpage $\Gamma$-strict.
\end{itemize}
\end{theo}

\begin{proof} Nous allons procéder en plusieurs étapes. 

\subsubsection{Réduction au cas affinoïde}
L'énoncé à prouver est G-local sur $X$, ce qui permet de supposer que $X$
est affinoïde. Dans ce cas $Y$ est quasi-compact par propreté topologique, et il admet donc
un $G$-recouvrement fini par des domaines affinoïdes $\Gamma$-stricts. Il suffit de démontrer le théorème pour chacun de ces domaines ; on
suppose donc désormais que $Y$ et $X$ sont affinoïdes. 

\subsubsection{Insensibilité aux revêtements radiciels}\label{deviss-insensible-radiciel}
Si $X'\to X$ est un morphisme
fini et radiciel, et si le théorème vaut au-dessus de $X'$, il vaut sur $X$. Supposons
en effet qu'il existe une famille finie $(Z_i\to X')$ de morphismes quasi-étales de sources
affinoïdes, recouvrant $X'$, et tels que $Y\times_X Z_i$ admette un $Z_i$-découpage
$\Gamma$-strict pour tout $i$. Quitte à raffiner le recouvrement $(Z_i)$, on peut supposer
qu'il existe pour tout $i$ un domaine $\Gamma$-rationnel $V_i$ 
de $X'$ (\ref{ss-ger-grau})
et un revêtement fini étale $W_i$ de $V_i$
dont $Z_i$ est un domaine $\Gamma$-rationnel. 

L'image de $V_i$ sur $X$ est un domaine $\Gamma$-rationnel $U_i$ de $X$
(qui se décrit en élevant les inéquations décrivant $V_i$ à une puissance
convenable de l'exposant caractéristique). 
L'invariance topologique du site étale (\cite{berkovich1993}, th. 4.3.4) assure l'existence d'un revêtement fini
étale $T_i\to U_i$ tel que $W_i=V_i\times_{U_i}T_i$. L'image de $Z_i$ sur $T_i$ par le 
morphisme radiciel $W_i\to T_i$ est un domaine
$\Gamma$-rationnel $S_i$ de $T_i$ ; la famille
$(S_i\to X)$ est un recouvrement quasi-étale de $X$. 

Fixons $i$. L'espace affinoïde $Y\times_X Z_i$ admet un recouvrement fini $(Y_{ij})$ par des domaines analytiques
$\Gamma$-stricts compacts, tels que $Y_{ij}\to Z_i$ soit à fibres géométriquement connexes pour tout $j$. Pour tout $j$, l'image
$\Omega_{ij}$ de $Y_{ij}$ sur $Y\times_X S_{ij}$ est un domaine analytique compact et $\Gamma$-strict de $Y\times_X S_i$, et
les fibres de $\Omega_{ij}\to S_i$ sont géométriquement connexes,
d'où l'assertion. 

\subsubsection{}
Nous allons utiliser librement
le langage de l'algèbre graduée à la Temkin,
dans sa variante introduite à la section 1 de \cite{ducros2019},
ainsi que la notion de \emph{présentation $\Gamma$-sympathique}
d'un morphisme entre algèbres affinoïdes, \emph{cf. op. cit.}, définition 3.2. 

Le théorème 3.5 de \emph{op. cit.} assure qu'il existe une famille finie 
de morphismes $X_i\to X$, dont les images recouvrent $X$, telles que les propriétés suivantes soient satisfaites pour tout $i$ : 

\begin{itemize}[label=$\diamond$] 
\item l'espace $X_i$ est affinoïde et $\Gamma$-strict ; 
\item si $\abs{k\gpm}\neq \{1\}$ alors $X_i\to X$ est quasi-étale ; en général, il admet une décomposition $X_i\to X'_i\to X$ avec $X_i\to X'_i$ fini, radiciel et plat, et $X'_i\to X$ quasi-étale ; 
\item $Y\times_X X_i\to X_i$ admet une présentation $\Gamma^\Q$-sympathique.
\end{itemize}
Il découle alors de \ref{deviss-insensible-radiciel}, et du fait que le théorème à montrer est, par son énoncé même, local pour la topologie quasi-étale sur $X$, qu'il suffit de traiter le cas où $Y\to X$ admet
une présentation $\Gamma^\Q$-sympathique. 

\subsubsection{}
Nous utilisons dans ce qui suit les réductions \emph{$\Gamma^\Q$-graduées} à la Temkin, \emph{cf.} \cite{ducros2019}, 2.1.3.

L'existence d'une présentation $\Gamma^\Q$-sympathique de $Y\to X$ implique
par définition que le morphisme naturel $\widetilde Y\to \widetilde X$ induit par la flèche $Y\to X$ admet une factorisation $\widetilde Y\to \mathsf Y\to \widetilde X$ telles que les
propriétés suivantes (entre autres) soient satisfaites :
\begin{enumerate}[1]
\item $\widetilde Y$ est finie sur $\mathsf Y$ ; 
\item il existe un recouvrement ouvert fini
$(\mathsf U_i)$ de $\mathsf Y$ tel que pour tout $\xi\in \widetilde X$ et tout $i$, la fibre $\mathsf U_{i,\xi}$ soit géométriquement connexe et soit ou bien vide, ou bien une composante connexe de $\mathsf Y_\xi$ ; 
\item pour tout point $x\in X$ dont on note $\widetilde x$ l'image sur $\widetilde X$ et toute extension complète $L$ de $\hr x$, le morphisme naturel $\widetilde {Y_{x,L}}\to \mathsf Y_{\widetilde x,\widetilde L}$ est un isomorphisme.
\end{enumerate}

Pour tout $i$, désignons par $U_i$ l'image réciproque
de $\mathsf U_i$ sur $Y$ par la flèche composée $Y\to \widetilde Y\to \mathsf Y$. Chacun des $U_i$ est un domaine analytique compact et $\Gamma$-strict de $Y$, et les $U_i$ recouvrent $Y$. Il suffit de montrer que pour tout $i$,
les fibres de $U_i \to X$ sont géométriquement connexes. 

Fixons $i$, et donnons-nous un point $x\in X$. Pour toute extension complète $L$ de $\hr x$, l'espace $U_{i,x,L}$ s'identifie à l'image réciproque 
de l'ouvert $\mathsf U_{i,\widetilde x,\widetilde L}$ de $\widetilde {Y_{x,L}}= \mathsf Y_{\widetilde x,\widetilde L}$, et 
$\mathsf U_{i,\widetilde x,\widetilde L}$ est connexe car $\mathsf U_{i,\widetilde x}$ est géométriquement connexe par hypothèse. 

Il n'y a plus pour conclure qu'à expliquer pourquoi la connexité de $\mathsf U_{i,\widetilde x,\widetilde L}$ entraîne celle de $U_{i,x,L}$. 
Or d'une manière générale, si $Z=\mathscr M(C)$ est un espace affinoïde $\Gamma$-strict et si $\rho$ désigne l'application de réduction $Z\to \widetilde Z$ alors $\rho$ induit pour tout ouvert $\mathsf U$ de $\widetilde Z$ une bijection $\pi_0(\rho\inv(\mathsf U))\simeq \pi_0(\mathsf U)$. En effet il suffit en vertu de la surjectivité et
de l'anti-continuité de $\rho$ de le vérifier  pour $\mathsf U$ parcourant une une base d'ouverts, et donc par exemple pour
$\mathsf U$ principal, auquel cas $\rho\inv(\mathsf U)$ est un domaine affinoïde de réduction $\mathsf U$ ; on s'est donc ramené au cas $\mathsf U=\widetilde Z$. La connexité de $Z$ entraîne celle de $\widetilde Z$ parce que $\rho$ est anti-continue et surjective. 
Et comme tout isomorphisme $C\simeq C_1\times C_2$ induit un isomorphisme $\widetilde C\simeq \widetilde {C_1}\times \widetilde {C_2}$, la connexité de $\widetilde Z$ entraîne celle de $Z$.
\end{proof}

\begin{rema}
Soit $Y\to X$ un morphisme plat, topologiquement propre et topologiquement
séparé entre espaces $k$-analytiques $\Gamma$-stricts. Supposons que $Y$ admette
un découpage $\Gamma$-strict $(Y_i)$ sur $X$. Il possède alors un découpage 
$\Gamma$-strict $(Z_j)$ sur $X$ que pour tout $j$, chaque fibre
non vide de $Z_j\to X$ soit une composante
connexe de la fibre de $Y\to X$ qui la contient. 

Pour le voir, on peut raisonner G-localement sur $X$, et partant supposer $Y$ et $X$ compacts ; 
on peut alors également supposer que le découpage $(Y_i)$ est fini, et indexé par $\{1,\ldots r\}$ pour un certain $r$.
On construit récursivement
pour tout $i$ 
une suite $(Z_{ij})_{1\leq j\leq r}$ de domaines analytiques compacts
et $\Gamma$-stricts de $Y$ par le procédé suivant :

\begin{itemize}[label=$\diamond$] 
\item $Z_{i1}=Y_i$ ; 
\item $Z_{ij}=Z_{i,j-1}\cup \bigcup_{1\leq \ell \leq r} Y_\ell \cap f^{-1}(f(Y_\ell\cap Z_{i,j-1}))$
\end{itemize}
Pour tout $i$, on pose $Z_i=Z_{i,r}$. 

Soit $x$ un point de $X$ et soit $y
\in Y_x$.
Soit $i$ un entier compris entre $1$ et $r$. Notons $\mathscr S_i$ l'ensemble 
des entiers $s$ compris entre $1$ et $r$ tels 
qu'il existe une suite $i=a_1,a_2,\ldots, a_s$ d'entiers deux à deux distincts compris entre $1$ et $r$
sujette aux conditions suivante : 
$(Y_{a_\ell}\cap Y_{a_{\ell +1}})_x$ est non vide pour tout $\ell$ compris
entre $1$ et $s-1$, et $y\in Y_{a_s}$. Si $\mathscr S_i$ est non vide
on note $\delta(y,i)$ son plus petit élément ; si $\mathscr S_i$ est vide on pose
$\delta(y,i)=+\infty$.

Par construction, $\delta(y,i)=1$ si et seulement si $y\in Y_i$ ; et comme
les $Y_{i,x}$ sont des parties compactes et connexes
 de la fibre $Y_x$ qui recouvrent cette dernière, 
on voit que $\delta(y,i)<+\infty$ si et seulement si $Y_{i,x}\neq \emptyset$ et $y$ appartient 
à la composante connexe de $Y_x$ contenant $Y_{i,x}$.

Par ailleurs il résulte de la définition que $y$ appartient au
domaine analytique $Z_{ij}$ 
si et seulement $\delta(y,i)\leq j$. Il s'ensuit que pour tout $i$, la
fibre $Z_{i,x}$ est vide si $Y_{i,x}=\varnothing$, et égale à la composante
connexe de $Y_x$ contenant $Y_{i,x}$ sinon. 

Ainsi, $(Z_i)_{1\leq i\leq r}$ constitue un recouvrement de $Y$ par des
domaines analytiques compacts et $\Gamma$-stricts
telles que pour tout $i$, toute fibre
non vide de $Z_i\to X$ soit une composante connexe de la fibre de $Y\to X$
qui la contient. Et si $x$ est un point de $X$ tel que $Z_{i,x}$
soit non vide alors $Y_{i,x}$ est non vide, contenu dans $Z_{i,x}$,
et géométriquement connexe ; par conséquent $Z_{i,x}$ est géométriquement connexe. 
\end{rema}

\section{Rappels et compléments sur les dévissages}
\label{s-complement-devissages}

\begin{enonce}[remark]{Notation}
Pour tout morphisme $u\colon \mathscr F\to \mathscr G$ entre faisceaux cohérents sur un espace
$k$-analytique $X$, on notera $\mathrm{Bij}(u)$ l'ensemble des points de $x$ en lesquels $u$
est bijectif ; c'est un ouvert de Zariski de $X$. 
\end{enonce}

Nous allons maintenant
donner la définition de dévissage en un point. Elle est inspirée
de la notion analogue dans le contexte schématique, introduite par 
Raynaud et Gruson dans \cite{raynaud-g1971}.

\begin{defi}\label{defi-deviss}
Soit $Y\to X$ un
morphisme d'espaces $k$-analytiques, soit $y$ un point de $Y$
et soit $x$ son image sur $X$. Soit $\mathscr F$ un faisceau cohérent sur $Y$ dont le support contient $y$. 
Soit $r$ un entier strictement positif et soit $n_1>n_2>\ldots >n_r$ une suite strictement décroissante
d'entiers. Un \emph{$X$-dévissage
de $\mathscr F$ en $y$ en dimensions $n_1>n_2>\ldots n_r$} est une liste de données
$(V,\{T_i,\pi_i,u_i,t_i,\mathscr L_i,\mathscr P_i\}_{i\in \{1,\ldots, r\}})$ où : 

\begin{itemize}[label=$\bullet$] 
\item $V$ est un voisinage affinoïde de $y$ dans $Y$ ; 
\item $T_i$ est pour tout $i$ un domaine
affinoïde d'un espace $k$-analytique $T^\dagger_i$
muni d'un morphisme $T_i^\dagger\to X$
lisse
et purement de dimension relative $n_i$, et $t_i$ est un point de $T_i$ situé au-dessus de $x$ ; 
\item pour tout $i$, $\mathscr L_i$ et $\mathscr P_i$ sont des $\mathscr O_{T_i}$-modules
cohérents, $\mathscr L_i$ est libre, et $\mathscr P_i$ est non nul dès que $i\leq r-1$ ; 

\item $t_i\in \mathrm{Supp}(\mathscr P_i)$ pour tout $i<r$, et $t_r\in \mathrm{Supp}(\mathscr P_r)$
si $\mathscr P_r\neq 0$ ; 
\item $\pi_1$ est un $X$-morphisme fini de $\mathrm{Supp}(\mathscr F_V)$ vers $T_1$
tel que $\pi_1\inv(t_1)=\{y\}$ \emph{ensemblistement} ; 
\item $\pi_i$ est pour tout $i$ compris entre $2$ et $r$ un morphisme fini de
$\mathrm{Supp}(\mathscr P_{i-1})$ vers $T_i$ tel que $\pi_i\inv(t_i)=\{t_{i-1}\}$
\emph{ensemblistement} ; 
\item $u_1$ est un morphisme de $\mathscr L_1$ vers $\pi_{1*}\mathscr F_V$ 
de conoyau $\mathscr P_1$ et tel que $t_1$ appartienne à $\adht {\mathrm{Bij}(u_1|_{T_{1,x}})}{T_{1,x}}$ ; 
\item pour tout $i$ compris entre $2$ et $r$, $u_i$ est un morphisme de $\mathscr L_i$
vers $\pi_{1*}\mathscr P_{i-1}$ 
de conoyau $\mathscr P_i$ et tel que $t_i$
appartienne à $\adht {\mathrm{Bij}(u_i|_{T_{i,x}})}{T_{i,x}}$.
\end{itemize}

On dit que ce dévissage est \emph{$\Gamma$-strict}
si $V$ et tous les $T_i$ sont $\Gamma$-stricts, et qu'il est \emph{total}
si $\mathscr P_r=0$. 
\end{defi}

\begin{rema}
Notre définition diffère légèrement de celle de \cite{ducros2018}, déf. 8.2.4 : 
ce que l'on appelait simplement dévissage dans \emph{loc. cit.}
est ce que nous appelons ici dévissage \emph{total}. 
\end{rema}

\subsection{Existence de dévissages}\label{ss-exist-deviss}
Si $Y$ et $X$ sont bons
et si $Y$ est $\Gamma$-strict,
le théorème 8.2.5 de \cite{ducros2018}
assure l'existence d'un $X$-dévissage $\Gamma$-strict 
total de
$\mathscr F$ en $y$ en dimensions comprises entre
$\dim_y (\mathrm{Supp}(\mathscr F))_x$ et $\mathrm{coprof}_y \mathscr F_{Y_x}$.

\subsection{Un critère de platitude}\label{ss-deviss-critplat}
L'intérêt des dévissages est de fournir un critère de platitude qui dans certaines circonstances se révèle plus
applicable que la définition directe. 
Ainsi, soient $X, Y$, $x$, $y$ et $\mathscr F$
comme dans la définition \ref{defi-deviss}, et soit
$(V, \{T_i,\pi_i,u_i,t_i,\mathscr L_i,\mathscr P_i\}_{i\in \{1,\ldots, r\}})$
un $X$-dévissage de $\mathscr F$ en $y$. Les assertions suivantes 
sont  équivalentes par une application répétée de la proposition 8.1.11 (1) de \cite{ducros2018} :

\begin{enumerate}[i]
\item $\mathscr F$ est $X$-plat en $y$ ; 
\item le faisceau cohérent $\mathscr P_r$ est $X$-plat en $t_r$ et $u_i$ est injectif en $t_i$
pour tout $i$. 
\end{enumerate}

\begin{rema}
Notons que si le dévissage est total, $\mathscr P_r$ est nul et en particulier automatiquement $X$-plat en $t_r$ : dans ce cas
$\mathscr F$ est donc $X$-plat en $x$ si et seulement si $u_i$ est injectif en $t_i$ pour tout $i$, 
énoncé qui constitue le th. 8.4.3 (2) de \cite{ducros2018}.

\end{rema}

Nous allons avoir besoin d'une variante légèrement plus globale de la notion de dévissage.

\begin{defi}
Soit $Y\to X$ un morphisme d'espaces
$k$-analytiques et soit $E$ 
un sous-ensemble de $X$. 
Soit $\mathscr F$ un faisceau cohérent sur $Y$. 
Soit $r$ un entier strictement positif et soit $n_1>n_2>\ldots >n_r$ une suite strictement décroissante
d'entiers $\geq 0$. Un \emph{$X$-dévissage de $\mathscr F$ au-dessus de $E$
en dimensions $n_1>n_2>\ldots >n_r$} est une liste de données 
$\{T_i,\pi_i,u_i, \mathscr L_i,\mathscr P_i\}_{i\in \{1,\ldots, r\}}$ où: 

\begin{itemize}[label=$\bullet$] 

\item $T_i$ est pour tout $i$ un espace $k$-analytique
muni d'un morphisme $T_i\to X$
quasi-lisse
et purement de dimension relative $n_i$ ; 
\item pour tout $i$, $\mathscr L_i$ et $\mathscr P_i$ sont des $\mathscr O_{T_i}$-modules
cohérents, et $\mathscr L_i$ est libre ; 
\item $\pi_1$ est un $X$-morphisme fini de $\mathrm{Supp}(\mathscr F)$ vers $T_1$
; 
\item $\pi_i$ est pour tout $i$ compris entre $2$ et $r$ un morphisme fini de
$\mathrm{Supp}(\mathscr P_{i-1})$ vers $T_i$ ; 
\item $u_1$ est un morphisme de $\mathscr L_1$ vers $\pi_{1*}\mathscr F$ 
de conoyau $\mathscr P_1$ tel que $\mathrm{Bij}(u_1|_{T_{1,x}})$ 
soit dense dans $T_{1,x}$ pour tout $x\in E$ ; 
\item pour tout $i$ compris entre $2$ et $r$, $u_i$ est un morphisme de $\mathscr L_i$ vers $\pi_{1*}\mathscr P_{i-1}$ 
de conoyau $\mathscr P_i$ tel que $\mathrm{Bij}(u_i|_{T_{i,x}})$ 
soit dense dans $T_{i,x}$ pour tout $x\in E$ ; 
\end{itemize}

Ce dévissage est dit \emph{$\Gamma$-strict}
si $V$ et tous les $T_i$ sont $\Gamma$-stricts ; 
il est dit \emph{total}
s'il existe $i$ tel que $\mathscr P_i=0$ (auquel cas $\mathscr P_j=0$ pour tout $j>i$). 
\end{defi}

\begin{rema}
Conservons les notations de la définition ci-dessus. 
Si $\{T_i,\pi_i,u_i, \mathscr L_i,\mathscr P_i\}_{i\in \{1,\ldots, r\}}$  est un 
dévissage de $\mathscr F$ au-dessus de $E$ alors pour tout $x\in E$
et tout indice $i$, la fibre $T_{i,x}$ est purement de dimension $n_i$, 
et le lieu de bijectivité de $u_i|_{T_{i,x}}$ est dense dans $T_{i,x}$ ; 
le conoyau de $u_i|_{T_{i,x}}$ étant égal à
$\mathscr P_{i,T_{i,x}}$, ce dernier a un support
d'intérieur vide dans $T_{i,x}$, et en particulier de dimension $<n_i$. 
\end{rema}

\begin{lemm}\label{exist-deviss-glob}
Soit $Y\to X$ un morphisme entre bons espaces
$k$-analytiques
; on suppose de plus que $Y$ est $\Gamma$-strict. 
Soit $y$ un point de $Y$
et soit $x$ son image sur $X$. Soit $\mathscr F$ un faisceau cohérent sur $Y$ dont le support contient $y$. 
Il existe alors un voisinage affinoïde
$\Gamma$-strict de $y$ dans $Y$ en restriction auquel
$\mathscr F$ admet un $X$-dévissage
$\Gamma$-strict total au-dessus de $x$
en dimensions comprises entre $\dim_y (\mathrm{Supp}(\mathscr F))_x$ et $\mathrm{coprof}_y \mathscr F_{Y_x}$, 
qui ne fait intervenir que des espaces affinoïdes. 
\end{lemm}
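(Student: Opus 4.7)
The plan is to invoke the existence theorem for dévissages at a point and then convert its output into a dévissage above $x$ by a careful shrinking procedure.  First, one applies \ref{ss-exist-deviss} (i.e.\ thm.~8.2.5 of \cite{ducros2018}) to obtain a $\Gamma$-strict total $X$-dévissage
\[(V,\{T_i,\pi_i,u_i,t_i,\mathscr L_i,\mathscr P_i\}_{1\leq i\leq r})\]
of $\mathscr F$ at $y$ in the required dimension range, consisting of affinoid $\Gamma$-strict spaces, with $\mathscr P_r=0$ and each $T_i$ an affinoid $\Gamma$-strict domain of a lisse $T_i^\dagger\to X$ purely of relative dimension $n_i$.

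The first key observation concerns the density condition.  Each fibre $T_{i,x}$ is quasi-lisse over $\hr x$, hence normal, so its connected components coincide with its irreducible components and are simultaneously open and closed.  The hypothesis $t_i\in \adht{\mathrm{Bij}(u_i|_{T_{i,x}})}{T_{i,x}}$ therefore means exactly that $\mathrm{Bij}(u_i|_{T_{i,x}})$ is Zariski dense in the connected component $C_i$ of $t_i$.  Consequently, if $T'_i$ is an affinoid $\Gamma$-strict domain of $T_i$ containing $t_i$ with $T'_{i,x}\subseteq C_i$, then $\mathrm{Bij}(u_i|_{T'_{i,x}})$ is Zariski dense in $T'_{i,x}$ (cf.\ \ref{compirr-domaines}), giving the density required by the definition of a dévissage above $x$.

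The delicate step is to enforce these component conditions together with finiteness of the restricted morphisms $\pi_i$, which forces the \emph{equality} condition $T'_i\cap \mathrm{Supp}(\mathscr P_i)=\pi_{i+1}^{-1}(T'_{i+1})$ (and analogously $V'\cap \mathrm{Supp}(\mathscr F_V)=\pi_1^{-1}(T'_1)$).  The plan is to pre-choose, for each $i$, an affinoid $\Gamma$-strict (actually $\Gamma$-rational) neighborhood $M_i$ of $t_i$ in $T_i$ satisfying the component condition, and likewise $M_0$ for $y$ in $V$; then to pick $T'_r$ a $\Gamma$-rational neighborhood of $t_r$ inside $M_r$, and to construct $T'_i$ for $i=r-1,\ldots,1$ by extending the affinoid $\Gamma$-rational domain $\pi_{i+1}^{-1}(T'_{i+1})$ of $\mathrm{Supp}(\mathscr P_i)\subseteq T_i$ via lemma~\ref{lem-gerr-grau} to an affinoid $\Gamma$-strict domain $W_i$ of $T_i$, and setting $T'_i=W_i\cap M_i$.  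The equality $T'_i\cap \mathrm{Supp}(\mathscr P_i)=\pi_{i+1}^{-1}(T'_{i+1})$ then holds precisely when $\pi_{i+1}^{-1}(T'_{i+1})\subseteq M_i$; this inclusion, together with the analogous one for $V$, is secured at the outset by choosing $T'_r$ sufficiently small, since the finite morphisms $\pi_i$ and $\pi_1$ are topologically proper with singleton fibres $\pi_i^{-1}(t_i)=\{t_{i-1}\}$ and $\pi_1^{-1}(t_1)=\{y\}$, so preimages of small enough neighborhoods lie inside any prescribed neighborhood.  The same trick, applied once more at the bottom, produces the affinoid $\Gamma$-strict neighborhood $V'$ of $y$ with $V'\cap \mathrm{Supp}(\mathscr F_V)=\pi_1^{-1}(T'_1)$.

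It then remains to check that the resulting data form a $\Gamma$-strict total $X$-dévissage of $\mathscr F|_{V'}$ above $x$ consisting of affinoid spaces: each $T'_i\to X$ is quasi-lisse purely of dimension $n_i$ (as a domain of the lisse $T_i^\dagger$), the restricted $\pi_i$ are finite by the equality conditions, the density of the bijectivity loci on the fibres over $x$ follows from the component condition, totality is preserved since $\mathscr P_r=0$, and all spaces are affinoid and $\Gamma$-strict by construction.  The main obstacle in the argument is the simultaneous management of the two competing constraints---the fibre component condition (for density) and the equality condition (for finiteness of the restricted morphisms)---which is resolved by the top-down iterative construction combining Gerritzen-Grauert (lemma~\ref{lem-gerr-grau}) with the local topological behavior of finite morphisms at singleton-fibre points.
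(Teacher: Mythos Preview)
Your approach coincides with the paper's: start from the pointwise dévissage of \ref{ss-exist-deviss}, reduce the fibrewise density condition to the connected-component condition on $T_{i,x}$, and shrink level by level using properness of the $\pi_i$ together with the Gerritzen--Grauert lifting of lemma~\ref{lem-gerr-grau}. The paper packages this as an induction on the length $r$ (handling $T'_1$ and $V'$ explicitly and invoking the inductive hypothesis for the shorter dévissage of $\mathscr P_1$ at $t_1$); your iterative construction from $T'_r$ down to $V'$ is just this recursion unfolded.

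There is however a slip in the bookkeeping. You claim the inclusions $\pi_{i+1}^{-1}(T'_{i+1})\subseteq M_i$ are secured ``by choosing $T'_r$ sufficiently small'', but $T'_{i+1}=W_{i+1}\cap M_{i+1}$ and the Gerritzen--Grauert extension $W_{i+1}$ carries no size control from $T'_r$; all you actually know is $T'_{i+1}\subseteq M_{i+1}$. The fix is to choose the $M_i$ themselves in nested fashion: first $M_0$, then for $i=1,\ldots,r$ take $M_i$ small enough that $M_{i,x}\subseteq C_i$ \emph{and} $\pi_i^{-1}(M_i)\subseteq M_{i-1}$ (possible by properness plus the singleton-fibre hypothesis). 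Then $T'_{i+1}\subseteq M_{i+1}$ gives $\pi_{i+1}^{-1}(T'_{i+1})\subseteq \pi_{i+1}^{-1}(M_{i+1})\subseteq M_i$ automatically --- which is precisely what the paper's induction achieves implicitly. Note also that to keep each $T'_i$ \emph{affinoid} (as the lemma requires), you should use the $\Gamma$-rational lifting from the \emph{proof} of lemma~\ref{lem-gerr-grau} (so that each $W_i$ is $\Gamma$-rational), rather than the bare statement, which only returns a compact $\Gamma$-strict domain.
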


\begin{proof}
On commence par choisir un $X$-dévissage $\Gamma$-strict
et total $(V, \{T_i,\pi_i, u_i, t_i,\mathscr L_i,\mathscr P_i\}_{i\in \{1,\ldots, r\}})$
de $\mathscr F$ en $y$ en dimensions appartenant à la plage requise
(\ref{ss-exist-deviss}). 
Nous allons montrer par récurrence sur $r$
qu'il existe un voisinage $\Gamma$-rationnel (\ref{ss-ger-grau})
$V'$ de $y$ dans $V$ qu'on peut choisir arbitrairement petit, et un 
$X$-dévissage $\Gamma$-strict total $(V', \{T'_i,\pi'_i,u'_i, \mathscr L'_i,\mathscr P'_i\}_{i\in \{1,\ldots, r\}})$ de $\mathscr F_{V'}$ en $y$ où chacun des $T'_i$ est un voisinage rationnel
$\Gamma$-strict de $t_i$ dans $T_i$ et où les autres données sont induites par celles du dévissage initial. 

On suppose donc l'assertion vraie en rang $<r$ (condition qui est vide si $r=1$ puisqu'il n'y a pas de dévissage de longueur nulle). 
Fixons un voisinage
$\Gamma$-rationnel arbitraire
arbitraire $\Omega$ de $y$ dans $V$. Par propreté topologique des morphismes finis, il existe un voisinage
$\Gamma$-rationnel 
$T'_1$ de $t_1$ dans $T_1$ qui possède les trois propriétés suivantes : 
\begin{enumerate}[1]
\item $T'_{1,x}$ est contenue dans la composante connexe de $t_1$ dans $T_{1,x}$ ; 
\item $\pi_1\inv(T'_1)$ est contenu dans $\Omega$ ; 
\item Si $r\geq 2$, la restriction de $\mathscr F$ à $T'_1$ possède un $X$-dévissage
total au-dessus de $x$ du type fourni par l'hypothèse de récurrence.
\end{enumerate}
Par construction, $\pi_1\inv(T'_1)$ est un domaine
$\Gamma$-rationnel
de $\mathrm{Supp}(\mathscr F_V)$ contenu dans $\Omega$. 
En relevant un système d'inégalités convenables définissant $\pi_1\inv(T'_1)$
(comme dans la preuve
du lemme
\ref{lem-gerr-grau})
on obtient un voisinage
$\Gamma$-rationnel
$W$ de
$y$ dans $V$ tel 
que $W\cap \mathrm{Supp}(\mathscr F_V)=\pi_1\inv(T'_1)$. Soit $V'$ l'intersection $W\cap \Omega$ ; c'est encore un voisinage $\Gamma$-rationnel
de $y$
dans $V$, et l'on a $V'\cap \mathrm{Supp}(\mathscr F_V)=\pi_1\inv(T'_1)$. On obtient alors un dévissage satisfaisant les conditions requises
en prenant la famille $(V',T'_1)$ (et les objets induits par $\pi_1$, $u_1$, $\mathscr L_1$ et $\mathscr P_1$...), et en la concaténant avec le dévissage évoqué
en (3)
si $r>1$.
\end{proof}

\begin{enonce}[remark]{Notation}
Soit $Y\to X$ un morphisme entre espaces $k$-analytiques et soit 
$\mathscr F$ un faisceau cohérent sur $Y$. Nous noterons
$\pl {\mathscr F}X$ l'ensemble des points de $Y$
en lesquels $\mathscr F$ est $X$-plat. C'est un ouvert de Zariski de $Y$ dont le fermé
complémentaire sera noté $\ql {\mathscr F}X$. 
\end{enonce}

\begin{defi}
Soit $Y\to X$ un morphisme entre espaces $k$-analytiques
et soit $u\colon \mathscr F\to \mathscr G$ un morphisme 
entre faisceaux cohérents sur $Y$. Soit $y$ un point de $Y$.
On dit que $u$
est \emph{universellement $X$-injectif} en $y$
si pour tout espace analytique $X'$ défini sur une extension 
complète de $k$, tout $k$-morphisme $X'\to X$
et tout  antécédent $y'$ de $Y$ sur $Y':=Y\times_X
X'$, le morphisme $\mathscr F_{Y'}\to \mathscr G_{Y'}$ induit
par $u
$ est injectif en $y'$. On dit que $u$ est $X$-universellement injectif s'il est 
$X$-universellement injectif en tout point de $Y$.
\end{defi}

\begin{prop}\label{deviss-dim-pasplat}
Soit $Y\to X$ un
morphisme entre espaces $k$-analytiques et soit $x$ 
un point de $X$.
Soit $\mathscr F$ un faisceau cohérent sur $Y$. 
Soit $r$ un entier strictement positif et soit 
$\mathscr D=\{T_i,\pi_i,u_i, \mathscr L_i,\mathscr P_i\}_{i\in \{1,\ldots, r\}}$ un $X$-dévissage
de $\mathscr F$ au-dessus de $x$, en dimensions $n_1>\ldots >n_r$. On suppose que $X$, $Y$ et les espaces $T_i$
sont tous compacts.

\begin{enumerate}[1]
\item Il existe un voisinage analytique compact $X'$ de $x$ dans $X$ tel que pour tout $i$ et tout $\xi \in X'$, l'intersection 
du lieu de surjectivité de $u_i$ avec $T_{i,\xi}$ soit dense dans $T_{i,\xi}$. 

\item Soit $m$ un entier tel que $\dim \ql{\mathscr P_r}X_x<m$ (c'est par exemple le cas
si $n_r\leq m$ ou si $\mathscr P_r=0$). Les assertions suivantes sont équivalentes :

\begin{enumerate}[j]
\item la dimension de $\ql {\mathscr F}X_x$ est $<m$ ; 
\item il existe un voisinage analytique compact $X'$ de $x$ dans $X$ tel que pour tout $i$ vérifiant l'inégalité
$n_i\geq m$, le morphisme $u_i|_{T_i\times_X X'}$ soit $X'$-universellement injectif ; 
\item pour tout $i$ vérifiant l'inégalité $n_i\geq m$, le morphisme $u_i$ est injectif en tout point de $T_{i,x}$. 
\end{enumerate}
\end{enumerate}
\end{prop}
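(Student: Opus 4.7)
For (1), the plan is to use the upper semi-continuity of relative fibral dimension applied to $Z_i = \mathrm{Supp}(\mathscr P_i) \subset T_i$. The dévissage axioms combined with purity of $T_{i,x}$ of dimension $n_i$ give $\dim(Z_i \cap T_{i,x}) < n_i$. The locus $E_i \subset Z_i$ where the relative dimension over $X$ equals $n_i$ is Zariski closed in $Z_i$; its image $F_i$ in $X$ is closed (using compactness of $T_i$ and Hausdorffness of $X$), and $x \notin F_i$ by the above strict inequality. Choosing a compact analytic neighborhood $X'$ of $x$ disjoint from the finite union $\bigcup_i F_i$ then satisfies the required density at every $\xi \in X'$.

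For (2), the key technical ingredient is a \emph{torsion-free propagation lemma}: on each connected component $T_i^{(j)}$ of $T_i$ (which is irreducible because $T_i$ is normal, being quasi-lisse), the stalks of $\ker u_i$ are either all zero or all non-zero. Indeed, $\ker u_i$ is a coherent subsheaf of the free sheaf $\mathscr L_i$, and the Berkovich local rings $\mathscr O_{T_i,t}$ are flat over any affinoid algebra $\mathscr O(V)$ (with $V \subset T_i^{(j)}$ affinoid, hence integral); one exploits the fact that a torsion-free submodule of a free module over an integral domain remains non-zero after tensoring with a non-zero flat module, and propagates across $T_i^{(j)}$ via its connectedness. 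Since $T_i^{(j)} \to X$ is flat of pure relative dimension $n_i$, the fiber $T_i^{(j)} \cap T_{i,x}$ is either empty or a union of components of $T_{i,x}$ of dimension $n_i$; hence the non-injectivity locus $N_i \cap T_{i,x}$ is either empty or of dimension exactly $n_i$.

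With this lemma in hand, the equivalence (ii)$\Leftrightarrow$(iii) follows from the flatness of $T_i \to X$, which ensures that $\ker u_i$ is stable under $X$-base change: (iii) is equivalent to the vanishing of $\ker u_i$ on the preimage in $T_i$ of a suitable compact analytic neighborhood $X'$ of $x$, obtained as in part (1) using compactness of $T_i$ and Hausdorffness of $X$, and this vanishing on $T_i \times_X X'$ is exactly $X'$-universal injectivity. For (iii)$\Rightarrow$(i), iterate the flatness criterion recalled in \ref{ss-deviss-critplat}: the locus $\ql{\mathscr F}{X}_x$ is the union of the preimages in $Y_x$, via the finite chain maps $\pi_i\circ\cdots\circ\pi_1$, of the sets $N_i\cap T_{i,x}$ together with the preimage of $\ql{\mathscr P_r}{X}_x$; under (iii) the first terms are empty for $n_i\geq m$, and the torsion-free lemma bounds the remaining ones by $n_i<m$, while the last has dimension $<m$ by hypothesis.

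The main obstacle is the implication (i)$\Rightarrow$(iii), which I would prove by contraposition. Suppose some $u_i$ with $n_i\geq m$ is non-injective at a point of $T_{i,x}$. By the torsion-free lemma, $N_i\cap T_{i,x}$ has dimension exactly $n_i$. The chain map $\Pi_i$ on the appropriate support subset of $Y_x$ is finite and surjective onto $T_{i,x}$, this surjectivity being shown by induction on $i$ using the density of $\mathrm{Bij}(u_i|_{T_{i,x}})$ in $T_{i,x}$ (which forces the closed image $\pi_i(\mathrm{Supp}(\mathscr P_{i-1}))\cap T_{i,x}$ to equal $T_{i,x}$) together with the closedness of finite morphisms. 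Its preimage of $N_i\cap T_{i,x}$ therefore produces a subset of $\ql{\mathscr F}{X}_x$ of dimension $\geq n_i\geq m$, contradicting (i). The delicate point to handle rigorously is the transfer of the torsion-free argument between Berkovich stalks and global sections on affinoid charts via flatness.
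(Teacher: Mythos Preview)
Your treatment of~(1) is correct and coincides with the paper's argument: it is exactly the semi-continuity of relative dimension applied to the non-surjectivity locus (i.e.\ to $\mathrm{Supp}(\mathscr P_i)$), followed by a compactness argument to produce~$X'$.

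For~(2), however, your plan contains two genuine gaps.

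\textbf{First gap: $T_i$ need not be normal.} You write that each connected component of $T_i$ is irreducible ``because $T_i$ is normal, being quasi-lisse''. But the dévissage axioms only say that the \emph{morphism} $T_i\to X$ is quasi-lisse; nothing is assumed about $X$, so $T_i$ itself can fail to be reduced (take $X$ non-reduced) or fail to have irreducible connected components (take $X$ connected but reducible). Your torsion-free propagation lemma, and hence the claim that $N_i\cap T_{i,x}$ is either empty or of dimension exactly $n_i$, collapses in these cases.

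\textbf{Second gap: injectivity does not imply $X$-universal injectivity.} You assert that ``the flatness of $T_i\to X$ ensures that $\ker u_i$ is stable under $X$-base change'', and use this to pass from $\ker u_i=0$ on $T_i\times_X X'$ to $X'$-universal injectivity. This is not justified: for an arbitrary base change $X''\to X'$ the map $T_i\times_X X''\to T_i\times_X X'$ is typically not flat, and kernels do not commute with such pull-backs. Concretely, on $T_i=\mathscr M(k\{s,t\})$ over $X=\mathscr M(k\{s\})$, multiplication by $s$ is injective but becomes zero after restriction to $\{s=0\}$. The flatness of $T_i\to X$ plays no role here. What is actually needed is an argument of a different nature: one must show that $\mathrm{Bij}(u_i)$ meets every fibre $T_{i,\xi}$ densely for $\xi$ near $x$, and then invoke Proposition~8.1.8 of \cite{ducros2018}.

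The paper proceeds quite differently: it argues by induction on the length $r$ of the dévissage, and the two key inputs are Propositions~8.1.8 and~8.1.10 of \cite{ducros2018}. Proposition~8.1.10 gives the precise link between $\pl{\pi_{1*}\mathscr F}X$, injectivity of $u_1$, and $\pl{\mathscr P_1}X$, which replaces your attempted decomposition of $\ql{\mathscr F}X_x$ via the chain maps $\Pi_i$. For (i)$\Rightarrow$(ii), one first shows that $\mathrm{Bij}(u_1)_x$ is dense in $T_{1,x}$ by combining the fiberwise density of the surjectivity locus (from the dévissage axiom, via Nakayama) with the fiberwise density of the injectivity locus (which contains $\pl{\pi_{1*}\mathscr F}X_x$, dense since its complement has dimension $<m\leq n_1$); then a semi-continuity argument as in~(1) spreads this density to nearby fibres, and Proposition~8.1.8 yields universal injectivity. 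The induction hypothesis then handles the remaining indices via $\mathscr P_1$.
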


\begin{proof}
Prouvons tout d'abord (1). Fixons $i$ et
appelons $Z$ le lieu des points de $T_i$ en lesquels $u_i$ n'est pas surjectif ; c'est un fermé
de Zariski de $T_i$. Par hypothèse, le lieu de bijectivité de $u_i|_{T_{i,x}}$ est dense dans $T_{i,x}$ ; il en va \emph{a fortiori}
de même de son lieu de surjectivité, qui
en vertu de \cite{ducros2018}, 2.5.4 (lequel repose
\emph {in fine} sur le
lemme de Nakayama)
est égal à la fibre en $x$ du lieu de
surjectivité de
$u_i$. Par conséquent, 
$Z_x$ est d'intérieur vide dans $T_{i,x}$ ; comme ce dernier est purement de dimension $n_i$, cela signifie que la dimension de $Z_x$ est $<n_i$. 
Soit $Z'$ le
sous-ensemble
de $Z$ constitué des points en lesquels la dimension relative de $Z$
sur $X$ est $\geq n_i$. C'est un fermé
de Zariski de $Z$ 
dont l'image 
$Z'$ sur $X$ est
par ce qui précède un compact ne contenant pas $x$ ; choisissons un voisinage analytique compact $X_i$ de $x$ dans $X$ qui ne rencontre pas cette image. 
Par construction, $Z\times_X X_i$ est de dimension relative partout $<n_i$ sur $X_i$, ce qui veut dire
que l'intersection du lieu de surjectivité de $u_i$ avec $T_{i,\xi}$ est dense dans $T_{i,\xi}$ pour tout $\xi \in X_i$.
Il suffit alors de poser $X'=\bigcap_i X_i$. 

Montrons maintenant (2).
Le fermé de Zariski $\ql{\pi_{1*}\mathscr F}X$ de $T_1$ est égal à l'image de $\ql {\mathscr F}X$ par
le morphisme fini $\pi_1$ (\cite{ducros2018}, Lemma 4.1.15 (3) ; notons que notre phrase a un sens car $\ql {\mathscr F}X$
est contenu dans $\mathrm{Supp}(\mathscr F)$).
On a donc $\dim \ql{\pi_{1,*}\mathscr F}X_x=\dim \ql {\mathscr F}X_x$.
Pour prouver (2), 
on raisonne par récurrence sur $r$, et l'on fait donc
l'hypothèse que (2) est vraie lorsque le dévissage en jeu est de longueur $<r$.
Supposons que (i) soit vraie et montrons (ii) ; si $n_1<m$ il n'y a rien à démontrer, et l'on suppose
donc à partir de maintenant que $n_1\geq m$.
La proposition 8.1.10 de \cite{ducros2018}
assure qu'en tout point de $\pl{\pi_{1*}\mathscr F}X$, 
le faisceau cohérent $\mathscr P_1$ est $X$-plat et le morphisme $u_1$
est injectif ; il vient $\ql{\mathscr P_1}X_x\subset \ql{\pi_{1,*}\mathscr F}X_x$
puis 
\[\dim\ql{\mathscr P_1}X_x\leq \dim \ql{\pi_{1,*}\mathscr F}X_x<m,\]
la seconde
inégalité provenant de l'hypothèse (i). 
L'hypothèse de récurrence assure donc qu'il existe un voisinage analytique compact
$X'$ de $x$ dans $X$ tel que pour tout $i$ vérifiant les inégalités
$i\geq 2$ et $r_i\geq m$, le morphisme $u_i|_{T_i\times_X X'}$ soit $X'$-universellement injectif. 
Comme $n_1\geq m$
et comme $\ql{\pi_{1,*}\mathscr F}X_x$ est de dimension $<m$, l'ouvert $\pl{\pi_{1*}\mathscr F}X_x$
de $T_{1,x}$, qui est contenu dans le lieu d'injectivité de $u_1$, est dense dans $T_{1,x}$ ; et comme par ailleurs le lieu de bijectivité de $u_1|_{T_{1,x}}$ est dense par définition d'un dévissage,  il en va \emph{a fortiori}
de même de son lieu de surjectivité ; par  \cite{ducros2018}, 2.5.4, ceci entraîne que le lieu de surjectivité de $u_1$ contient un ouvert dense de $T_{1,x}$. 
Par conséquent, $(\mathrm{Bij}(u_1))_x$ est dense dans $T_{1,x}$ . Soit $\Sigma$ le complémentaire de $\mathrm{Bij}(u_1)$ dans $T_1$. C'est un fermé de Zariski de $T_1$ ; soit $\Sigma'$ le lieu des points de $\Sigma$ en lequel ce dernier est de dimension relative $n_1$ sur $X$. C'est un fermé de Zariski de $\Sigma$ (\cite{ducros2007}, Th. 4.9)
qui ne rencontre pas $T_{1,x}$ puisque $\Sigma_x$ est d'intérieur vide dans $T_{1,x}$, qui est purement de dimension $n_1$ ; il existe donc un voisinage analytique compact $X''$ de $x$ dans $X$ ne rencontrant pas l'image de $\Sigma'$. Par construction, l'intersection de $\mathrm{Bij}(u_1)$ avec $T_{1,\xi}$ est dense dans $T_{1,\xi}$ pour tout $\xi \in X''$ ; il résulte alors de la proposition 8.1.8 de \cite{ducros2018} que $u_1|_{T_1\times_X X''}$ est universellement injectif. En conséquence, $u_i|_{T_i\times_X(X'\cap X'')}$ est universellement injectif pour tout $i$ tel que $r_i\geq m$, et (ii) est vraie. 

L'implication (ii)$\Rightarrow$(iii) est évidente (et ne fait pas appel à l'hypothèse de récurrence). 

Supposons que (iii) est vraie et montrons (i). Si $n_1<m$ 
on a alors évidemment $\dim \ql{\pi_{1*}\mathscr F}X_x<m$ et
partant
$\dim \ql{\mathscr F}X_x<m$. Supposons
maintenant que $n_1\geq m$. Le morphisme $u_1$ est alors injectif en tout point de $T_{1,x}$,
ce qui entraîne en vertu de la proposition 8.1.10 de \cite{ducros2018} que 
$\pl{\pi_{1*}\mathscr F}X_x=\pl{\mathscr P_1}X_x$, et partant
que $\ql{\pi_{1*}\mathscr F}X_x=\ql{\mathscr P_1}X_x$.

On distingue deux cas. Supposons tout d'abord que $r\geq 2$. Le dévissage $\mathscr D$ définit alors par restriction aux entiers compris entre $2$ et $r$ un $X$-dévissage du faisceau cohérent $\mathscr P_1$
au-dessus de $x$ en dimensions $n_2>\ldots >n_r$, qui vérifie (iii) ; en vertu de
notre hypothèse de récurrence, il vient $\dim \ql{\mathscr P_1}X_x<m$, c'est-à-dire
 $\dim \ql{\pi_{1*}\mathscr F}X_x<m$, ce qui entraîne que
que $\dim \ql{\mathscr F}X_x<m$. Supposons maintenant que $r=1$ ; on a alors $\dim \ql{\mathscr P_1}X_x<m$, et 
donc $\dim \ql{\pi_{1*}\mathscr F}X_x<m$, et par conséquent $\dim
\ql{\mathscr F}X_x<m$. 
\end{proof}

\begin{lemm}\label{lem-plat-libre}
Soit $Y\to X$ un morphisme quasi-lisse d'espaces
$k$-analytiques. 
Soit $\mathscr F$ un faisceau cohérent sur $Y$.
Soit $U$ l'ouvert de Zariski de $Y$ formé des points en lesquels
$\mathscr F$ est libre, soit $y$ un point 
de $Y$ en lequel
$\mathscr F$ est $X$-plat, et soit $x$ l'image de $y$
sur $X$.
Le point $y$ appartient
alors à $\adht{U_x}{Y_x}$. 
\end{lemm}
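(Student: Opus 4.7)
Mon plan est de me ramener à la fibre $Y_x$, d'y exploiter la régularité (conséquence de la quasi-lissité de $Y\to X$) pour produire des points où le faisceau restreint est libre, puis de remonter cette liberté à $Y$ via le critère local de platitude. Tout d'abord, je remplacerais $Y$ par l'ouvert de Zariski $\pl{\mathscr F}X$ des points de $Y$ en lesquels $\mathscr F$ est $X$-plat, qui contient $y$ (\cite{ducros2018}, thm. 10.3.2) ; ce remplacement ne peut qu'affaiblir la conclusion, car pour tout ouvert de Zariski $Y'\subset Y$ contenant $y$ on a $\adht{U\cap Y'_x}{Y'_x}\subset \adht{U_x}{Y_x}$. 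On peut donc supposer $\mathscr F$ partout $X$-plat sur $Y$. Comme $Y\to X$ est quasi-lisse, la fibre $Y_x$ est géométriquement régulière, donc régulière et réduite, et ses composantes irréductibles sont ses composantes connexes (\ref{rappel-irred}). Posant $\mathscr G:=\mathscr F|_{Y_x}$, soit $V\subset Y_x$ l'ouvert de Zariski formé des points en lesquels $\mathscr G$ est libre.

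Dans un second temps, je montrerais que $V$ rencontre la composante irréductible $T$ de $Y_x$ contenant $y$. À cet effet, je choisirais un point $y_T\in T$ tel que $d_{\hr x}(y_T)=\dim T$ (un tel point existe par la formule $\dim T=\sup_{t\in T} d_{\hr x}(t)$), ainsi qu'un domaine affinoïde $W$ de $Y_x$ contenant $y_T$. Par \ref{compirr-domaines}, $T\cap W$ est une réunion de composantes irréductibles de $W$, toutes de dimension $\dim T$ ; l'adhérence de Zariski $T'$ de $\{y_T\}$ dans $W$ est irréductible, de dimension au moins $d_{\hr x}(y_T)=\dim T$, et contenue dans $T\cap W$ : c'est donc exactement une composante irréductible de $W$. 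L'espace $W$ étant régulier, le schéma $W\al$ l'est également par GAGA (\ref{ss-valid-gaga}), et en particulier réduit ; combiné au principe GAGA pour les composantes irréductibles, cela montre que $y_T\al$ est le point générique de la composante irréductible $T'\al$ de $W\al$, d'anneau local un corps. Tout module de type fini sur un corps étant libre, $(\mathscr G|_W)\al$ est libre en $y_T\al$ ; par GAGA pour la liberté en un point (\ref{ss-valid-gaga}), $\mathscr G|_W$ est libre en $y_T$, et $\mathscr G$ aussi, de sorte que $y_T\in V\cap T$.

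En troisième lieu, je montrerais l'inclusion $V\subset U\cap Y_x$ : si $\mathscr F$ est $X$-plat en un point $y'\in Y_x$ et si $\mathscr G=\mathscr F|_{Y_x}$ est libre en $y'$, alors $\mathscr F$ est libre en $y'$. Le raisonnement se fait au niveau des anneaux locaux. Choisissons un bon domaine analytique $U'$ de $X$ contenant $x$ et un bon domaine analytique $V'$ de $Y\times_X U'$ contenant $y'$. Posons $A:=\mathscr O_{U',x}$, $B:=\mathscr O_{V',y'}$ et $M:=\mathscr F_{V',y'}$. Alors $M$ est $A$-plat (hypothèse), $B$ est $A$-plat (quasi-lissité), et $M\otimes_A A/\mathfrak m_A\simeq \mathscr G_{V'_x,y'}$ est libre sur $B\otimes_A A/\mathfrak m_A\simeq \mathscr O_{V'_x,y'}$. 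Le critère local de platitude entraîne alors que $M$ est $B$-plat, donc $B$-libre puisque $M$ est de type fini sur l'anneau local noethérien $B$ ; autrement dit, $\mathscr F$ est libre en $y'$, \ie $y'\in U$. Il ne reste plus qu'à conclure : $V$ étant un ouvert de Zariski de $Y_x$ rencontrant la composante $T$ de $y$, on a $T\subset \adhz V{Y_x}=\adht V{Y_x}$ (coïncidence pour les ouverts de Zariski, \ref{compirr-domaines}), et l'inclusion $V\subset U_x$ donne finalement $y\in T\subset \adht V{Y_x}\subset \adht{U_x}{Y_x}$.

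La difficulté principale réside à mon sens dans la troisième étape : avant d'appliquer le critère local de platitude, il faut justifier proprement l'identification $M\otimes_A A/\mathfrak m_A\simeq \mathscr G_{V'_x,y'}$, c'est-à-dire la compatibilité entre la fibre des anneaux locaux analytiques et l'anneau local de la fibre analytique ; une fois cette compatibilité admise, tout repose sur des principes classiques d'algèbre commutative.
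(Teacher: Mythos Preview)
Your overall strategy is sound and genuinely different from the paper's, but your third step contains a real gap that you rightly flag and that is not merely a routine verification.

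The claimed identifications $B\otimes_A A/\mathfrak m_A\simeq \mathscr O_{V'_x,y'}$ and $M\otimes_A A/\mathfrak m_A\simeq \mathscr G_{V'_x,y'}$ are \emph{false} in general. The analytic fibre $V'_x$ is $V'\times_{U'}\mathscr M(\hr x)$, where $\hr x$ is the \emph{completed} residue field, whereas $B/\mathfrak m_A B$ only sees the residue field $\kappa(x)$ of the local ring $A=\mathscr O_{U',x}$; one has $\kappa(x)\subsetneq\hr x$ as soon as $x$ is not a rigid point. What is true is that there is a natural local homomorphism $B/\mathfrak m_A B\to \mathscr O_{V'_x,y'}$ through which the map $B\to \mathscr O_{V'_x,y'}$ factors, and that $\mathscr G_{V'_x,y'}\simeq (M/\mathfrak m_A M)\otimes_{B/\mathfrak m_A B}\mathscr O_{V'_x,y'}$. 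Your argument would go through if you could show this map is \emph{faithfully flat}, so that freeness of $\mathscr G_{V'_x,y'}$ descends to flatness (hence freeness) of $M/\mathfrak m_A M$ over $B/\mathfrak m_A B$; but establishing this faithful flatness is itself a nontrivial statement about the comparison between algebraic and analytic fibres in Berkovich geometry, and is not something you can simply admit.

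The paper's proof avoids this difficulty entirely. Rather than working with local rings of the fibre, it constructs an explicit map $u\colon\mathscr O_Y^r\to\mathscr F$ (after reducing to $Y,X$ affinoid) by lifting a basis of $\mathscr F(Y)\otimes\kappa(\eta)$, where $\eta$ is the generic point of $Z^{\mathrm{al}}$ for $Z$ the connected component of $y$ in $Y_x$. It then uses propositions 8.1.10 and 8.1.8 of \cite{ducros2018} --- which are precisely the tools designed to transfer injectivity/bijectivity information between the total space and the fibre in the analytic setting --- to show that $u$ is bijective on a dense open of $Z$, whence $U_x$ meets $Z$. This dévissage machinery encapsulates the delicate fibre comparison that your local-criterion argument leaves unresolved.
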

\begin{proof}
L'assertion peut se prouver en remplaçant $Y$ 
par n'importe quel domaine affinoïde de $Y$ contenant
$y$. On peut
donc supposer $X$ et $Y$ affinoïdes.
Soit $Z$ la composante connexe de $y$ dans l'espace
quasi-lisse $Y_x$ (remarquons que comme $Y_x$ est quasi-lisse sur $\hr x$ il est normal, et ses composantes connexes sont donc irréductibles) ; soit $\eta$ le point générique de la composante
connexe $Z\al$
de $Y_x\al$ et soit $\kappa(\eta)$ son 
corps résiduel. 
Choisissons une famille finie
d'éléments de $\mathscr F(Y)$
induisant une base
du $\kappa(\eta)$-espace
vectoriel $\mathscr F(Y)\otimes_{\mathscr O_Y(Y)}\kappa(\eta)$ ;
cette famille définit un morphisme $u$ de 
$\mathscr O_Y^r$ vers $\mathscr F$ ; soit $\Omega$ l'ouvert de Zariski
de $Y$ constitué des points en lesquels $u$ est un isomorphisme. Par construction, il existe un ouvert de Zariski non vide $T$ de $Z$ tel que $u|_T$ soit un isomorphisme. Le point
$y$ appartient à $\adht T{Y_x}$ ; puisque $\mathscr F$
est $X$-plat en $y$, il s'ensuit en vertu de la proposition 8.1.10 de \cite{ducros2018} que $u$ est injectif en $y$ ; il en résulte par la proposition 8.1.8 de \emph{op. cit.}
que $y\in \adht{\Omega_x}{Y_x}$. Comme $\Omega$ est contenu dans $U$ il vient $y\in \adht{U_x}{Y_x}$.
\end{proof}

\section{Les éclatements en géométrie analytique}
Nous utiliserons régulièrement dans ce qui suit les notions 
d'adhérence analytique et de densité analytique (définition \ref{def-densal}
et lemme-définition \ref{lem-adh-analytique}). 

\begin{defi}
Si $X$ est un espace
$k$-analytique, nous appellerons
\emph{diviseur de Cartier}
sur $X$ tout sous-espace analytique fermé de $X$ défini
par un faisceau cohérent d'idéaux inversible. 
\end{defi}

\begin{rema}
Nous attirons l'attention du lecteur sur le fait 
que dans ce texte les diviseurs de Cartier sont \emph{par définition}
effectifs.
\end{rema}

\subsection{}\label{ss-divcart-andense}
Soit $X$ un espace
$k$- analytique et soit $S$ un diviseur de Cartier de $X$. Son complémentaire $X\setminus S$
est analytiquement dense dans $X$ (et en particulier dense pour la topologie de Zariski), et $S$ est partout de codimension $1$
dans $X$: on peut en effet
vérifier cet énoncé G-localement, ce qui permet de supposer $X$ affinoïde ; et on se ramène alors par les théorèmes de type GAGA 
aux assertions analogues dans le contexte schématique.

\subsection{}
Soit $X$ un espace $k$-analytique, soit $\mathscr I$ un faisceau cohérent d'idéaux sur $X$ et soit $S$
le sous-espace analytique fermé de $X$ défini par $\mathscr I$. La catégorie des
espaces analytiques
$Y$ définis sur une extension complète de $k$ et munis d'un $k$-morphisme $f\colon Y\to X$ tel que le faisceau cohérent d'idéaux
$f\inv \mathscr I$ soit inversible, c'est-à-dire encore tels que $Y\times_X S$ soit un diviseur de Cartier de $Y$, admet un objet final, appelé \emph{éclatement de $X$ le long de $\mathscr I$}
(ou le long de $S$, ou de centre $S$), qui est un espace $k$-analytique. Il suffit en effet de le vérifier G-localement, ce qui permet de supposer que $X$ est affinoïde. Désignons par $\mathscr Y$ l'éclaté
$\mathrm{Proj}(\bigoplus (\mathscr I\al)^n)$ de $X\al$ le long de $\mathscr I\al$. On vérifie alors (exactement comme en géométrie algébrique)
que l'espace $X$-analytique $\mathscr Y\an$ possède la propriété universelle requise. 

L'éclatement de $X$ le long de $S$
sera noté $\ecl XS$ ; 
le produit fibré $\ecl XS\times_X S$ est un
diviseur de Cartier de $\ecl XS$, qu'on appelle
le \emph{diviseur exceptionnel}
de l'éclatement
et qu'on note
$\dex XS$. 

\subsection{}
Soit $X$ un espace $k$-analytique et soit $S$ un sous-espace analytique fermé de $X$, défini par un faisceau quasi-cohérent d'idéaux
$\mathscr I$.
Soit $f$ le morphisme canonique $\ecl XS$ vers $X$.
Les faits suivants découlent immédiatement de la construction de l'éclatement et/ou de sa propriété universelle. 

\subsubsection{}
Le morphisme $f$ est propre et induit un isomorphisme
\[\ecl XS\setminus\dex XS\simeq X\setminus S.
\]
De plus, $\ecl XS\setminus \dex XS$
est analytiquement dense dans $\ecl XS$. 

\subsubsection{}
Si $X$ est réduit, $\ecl XS$ est réduit. 

\subsubsection{}
Si $\mathscr J$ est un faisceau cohérent d'idéaux inversibles sur $X$, le faisceau cohérent d'idéaux $f\inv (\mathscr J)$ sur $\ecl XS$
est inversible. Autrement dit, $\ecl XS\times_X T$ est un diviseur de Cartier de $\ecl XS$ pour tout diviseur de Cartier $T$ de $X$. 

\subsubsection{}\label{sss-ecl-chgebase}
Soit $L$ une extension complète de $k$, soit $X'$
un espace $L$-analytique
et soit $X'\to X$ un $k$-morphisme ; posons
$S'=S\times_X X'$. 
Supposons que
\[ \dex XS\times_{\ecl XS}(\ecl XS \times_X X')
=(\ecl XS \times_X X')\times_{X'}S'
\]
soit un diviseur de Cartier de $\ecl XS\times_X X'$, 
ce qui est par exemple le cas en vertu de 
\ref{sss-plat-idinv} dès que le morphisme
$X'\to X_L$ est plat. 
On a alors un $X'$-isomorphisme
canonique entre 
$\ecl{X'}{S'}$ et $\ecl XS\times_X X'$
(il est en effet tautologique que ce dernier satisfait la propriété universelle
requise).

\subsubsection{}\label{sss-cartier-ecliso}
Si $S$ est un diviseur de Cartier, $f$ est un isomorphisme. 

\subsubsection{}\label{sss-centre-change}
Soit $\mathscr J$ un faisceau d'idéaux cohérent et inversible sur $X$, et soit $n$ un entier strictement
positif. 
L'éclatement de $X$ le long de $\mathscr I^n\cdot \mathscr J$
s'identifie
canoniquement à $\ecl XS$. 

\subsubsection{}\label{sss-eclatetout-vide}
Si le support de $S$ est égal (ensemblistement) à l'espace $X$ tout entier alors $\ecl XS=\emptyset$.

\subsection{}
Nous dirons qu'un morphisme $f\colon Y\to X$ entre espaces $k$-analytiques est un éclatement s'il existe un sous-espace analytique fermé $S$ de $X$ et un $X$-isomorphisme $Y\simeq \ecl XS$. 

Insistons sur le fait que si c'est le cas, le centre $S$ n'est pas uniquement déterminé, comme en attestent les exemples donnés en 
\ref{sss-cartier-ecliso}, \ref{sss-centre-change} 
et \ref{sss-eclatetout-vide} ; et le diviseur exceptionnel de l'éclatement
ne l'est pas davantage. 

On ne parlera
dès lors «du» centre ou «du» diviseur exceptionnel
d'un éclatement que si ce dernier a été présenté comme l'éclatement le long d'un centre bien précis.

\subsection{}
Soit $X$ un espace
$k$-analytique et soient $S$ et $Y$ deux sous-espaces
analytiques fermés de $X$. On note $S\cap Y$ le sous-espace
analytique fermé $S\times_XY$ de $X$ et $f$ le morphisme
canonique $\ecl XS\to X$. L'éclatement 
$\ecl Y{S\cap Y}$ s'identifie à un sous-espace analytique fermé de $\ecl XS$ qu'on appelle le \emph{transformé strict}
de $Y$ (relatif à l'éclatement de centre $S$). Modulo cette identification, $f\inv(Y\setminus S)$ est l'ouvert
$\ecl Y{S\cap Y}\setminus \dex Y{S \cap Y}$ de $\ecl Y{S\cap Y}$, qui est analytiquement dense dans $\ecl Y{S\cap Y}$
puisque $\dex Y{S\cap Y}$ est un diviseur de Cartier de 
$\ecl Y{S\cap Y}$ (\ref{ss-divcart-andense}). 
Autrement dit, le transformé strict de $Y$ est l'adhérence analytique de $f\inv(Y\setminus S)$ dans $\ecl XS$, qu'on peut également voir comme l'adhérence analytique de  $f\inv(Y\setminus S)$ 
dans $f\inv(Y)$. 

Considérons $\mathscr O_{f\inv(Y)}$ et
$\mathscr O_{\ecl Y{S\cap Y}}$
comme deux faisceaux cohérents sur $\ecl XS$. Par ce qui précède, 
$\mathscr O_{\ecl Y{S\cap Y}}$ est le quotient de $\mathscr O_{f\inv(Y)}$ par son sous-module formé des sections qui s'annulent sur
$\ecl XS\setminus \dex XS$, c'est-à-dire des sections qui sont à support (ensemblistement) dans $\dex XS$.

\subsection{}
Soit $X$ un espace $k$-analytique
et soit $S$ un sous-espace analytique fermé de $X$. On suppose 
que $X$ est irréductible et que $S$ est ensemblistement distinct de $X$.

\subsubsection{}\label{sss-ecl-irred}
Soit $f\colon \ecl XS\to X$ le morphisme canonique. 
Comme $f$ induit un isomorphisme $\ecl XS\setminus \dex XS\simeq X\setminus S$, l'ouvert $X\setminus S$ de $X$ est contenu dans $f(\ecl XS)$. Puisque $S$ est ensemblistement distinct de $X$, l'ouvert
de Zariski $X\setminus S$  est dense  dans $X$ (qui est irréductible) ; comme $f$ est fermé en tant que morphisme propre, il vient
$f(\ecl XS)=X$. 

\subsubsection{}\label{sss-ecl-irred2}
L'ouvert $X\setminus S$ de $X$ est irréductible
(lemme \ref{lem-ouvzar-connexe}). 
Or $\ecl XS$ induit un isomorphisme 
$\ecl XS\setminus \dex XS \simeq X\setminus S$. 
Puisque $\ecl XS\setminus \dex XS$ est
Zariski-dense dans $\ecl
XS$ 
(car $\dex XS$ est un diviseur de Cartier de $\ecl XS$),
il
en résulte que
$\ecl XS$ est irréductible. 
Comme il possède un ouvert dense isomorphe à $X\setminus S$, 
sa dimension est égale à celle de $X$.

\subsection{}
Soit  $X$ un espace
$k$-analytique et soit $S$ un sous-espace
analytique fermé de $X$. Notons
$f$ le morphisme
canonique $\ecl XS\to X$. Soit $(X_i)_{i\in I}$ la famille des composantes
irréductibles de $X$ n'étant pas contenues dans $S$, 
chacune étant munie de sa structure réduite. 
Pour tout $i$, on pose $S_i=S\times_X X_i$ et l'on identifie l'éclaté
$\ecl{X_i}{S_i}$ à un sous-espace analytique fermé
de $\ecl XS$. 

\subsubsection{}\label{sss-image-eclatement}
Il
résulte de \ref{sss-ecl-irred}
que $f(\ecl{X_i}{S_i})=X_i$. Par conséquent, $f(\ecl XS)$ contient
la réunion des
$X_i$ pour $i\in I$.
Soit $U$ le complémentaire de $\bigcup_{i\in I}X_i$ dans $X$. 
On a 
\[\ecl XS\times_X U=\ecl U{S\cap U}=\emptyset\]
car $S\cap U=U$ par définition de la famille $(X_i)$. 
Il vient $f(\ecl XS)=\bigcup_ {i\in I}X_i$. 

\subsubsection{}\label{sss-composantes-eclate}
L'ouvert $\ecl XS\setminus \dex XS$ est analytiquement dense 
dans $\ecl XS$ et il est égal à $\bigcup_{i\in I}f\inv(X_i\setminus S_i)$ ; par conséquent $\ecl XS$ s'identifie ensemblistement à
\[
\bigcup_i \adhz{f\inv(X_i\setminus S_i)}{\ecl XS}=
\bigcup_i \ecl{X_i}{S_i}.\]

D'après \ref{sss-ecl-irred2}, l'espace
$\ecl{X_i}{S_i}$ est pour tout $i$
un espace irréductible de dimension $\dim X_i$. Par ailleurs si $i$ et $j$ sont deux éléments
distincts de $I$, les fermés $\ecl{X_i}{S_i}$ et $\ecl{X_j}{S_j}$
de $\ecl XS$ sont non comparables pour l'inclusion : il suffit en effet de s'assurer que leurs traces sur $\ecl XS\setminus \dex XS$ ne le sont pas, mais elles s'identifient \emph{via} $f$ à 
$X_i\setminus S_i$ et $X_j\setminus S_j$, lesquels ne sont pas comparables pour l'inclusion puisque $X_i$ et $X_j$ ne le sont pas. 

Par conséquent $(\ecl{X_i}{S_i})_i$ est la famille des composantes 
irréductibles de $\ecl XS$.

\begin{lemm}\label{lem-compose-eclat}
Soit $X$ un espace
$k$-analytique compact, soit $S$ un sous-espace analytique fermé de $X$, et soit $f$ le morphisme canonique
$\ecl XS\to X$. Soit $T$ un sous-espace analytique fermé de $\ecl XS$. Il existe un sous-espace analytique fermé $\Sigma$ de $X$, dont le support est ensemblistement égal à $S\cup f(T)$, tel que
$\ecl{\ecl XS}T\simeq_X \ecl X\Sigma$.
\end{lemm}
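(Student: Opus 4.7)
The plan is to reduce the question to the analogous statement in scheme theory, where the fact that a composition of blow-ups is again a blow-up is classical (see e.g.\ \cite{raynaud-g1971}, \S 5.1). The transfer uses G-localization combined with GAGA. When $X = \mathscr M(A)$ is affinoid, the analytic blow-up $\ecl XS$ is by construction the analytification of the projective $A$-scheme $\mathscr Y := \mathrm{Proj}\bigl(\bigoplus_{m\geq 0}(\mathscr I\al)^m\bigr)$, where $\mathscr I$ is the ideal of $S$; GAGA for proper schemes over an affinoid base then gives a bijection between closed analytic subspaces of $\ecl XS = \mathscr Y\an$ and closed subschemes of $\mathscr Y$, so $T$ corresponds to a unique closed subscheme $T\al \subset \mathscr Y$. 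Applying the scheme-theoretic composition-of-blow-ups result to $\mathscr Y \to X\al$ and $T\al$ produces a closed subscheme $\Sigma\al \subset X\al$ with support $|S\al| \cup f\al(|T\al|)$ such that $\ecl{X\al}{\Sigma\al} \simeq_{X\al} \ecl{\mathscr Y}{T\al}$, and analytifying yields a candidate $\Sigma := (\Sigma\al)\an$ and an isomorphism $\ecl X\Sigma \simeq_X \ecl{\ecl XS}T$ in the affinoid case.

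For a canonical global construction, needed so that local data glue on a compact $X$, I would use a pushforward. Let $\mathscr J$ denote the ideal of $T$ on $\ecl XS$. Since $\mathscr I \cdot \mathscr O_{\ecl XS}$ is the invertible ideal of $\dex XS$, the coherent ideal $\mathscr N_n := \mathscr J \cdot (\mathscr I \cdot \mathscr O_{\ecl XS})^n$ on $\ecl XS$ is well-defined for every $n \geq 0$, and since $f \colon \ecl XS \to X$ is proper, the pushforward $\mathscr K_n := f_*\mathscr N_n$ is a coherent ideal of $\mathscr O_X$. The key technical claim is that for $n$ sufficiently large the adjunction $f^*\mathscr K_n \to \mathscr N_n$ is surjective, equivalently $\mathscr K_n \cdot \mathscr O_{\ecl XS} = \mathscr N_n$. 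This is an analytic Serre-type theorem for the $f$-ample invertible $\mathscr I \cdot \mathscr O_{\ecl XS}$; I would prove it by G-localizing on $X$ and invoking GAGA on $\mathscr Y$ to transfer from the corresponding algebraic statement. Compactness of $X$ allows a uniform choice of $n$, and one defines $\Sigma$ by $\mathscr K_n$ for any such $n$.

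The description $|\Sigma| = S \cup f(T)$ is then checked pointwise: outside $S \cup f(T)$, $f$ is a local isomorphism and $\mathscr J$ is locally the unit ideal, forcing $\mathscr K_n = \mathscr O_X$ there; at a point of $S \cup f(T)$, every section of $\mathscr N_n$ in a neighbourhood of $f\inv(x)$ vanishes somewhere on the compact fibre, and after pushforward lies in the maximal ideal. For the isomorphism $\ecl X\Sigma \simeq_X \ecl{\ecl XS}T$ one compares the two sides on a finite affinoid cover of $X$: on each affinoid piece both sides agree with the analytification of the scheme-theoretic composition $\ecl{X\al}{\Sigma\al}$ from the first paragraph, and the canonical construction of $\mathscr K_n$ via pushforward shows that these local identifications glue. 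Alternatively one could invoke the universal properties directly together with \ref{par-prod-inv}, which from the equality $\mathscr K_n \cdot \mathscr O_{\ecl XS} = \mathscr N_n = (\mathscr J \cdot \mathscr O)(\mathscr I \cdot \mathscr O)^n$ extracts the required invertibility of each factor on both sides.

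The principal obstacle will be the Serre-type surjectivity $f^*\mathscr K_n \twoheadrightarrow \mathscr N_n$ in the Berkovich setting together with the compatibility of the analytic pushforward $f_*$ with its algebraic counterpart under analytification: both should be derivable from GAGA for $\mathscr Y$ projective over an affinoid $A$, but the identification of the two pushforwards and the fact that a single $n$ works uniformly on the compact $X$ deserves careful verification. A subsidiary issue, in the universal-property approach, is the circularity in producing the morphism $\ecl X\Sigma \to \ecl XS$ before one has invertibility of $\mathscr I \cdot \mathscr O_{\ecl X\Sigma}$; this is most cleanly sidestepped by using the affinoid-covering-plus-gluing variant rather than arguing purely through universal properties.
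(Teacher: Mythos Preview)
Your plan is essentially the paper's: construct a canonical coherent ideal on $X$ by pushing data down from $\ecl XS$, choose $n$ large enough (uniformly, by compactness), and verify the isomorphism G-locally by reducing to the scheme-theoretic statement via GAGA over an affinoid base. The paper cites the Stacks Project argument (Tags 0801, 0803, 080B) explicitly and transplants it almost verbatim.

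Two points where your formulation diverges from the paper and would need adjustment. First, $f_*\mathscr N_n$ is a coherent sub-$\mathscr O_X$-module of $f_*\mathscr O_{\ecl XS}$, not of $\mathscr O_X$: the adjunction map $\mathscr O_X\to f_*\mathscr O_{\ecl XS}$ is in general neither injective nor surjective, so your $\mathscr K_n$ is not literally an ideal of $\mathscr O_X$. The paper sidesteps this by taking instead the kernel $\mathscr J_n$ of the natural map $\mathscr I^n\to f_*(i_*\mathscr O_T(n))$, which by construction sits inside $\mathscr I^n\subset\mathscr O_X$; this is exactly the preimage of $f_*\mathscr N_n$ under $\mathscr I^n\to f_*\mathscr L^n$, so the fix to your construction is simply to pull back into $\mathscr I^n$. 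Second, the paper defines $\Sigma$ by the product $\mathscr J_n\cdot\mathscr I$ rather than by $\mathscr J_n$ alone. This extra factor of $\mathscr I$ is precisely what dissolves the circularity you identified: on $\ecl X\Sigma$ the product $\mathscr J_n\cdot\mathscr I$ becomes invertible, hence so does each factor by \ref{par-prod-inv}, and invertibility of $\mathscr I$ furnishes the map $\ecl X\Sigma\to\ecl XS$ directly from the universal property, with no need for an auxiliary gluing step. With these two corrections your argument becomes the paper's.
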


\begin{proof}
Nous allons suivre \emph{mutatis mutandis}
la preuve de \cite[\href{https://stacks.math.columbia.edu/tag/0801}{Tag 0801}]{stacks-project}.
Soit $\mathscr I$ le faisceau cohérent d'idéaux qui définit $S$
et soit $\mathscr A$ la $\mathscr O_X$-algèbre graduée 
$\bigoplus_n \mathscr I^n$. Soit $i$ l'immersion fermée
$T\hookrightarrow \ecl XS$. On dispose d'un morphisme naturel
de $\mathscr O_X$-algèbres graduées
\[\mathscr A\to\bigoplus_n  f_*(i_*\mathscr O_T(n))\]
où la torsion par
$n$ fait référence au faisceau $\mathscr O(1)$ relatif
sur l'espace $X$-analytique $\ecl XS=\mathrm{Proj}\;\mathscr A$. 
Soit $\mathscr J$ son noyau. Ce dernier possède les propriétés suivantes : 
\begin{enumerate}[1]
\item pour tout $n$, le sommande $\mathscr J_n$ est un $\mathscr O_X$-module cohérent ; 
\item pour $n$ assez grand, l'immersion fermée 
\[\mathrm{Proj}(\mathscr A/\mathscr J_n
\cdot\mathscr A)\hookrightarrow \mathrm{Proj}\;\mathscr A=\ecl XS\]
identifie $\mathrm{Proj}(\mathscr A/\mathscr J_n
\cdot\mathscr A)$
à $Z$, et le support du faisceau cohérent sur $\ecl XS$ 
associé à $\mathscr A/\mathscr J_n\cdot\mathscr A$ est égal à $T$.
\end{enumerate}
En effet, (1) découle immédiatement de la noethérianité des algèbres affinoïdes ; et par compacité de $X$, on peut pour démontrer (2) supposer que $X$ est affinoïde, et partant se ramener à l'énoncé schématique analogue, qui est 
une version plus simple
(grâce à la cohérence de $\mathscr J_n$) de \cite[\href{https://stacks.math.columbia.edu/tag/0803}{Tag 0803}]{stacks-project}

Par définition, $\mathscr J_n$ est un sous-faisceau cohérent de $\mathscr I^n$ ; on peut donc le voir comme un faisceau cohérent d'idéaux sur $X$, qu'on note $\mathscr K$. Le sous-espace analytique fermé $\Sigma$ de $X$ défini par le faisceaux d'idéaux 
$\mathscr K\cdot \mathscr I$ convient alors : il suffit en effet de le prouver G-localement, ce qui permet de se ramener au cas affinoïde, puis à la variante schématique de l'énoncé, qui est prouvée dans \cite[\href{https://stacks.math.columbia.edu/tag/080B}{Tag 080B}]{stacks-project}.
\end{proof}

\begin{lemm}\label{lem-partition-ouverte}
Soit $X$ un espace
$k$-analytique
compact et soit $U$ un ouvert de Zariski de $X$. Supposons donnée une partition finie $U=\coprod_i U_i$ où les $U_i$ sont aussi des ouverts de Zariski de $X$. Il existe alors un sous-espace analytique fermé $S$ de $X$, supporté par $X\setminus U$, et une partition $\ecl XS=\coprod V_i$, où les $V_i$ sont des ouverts de Zariski de $\ecl XS$, telle que
$V_i\setminus \dex XS=\ecl XS\times_X U_i$ pour tout $i$.
\end{lemm}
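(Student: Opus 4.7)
Je procéderais par récurrence sur $n$ (nombre de parts de la partition). Le cas $n=1$ est immédiat : il suffit de prendre pour $S$ le sous-espace analytique fermé réduit de $X$ de support $X\setminus U$ et $V_1=\ecl XS$. Pour le pas inductif, l'idée est de ramener $n\geq 3$ au cas $n=2$ : appliquer le cas $n=2$ à la partition $U=U_1\coprod (U_2\cup\cdots\cup U_n)$ fournit un éclatement $\widetilde X=\ecl X{S_0}$ et un ouvert-fermé de Zariski $V'\subset \widetilde X$ vérifiant $V'\setminus\dex X{S_0}=U_2\cup\cdots\cup U_n$. L'espace $V'$ est alors un espace $k$-analytique compact, dans lequel $\coprod_{i\geq 2}(\widetilde X\times_X U_i)$ partitionne un ouvert de Zariski. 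L'hypothèse de récurrence appliquée à $V'$ fournit un éclatement $\ecl{V'}{S_1}=\coprod_{i\geq 2}W_i$ (avec $S_1$ supporté dans $V'\cap \dex X{S_0}$), que l'on prolongerait à $\widetilde X$ en prenant l'idéal unité sur $\widetilde X\setminus V'$. Le lemme \ref{lem-compose-eclat} présente enfin la composée comme éclatement de $X$ le long d'un centre supporté dans $X\setminus U$.

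Le cœur de la démonstration est donc le cas $n=2$. L'espace $X$ étant compact n'a qu'un nombre fini de composantes irréductibles $(X_\alpha)$. Pour chacune, $X_\alpha\cap U$ est vide ou irréductible (lemme \ref{lem-ouvzar-connexe}) ; dans le second cas, il est contenu dans un unique $U_i$. J'attribuerais à $X_\alpha$ un indice $i(\alpha)\in\{1,2\}$ (cet indice canonique, ou $1$ par convention si $X_\alpha$ ne rencontre pas $U$) et poserais $Z_i=\bigcup_{i(\alpha)=i}X_\alpha$ muni de la structure réduite. On a alors ensemblistement $Z_1\cup Z_2=X$, $Z_i\cap U=U_i$ et donc $Z_1\cap Z_2\subseteq X\setminus U$. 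Je définirais $S$ comme le sous-espace analytique fermé de $X$ défini par $\mathscr I_{Z_1}+\mathscr I_{Z_2}$, poserais $\widetilde X=\ecl XS$, et prendrais pour $V_i$ la transformée stricte $\widetilde{Z_i}$ de $Z_i$ dans $\widetilde X$.

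L'étape cruciale consiste à établir que $\widetilde{Z_1}$ et $\widetilde{Z_2}$ sont ensemblistement disjointes et recouvrent $\widetilde X$. Une fois ceci acquis, $V_1$ et $V_2$ sont bien des ouverts de Zariski (chacun est le complémentaire de l'autre, qui est fermé de Zariski), et l'égalité $V_i\setminus \dex XS=\widetilde X\times_X U_i$ découle de la définition de la transformée stricte via l'isomorphisme $\widetilde X\setminus\dex XS\simeq X\setminus S$ et de la relation $Z_i\cap(X\setminus S)=U_i$.

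La difficulté principale réside dans cette affirmation. Étant de nature ensembliste, elle se vérifie G-localement, ce qui permet de supposer $X$ affinoïde ; la compatibilité de l'éclatement avec l'analytification ramène alors la question à l'énoncé schématique correspondant. Sur une carte locale de l'éclaté schématique, $\mathscr I_S$ est engendré par un élément $t$ non diviseur de zéro ; en écrivant $\mathscr I_{Z_i}=(a^{(i)}_j t)_j$ pour $i\in\{1,2\}$ et en exploitant l'inversibilité de $\mathscr I_S$, on obtient $(a^{(1)}_j, a^{(2)}_k)_{j,k}=(1)$, ce qui entraîne la disjonction des lieux d'annulation des idéaux $(a^{(i)}_j)_j$ définissant les transformées strictes. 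Pour le recouvrement, j'exploiterais le fait que la condition ensembliste $Z_1\cup Z_2=X$ équivaut à la nilpotence de $\mathscr I_{Z_1}\mathscr I_{Z_2}$ : on en déduit que $t^2(a^{(1)}_j a^{(2)}_k)_{j,k}$ est nilpotent, puis que $(a^{(1)}_j a^{(2)}_k)_{j,k}$ l'est aussi ($t$ étant non diviseur de zéro) ; en chaque point de la carte, pour tout couple $(j,k)$, l'un des facteurs $a^{(1)}_j$ ou $a^{(2)}_k$ s'annule, forçant le point à appartenir à $\widetilde{Z_1}\cup \widetilde{Z_2}$.
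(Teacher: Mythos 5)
Votre architecture générale (récurrence ramenant au cas de deux ouverts via le lemme \ref{lem-compose-eclat}, puis éclatement de la somme de deux idéaux pour séparer les deux morceaux) est bien celle de la preuve du texte, mais votre construction dans le cas $n=2$ comporte une lacune réelle : le centre $S$ défini par $\mathscr I_{Z_1}+\mathscr I_{Z_2}$ est supporté par $Z_1\cap Z_2$, qui est en général \emph{strictement} contenu dans $X\setminus U$. Or la conclusion du lemme force le support de $S$ à être exactement $X\setminus U$ : en prenant la réunion sur $i$ des égalités $V_i\setminus \dex XS=\ecl XS\times_X U_i$, on obtient $\ecl XS\setminus\dex XS=\ecl XS\times_X U$, c'est-à-dire $X\setminus S=U$ via l'isomorphisme $\ecl XS\setminus\dex XS\simeq X\setminus S$. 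Concrètement, la relation « $Z_i\cap(X\setminus S)=U_i$ » que vous invoquez est fausse dès que $X\setminus U\not\subseteq Z_1\cap Z_2$. Prenez par exemple $X=X_1\cup X_2$ avec deux composantes irréductibles se coupant en un point $p$, un point $q\in X_1\setminus X_2$ distinct de $p$, puis $U=X\setminus\{p,q\}$, $U_1=X_1\setminus\{p,q\}$ et $U_2=X_2\setminus\{p\}$ : votre construction donne $Z_i=X_i$ et $S$ supporté par $\{p\}$, de sorte que $Z_1\cap(X\setminus S)=X_1\setminus\{p\}$ contient $q$ alors que $U_1$ ne le contient pas ; la transformée stricte $V_1$ privée du diviseur exceptionnel contient l'antécédent de $q$, qui n'est pas dans l'image réciproque de $U_1$.

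La correction est précisément ce que fait le texte : on choisit un faisceau cohérent d'idéaux $\mathscr J$ de lieu des zéros ensembliste $X\setminus U$ et on remplace $\mathscr I_{Z_i}$ par $\mathscr J\cdot\mathscr I_{Z_i}$ (le texte prend pour $Z_i$ l'adhérence analytique de $U_i$, ce qui revient essentiellement au même que votre réunion de composantes irréductibles) ; le lieu des zéros de la somme des deux idéaux ainsi corrigés est alors $(X\setminus U_1)\cap(X\setminus U_2)=X\setminus U$, et les restrictions à $U_1$ et $U_2$ gardent les propriétés voulues ($\mathscr J_i$ nul sur $U_i$, égal à l'idéal unité sur $U_j$ pour $j\neq i$). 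Le reste de votre argument est correct : la séparation des deux fermés sur l'éclaté (l'image réciproque de $\mathscr I_1+\mathscr I_2$ y est inversible, engendrée par un élément régulier $t$, avec $\mathscr I_i\mathscr O=t\mathscr A_i$ et $\mathscr A_1+\mathscr A_2=\mathscr O$, d'où la disjonction), le recouvrement de $\ecl XS$ par les deux transformées strictes via \ref{sss-composantes-eclate}, et la réduction du cas $n\geq 3$ au cas $n=2$.
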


\begin{proof}
Nous reprenons
essentiellement la preuve
du lemme
5.1.5 de \cite{raynaud-g1971}. 
Le lemme \ref{lem-compose-eclat}
ci-dessus
permet de se ramener (par une récurrence immédiate)
au cas où la partition donnée de $U$
comprend exactement deux ouverts $U_1$ et $U_2$.
Choisissons un faisceau cohérent d'idéaux $\mathscr J$
de lieu (ensembliste) des zéros $X\setminus U$.
Pour $i\in \{1,2\}$ on désigne par $\mathscr I_i$ le faisceau cohérent d'idéaux définissant l'adhérence analytique de $U_i$ dans $X$ ; 
puis on pose $\mathscr J_i=\mathscr J
\cdot\mathscr I_i$. 
Par construction, $\mathscr J_i|_{U_i}=0$, $\mathscr J_i|_{U_j}=\mathscr O_{U_j}$ pour $j\neq i$, et le lieu des zéros ensembliste de $\mathscr J_i$ est $X\setminus U_i$. 

Soit $S$ le sous-espace analytique fermé de $X$ défini par l'idéal $\mathscr J_1+\mathscr J_2$ ; il est supporté par $X\setminus U$. 
Pour $i\in\{1,2\}$, notons $U'_i$ l'image réciproque de $U_i$ sur $\ecl XS$ et $\mathscr J'_i$ l'image réciproque de $\mathscr J_i$ dans $\mathscr O_{\ecl XS}$. Par construction, $ \mathscr J'_1+\mathscr J'_2$ est un faisceau d'idéaux inversible de  $\mathscr O_{\ecl XS}$, et $(\mathscr J'_1\cap \mathscr J'_2)|_{U'_1\coprod U'_2}=0$. Puisque $U'_1\coprod U'_2$ est analytiquement dense dans
$\ecl XS$ (c'est le complémentaire du 
diviseur de Cartier $\dex XS$), il vient 
$\mathscr J'_1\cap \mathscr J'_2=0$. 
Pour $i\in\{1,2\}$, notons $T_i$ le sous-espace analytique
fermé de $\ecl XS$ défini par $\mathscr K_i:=(\mathscr J'_1+\mathscr J'_2)\inv \mathscr J'_i\subset
\mathscr O_{\ecl XS}$. On pose $V_2=\ecl XS\setminus T_2$ et $V_1=\ecl XS\setminus T_1$. 

On a $\mathscr K_1+\mathscr K_2=\mathscr O_{\ecl XS}$,
et donc $T_1\cap T_2=\emptyset$. Et par ailleurs $\mathscr K_1\cdot \mathscr K_2$ est contenu dans
$\mathscr J'_1\cap \mathscr J'_2=0$, si bien que $T_1\cup T_2=\ecl XS$ ; par conséquent, $\ecl XS=V_1\coprod V_2$. 

On a de plus $\mathscr K_1|_{U'_1}=0$ et $\mathscr K_1|_{U'_2}=\mathscr O_{U'_2}$. Il en résulte que $V_2\setminus \ecl XS=U'_2$, et on a de même $V_1\setminus \ecl XS=U'_1$.
\end{proof} 

Nous allons maintenant introduire les morphismes d'espaces analytiques qui nous servirons à aplatir les faisceaux cohérents. Il s'agit essentiellement des composés d'éclatements \emph{et de morphismes quasi-étales}, ce dernier point étant une nouveauté par rapport à ce qu'on rencontre en théorie des schémas. 

\subsection{}\label{ss-def-akgamma}
On note $\mathfrak A_{k,\Gamma}^{\mathrm{elem}}$
la classe des morphismes $Y\to X$ entre espaces $k$-analytiques
compacts et $\Gamma$-stricts qui admettent une factorisation de la forme
$Y\to Z\to X$ où $Z$ est compact et $\Gamma$-strict, où $Z\to X$ est quasi-étale et où $Y\to Z$ est un éclatement. 
Notons que tout éclatement et tout morphisme quasi-étale dont les sources et buts sont compacts et $\Gamma$-stricts appartiennent à $\mathfrak A_{k,\Gamma}^{\mathrm{elem}}$ (composer d'un côté ou de l'autre avec l'identité, qui est à la fois un morphisme quasi-étale et un éclatement). 

On définit récursivement sur l'entier naturel $h$ une classe 
$\akg k\Gamma h$ de morphismes entre espaces $k$-analytiques compacts et $\Gamma$-stricts, par le procédé suivant : 

\begin{itemize}[label=$\diamond$]

\item $Y\to X$ appartient 
à $\akg k\Gamma 0$ si et seulement si $X$ est compact et $\Gamma$-strict et $Y\to X$ est un isomorphisme ; 

\item si $h>0$ alors $Y\to X$ appartient 
à $\akg k\Gamma h$ si et seulement si
elle admet une factorisation 
$Y\to Z\to X$  
où $Z\to X$ appartient à $\akg k\Gamma{h-1}$
et où $Y\to Z$ appartient à $
\mathfrak A_{k,\Gamma}^{\mathrm{elem}}$. 
\end{itemize}
Nos notations sont consistantes : si 
$Y\to X$ appartient à $\akg k\Gamma h$,
il appartient aussi à $\akg k\Gamma {h+1}$, ce qu'on peut voir
en écrivant $Y\to X$ comme la flèche composée
\[\begin{tikzcd}
Y\ar[r,"\mathrm{Id}_Y"]&
Y
\ar[r]&X.\end{tikzcd}\]
Remarquons que $\akg k\Gamma 1$ n'est autre que
$\mathfrak A_{k,\Gamma}^{\mathrm{elem}}$. 
Nous noterons $\mathfrak A_{k,\Gamma}$ la réunion
des $\akg k\Gamma h$ pour $h$ parcourant $\N$.

Soit $h$ un entier et soit $Y\to X$ une flèche de $\akg
k\Gamma h$. Soit $L$ une extension complète de $k$ et soit $X'$ un espace $L$-analytique $\Gamma$-strict et compact muni d'un morphisme plat vers $X_L$ ; posons $Y'=Y\times_X X'$. On voit à l'aide de \ref{sss-ecl-chgebase}
et par une récurrence immédiate sur $h$ que $Y'\to X'$ est une flèche de
$\akg L\Gamma h$. 

\subsection{}\label{ss-def-dkgamma}
Nous allons maintenant introduire une variante de la définition précédente «avec spécification d'un diviseur exceptionnel».
On définit récursivement sur l'entier naturel $h$
une classe $\bkg k\Gamma h$ de diagrammes
$S\hookrightarrow Y\to X$ dans
la catégorie des espaces $k$-analytiques compacts et $\Gamma$-stricts,
par le procédé suivant : 

\begin{itemize}[label=$\diamond$]

\item $S\hookrightarrow Y\to X$ appartient 
à $\bkg k\Gamma 0$ si et seulement si $X$ est compact et $\Gamma$-strict, $Y\to X$ est un isomorphisme, et
$S$ est un diviseur de Cartier de $Y$ ; 

\item si $h>0$ alors $S\hookrightarrow Y\to X$ appartient 
à $\bkg k\Gamma h$ si et seulement s'il existe :
\begin{itemize}[label=$\bullet$] 
\item un diagramme
$T\hookrightarrow Z\to X$ appartenant à $\bkg k\Gamma{h-1}$ ; 
\item un espace analytique compact et $\Gamma$-strict
$Z'$ muni d'un morphisme quasi-étale $Z'\to Z$ ; 

\item un sous-espace analytique fermé $T'$
de $Z'$ majorant $Z'\times_Z T$,
 \end{itemize}
 tels que $Y\to X$ admette une factorisation 
 $Y\to Z'\to Z\to X$ 
 identifiant $Y$ à $\ecl{Z'}{T'}$ et $S$
 à $\dex{Z'}{T'}$. 
\end{itemize}

Nos notations sont consistantes : si 
$S\hookrightarrow Y\to X$ appartient à $\bkg k\Gamma h$, il appartient aussi à $\bkg k\Gamma {h+1}$, ce qu'on peut voir
en écrivant $Y\to X$ comme la composée
\[\begin{tikzcd}
\ecl YS\ar[r]&Y\ar[r,"\mathrm{Id}_Y"]&Y\ar[r]
&X.\end{tikzcd}\]

\subsection{}
Soit $h$ un entier et soit $S\hookrightarrow Y\to X$
un diagramme appartenant à $\bkg k \Gamma h$.

\subsubsection{}
\label{sss-defelem}
Il résulte immédiatement
de la définition et d'une récurrence sur $h$ que $Y\to X$ appartient à $\akg k \Gamma h$, que $S$ est un diviseur de Cartier de $Y$, et que $Y\to X$ est quasi-étale en dehors de
$S$.

\subsubsection{}
Soit $L$ une extension complète de $k$ et soit $X'$ un espace $L$-analytique $\Gamma$-strict et compact muni d'un morphisme plat vers $X_L$ ; posons $Y'=Y\times_X X'$ et $S'=S\times_Y Y'$. En combinant \ref{sss-ecl-chgebase} avec une récurrence sur $h$, on voit que le diagramme
$S'\hookrightarrow Y'\to X'$ appartient à $\bkg L\Gamma h$.

\begin{enonce}[remark]{Notation}\label{not-ax}
Soit $X$ un espace $k$-analytique. Nous noterons
$\mathsf A(X)$ l'ensemble des \emph{points d'Abhyankar de rang maximal} de $X$, c'est-à-dire l'ensemble des points $x$ de $X$
tels que $d_k(x)=\dim_x X$. 
\end{enonce}

\subsection{}
Soit $X$
un espace $k$-analytique.

\subsubsection{}\label{sss-ax-irred}
Si $X$ est irréductible tout point de $\mathsf A(X)$ est Zariski-dense dans $X$, et tout ouvert non vide $U$
de $X$ rencontre $\mathsf A(X)$ (car $\dim  U=\dim X$). 

\subsubsection{}\label{sss-ax-axi}
On ne suppose plus $X$ irréductible. Soit $(X_i)_{i\in I}$ la famille des composantes irréductibles de $X$. 
On a alors $\mathsf A(X)=\coprod_i \mathsf A(X_i)$.

En effet, soit $i\in I$ et soit $x\in \mathsf A(X_i)$. 
Comme $x$ est Zariski-dense dans $X_i$ il n'appartient pas à $\bigcup_{j\neq i}X_j$ (et \emph{a fortiori}
pas à $\bigcup_{j\neq i}\mathsf A(X_j)$). Il en résulte que
\[\dim_x X=\dim_x X_i=d_k(x)\]
et $x\in \mathsf A(X)$. 
Ainsi les $\mathsf A(X_i)$ sont deux à deux disjoints et leur réunion est contenue dans $\mathsf A(X)$. 

Réciproquement, soit $x\in \mathsf A(X)$ et soit $i$ tel que $x\in X_i$ et $\dim_x X=\dim X_i$. 
Comme $\dim_x X=d_k(x)$ on a $\dim X_i=d_k(x)$ et $x\in \mathsf A(X_i)$. 

\subsubsection{}\label{sss-ax-compirr}
Pour tout indice $i$, il résulte
de \ref{sss-ax-irred} que $\mathsf A(X_i)$ est dense dans $X_i$ et que chaque élément
de $\mathsf A(X_i)$ est Zariski-dense dans $X_i$. 
On en déduit en vertu de \ref{sss-ax-axi}
que pour tout fermé de Zariski $Y$ de $X$, 
l'adhérence $\adht{\mathsf A(X)\cap Y}X$
est la réunion des composantes irréductibles de $X$ contenues dans $Y$.

\begin{lemm}\label{dz-ecl-qet}
Soit $f\colon Y\to X$ une flèche
de $\mathfrak A_{k,\Gamma}$
et soit $y\in \mathsf A(Y)$. L'image $f(y)$ appartient alors
à $\mathsf A(X)$ 
\end{lemm}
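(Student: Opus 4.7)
The plan is to proceed by induction on the height $h$ of $f$ in $\mathfrak B_{k,\Gamma}$. The case $h=0$ is trivial since then $f=\mathrm{Id}_X$. For the inductive step it suffices to treat the case $f\in \mathfrak B_{k,\Gamma}^{\mathrm{elem}}$, and by factoring such an $f$ as $Y\to X'\to X$ with $Y\to X'$ an éclatement and $X'\to X$ quasi-étale, I would further reduce to verifying the statement in two sub-cases: $f$ quasi-étale, and $f$ an éclatement.

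For the quasi-étale case I would argue as follows. Let $g\colon X'\to X$ be quasi-étale and $y'\in \mathsf A(X')$. Since $g$ is quasi-lisse of relative dimension zero, it is in particular plat, so the platitude–dimension formula yields $\dim_{y'}X'=\dim_{g(y')}X$. Moreover, the fiber $X'_{g(y')}$ is geometrically régulier of dimension zero at $y'$, so locally at $y'$ it is $\mathscr M(L)$ for a finite séparable extension $L$ of $\hr{g(y')}$; hence $\hr{y'}$ is finite séparable over $\hr{g(y')}$, which gives $d_k(y')=d_k(g(y'))$. Combining, $d_k(g(y'))=d_k(y')=\dim_{y'}X'=\dim_{g(y')}X$, so $g(y')\in\mathsf A(X)$.

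The key observation for the éclatement case is that a point of $\mathsf A(Y)$ cannot lie in the diviseur exceptionnel. Indeed, let $h\colon Y=\ecl{X'}S\to X'$ and $y\in\mathsf A(Y)$. Choose an irreducible component $Z$ of $Y$ containing $y$ with $\dim Z=\dim_y Y$ (possible by \ref{rappel-irred}); then $d_k(y)=\dim Z$ forces $\adhz{\{y\}}Y=Z$. Now $\dex{X'}S$ is a diviseur de Cartier on $Y$, and by \ref{sss-reg-ass} applied to a local regular generator, $\dex{X'}S$ contains no composante assassine of $Y$, in particular no composante irréductible. Since $\dex{X'}S$ is Zariski-fermé, $y\in\dex{X'}S$ would give $Z=\adhz{\{y\}}Y\subseteq\dex{X'}S$, a contradiction. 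So $y\notin\dex{X'}S$, and since $h$ induces an isomorphisme $Y\setminus\dex{X'}S\simeq X'\setminus S$, it is a local isomorphisme at $y$; this gives $d_k(h(y))=d_k(y)$ and $\dim_{h(y)}X'=\dim_{h(y)}(X'\setminus S)=\dim_y Y$, so $h(y)\in\mathsf A(X')$.

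The only genuinely non-formal step is the verification that no $\mathsf A$-point of $Y$ lies on the diviseur exceptionnel of an éclatement, which rests on the interplay between composantes assassines and inversibilité (\ref{sss-reg-ass}, plus the characterisation of $\mathsf A(Y)$ via Zariski-density in a composante irréductible); the rest is bookkeeping with the additive behaviour of $\dim$ and $d_k$ under plat morphisms of relative dimension zero and under local isomorphismes.
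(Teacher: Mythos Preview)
Your proof is correct and follows essentially the same approach as the paper's: reduce to $\mathfrak B_{k,\Gamma}^{\mathrm{elem}}$, then to the quasi-étale and éclatement cases separately, using in the latter that a point of $\mathsf A(Y)$ cannot lie on the exceptional divisor. The only cosmetic difference is that the paper invokes directly the fact that a Cartier divisor has Zariski-dense complement (hence empty Zariski interior), whereas you route this through \ref{sss-reg-ass}; both justifications are valid.
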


\begin{proof}
On se ramène immédiatement au cas où $f$
est ou bien 
quasi-étale, ou bien un éclatement. Si $f$ est quasi-étale, il 
est en particulier plat et de dimension relative nulle.
On en déduit que
$\dim_{f(y)}X=\dim_y Y$, et 
la finitude de $\hr y$
sur $\hr {f(y)}$ assure par ailleurs que $d_k(f(y))=d_k(y)$ ; par conséquent, 
le lemme est vrai lorsque $f$ est
quasi-étale. 

Supposons maintenant que $f$ soit un éclatement, et choisissons-en un centre $S$. L'hypothèse 
faite sur $y$ implique que celui-ci n'est situé sur aucun fermé de Zariski d'intérieur vide de $Y$ ; 
en
particulier, $y$ n'est pas situé sur le diviseur
de Cartier $\dex XS$ ; le lemme découle alors
du fait que $f$ induit
un isomorphisme $Y\setminus \dex XS\simeq X\setminus S$. 
\end{proof}

\section{Aplatissement d'un faisceau cohérent}

\subsection{}\label{ss-oslash-coherent}
Soit $X$ un espace $k$-analytique, 
soit $\mathscr F$ un faisceau cohérent sur $X$
et soit $S$ un fermé de Zariski de $X$. 
Le sous-faisceau $\mathscr G$ de $\mathscr F$ formé des sections
dont le support est \emph{ensemblistement} contenu dans $S$ 
est alors cohérent. 

En effet, on peut pour le vérifier supposer $X$ affinoïde. Posons
$A=\mathscr O_X(X)$ ; notons $I$ un idéal de $A$ définissant $S$ 
et notons $M$ le $A$-module $\mathscr F(X)$. Il existe par noethérianité un entier $n$
tel que tout élément de $M$ annulé par une puissance de $I$ soit annulé par $I^n$.
Soit $N$ le sous-module de $M$ formé des éléments de $I^n$-torsion. Pour tout $f\in I$, 
la multiplication par $f$ définit un endomorphisme injectif du module $M/N$, puis du faisceau
cohérent associé $\widetilde{M/N}$. Il s'ensuit que
l'injection canonique $\widetilde N\hookrightarrow \mathscr G$ est un isomorphisme.

\subsection{}
Soit $Y\to X$ un morphisme d'espaces $k$-analytiques,
soit
$S$ un fermé de Zariski de $X$ et soit $\mathscr F$ un faisceau cohérent sur $Y$. 
Nous noterons $\mathscr F\oslash_X S$ le quotient de $\mathscr F$ par son sous-faisceau formé
des sections dont le support est \emph{ensemblistement}
contenu dans $Y\times_X S$ (nous emploierons aussi cette notation lorsque $S$ est un sous-espace analytique
fermé de $X$, mais seul compte alors son
ensemble sous-jacent). C'est un faisceau cohérent
d'après \ref{ss-oslash-coherent}. 
Si $Y=X$ nous écrirons simplement $\mathscr F\oslash S$.

\subsection{}
Soit $f\colon Y\to X$ un morphisme
d'espaces $k$-analytique
et soit $\mathscr F$ un faisceau cohérent sur $Y$. Rappelons que
$\pl {\mathscr F}X$ désigne le lieu de $X$-platitude de $\mathscr F$, et $\ql {\mathscr F} X$ son fermé de Zariski
complémentaire ; notons que $\ql {\mathscr F}X\subset \mathrm{Supp}(\mathscr F)$.
Pour tout entier $n$, le lieu des points de $\ql {\mathscr F} X$
en lesquels ce dernier est de dimension relative $\geq n$ sur $X$ sera noté $\qln {\mathscr F} X n$ ; c'est un fermé de Zariski
de $\ql {\mathscr F}X$ (remarquons que
$\qln {\mathscr F}X0$ est égal à $\ql {\mathscr F}X$).
Enfin, nous noterons $\qlns {\mathscr F}Xn$ le saturé de $\qln {\mathscr F} X n$ 
pour la relation d'équivalence ensembliste induite par $f$, c'est-à-dire l'ensemble
$f\inv(f(\qln{\mathscr F}Xn))$. 

\subsection{}\label{cartier-plat-inutile}
Soit $Y\to X$ un
morphisme d'espaces $k$-analytiques
et soit $\mathscr F$ un faisceau cohérent sur $Y$. 
Soit $S$ un diviseur de Cartier sur $X$. La flèche quotient de $\mathscr F$
vers $\mathscr F\oslash_XS$ est
alors un isomorphisme au-dessus de l'ouvert de Zariski $\pl {\mathscr F}X$ de $Y$.
En effet, on peut pour le voir
raisonner G-localement
et se ramener au cas où $Y$  et $X$ sont affinoïdes, et où $S$ est défini par l'équation $f=0$ pour une certaine
fonction $f$
non diviseur de zéro dans $\mathscr O_X(X)$. Pour tout $n$, la multiplication par $f^n$ est un endomorphisme injectif
de $\mathscr O_X$, et la multiplication par $f^n$ est donc un endomorphisme injectif de $\mathscr F_{\pl{\mathscr F}X}$,
ce qui entraîne aussitôt notre assertion. 

\begin{defi}
Soit $Y\to X$ un morphisme
d'espaces $k$-analytiques, et soit $\mathscr F$ un faisceau cohérent sur $Y$. 
Soit $n$ un entier. Nous dirons que le faisceau
$\mathscr F$ est \emph{$X$-plat en dimensions $\geq n$} si $\qln {\mathscr F}Xn=\varnothing$. 
Notons que $\mathscr F$ est $X$-plat en dimensions $\geq 0$
si et seulement s'il est $X$-plat.
\end{defi}

Nous pouvons maintenant énoncer le théorème principal
de cet article, qu'on peut voir comme une version analytique
du théorème 5.2.2 de \cite{raynaud-g1971}. Son énoncé utilise de façon cruciale
la classe de diagrammes $\bkg k\Gamma h$ introduite en 
\ref{ss-def-dkgamma}. 

\begin{theo}\label{flat-eclat}
Soit $f\colon Y\to X$ un morphisme entre espaces
$k$-analytiques compacts
et
$\Gamma$-stricts. 
Soient $n$ et $d$ deux entiers avec $n\leq d+1$, 
et soit $\mathscr F$ un faisceau cohérent sur $Y$ dont le support est partout
de dimension relative $\leq d$ sur $X$. 
Il existe 
une famille finie de couples
$[(S_i\hookrightarrow Z_i\to X,Y_i)]_i$, où 
\begin{itemize}[label=$\diamond$]
\item $S_i\hookrightarrow Z_i\to X$ est un
diagramme appartenant à $\bkg k\Gamma {d+1-n}$ ; 
\item $Y_i$ est un domaine $k$-analytique compact et $\Gamma$-strict
de $Y\times_X Z_i$,
\end{itemize}
famille qui satisfait les propriétés suivantes :
\begin{enumerate}[1]

\item pour tout  $i$, le faisceau cohérent $\mathscr F_{Y_i}\oslash_{Z_i}S_i$
est $Z_i$-plat en dimensions $\geq n$ ; 
\item pour tout $i$, l'ouvert $Y_i\setminus (Y_i\times_{Z_i}S_i)$
de $Y_i$ est contenu dans l'image réciproque de $Y\setminus \qln {\mathscr F}Xn$ ; 
\item pour tout $i$, le fermé
$Y_i\times_{Z_i}S_i$ 
de $Y_i$ est contenu dans l'image réciproque de $\qlns {\mathscr F}Xn$ ; 
\item la réunion des images des morphismes $Y_i\to Y$ contient $Y\setminus\qlns {\mathscr F}Xn$ ; 
\item la réunion des images des morphismes $Y_i
\times_{Z_i}S_i\to Y$
contient tout point de $\qln{\mathscr F}Xn$
dont l'image sur $X$ n'est pas
adhérente à $f(\qln{\mathscr F}Xn)\cap \mathsf A(X)$. 
\end{enumerate}

\end{theo}

\subsection{Commentaires}
Avant de démontrer le théorème, nous allons passer en revue
quelques cas particuliers. 

\subsubsection{Supposons que $\mathscr F$
est déjà $X$-plat en dimensions $\geq n$}\label{sss-theo-vide}
Le théorème est alors
vide car la famille constituée
de l'unique couple
$(\begin{tikzcd}
\emptyset\ar[r,hook]&X\ar[r,"\mathrm{Id}"]&X\end{tikzcd}, Y)$ satisfait tautologiquement
les conditions (1) à (5). 

\subsubsection{Supposons que $n=0$}
\label{aplat-strictosensu}
Nous avons alors
affaire à un théorème
d'aplatissement \emph{stricto sensu} : en effet, dans ce cas, 
le faisceau
cohérent $\mathscr F_{Y_i}\oslash_{Z_i}S_i$
est
$Z_i$-plat pour tout $i$.

\subsubsection{Supposons que $n=d+1$}
Le théorème est alors
vide  : en effet, comme les fibres de $\supp F\to X$ sont de dimension $\leq d$,
le faisceau cohérent $\mathscr F$ est automatiquement $X$-plat en dimensions $\geq d+1$, et 
on est donc dans la situation déjà décrite en \ref{sss-theo-vide}. 

Si nous avons choisi d'inclure le cas $n=d+1$ dans l'énoncé (au lieu d'imposer $n\leq d$, 
ce qui pourrait paraître plus naturel) c'est d'une part pour rendre triviale l'initialisation
de la récurrence descendante sur $n$ sur laquelle est fondée notre preuve, d'autre part pour
disposer d'un théorème d'utilisation aussi souple que possible. Par exemple, cela nous permettra de démontrer
le théorème \ref{thm-equidim}
sans perdre du temps à traiter à part le cas (par ailleurs trivial)
où $d=\delta$.

\subsubsection{Supposons que $\qlns {\mathscr F}Xn$
est d'intérieur vide dans $Y$}
\label{comment-presque-plat}
La propriété (4)
combinée à la compacité
des $Y_i$ implique
alors que les images des morphismes $Y_i\to Y$ recouvrent $Y$. 

\subsubsection{Supposons que $\qln {\mathscr F}Xn=Y$}
\label{comment-domfact}
La propriété (4)
est alors triviale et n'apporte aucune information sur les images des $Y_i\to Y$, et (2) assure par ailleurs que
$Y_i$ est contenu pour tout $i$ dans l'image réciproque de $S_i$, 
ce qui entraîne que $\mathscr F_{Y_i}\oslash_{Z_i}S_i=0$ : la 
propriété (1) est donc également évidente.
Notre
théorème est ainsi dans ce cas sans contenu
\emph{en ce qui concerne la platitude}. C'est conforme au principe général évoqué en introduction selon lequel les
techniques d'aplatissement n'ont pas vocation à créer de la platitude \emph{ex-nihilo}, mais à la propager un peu lorsqu'elle existe déjà ; remarquons que \ref{comment-presque-plat} est également une manifestation de ce principe, mais dans le cas opposé où il y a déjà «beaucoup» de platitude au départ. 

Mais notre théorème n'est pas vide pour autant lorsque
$\qln {\mathscr F}Xn=Y$.
Il affirme alors en effet comme on vient de le voir l'existence pour tout $i$ d'une factorisation
ensembliste de $Y_i\to Z_i$ par $S_i$, qui est un fermé de Zariski d'intérieur vide 
de $Z_i$ ;  et si de plus $X$ est réduit,
la fibre de $f$
en un point de $\mathsf A(X)$ est entièrement contenue 
dans $\pl{\mathscr F}X=\emptyset$, si bien que
$f(Y)=f(\qln {\mathscr F}Xn)$
ne contient aucun point de $\mathsf A(X)$ ; 
la combinaison de (1) et (2) entraîne
alors que la réunion des images
des $Y_i\to Y$ est égale à $Y$. 

C'est sur ce type de remarque
que se fonde
le théorème \ref{thm-dominant-factor}
que nous verrons plus loin.

\subsection{Démonstration du théorème \ref{flat-eclat}}
Elle est longue et comporte de nombreuses étapes. 

\subsubsection{Automaticité
de la propriété \textnormal{(2)}}\label{sss-auto2}
Remarquons pour commencer que (2) est une conséquence de (1). En effet, supposons (1) satisfaite, soit $i$ un indice, soit $t$ un point de $Y_i$ qui n'est pas situé au-dessus de $S_i$, et soit $y$ l'image de $t$ sur $Y$. L'assertion (1) assure que $t$ n'appartient pas à $\qln{\mathscr F_{Y_i}}Xn$ ; et comme $t$ n'est pas situé au-dessus de $S_i$, son image sur $Z_i$ appartient à $\pl {Z_i}X$. On en déduit alors par descente plate de la platitude que $y$ n'appartient pas 
à $\qln {\mathscr F}Xn$.

\subsubsection{Preuve
du théorème dans un premier cas particulier}\label{platif-eclat-premier}
On suppose que $d$ est égal à $n$
et que $f$ est quasi-lisse à fibres géométriquement
irréductibles de dimension $n$. 
Nous allons
montrer qu'il existe un sous-espace analytique fermé $S$ de $X$
de support $f(\qln {\mathscr F}Xn)$
tel
que le faisceau cohérent
$\mathscr F_{Y\times_X \ecl XS}\oslash_X S$
soit $\ecl XS$-plat en dimensions $\geq n$. C'est en un certain sens le cœur 
de notre démonstration, car c'est la seule étape lors de laquelle on construit
explicitement un éclatement ayant des vertus aplatissantes. Notre raisonnement
est une adaptation de celui suivi par Raynaud et Gruson à la section 5.4 de \cite{raynaud-g1971}. 

\paragraph{}
Supposons qu'on ait établi ce qui est 
annoncé en \ref{platif-eclat-premier}. 
Le théorème est alors valable dans notre cas particulier ; plus précisément, la
famille singleton
$\{(\dex XS
\hookrightarrow \ecl XS\to X, Y\times_X \ecl XS)\}$ répond à nos exigences. 
Il est en effet immédiat qu'elle satisfait (1), et partant (2) d'après \ref{sss-auto2}. En ce qui concerne (3), (4) et (5) commençons par remarquer que comme $S$ a pour support 
$f(\qln {\mathscr F}Xn)$, le fermé $\qlns {\mathscr F}Xn$
de $Y$ est égal à $Y\times_X S$. L'assertion (4) en découle tautologiquement, et l'assertion (3) s'en déduit compte-tenu
du fait que l'image de $\ecl XS\to X$ contient $X\setminus S$.
Il reste à vérifier (5). Soit $y$ un point 
de $\qln {\mathscr F}Xn$ dont l'image $x$ n'appartient
pas à l'adhérence de $S\cap \mathsf A(X)$ dans $S$.
Il résulte de 
\ref{sss-image-eclatement}
que $x$ appartient à l'image de $\dex XS$ sur $X$ ; il s'ensuit
que $y$ appartient à l'image de $Y\times_X \dex XS\to Y$.

\paragraph{}
Nous allons maintenant prouver l'assertion annoncée
en \ref{platif-eclat-premier}
en plusieurs étapes.
Remarquons pour commencer que sous nos hypothèses $\qln {\mathscr F}Xn$ est exactement la réunion des fibres de $Y\to X$ qui 
ne rencontrent pas $\pl{\mathscr F}X$. 
Soit $U$ le complémentaire de $\qln {\mathscr F}Xn$ ; c'est un ouvert de Zariski de $Y$.

Soit $V$ le lieu des points en lesquels $\mathscr F$ est
libre ;
c'est un ouvert de Zariski de $Y$. Comme $Y$ est quasi-lisse
sur $X$, le faisceau cohérent $\mathscr O_Y$ est plat
sur $X$, et $V$ est par conséquent contenu dans $\pl{\mathscr F}X$ et \emph{a fortiori}
dans $U$.

Soit $x$ un point de $X$ tel que $U_x$ soit non vide. Cela signifie 
que $Y_x$
rencontre $\pl{\mathscr F}X$ ; on déduit alors du lemme \ref{lem-plat-libre}
que $V$ rencontre $Y_x$. 

Ceci valant pour tout $x$ tel que $U_x$ soit non vide, 
on peut également décrire
$\qln {\mathscr F}Xn$
comme la réunion des fibres de $Y\to X$ qui ne rencontrent pas $V$. 

\paragraph{}
Soit $T$ le complémentaire de $V$ dans $Y$, muni (disons) de sa structure réduite. Les fibres de $Y\to X$ étant géométriquement irréductibles, le fermé $T$ de $Y$ possède  un «idéal des coefficients» sur $X$, définissant un sous-espace analytique fermé $S$ de $X$ (th. \ref{theo-id-coeff}). Par construction, un point $x$ de $X$ appartient à $S$ si et seulement si sa fibre $Y_x$ ne rencontre pas $V$, c'est-à-dire si et seulement si $x$ appartient à $f(\qln {\mathscr F}Xn)$ (en effet comme la fibre $Y_x$ est quasi-lisse, elle est régulière et en particulier réduite, et un sous-espace analytique fermé de $Y_x$ est donc isomorphe à $Y_x$ si et seulement si son support
est égal à $Y_x$); autrement dit, le support de $S$
est égal à $f(\qln {\mathscr F}Xn)$.

Nous allons maintenant démontrer l'existence d'un sous-espace analytique fermé $S'$
de $X$ de même support que $S$ tel que $\mathscr F_{Y\times_X \ecl X{S'}}\oslash_X S'$
soit $\ecl X{S'}$-plat en dimensions $\geq n$, ce qui achèvera la preuve de l'assertion souhaitée ; nous utiliserons 
désormais uniquement les faits suivants : 

\begin{itemize}[label=$\bullet$]
 \item $Y\to X$ est
quasi-lisse à fibres géométriquement irréductibles de dimension $n$ ; 
\item $V$ est un ouvert de Zariski de $Y$ tel que $\mathscr F_V$ soit G-localement libre (et $T$
désigne son complémentaire) ; 
\item $Y\times_X S$ est la réunion des fibres de $Y\to X$ qui ne rencontrent pas $V$. 
\end{itemize}
Nous procèderons à plusieurs reprises à des changement de base par des éclatements convenables, ce qui sera licite car les trois propriétés ci-dessus 
sont préservées par ce type d'opération, et car la
composée de deux éclatements est un éclatement
dont le support du centre est ce qu'on attend,
voir le lemme \ref{lem-compose-eclat}.
Nous n'utiliserons plus le fait que $Y\times_X S=\qln {\mathscr F}Xn$ (qui pourrait ne pas survivre à un changement de base). 

\paragraph{}
Soit $\Omega$ l'ouvert complémentaire de $S$ dans $X$. Pour tout point $x$ de $\Omega$, la fibre $V_x$ est non vide ; le faisceau $\mathscr F$ est libre en tout point de $V$, et en particulier en tout point de $V_x$ ; ce dernier étant un ouvert de Zariski de l'espace normal et connexe $Y_x$, il est connexe (lemme \ref{lem-ouvzar-connexe})
et le rang de $\mathscr F$ est donc égal à un même entier $r(x)$ en tout point de $V_x$.

Montrons que la fonction $x\mapsto r(x)$ est localement constante sur $\Omega$. Soit $x$ un point de $\Omega$, et soit $\Omega'$ un voisinage analytique compact de $x$ dans $\Omega$. Par hypothèse, le fermé $T_\omega$ est d'intérieur vide dans $Y_\omega$ pour tout $\omega \in \Omega'$. Le corollaire
\ref{coro-tube-transverse} assure alors qu'il existe un domaine analytique compact $T'$ de $Y\times_X \Omega'$ qui évite $T\times_X \Omega'$ et rencontre
toutes les fibres de $Y$ au-dessus de $\Omega'$. Comme $T'$ est contenu dans $V$, le faisceau $\mathscr F|_{T'}$ est G-localement libre. Pour tout $i$, notons $T'_i$ l'ouvert fermé de $T'$ sur lequel le rang
de $\mathscr F$ est égal à $i$. On a $T'_i=\emptyset$ pour presque tout $i$, et $T'=\coprod_i T'_i$. De plus, le rang étant constant sur $V_x$ pour tout $x$, les images des $T'_i$ sur $\Omega'$ sont deux à deux disjointes, et compactes. On obtient ainsi une écriture de $\Omega'$ comme une union disjointe finie d'ouverts fermés sur lesquels $r$ est constant ; en particulier, $r$ est constant au voisinage de $x$. 

Fixons un entier $m$. Le lieu $\Omega_m$ des points $x$ de $\Omega$ tels que $r(x)=m$ est en vertu
de ce qui précède un ouvert fermé de $\Omega$, et partant un ouvert de Zariski de $X$
(lemme \ref{zar-transitive}). 
D'après le lemme \ref{lem-partition-ouverte}, il existe un
sous-espace analytique fermé
$S'$ de $X$
de support $S$ tel que la partition finie $\Omega\times_X\ecl X{S'}=\coprod (\Omega_i\times_X\ecl X{S'})$ soit induite par une partition finie en ouverts de Zariski
de $\ecl X{S'}$. En remplaçant $X$ par $\ecl X{S'}$ et en raisonnant ouvert par ouvert, on se ramène au cas où $r$ a une valeur constante sur $\Omega$, encore notée $r$ (cette notation désigne donc désormais un entier, et non une fonction), et où $S$ est un diviseur de Cartier.  
Sous ces hypothèses,
toute composante assassine de $X$ rencontre $\Omega$.

\paragraph{}\label{par-compass-rencontre}
Montrons que $V$ est analytiquement dense dans $Y$.
Soit $\Sigma$ une composante assassine de $Y$. Comme $Y\to X$ est quasi-lisse, $\Sigma$ est une composante irréductible de $f\inv(\Theta)$ pour une certaine composante assassine $\Theta$ de $X$ (munie, disons, de sa structure réduite). 
Puisque $f\inv(\Theta)\to \Theta$ est plat l'image
$f(\Sigma)$ est Zariski-dense dans $\Theta$.
En tant que composante assassine de $X$, le fermé $\Theta$ rencontre $\Omega$, et l'image $f(\Sigma)$ n'est donc
par ce qui précède
pas contenue dans $S$; autrement dit,  $f\inv(S)\cap \Sigma$ est d'intérieur vide dans $\Sigma$. Il existe par conséquent un point $y$ de $\Sigma$, dont on note $x$ l'image sur $X$, qui n'est situé ni sur $f\inv(S)$, ni sur une composante irréductible de $f\inv(\Theta)$ autre que $\Sigma$. La dimension relative de $f|_\Sigma$ en $y$ est alors égale à la dimension relative de $f$ en $y$, c'est-à-dire à $n$. Par conséquent $\Sigma\cap Y_x=Y_x$ (la fibre $Y_x$ étant irréductible de dimension $n$).  Comme $x$ n'est pas situé sur $S$, la fibre $V_x$ est non vide, et $\Sigma$ rencontre donc $V$. Ce dernier est bien par conséquent 
analytiquement dense dans $Y$. 


\paragraph{}

Soit $\mathscr I$ le $r$-ième idéal de Fitting de $\mathscr F$
(\ref{ss-fitting})  et soit $\mathscr J\subset \mathscr O_X$
son «idéal des coefficients» (thm. \ref{theo-id-coeff}). Comme $\mathscr F_V$ est G-localement libre de rang $r$, on a $\mathscr I_V=\mathscr O_V$ ; puisque $V$ rencontre toutes les fibres de $Y\to X$ au-dessus de $\Omega$, il vient $\mathscr J_\Omega=\mathscr O_\Omega$.
Soit $\mathscr J'$ le faisceau d'idéaux inversible
définissant $S$. En remplaçant $X$ par son éclatement le long de $\mathscr J\cdot \mathscr J'$ (qui s'identifie à l'éclatement de $X$ le long de $\mathscr J$ mais a un centre qui 
est ensemblistement exactement égal à $S$), 
on se ramène au cas où $\mathscr J$ est inversible. 

Soit $\mathscr H$ le sous-faiceau cohérent $(\mathscr J\mathscr O_Y)\inv \mathscr I$
de $\mathscr K_Y$. Il est en fait contenu dans $\mathscr O_Y$, et son idéal des coefficients est lui-même égal à $\mathscr O_X$
(th. \ref{theo-id-coeff}). Soit $Y'$ l'ouvert complémentaire du fermé de Zariski de $Y$ défini par
le faisceau d'idéaux $\mathscr H$ ;
par définition on a $\mathscr H_{Y'}=\mathscr O_{Y'}$, 
et partant 
$\mathscr I_{Y'}=\mathscr J\mathscr O_{Y'}$ ; en particulier, $\mathscr I_{Y'}$
est 
inversible. 

Soit $x\in X$. Comme
l'idéal des coefficients
de $\mathscr H$ est égal à $\mathscr O_X$, le sous-espace
analytique fermé de $Y_x$ défini par $\mathscr H_{Y_x}$ n'est pas isomorphe
à $Y_x$, ce qui entraîne que son support est différent de $Y_x$ car l'espace $Y_x$ est quasi-lisse, donc régulier, donc réduit. 
En conséquence, l'ouvert $Y'$ rencontre toutes les fibres de $f$. 

Notons par ailleurs
que puisque
$\mathscr J_\Omega=\mathscr O_\Omega$,
on a $\mathscr J\mathscr O_{Y\times_X \Omega}=\mathscr O_{Y\times_X \Omega}$,
si bien que $\mathscr H_{Y\times_X \Omega}=\mathscr I_{Y\times_X \Omega}$. Il en résulte que $\mathscr H_V=\mathscr I_V=\mathscr O_V$ (et $V$ est donc contenu dans~$Y'$)
et que $\mathscr I_{Y'\times_X \Omega}=\mathscr H_{Y'\times_X \Omega}
=\mathscr O_{Y'\times_X \Omega}$. 

Nous allons maintenant montrer que 
$\mathscr F\oslash_X S$ est $X$-plat en dimensions $\geq n$, ce qui achèvera la démonstration de
l'assertion énoncée en \ref{platif-eclat-premier}. 
Soit $\mathscr N$ le sous-faisceau cohérent de $\mathscr F$ constitué des sections annulées par $\mathscr I$. Puisque $\mathscr I_{Y'\times_X \Omega}=\mathscr O_{Y'\times_X \Omega}$, 
on a $\mathscr N_{Y'\times_X\Omega}=0$ ; par conséquent, 
le support de toute section de $\mathscr N$ sur un ouvert de $Y'$ est contenu dans $Y'\times_X S$ et l'on dispose donc d'une suite exacte 
de faisceaux cohérents sur $Y'$
\[0\to \mathscr P\to (\mathscr F/\mathscr N)_{Y'}\to (\mathscr F\oslash_X S)_{Y'}\to 0\]
(où $\mathscr P$ est le faisceau  cohérent sur $Y'$
\emph{défini}
par cette suite exacte). 

Nous allons tout d'abord montrer que $(\mathscr F/\mathscr N)_{Y'}$ est
G-localement libre de rang $r$ ; comme $f$ est quasi-lisse, cela assurera que $(\mathscr F/\mathscr N)_{Y'}$ est $X$-plat.
Soit $W$ un domaine affinoïde de $Y$. Comme
l'ouvert de Zariski $V$ est analytiquement dense
dans $Y$ (\ref{par-compass-rencontre}), l'intersection $V\cap W$ est analytiquement dense
dans $W$ (\ref{ss-andense-interv}). Autrement dit, 
toute composante assassine de $W$ rencontre $V$, 
ce qui signifie en vertu de la proposition \ref{pro-compass-aff}  (2)
que l'assassin de $W\al$ est contenu dans $(W\cap V)\al$. Comme le faisceau $\mathscr F_V$ est 
G-localement libre de rang $r$, le faisceau
$(\mathscr F_W\al)_{(V\cap W)\al}$ est localement libre de rang $r$ ; le faisceau $\mathscr F_W\al$ est en conséquence
localement libre de rang $r$ en tout point de $\mathrm{Ass}(W\al)$.
On déduit alors 
du lemme 5.4.3 de \cite{raynaud-g1971}
que $\mathscr F_W\al/\mathscr N_W\al$ est localement libre de rang $r$ au-dessus du plus grand ouvert de $W\al$ sur lequel
le
$r$-ième idéal de Fitting $\mathscr I\al$
de $\mathscr F\al$ est inversible, ouvert qui contient
 $(Y'\cap W)\al$. 
Ceci valant pour tout domaine affinoïde $W$ de $Y$,  le faisceau $(\mathscr F/\mathscr N)_{Y'}$ est
G-localement libre de rang $r$ et par conséquent $X$-plat, comme annoncé. 

Comme $V$ ne rencontre pas $Y\times_X S$ on a 
$(\mathscr F/\mathscr N)_V=(\mathscr F\oslash_XS)_V=\mathscr F_V$, et il vient $\mathscr P_V=0$. Par ailleurs, le faisceau cohérent $\mathscr P$ est un sous-faisceau du faisceau
G-localement libre
$(\mathscr F/\mathscr N)_{Y'}$ ; toute composante assassine de $\mathscr P$ est alors une composante assassine de $Y'$, et rencontre dès lors $V$. Mais comme $\mathscr P_V=0$, ceci entraîne que $\mathscr P$ n'a pas de composantes assassines, ce qui signifie qu'il est nul ; par conséquent
$(\mathscr F\oslash_X S)_{Y'}=(\mathscr F/\mathscr N)_{Y'}$, et $(\mathscr F\oslash_X S)_{Y'}$
est de ce fait $X$-plat. 
Puisque $Y'$ rencontre toutes les fibres de $f$ (fibres qui sont irréductibles et de dimension $n$), le faisceau
cohérent $\mathscr F\oslash_X S$ est $X$-plat en dimensions $\geq n$.

\subsubsection{Preuve d'une variante du théorème 
dans un second cas particulier}\label{platif-coeur}
Supposons donné un diviseur de Cartier 
$S$ de $X$ tel que
le faisceau cohérent
$\mathscr F\oslash_X S$
soit $X$-plat
en dimensions $\geq n+1$.
Nous allons montrer qu'il existe 

\begin{itemize}[label=$\bullet$]
\item une famille finie $(Z_i\to X)_{1\leq i\leq m}$ de morphismes quasi-étales à sources
compactes et $\Gamma$-strictes ; 
\item pour tout $i$, un sous-espace analytique fermé $S_i$ de $Z_i$ majorant $Z_i\times_X S$
et un domaine analytique compact
et $\Gamma$-strict
$Y_i$ de $Y\times_{Z_i}\ecl{Z_i}{S_i}$, 
\end{itemize}
ces données étant assujetties aux conditions suivantes : 

\begin{enumerate}[i]
\item pour tout $i$, le faisceau
cohérent $\mathscr F_{Y_i}\oslash_{Z_i}S_i$
est $\ecl{Z_i}{S_i}$-plat en dimensions $\geq n$ ; 
\item pour tout $i$, le
fermé
$Y_i\times_{Z_i}S_i$
de $Y_i$ est contenu dans l'image réciproque 
de $\qlns {\mathscr F}Xn\cup (Y\times_X S)$  ; 
\item la réunion des images des $Y_i\to Y$ contient $Y\setminus (\qlns {\mathscr F}Xn\cup (Y\times_X S))$ ;  
\item la réunion des images des morphismes $Y_i\times_{X_i}S_i\to Y$
contient l'ensemble des points de
$\qln {\mathscr F}Xn\cup (Y\times_X S)$
dont l'image sur $X$ n'est pas adhérente à
\[
[f(\qln {\mathscr F}Xn)
\cup (f(Y)\cap S)]\cap \mathsf A(X)
.\]
\end{enumerate}

\paragraph{}
Pour établir l'assertion requise, on peut raisonner
G-localement
sur $X$ et $Y$. On peut donc supposer qu'ils sont tous deux affinoïdes. Soit $x\in X$. Par compacité
de $X$, il
suffit de démontrer l'assertion au-dessus d'un voisinage de $x$ ; nous nous autoriserons donc 
dans la suite du raisonnement à restreindre $X$ autour de $x$ autant que nécessaire. 

En raisonnant une dernière fois G-localement sur $Y$, on peut en vertu 
du lemme \ref{exist-deviss-glob}
supposer que l'on est dans l'un des deux cas suivants : 
\begin{itemize}[label=$\diamond$]
\item $Y_x$ ne rencontre pas le support de $\mathscr F\oslash_XS $ ; 
\item $Y_x$ rencontre le support de $\mathscr F\oslash_XS$
et il existe un $X$-dévissage total
et $\Gamma$-strict
\[\{T_i,\pi_i, u_i, \mathscr L_i,\mathscr P_i\}_{i\in \{1,\ldots, r\}}\]
de $\mathscr F\oslash_XS$ au-dessus de $x$, en dimensions $n_1>\ldots >n_r$
majorées par $\dim \mathrm{Supp}(\mathscr F\oslash_XS)_x$. 
\end{itemize}

\paragraph{}\label{g-deja-platn}Supposons
que $Y_x$ ne rencontre pas le support de $\mathscr F\oslash_XS$.
Par propreté topologique de $f$ on peut alors restreindre $X$
pour se ramener au cas où $\mathscr F\oslash_XS=0$. Sous cette hypothèse, et plus
généralement si $\mathscr F\oslash_XS$ est $X$-plat en dimensions $\geq n$, l'assertion requise
est 
vraie avec $m=1,Z_1=X,S_1=S$ et $Y_1=Y\times_X \ecl XS$. 

%
%

\paragraph{}\label{par-conditions-deviss}
On suppose désormais que $Y_x$ rencontre le support de $\mathscr F\oslash_XS$ et qu'il existe un
$X$-dévissage total \[\mathscr D=\{T_i,\pi_i, u_i, t_i,\mathscr L_i,\mathscr P_i\}_{i\in \{1,\ldots, r\}}\]
du faisceau $\mathscr F\oslash_XS$ au-dessus de $x$, en dimensions $n_1>\ldots >n_r$
majorées par $\dim \mathrm{Supp}(\mathscr F\oslash_XS)_x$. Quitte à tronquer
ce dévissage (qui ne sera dès lors peut-être plus total), on peut remplacer
la condition $\mathscr P_r=0$ par les deux conditions suivantes : 

\begin{enumerate}[a]
\item $\dim \mathrm{Supp}(\mathscr P_r)_x<n$ ; 
\item pour tout entier $i$ tel que $1\leq i<r$ on a $\dim \mathrm{Supp}(\mathscr P_i)_x\geq n$
(et partant $n_i\geq n+1$ et $n_{i+1}\geq n$).
\end{enumerate}

En vertu de la
proposition \ref{deviss-dim-pasplat}, on peut également
supposer quitte à restreindre $X$ que les deux assertions suivantes sont satisfaites : 

\begin{enumerate}[a]
\setcounter{enumi}{2}
\item pour tout $i$, l'intersection 
$\mathrm{Supp}(\mathscr P_i)\cap Y_\xi$ est d'intérieur vide dans $Y_\xi$ pour tout
$\xi \in X$ ; 
\item pour tout $i$ tel que $n_i\geq n+1$ (et en particulier pour tout $i<r$)
le morphisme $u_i$ est universellement $X$-injectif. 
\end{enumerate}

\paragraph{Supposons que $n_r<n$}
Comme $n_{i+1}\geq n$ pour tout $i$
tel que $1\leq i<r$ d'après  la
condition (b) de \ref{par-conditions-deviss}, il
vient $r=1$. 
La dimension relative
du support de  
$\mathscr F\oslash_XS$
sur $X$ est donc  strictement inférieure à $n$, et $\mathscr F\oslash_XS$ est en particulier $X$-plat en dimensions $\geq n$ ; 
l'assertion cherchée
est alors vraie (\ref{g-deja-platn}). 

\paragraph{Supposons que $n_r>n$} Dans ce cas $u_i$ est universellement injectif pour tout $i$ ; 
joint à la condition (a) ci-dessus ceci implique
d'après l'assertion (2) de la proposition \ref{deviss-dim-pasplat}
que $\mathscr F\oslash_XS$ est $X$-plat en dimensions $\geq n$, et l'assertion cherchée
est dès lors vraie (\ref{g-deja-platn}). 

\paragraph{Supposons que $n_r=n$}
Le théorème \ref{exist-decoupages}
appliqué au morphisme quasi-lisse
$T_r \to X$ assure l'existence d'une surjection quasi-étale à source affinoïde
et $\Gamma$-stricte
$X'\to X$
telle qu'il existe un $X'$-découpage $\Gamma$-strict
de $T_r\times_X X'$. Il suffit alors par propreté topologique de $X'\to X$
de montrer le théorème au voisinage
de chacun des antécédents de $x$ dans $X'$. Autrement dit quitte à remplacer $X$ par $X'$ et $x$
par n'importe lequel de ses antécédents sur $X'$, on peut supposer que $T_r\to X$
possède un $X$-découpage 
$\Gamma$-strict
$(T_{rj})_{1\leq j\leq e_r}$.

Nous allons construire récursivement 
pour tout $i$ un recouvrement fini $(T_{ij})_{1\leq j\leq e_i}$ de $T_i$ par des domaines analytiques compacts
et $\Gamma$-stricts. Le recouvrement
$(T_{rj})_j$ est déjà construit. 
Supposons $(T_{ij})_j$ construit pour un certain $i$ compris entre $2$ et $r$, et expliquons comment 
construire $T_{i-1,j}$ ; pour tout $j$ compris entre $1$ et $e_i$ on note $X_{ij}$ l'image
de $T_{ij}$ sur $X$ ; c'est un domaine
analytique compact
et $\Gamma$-strict
de $X$. Soit $j$ compris entre $1$ et $e_i$. On choisit un domaine analytique compact
et $\Gamma$-strict
$\Upsilon$ de $T_{i-1}$ tel que $\Upsilon\cap \mathrm{Supp}(\mathscr P_{i-1})=\pi_i\inv(T_{ij})$
(ce qui est possible
grâce au lemme \ref{lem-gerr-grau})
et l'on 
pose alors $T_{i-1,j}=\Upsilon\times_X{X_{ij}}$. Par construction, $T_{i-1,j}\cap  \mathrm{Supp}(\mathscr P_{i-1})=\pi_i\inv(T_{ij})$
pour tout $j$ compris entre $1$ et $e_i$. La réunion $\bigcup_{1\leq j\leq e_i}T_{i-1,j}$ est donc
un domaine
analytique compact
et $\Gamma$-strict
de $T_{i-1}$ qui contient $\mathrm{Supp}(\mathscr P_{i-1})$ et en est dès lors
un voisinage. Si  $\bigcup_{1\leq j\leq e_i}T_{i-1,j}$ est égal à $T_{i-1}$ tout entier, on considère que la famille $(T_{i-1,j})_j$ est construite.
Sinon il existe un domaine analytique
compact et $\Gamma$-strict
$T_{i-1,e_i+1}$ de $T_{i-1}$ ne rencontrant pas 
 $\mathrm{Supp}(\mathscr P_{i-1})$ et tel que $\bigcup_{1\leq j\leq e_i+1}T_{i-1,j}=T_{i-1}$, et l'on considère que la famille
 $(T_{i,j})_j$ est construite. (On a donc $e_{i-1}=e_i$ ou $e_{i-1}=e_i+1$). 
 
 On construit enfin un recouvrement $(Y_j)_{1\leq j\leq e_0}$ de $Y$ par un procédé analogue (prendre ci-dessus
 $i=1$ et remplacer $T_{i-1}$ par $Y$ et $\mathscr P_{i-1}$ par $\mathscr F\oslash S$). Soit $j$ compris entre $1$ et $e_0$. 
 Notons $\epsilon_j$ le plus grand entier $i$ compris entre $0$ et $r$ tel que $j\leq e_i$ (si $\epsilon_j <r$ on a donc $j=e_{\epsilon_j}$ et $e_{\epsilon_j+1}=e_{\epsilon_j}-1$, 
 ce qui entraîne que $\mathscr P_{e_j,T_{\epsilon_jj}}=0$), 
 et notons $X_j$ l'image de $T_{\epsilon_jj}$ sur $X$ (c'est un domaine analytique compact
 et $\Gamma$-strict
de $X$) si $\epsilon_j>0$ ; si $\epsilon_j=0$ (ce qui veut dire
 que $j=e_0$ et que $e_1=e_0-1$) on pose $X_j=X$. Il résulte de nos constructions que $Y_j\to X$ se factorise par $X_j$.
 Il suffit maintenant de montrer l'assertion requise
 pour chacun des morphismes $Y_j\to X_j$.
On fixer donc $j$, et l'on distingue plusieurs cas. 

\emph{Supposons que $\epsilon_j=0$}. On a alors $j=e_0$ et $(\mathscr F\oslash_XS)_{Y_j}=0$, et l'assertion requise vaut
dès lors pour $Y_j\to X_j$
(\ref{g-deja-platn}).

\emph{Supposons que $0<\epsilon_j<r$}. La donnée $\mathscr D_j$ des $T_{ij}$ pour $i$ variant
de $1$ à $\epsilon_j$ et des restrictions correspondantes des faisceaux $\mathscr L_i$, des morphismes
$\pi_i$ et des applications linéaires $u_i$ définit alors un $X_j$-dévissage 
de $(\mathscr F\oslash_XS)_{Y_j}$
au-dessus de tout point de $X_j$. En effet, soit $\xi$ un point de $X_j$ et
soit $i$ un entier compris entre $1$ et $\epsilon_j$. 
Par hypothèse, l'intersection du support de $\mathscr P_{i,T_{ij}}$ avec $T_{ij,\xi}$ est d'intérieur
vide dans $T_{ij,\xi}$ ; par un raisonnement
reposant \emph{in fine}
sur le lemme de Nakayama
(\cite{ducros2018}, 2.5.4)
cela signifie que le lieu de surjectivité de
$u_{i,T_{ij}}$ contient un ouvert dense de $T_{ij,\xi}$, et il résulte
par ailleurs de nos hypothèses et des inégalités $i\leq \epsilon_j<r$ que
$u_{i,T_{ij}}$ est
(universellement) injectif ; par conséquent, 
le lieu de bijectivité de $u_{i,T_{ij}}$ contient un ouvert dense
de $T_{ij,\xi}$, et le lieu de bijectivité de $u_{i,T_{ij,\xi}}$ est
\emph{a fortiori}
dense
dans $T_{ij,\xi}$. Ainsi, $\mathscr D_j$ est un $X_j$-dévissage de $(\mathscr F\oslash_XS)_{Y_j}$ 
au-dessus de tout point de $X_j$, comme annoncé. Ce dévissage est total : en effet comme
$\epsilon_j<r$ on a $\mathscr P_{e_j,T_{\epsilon_jj}}=0$. Puisque 
$u_{i,T_{ij}}$ est (universellement) injectif pour tout $i$ compris entre $1$ et $\epsilon_j$, 
on déduit de \ref{ss-deviss-critplat}
que $(\mathscr F\oslash_XS)_{Y_j}$ est $X_j$-plat ; l'assertion requise vaut alors pour 
$Y_j\to X_j$
(\ref{g-deja-platn}).

\emph{Supposons que $e_j=r$}. Le morphisme $T_{rj}\to X_j$ est quasi-lisse à fibres géométriquement
connexes, et surjectif ; autrement dit, il est quasi-lisse à fibres géométriquement irréductibles. Ses fibres sont
de dimension $n$
(rappelons en effet qu'on s'est placé dans le cas où $n_r=n$). Posons $\mathscr Q=(\pi_{r*}\mathscr P_{r-1})_{T_{rj}}$.
En vertu du cas particulier traité au \ref{platif-eclat-premier}, 
il existe un sous-espace analytique fermé $F$
de $X_j$, de
support l'image de $\qln {\mathscr Q}{X_j}n=\qlns{\mathscr Q}{X_j}n$, tel que
$\mathscr Q_{T_{r,j}\times_{X_j} \ecl{X_j}F}\oslash_
{X_j}F$ 
soit $\ecl {X_j}F$-plat
en dimensions $\geq n$. Soit 
$\Sigma$ le sous-espace analytique fermé de $X_j$ défini par le produit des idéaux définissant respectivement $F$ et $S\cap X_j$ ; son support est la réunion de $S\cap X_j$ et
de l'image de $\qln {\mathscr Q}{X_j}n$. 
L'éclaté $\ecl {X_j}\Sigma$ est égal à $\ecl {X_j}F$
(\ref{sss-centre-change}). De plus, le lieu de
$\ecl {X_j}\Sigma$-platitude
de $\mathscr Q_{T_{r,j}\times_{X_j}
\ecl{X_j}\Sigma}\oslash_{X_j}\Sigma$
contient
celui de
$\mathscr Q_{T_{r,j}\times_{X_j} \ecl{X_j}\Sigma}\oslash
_{X_j}
F$ ;
en effet,
le premier est le quotient du second par son sous-faisceau formé des sections à support dans
$T_{rj}\times_{X_j}\dex {X_j}\Sigma$,
qui est un diviseur de Cartier de
$T_{rj}\times_{X_j}\ecl {X_j}\Sigma$ car $T_{r,j}\to X_j$ est plat (puisque quasi-étale), et notre affirmation est alors une conséquence de 
\ref{cartier-plat-inutile}. En conséquence, 
$\mathscr Q_{T_{r,j}\times_{X_j}
\ecl{X_j}\Sigma}\oslash_{X_j}
\Sigma$ est
$\ecl {X_j}\Sigma$-plat en dimensions $\geq n$. 

Nous allons
maintenant montrer les énoncés suivants :

\begin{enumerate}[A]
\item On a l'égalité
\[f(\qln {\mathscr F_{Y_j}}{X_j}n)\cup (S\cap X_j)=
f(\qln {\mathscr Q}{X_j}n)
\cup (S\cap X_j).\]

\item Le faisceau
cohérent
\[(\mathscr F\oslash_XS)_{Y_j\times_{X_j}\ecl {X_j}\Sigma}\oslash_{X_j}\Sigma
=\mathscr F_{Y_j\times_{X_j}\ecl {X_j}\Sigma}\oslash_{X_j}\Sigma\]
est $\ecl{X_j}\Sigma$-plat
en dimensions $\geq n$. 
\end{enumerate}
Avant de le faire, expliquons pourquoi ceci entraîne que $Y_j\to X_j$ satisfait l'énoncé \ref{platif-coeur}, avec $m=1, Z_1=X_j, S_1=\Sigma$, et en prenant comme domaine analytique de $Y_j\times_{X_ j}\ecl {X_j}\Sigma$ cet espace tout entier. 

L'assertion (i) est une conséquence directe de (B). Les
assertions (ii)
et (iii) résultent de (A), du fait que
$\Sigma$ est égal ensemblistement à $(S\cap X_j)\cup f(\qln {\mathscr Q}{X_j}n)$, et  du fait que $\ecl {X_j}\Sigma\to X_j$ induit un isomorphisme au-dessus de $X_j\setminus \Sigma$ (ce qui entraîne que 
$Y_j\times_{X_j}\ecl{X_j}\Sigma\to Y_j$ induit un isomorphisme
au-dessus de $Y_j\setminus (Y_j\times_{X_j}\Sigma)$).

Vérifions enfin (iv). Donnons-nous
un point 
$y$ sur $(Y_j\times_X S)
\cup \qln {\mathscr F_{Y_j}}{X_j}n$
dont l'image $x$ par $f$ n'est pas adhérente à 
\[[(f(Y_j)\cap S)\cup f(\qln {\mathscr F_{Y_j}}{X_j}n)]
\cap \mathsf A(X)
=[(f(Y_j)\cap S)\cup f(\qln {\mathscr Q}{X_j}n)]
\cap \mathsf A(X),\]
et montrons
que $y$ appartient à l'image de $Y_j\times_{X_j}\dex{X_j}\Sigma$. 
Le point $x$ appartient à $\Sigma$ par définition, et aucune composante irréductible de $X_j$ contenue dans $\Sigma$
ne passe par $x$.  En effet si c'était le cas cette composante serait contenue dans 
$f(\qln {\mathscr F_{Y_j}}{X_j}n)$, car
$S\cap X_j$ est un diviseur de Cartier de $X_j$, et est en particulier d'intérieur vide dans $X_j$, 
et $x$ serait adhérent à 
$ f(\qln {\mathscr F_{Y_j}}{X_j}n)\cap \mathsf A(X)$, en contradiction avec notre hypothèse.

Puisqu'aucune composante irréductible de $X_j$ contenue dans $\Sigma$ ne passe par $x$, le point $x$ appartient à l'image de $\dex {X_j}\Sigma\to X_j$ (\ref{sss-composantes-eclate}) ; il en résulte que $y$ appartient à l'image de 
de $Y_j\times_{X_j}\dex{X_j}\Sigma$.

\paragraph{Preuve de l'assertion \textnormal{(A)}}
Soit $\xi \in X_j$. La dimension
de $\mathrm{Supp}(\mathscr P_{r,T_{rj}})_\xi$ est strictement
inférieure à $n_r=n$, et on sait par ailleurs
que $u_{i,T_{ij}}$ est universellement
$X_j$-injectif pour $i$ variant entre $1$ et $r-1$. 
En appliquant la proposition \ref{deviss-dim-pasplat}
d'une part au $X_j$-dévissage à un cran 
$\{T_{rj}, \mathrm{Id}, u_{r,T_{rj}},
\mathscr L_{r,T_{rj}}, \mathscr P_{r,T_{rj}}\}$ de $\mathscr Q$
et d'autre part au $X_j$-dévissage $\mathscr D_j$ du
faisceau cohérent
$(\mathscr F\oslash_XS)_{Y_j}$, on voit que les assertions suivantes
sont équivalentes : 

\begin{enumerate}[a]
\item $\dim \ql{\mathscr Q}{X_j}_\xi<n$ ; 
\item $u_r$ est injectif en tout point de $T_{rj,\xi}$ ; 
\item $\dim \ql{(\mathscr F\oslash_XS)_{Y_j}}{X_j}_\xi <n$.
\end{enumerate}
Si $\xi$ n'appartient pas à $S$
l'assertion (c) équivaut simplement à demander que
$\dim \ql{\mathscr F_{Y_j}}{X_j}_\xi <n$. 
L'équivalence (a)$\iff$(c)
implique donc que
\[f(\qln {\mathscr F_{Y_j}}{X_j}n)\setminus S=
f(\qln {\mathscr Q}{X_j}n)
\setminus S,\]
d'où (A).

\paragraph{Preuve de \textnormal{(B)}}
On pose $\mathscr G=(\mathscr F\oslash_XS)_{Y_j}$. En procédant
au changement de base $\ecl{X_j}F\to X_j$ on se ramène,
sans changer les propriétés du dévissage $\mathscr D_j$,
au cas où $\Sigma$
est un diviseur de Cartier, où $\ecl{X_j}\Sigma=X_j$
et où $\mathscr Q\oslash_{X_j}\Sigma$ est
$X_j$-plat en dimensions $\geq n$, et il suffit de
démontrer que ceci entraîne la $X_j$-platitude de $\mathscr G\oslash_{X_j}\Sigma$ en dimensions $\geq n$. 

Comme $\mathscr Q\oslash_{X_j}\Sigma$ est $X_j$-plat en dimensions $\geq n$, il en 
va de même de $\mathscr P_{r-1, T_{r-1,j}}\oslash_{X_j}\Sigma$
(\cite{ducros2018}, Lemma 4.1.15 (3)). 
Pour tout $i$ compris entre $1$ et $r-1$, la suite exacte
\[0\to \mathscr L_{i,T_{ij}}\to \pi_{i*}\mathscr P_{i-1,T_{ij}}\to \mathscr P_{i,T_{ij}}\to 0\]
(en posant $T_{0j}=Y_j$ et $\mathscr P_{0j}=\mathscr G$) induit d'après 
le lemme 5.5.3 (ii)
de \cite{raynaud-g1971}
une suite exacte 
\[0\to \mathscr L_{i,T_{ij}}\to (\pi_{i*}\mathscr P_{i-1,T_{ij}})\oslash_{X_j}\Sigma\to \mathscr P_{i,T_{ij}}\oslash_{X_j}\Sigma\to 0,\]
d'où un dévissage $\mathscr D'_j$ de $\mathscr G\oslash_{X_j}\Sigma$ en dimensions $n_1>\ldots>n_{r-1}$. 
Comme les applications linéaires qui le constituent sont injectives, et comme
le faisceau cohérent
$\mathscr P_{r-1,T_{r-1,j}}\oslash_{X_j}\Sigma$ est $X_j$-plat en dimensions $\geq n$, il résulte de la 
proposition  \ref{deviss-dim-pasplat}
que le faisceau cohérent
$\mathscr G\oslash_{X_j}\Sigma$ est $X_j$-plat en dimensions $\geq n$.

\subsubsection{Preuve du cas général}Nous allons démontrer le cas
général en procédant par récurrence descendante sur l'entier $n\leq d+1$.
On suppose donc que le théorème est vrai pour tout entier strictement supérieur à $n$ et inférieur ou égal à $d+1$.

\paragraph{Le cas où $n=d+1$}
Comme les fibres du morphisme 
$\supp F\to X$ sont toutes de dimension $\leq d$, le faisceau $\mathscr F$ est
$X$-plat en dimensions $\geq d+1$. Par conséquent, les conclusions du théorème sont vérifiées par la famille
constituée de l'unique couple
$(\begin{tikzcd}
\emptyset\ar[r,hook]&X\ar[r,"\mathrm{Id}"]&X\end{tikzcd}, Y)$.

\paragraph{Le cas où $n<d+1$}
L'hypothèse de récurrence assure
qu'il existe une famille finie $(S_i\hookrightarrow
Z_i\to X)$ de
diagrammes appartenant à
$\bkg k\Gamma {d-n}$
et, pour tout $i$, un
domaine analytique compact 
et $\Gamma$-strict
$Y_i$ de $Y\times_X Z_i$, tels que les propriétés suivantes soient satisfaites : 

\begin{enumerate}[s]
\item pour tout $i$, le faisceau cohérent $(\mathscr F_{Y_i})\oslash_{Z_i}S_i$
est $Z_i$-plat en dimensions $\geq n+1$ ; 
\item pour tout $i$,
l'ouvert $Y_i\setminus (Y_i\times_{Z_i}S_i)$
de $Y_i$
est contenu dans l'image réciproque de $Y\setminus
\qlns {\mathscr F}X{n+1}$ ; 
\item pour tout $i$,
le fermé $Y_i\times_{Z_i}S_i$
de $Y_i$
est contenu dans l'image réciproque de $\qlns {\mathscr F}X{n+1}$ ; 
\item la réunion des images des morphismes $Y_i\to Y$ contient $Y\setminus\qlns {\mathscr F}X{n+1}$ ; 
\item la réunion des images des morphismes $Y_i
\times_{Z_i}S_i\to Y$
contient tout point de $\qln {\mathscr F}X{n+1}$
dont l'image sur $X$ n'est pas adhérente à $f(\qln {\mathscr F}X{n+1})
\cap \mathsf A(X)$. 

\end{enumerate}

D'après les résultats de \ref{platif-coeur}, il existe pour tout $i$
une famille finie $[(Z_i^\ell, S_i^\ell, Y_i^\ell)]_{\ell}$ où :

\begin{itemize}
\item [$(\alpha)$] $Z_i^\ell$ est pour tout $\ell$ un espace
$k$-analytique compact
et $\Gamma$-strict
muni d'un morphisme quasi-étale vers $Z_i$, et $S_i^\ell$
est un sous-espace analytique fermé de $Z_i^\ell$ majorant $Z_i^\ell\times_{Z_i}S_i$ ; 
\item [$(\beta)$] $Y_i^\ell$ est pour tout $\ell$ un domaine analytique compact
et $\Gamma$-strict
de $Y_i\times_{Z_i}\ecl{Z_i^\ell}{S_i^\ell}$, tel que
\[\mathscr F_{Y_i^\ell}\oslash_{Z_i^\ell} {S_i^\ell}=(\mathscr F_{Y_i}\oslash_{Z_i}S_i)_{Y_i^\ell}
\oslash_{Y_i^\ell}{S_i^\ell}\]
soit $\ecl {Z_i^\ell}{S_i^\ell}$-plat en dimensions $\geq n$ ; 
\item [$(\gamma)$] pour tout indice $\ell$, l'image du morphisme $Y_i^\ell\times_{Z_i^\ell}S_i^\ell\to Y_i$ est contenue dans
\[\qlns {\mathscr F_{Y_i}\oslash_{Z_i}S_i}{Z_i}n\cup (Y_i\times_{Z_i}S_i).\]
\item [$(\delta)$] la réunion pour $\ell$ variable des images des morphismes $Y^\ell_i\to Y_i$ contient
$Y_i\setminus (\qlns{\mathscr F_{Y_i}\oslash_{Z_i}S_i}{Z_i}n\cup (Y_i\times_{Z_i}S_i))$ ; 
\item[($\epsilon$)] si l'on note
$f_i$ le morphisme $Y_i\to Z_i$, la réunion pour $\ell$
variable des images des morphismes
$Y_i^\ell\times_{Z_i^\ell}S_i^\ell\to Y_i$
contient tout point de 
\[(Y_i\times_{Z_i}S_i)\cup \qln {\mathscr F_{Y_i}\oslash_{Z_i}S_i}{Z_i}n\]
dont l'image
par $f_i$ n'est pas adhérente
à
\[
[(f_i(Y_i)\cap S_i)\cup f_i(\qln {\mathscr F_{Y_i}\oslash_{Z_i}S_i}{Z_i}n)]\cap \mathsf A(Z_i).\]
\end{itemize}

La famille
$[(\dex{Z_i^\ell}
{S_i^\ell}\hookrightarrow
\ecl {Z_i^\ell}
{S_i^\ell}\to X, Y_i^\ell)]_{i,\ell}$
satisfait alors les conclusions du théorème. 
C'est en effet évident pour (1)
qui découle de ($\beta$), et on sait que (2) est alors automatiquement
vérifiée (\ref{sss-auto2}). 
En ce qui concerne (3), soit $y$ un point de $Y\setminus \qlns {\mathscr F}Xn$ ; il appartient \emph{a fortiori}
à $Y\setminus \qlns {\mathscr F}X{n+1}$ et est donc égal d'après (4$^\ast$)
à l'image d'un point $y'$ de $Y_i$ pour un certain $i$. Comme $y$ n'appartient pas 
à $\qlns{\mathscr F}X{n+1}$, la condition (3$^\ast$)
entraîne que $y'$ 
n'appartient pas à $Y_i\times_{Z_i}S_i$  ; il 
est donc situé au-dessus
du lieu de platitude de $Z_i\to X$.
Par conséquent, comme $y$ n'appartient pas à $\qlns{\mathscr F}Xn$, le point $y'$
n'appartient
pas à $\qlns{\mathscr F_{Y_i}}{Z_i}n$, et donc pas 
non plus à $\qlns{\mathscr F_{Y_i}\oslash_{Z_i}S_i}{Z_i}n$
car $\mathscr F_{Y_i}\oslash_{Z_i}S_i$
coïncide avec $\mathscr F_{Y_i}$ au-dessus
de $Z_i\setminus S_i$.
Il s'ensuit que $y'$
appartient en vertu de $(\delta)$ à l'image de $Y_i^\ell\to Y_i$
pour un certain $\ell$, et $y$ appartient
dès lors à l'image de $Y_i^\ell\to Y$.

Vérifions (4). Fixons un couple $(i,\ell)$, soit $y$ un point de $Y_i^\ell\times_{Z_i^\ell}S_i^\ell$ et
soit $\eta$ son
image sur $Y_i$. 
Il résulte de $(\gamma)$ que
\[\eta \in \qlns {\mathscr F_{Y_i}\oslash_{Z_i}S_{i}}{Z_i}n\cup  (Y_i\times_{Z_i}S_i).\]
On distingue maintenant deux cas : si 
$\eta$ appartient à $Y_i\times_{Z_i}S_i$ il est alors situé d'après
la propriété (3$^\ast$) au-dessus de $\qlns {\mathscr F}X{n+1}$, et \emph{a fortiori}
au-dessus de $\qlns{\mathscr F}Xn$ ; sinon $\eta$ appartient à 
$\qlns {\mathscr F_{Y_i}\oslash_{Z_i}S_i}{Z_i}n$
et le même raisonnement que 
celui suivi pour la preuve de (3) montre alors
que $\eta$
est situé au-dessus de $\qlns {\mathscr F}Xn$.

Montrons enfin (5). Soit $y$ un point de
$f(\qln{\mathscr F}Xn)$
dont l'image $x$ sur $X$  n'est pas
adhérente à $f(\qln{\mathscr F}Xn)\cap \mathsf A(X)$ ; nous
allons prouver que
le point $y$ appartient à l'image de l'un des
morphismes $Y_i^\ell\times
_{Z_i^\ell}S_i^\ell\to Y$.

Nous allons tout d'abord montrer qu'il existe $i$ tel que $y$
possède un antécédent $y'$ sur
$\Lambda_i:=(Y_i\times_{Z_i}S_i)\cup \qln {\mathscr F_{Y_i}\oslash_{Z_i}S_i}{Z_i}n.$
On distingue pour ce faire deux cas. 

\emph{Supposons que $x$ n'appartient pas à $f(\qln{\mathscr F}X{n+1})$}. 
Le point $y$ n'appartient alors
pas à $\qlns{\mathscr F}X{n+1}$.
Il s'ensuit en vertu de (4$^\ast$)
qu'il existe
un indice $i$ 
et un point $y'$ de $Y_i$ situé au-dessus de $y$. D'après
(3$^\ast$) 
le point $y'$ n'appartient pas à $Y_i\times_{Z_i}S_i$ ; il
est situé par conséquent 
au-dessus
du lieu de platitude de la
flèche $Z_i\to X$, si bien qu'il appartient
à $\qln{\mathscr F_{Y_i}}{Z_i}n$, et partant
à $\qln{\mathscr F_{Y_i}\oslash_{Z_i}S_i}{Z_i}n$
puisqu'il n'est pas situé au-dessus de $S_i$. 
Il appartient donc bien à 
$\Lambda_i$. 

\emph{Supposons maintenant que $x$ appartient à $f(\qln{\mathscr F}X{n+1})$}. Puisque
$x$ n'est, par
hypothèse, pas adhérent 
à $f(\qln{\mathscr F}Xn)\cap \mathsf A(X)$, il n'est 
\emph{a fortiori}
pas adhérent à
$f(\qln{\mathscr F}X{n+1})\cap \mathsf A(X)$ ; 
il résulte alors de (5$^\ast$)
qu'il existe un indice $i$ et un antécédent $y'$ de $y$
sur $Y_i\times_{Z_i}S_i\subset \Lambda_i$.

\emph{Fin
de la démonstration.}
Soit $z$ l'image 
de $y'$ sur $Z_i$.
Montrons que $z$ n'est pas 
adhérent à $f_i(\Lambda_i) \cap \mathsf A(Z_i)$. On raisonne par l'absurde, en supposant qu'il l'est. 
Comme
$S_i$ est un diviseur de Cartier de $Z_i$, 
il ne rencontre pas $\mathsf A(Z_i)$, et $z$ est donc adhérent 
à
$f_i(\qln {\mathscr F_{Y_i}\oslash_{Z_i}S_i}{Z_i}n)\cap \mathsf A(Z_i\setminus S_i).$
Soit $\zeta$ un point de $f_i(\qln {\mathscr F_{Y_i}\oslash_{Z_i}S_i}{Z_i}n)\cap
\mathsf A(Z_i\setminus S_i)$. Par définition, le point $\zeta$
possède un antécédent $\eta$ par $f_i$ situé sur $\qln {\mathscr F_{Y_i}\oslash_{Z_i}S_i}{Z_i}n$ ; puisque $\zeta$ n'appartient pas à $S_i$, le point $\eta$ appartient en fait à 
$\qln {\mathscr F_{Y_i}}{Z_i}n$ ; et comme $Z_i\to X$ est quasi-étale et en particulier plat en dehors de $S_i$, ceci entraîne que l'image de $\eta$ sur $Y$
appartient à $\qln {\mathscr F}Xn.$ En conséquence, l'image $\xi$ de $\zeta$ sur $X$ appartient à $f(\qln {\mathscr F}Xn)$. Par ailleurs le point $\zeta$ appartient à $\mathsf A(Z_i\setminus S_i)$, ce qui
entraîne que $\xi\in \mathsf A(X)$
(lemme
\ref{dz-ecl-qet}). 
Il résulte de ce qui précède que $x$ appartient à l'adhérence
de $f(\qln {\mathscr F}Xn)\cap \mathsf A(X)$, ce qui est absurde. Par conséquent, $z$ n'appartient pas à l'adhérence de
$f_i(\Lambda_i)\cap \mathsf A(X)$. L'assertion $(\epsilon)$ assure alors
que $y'$ appartient à l'image de $Y_i^\ell\times_{Z_i^\ell}
S_i^\ell\to Y_i$ pour un certain $\ell$, 
et $y$ appartient de ce fait
à l'image de $Y_i^\ell\times_{Z_i^\ell}
S_i^\ell\to Y$.\qed

\section{Applications}
\label{s-appli}

\begin{lemm}\label{lem-plat-dimension}
Soit $Y\to X$ un morphisme entre espaces $k$-analytiques, 
soit $y$ appartenant à $\mathsf A(Y)$
et soit $x$ son image sur $X$. 

\begin{enumerate}[1]
\item 
On a 
$d_k(x)=d_k(y)-\dim_x Y_x$. 

\item Supposons de
plus $X$ réduit.
Les assertions suivantes sont équivalentes : 

\begin{enumerate}[j]
\item l'espace $Y$ est $X$-plat en $y$ ; 
\item
on a 
$\dim_y Y_x+\dim_x X=\dim_y Y$ ; 
\item le point $x$ appartient à $\mathsf A(X)$ ; 
\item l'espace $Y_{\mathrm{red}}$ est $X$-plat en $y$. 

\end{enumerate}
\end{enumerate}
\end{lemm}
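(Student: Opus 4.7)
Pour (1), le point clé est de montrer que $y\in \mathsf A(Y)$ entraîne $y\in \mathsf A(Y_x)$, c'est-à-dire l'égalité $d_{\hr x}(y) = \dim_y Y_x$. L'inégalité $d_{\hr x}(y)\leq \dim_y Y_x$ est tautologique. Pour l'inégalité opposée, je choisirais un voisinage analytique $V$ de $y$ dans $Y$ tel que $\dim V = \dim_y Y = d_k(y)$ (la dernière égalité par hypothèse sur $y$) ; alors tout $z\in V$ vérifie $d_k(z)\leq d_k(y)$, et pour tout $z\in V\cap Y_x$ l'additivité de $d_k$ dans la tour $k\subset \hr x\subset \hr z$ donne $d_{\hr x}(z) = d_k(z) - d_k(x) \leq d_k(y)-d_k(x) = d_{\hr x}(y)$, d'où $\dim(V\cap Y_x)\leq d_{\hr x}(y)$ puis $\dim_y Y_x\leq d_{\hr x}(y)$. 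L'assertion (1) en résulte, en combinant l'égalité obtenue avec l'additivité $d_k(y) = d_k(x)+d_{\hr x}(y)$.

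Pour (2), je procèderais selon le schéma (i)$\Rightarrow$(ii)$\Rightarrow$(iii)$\Rightarrow$(i), complété par (iii)$\Rightarrow$(iv)$\Rightarrow$(ii). L'implication (i)$\Rightarrow$(ii) est une application immédiate de la formule de dimension pour les morphismes plats rappelée en \ref{rapp-plat}. Pour (ii)$\Rightarrow$(iii), on combine (1) et l'inégalité générale $d_k(x)\leq \dim_x X$ : on a $d_k(x) = d_k(y) - \dim_y Y_x = \dim_y Y - \dim_y Y_x = \dim_x X$, ce qui force l'appartenance $x\in \mathsf A(X)$.

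Les implications (iii)$\Rightarrow$(i) et (iii)$\Rightarrow$(iv) résultent toutes deux du théorème de platitude automatique au-dessus des points génériques rappelé en \ref{rapp-plat}, appliqué respectivement aux morphismes $Y\to X$ et $Y_{\mathrm{red}}\to X$ : l'hypothèse « $X$ réduit en $x$ » et l'égalité $d_k(x) = \dim_x X$ garantissent la platitude des faisceaux structuraux en $y$ (qui est un point commun à $Y_x$ et $(Y_{\mathrm{red}})_x$). Enfin, (iv)$\Rightarrow$(ii) s'obtient à nouveau par la formule de dimension pour les morphismes plats, appliquée à $Y_{\mathrm{red}}\to X$ en $y$, en remarquant que $Y$ et $Y_{\mathrm{red}}$ (resp. $Y_x$ et $(Y_{\mathrm{red}})_x$) ont même espace topologique sous-jacent, si bien que les dimensions en $y$ coïncident. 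L'étape substantielle est (1), et plus précisément le principe que le caractère Abhyankar de rang maximal passe aux fibres ; le reste consiste à combiner mécaniquement les rappels de la section \ref{rapp-plat} avec (1).
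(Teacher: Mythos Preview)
Your proof is correct and follows essentially the same route as the paper's. For (1) the paper uses a Zariski open $U$ cut out by two upper-semicontinuity conditions (on $\dim_z Y$ and on $\dim_z Y_{f(z)}$) whereas you use an analytic neighbourhood $V$ with only $\dim V=\dim_y Y$; your version is in fact slightly more economical since the inequality $\dim_y Y_x\leq \dim(V\cap Y_x)$ is automatic. For (2) the paper packages the equivalence with (iv) by observing that (iii) is unchanged when $Y$ is replaced by $Y_{\mathrm{red}}$, which is exactly what your explicit implications (iii)$\Rightarrow$(iv)$\Rightarrow$(ii) amount to.
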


\begin{proof}
Prouvons tout d'abord (1).
Soit $U$ l'ouvert de Zariski de $Y$ formé des points $z$ 
tels que $\dim_z Y_{f(z)}\leq \dim_y Y_x$ et $\dim_z Y\leq \dim_y Y$ ; il contient $y$. On a
$\dim_y Y=d_k(y)=d_{\hr x}(y)+d_k(x)$,
et si $z$ est un point de $U_x$ on a 
\[\dim_y Y\geq \dim_z Y\geq  d_k(z)=d_{\hr x}(z)+d_k(x),\]
ce qui montre que le maximum de $d_{\hr x}$
sur $U_x$ est atteint en $y$. Puisque $\dim U_x$ est égal à $\dim_y Y_x$ par définition 
de $U$, 
il vient $d_{\hr x}(y)=\dim_y Y_x$ ; par conséquent
$d_k(x)=d_k(y)-d_{\hr x}(y)=d_k(y)-\dim_y Y_x$.

Faisons l'hypothèse que $X$ est réduit, et montrons (2).
 Les implications (i)$\Rightarrow$(ii) et (iii)$\Rightarrow$(i) font 
 partie des propriétés de base de la platitude
 rappelées en \ref{rapp-plat}
 \emph{et sq.}

Supposons que (ii)
est vraie. On a d'après (1) l'égalité  
$d_k(x)=d_k(y)-\dim_y Y_x$ et au vu de l'hypothèse (ii), elle entraîne
que $d_k(x)=\dim_xX$, c'est-à-dire que $x\in \mathsf A(X)$, d'où (iii). 

Ainsi les assertions (i), (ii) et (iii) sont-elles équivalentes.
Comme la valeur de vérité de (iii) ne change pas si l'on remplace $Y$ par $Y_{\mathrm{red}}$
on a également (i)$\iff$(iv). 
\end{proof}

Le but de ce qui suit est de montrer un «théorème d'équidimensionalisation», fondé sur notre théorème
principal mais ne nécessitant aucune hypothèse de platitude générique. 

\begin{lemm}\label{lem-plat-dimconst}
Soit $f\colon Y\to X$ un morphisme d'espaces $k$-analytiques. 
Supposons qu'il existe $\delta$ tel que
$\Omega:=\{y\in Y, \dim_y f=\delta\}$
soit un ouvert de Zariski dense de $Y$.

\begin{enumerate}[1]
\item On a $\pl YX\subset \Omega$. 
\item On a $Y\setminus \Omega =\qln YX{\delta+1}$.
\item Si $Y$ est équidimensionnel, l'ensemble $\mathsf A(Y)$ ne rencontre pas
$\qln YX{\delta+1}^{\mathrm{sat}}$. 
\end{enumerate}
\end{lemm}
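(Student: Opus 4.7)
The plan starts from a preliminary bound and then treats the three assertions in order. Since $\Omega$ is Zariski-dense and open in $Y$ while $y\mapsto \dim_y f$ is upper semi-continuous for the Zariski topology (\cite{ducros2007}, thm.~4.9), the open subset $\{y\in Y: \dim_y f<\delta\}$ does not meet $\Omega$ and is therefore empty; hence $\dim_y f\geq \delta$ everywhere on $Y$, and $Y\setminus \Omega$ coincides with the closed subset $\{y\in Y: \dim_y f\geq \delta+1\}$.

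For (1), let $y\in \pl YX$ and choose an irreducible component $Z$ of $Y$ through $y$ with $\dim Z=\dim_y Y$. The Zariski-open $U:=\pl YX\cap Z$ is irreducible (lemma~\ref{lem-ouvzar-connexe}), hence an irreducible component of $\pl YX$ by \ref{compirr-domaines}; since $f|_{\pl YX}$ is flat, \cite{ducros2009}, Lemme~5.7 guarantees that $T:=\adhz{f(U)}{X}$ is an irreducible component of $X$, which contains $f(y)$. Combining the density of $\Omega\cap U$ in $Z$, the local finiteness of the components of $Y$ through a given point and of the components of $X$ through $f(y)$, and the density of $f(U)$ in $T$, I select $z\in U\cap \Omega$ such that $\dim_z Y=\dim Z$ and $\dim_{f(z)}X=\dim T$. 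The flat dimension formula (\cite{ducros2018}, Lemma~4.5.11) at $z$ reads $\dim Z=\delta+\dim T$, while at $y$ it reads $\dim Z=\dim_y f+\dim_{f(y)}X\geq \dim_y f+\dim T$; subtracting forces $\dim_y f\leq \delta$, and together with the preliminary bound this gives $\dim_y f=\delta$, so $y\in\Omega$. Assertion (2) is then immediate: the inclusion $\qln YX{\delta+1}\subseteq Y\setminus\Omega$ is tautological, and conversely any $y\in Y\setminus\Omega$ satisfies $\dim_y f\geq \delta+1$ (by the preliminary bound) and $y\notin\pl YX$ by (1), whence $y\in \qln YX{\delta+1}$.

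For (3), assume $y\in \mathsf A(Y)$ with $Y$ equidimensional; it suffices to prove $Y_{f(y)}\subseteq \Omega$. By \ref{sss-ax-axi} the point $y$ belongs to a unique irreducible component $Y_i$ of $Y$, and by \ref{sss-ax-irred} it is Zariski-dense in $Y_i$; consequently $y$ lies in the nonempty Zariski-open $\Omega\cap Y_i$, so $\dim_y f=\delta$. Equidimensionality gives $d_k(y)=\dim Y_i=\dim Y$, and lemma~\ref{lem-plat-dimension}(1) then yields $d_k(f(y))=\dim Y-\delta$. Writing $x=f(y)$, the additivity $d_k(z)=d_k(x)+d_{\hr x}(z)$ along the tower $k\subseteq \hr x\subseteq \hr z$, together with $d_k(z)\leq \dim_z Y\leq \dim Y$ for every $z\in Y_x$, forces $d_{\hr x}(z)\leq \delta$; taking the supremum over $Y_x$ yields $\dim Y_x\leq \delta$, hence $\dim_z f\leq \delta$ on the whole fibre, and combined with the preliminary bound this forces $Y_x\subseteq\Omega$. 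Consequently $x\notin f(Y\setminus\Omega)=f(\qln YX{\delta+1})$ and $y\notin \qlns YX{\delta+1}$, as required.

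The main obstacle is assertion (1): one must carefully arrange for the auxiliary point $z$ to lie inside several dense Zariski-opens simultaneously, and correctly invoke \cite{ducros2009}, Lemme~5.7 to identify $T$ as an irreducible component of $X$. Once this setup is in place, the flat dimension formula finishes (1) by a one-line subtraction, and both (2) and (3) follow by formal arguments involving the Abhyankar condition and lemma~\ref{lem-plat-dimension}.
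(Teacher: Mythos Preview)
Your proofs of (1) and (3) are correct. For (1) you take a route different from the paper's: rather than base-changing to an irreducible component of $X$ and showing that $Y$ then becomes purely of dimension $\dim X+\delta$, you fix a component $Z$ of $Y$ of maximal dimension through $y$, push it forward via \cite{ducros2009}, Lemme~5.7 to obtain a component $T$ of $X$, and compare the flat dimension formula at $y$ with its value at a well-chosen auxiliary point $z\in U\cap\Omega$. This is a legitimate and slightly more direct argument; the paper's approach has the advantage of establishing the stronger intermediate fact that $\pl YX$ is equidimensional over each component of $X$. Your proof of (3) is essentially the paper's, except that you invoke lemma~\ref{lem-plat-dimension}(1) where the paper redoes that computation by hand.

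There is however a gap in your proof of (2). You write that ``$\dim_y f\geq\delta+1$ and $y\notin\pl YX$, whence $y\in\qln YX{\delta+1}$'', but this does not follow immediately: by definition, $\qln YX{\delta+1}$ is the locus of points where the Zariski-closed subset $\ql YX$ (not $Y$ itself) has relative dimension $\geq\delta+1$ over $X$, and you have only established that $Y$ has relative dimension $\geq\delta+1$ at $y$. The missing step, which is exactly what the paper spells out, is this: take an irreducible component $T$ of $Y_x$ through $y$ with $\dim T\geq\delta+1$; every $t\in T$ satisfies $\dim_t f\geq\dim T\geq\delta+1$, hence $t\notin\Omega$, hence $t\notin\pl YX$ by (1). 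Thus $T\subset(\ql YX)_x$, and since $T$ is already a component of $Y_x$ it is a component of $(\ql YX)_x$, giving $\dim_y(\ql YX)_x\geq\delta+1$ and therefore $y\in\qln YX{\delta+1}$.
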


\begin{proof}
Commençons par prouver (1).
On peut remplacer $Y$ par $\pl YX$, et partant supposer $f$
plat. Posons
$\Omega=\{y\in Y, \dim_y f=\delta\}$ ; c'est par hypothèse un ouvert de Zariski dense
de $Y$ et il s'agit de démontrer que 
$\Omega=Y$. Soit $y\in Y$ et soit $x$ son image sur $X$. 
Soit $Z$ une composante irréductible de $X$ passant par $x$.
Il suffit de montrer que 
$\dim_y (Y\times_X Z)_x=\delta$.

Soit $Z'$ l'ouvert de Zariski de $X$
constitué des points de $Z$ qui n'appartiennent à aucune autre composante irréductible de $X$.
L'intersection de $Y\times_Z Z'$ avec $\Omega$ est Zariski-dense dans $Y\times_Z Z'$
(car ce dernier est un ouvert de Zariski de $Y$).
Par ailleurs
$Y\times_X Z'$ est Zariski-dense dans $Y\times_X Z$ ; en effet $Y\times_X Z\to Z$ est plat (pour donner un sens à cette affirmation il faut choisir une structure analytique sur $Z$, par exemple sa structure réduite) et l'image sur $Z$ de toute composante irréductible de $Y\times_X Z$ est donc Zariski-dense  dans $Z$ et de ce fait rencontre $Z'$, d'où notre
assertion. Il s'ensuit que $\Omega\cap (Y\times_Z Z')$ est dense dans $Y\times_X Z$.

On peut donc remplacer $X$
par $Z$, c'est-à-dire supposer que $X$ est irréductible. Soit $n$ sa dimension. Soit $T$ une composante irréductible de $Y$. Choisissons un point $t$ de $Y$ situé sur $T\cap \Omega$
et n'appartenant à aucune autre composante irréductible de $Y$
(c'est possible car $\Omega$ est Zariski-dense dans $Y$). On a alors $\dim_t T_{f(t)}=\dim_t Y_{f(t)}=\delta$. Par platitude il vient $\dim T=\dim_t Y=n+\delta$. Ceci valant pour tout $T$, l'espace
$Y$ est purement de dimension $n+\delta$. On en déduit à nouveau par platitude que la dimension relative de $Y$
sur $X$ est partout égale à $\delta$, ce qui achève de montrer (1). 

Montrons maintenant (2). Soit $y\in Y$. Si 
$y$
appartient à $\qln YX{\delta+1}$ on a \emph{a fortiori}
$\dim_y f\geq \delta+1$, et $y$ n'appartient donc
pas à $\Omega$. 
Réciproquement, supposons que $y$ n'appartient pas 
à $\Omega$. Le point $y$ vit alors sur une composante irréductible $T$ de $Y_{f(y)}$
de dimension $\geq \delta+1$. 
L'assertion (1) déjà démontrée assure que
$T\subset \ql YX$, ce qui entraîne que $y\in \ql YX_{\geq \delta+1}$. 

Montrons enfin (3). On suppose donc que $Y$ est purement de dimension $m$ pour un certain $m\geq 0$.
Soit $y\in \mathsf A(Y)$ et soit $x$ son image sur $X$.
On a $d_k(y)=m$, et par densité de $\Omega$ dans $Y$ on a
$\dim_y Y_x=\delta$ ; ceci entraîne que $d_{\hr x}(y)\leq \delta$ et qu'il existe $z\in Y_x$ tel que $d_{\hr x}(z)=\delta$. 

On a $d_k(x)=d_k(y)-\d_{\hr x}(y)\geq m-\delta$, et 
$d_k(x)=d_k(z)-\d_{\hr x}(z)\leq m-\delta$ car $d_k(z)\leq m$ 
puisque $\dim Y=m$. Par conséquent, $d_k(x)=m-\delta$. Pour 
tout $t\in Y_x$ on a donc 
\[m\geq d_k(t)=d_{\hr x}(t)+d_k(x)=d_{\hr x}(t)+m-\delta.\]
Ceci entraîne que $\dim Y_x\leq \delta$, et finalement que $\dim Y_x=\delta$. En particulier, $Y_x$
ne rencontre pas $\qln YX{\delta+1}$, et $y$ n'appartient
ainsi
pas
à $\qln YX{\delta+1}^{\mathrm{sat}}$. 
\end{proof}

\begin{theo}\label{thm-equidim}
Soit $f\colon Y\to X$ un morphisme entre espaces
$k$-analytiques $\Gamma$-stricts compacts. 
On suppose que 
l'application $y\mapsto \dim_y f$ prend des valeurs comprises
entre deux entiers $\delta$ et $d$ avec $\delta\leq d$, et
que la réunion des fibres de $f$ de dimension $\delta$ est dense dans $Y$. 
Il existe
une famille finie de
couples $[(S_i\hookrightarrow Z_i\to X, Y_i)]_i$
où $S_i\hookrightarrow Z_i\to X$ est un diagramme
appartenant à $\bkg k\Gamma {d-\delta}$
et où $Y_i$ est un domaine analytique compact et $\Gamma$-strict de $Y\times_X Z_i$, qui satisfait les propriétés suivantes : 

\begin{enumerate}[1]
\item pour tout $i$, l'adhérence analytique $Y'_i$ de $Y_i\setminus Y_i\times_{Z_i}S_i$ dans $Y_i$
est purement de dimension relative $\delta$ sur $Z_i$ ; 
\item la réunion des images des morphismes $Y'_i\to Y$ est égale à $Y$ ; 
\item la réunion des images des morphismes $Y_i\times_{Z_i}S_i\to Y$ est contenue dans la réunion des fibres de $f$
de dimension $>\delta$, et contient $\{y\in Y, \dim_y f>\delta\}$.
\end{enumerate}
\end{theo}

\begin{rema}
Les hypothèses du théorème sont notamment vérifiées lorsque les conditions suivantes sont satisfaites: $y\mapsto \dim_y f$ prend des valeurs comprises
entre deux entiers $\delta$ et $d$, l'ouvert $\{y\in Y, \dim_y f=\delta\}$
est dense dans $Y$, et $Y$ est équidimensionnel. En effet, si c'est le cas il résulte alors de l'assertion (3) du
lemme \ref{lem-plat-dimconst} 
que $\mathsf A(Y)$ ne rencontre pas 
$\qln YX{\delta+1}^{\mathrm{sat}}$, ce qui signifie que pour tout $y\in \mathsf A(Y)$, la fibre $f\inv(f(y))$ évite $\qln YX{\delta+1}$ ; en vertu de l'assertion (2) de \emph{loc. cit.}, ceci revient à dire que $f\inv(fy))$ est entièrement contenu dans $\{y\in Y, \dim_y f=\delta\}$, soit encore que la fibre $f\inv(f(y))$ est de dimension $\delta$. La réunion des fibres de $f$ qui sont de dimension $\delta$ contient donc $\mathsf A(Y)$, et est par conséquent dense dans $Y$. 
\end{rema}

\begin{proof}[Démontration du théorème \ref{thm-equidim}]
Le théorème \ref{flat-eclat}
appliqué lorsque $\mathscr F=\mathscr O_Y$
et $n=\delta+1$
assure l'existence d'une famille finie de
couples $[(S_i\hookrightarrow Z_i\to X, Y_i)]_i$
où $S_i\hookrightarrow
Z_i\to X$ est un diagramme
appartenant à $\bkg k\Gamma {d-\delta}$ 
et
où $Y_i$ est un domaine analytique compact et $\Gamma$-strict de $Y\times_X Z_i$, 
qui satisfait les propriétés suivantes : 
\begin{enumerate}[a]
\item pour tout $i$, l'adhérence analytique $Y'_i$ de $Y_i\setminus Y_i\times_{Z_i}S_i$ dans $Y_i$
est plate en dimensions $\geq \delta+1$ sur $Z_i$ ; 
\item pour tout $i$, l'ouvert $Y_i\setminus Y_i\times_{Z_i}S_i$ de $Y_i$ est contenu dans l'image réciproque de 
$Y\setminus \qln YX{\delta+1}$ ; 
\item pour tout $i$, le fermé $Y_i\times_{Z_i}S_i$ de  $Y_i$ est contenu dans l'image réciproque de 
$\qln YX{\delta+1}^{\mathrm{sat}}$ ; 
\item la réunion des images des morphismes $Y_i\to Y$ contient $Y\setminus \qln YX{\delta+1}^{\mathrm{sat}}$ ; 
\item la réunion des images des morphismes $Y_i\times_{Z_i}S_i\to Y$ contient tout point de $\qln YX{\delta+1}$
dont l'image sur $X$ n'est pas adhérente à $\mathsf A(X)\cap f(\qln YX{\delta+1})$. 
\end{enumerate}
Il suffit maintenant d'expliquer pourquoi ces propriétés entraînent (1), (2) et (3).
Posons $\Omega=\{y\in Y,\dim_y f=\delta\}$. 
Sous nos hypothèses, $\Omega$ est dense dans $Y$
(il contient même un ensemble dense de fibres de $f$) ;
le lemme \ref{lem-plat-dimconst} (2)
assure dès lors que $\Omega$ est le complémentaire de $\qln YX{\delta+1}$ 
dans $Y$ ; il s'ensuit que $\qln YX{\delta+1}^{\mathrm{sat}}$ est exactement la réunion des fibres de $f$ de dimension $>\delta$ ; compte-tenu de nos hypothèses, ceci entraîne la densité de $Y\setminus 
\qln YX{\delta+1}^{\mathrm{sat}}$ dans $Y$.

\subsubsection{Preuve de \textnormal{(1)}}
Comme $\Omega$
est le complémentaire de $\qln YX{\delta+1}$ 
dans $Y$, la propriété (b)
assure que $Y_i\setminus Y_i\times_{Z_i}S_i$ est situé 
au-dessus de $\Omega$ ; par conséquent, 
$Y_i\setminus Y_i\times_{Z_i}S_i$ est purement de dimension relative $\delta$ sur $Z_i$.

Puisque l'ouvert $(Y_i\setminus Y_i\times_{S_i}Z_i)$
de $Y'_i$ est analytiquement dense (et en particulier Zariski-dense)
dans ce dernier, on déduit de (a)
et du lemme \ref{lem-plat-dimconst} (2)
que $Y'_i$ est purement de dimension relative $\delta$ sur $Z_i$, et (1)
est établie. 

\subsubsection{Preuve de \textnormal{(2)}}
La conjonction de (c)
et de (d) assure que la réunion des images des morphismes $Y_i\setminus (Y_i\times_{Z_i}S_i)\to Y$
contient $Y\setminus \qln YX{\delta+1}^{\mathrm{sat}}$, dont on a vu plus
haut que c'était un ouvert dense de $Y$.  Comme cette réunion est par ailleurs compacte, l'assertion (2) en résulte. 

\subsubsection{Preuve de \textnormal{(3)}}
D'après la condition (c), la réunion des images des 
morphismes $Y_i\times_{Z_i}S_i\to Y_i$ est contenue dans
$\qln YX{\delta+1}^{\mathrm{sat}}$, 
c'est-dire dans la réunion des fibres de $f$ de dimension $\delta$. 

Soit $y$ un point de $Y$ tel que $\dim_y f>\delta$, c'est-à-dire tel que 
$y\in \qln YX{\delta+1}$. On sait d'après la propriété (2) déjà établie que $y$ possède un antécédent $z$ sur $Y'_i\subset Y_i$ pour un certain $i$ ; et la propriété (b) implique que $z\in Y_i\times_{Z_i}S_i$. 
\end{proof}

Si $f\colon Y\to X$ est
un morphisme
de type fini
entre schémas
noethériens avec $X$ intègre
et si l'ouvert de platitude de $f$ est vide, $f(Y)$ ne
contient pas le point générique de $X$, et est donc contenu dans un fermé de Zariski strict de $X$. 

En raison du «mauvais» comportement de la topologie de Zariski en géométrie analytique,
le résultat
analogue dans le monde des espaces
de Berkovich est grossièrement faux (penser à l'immersion dans un gros bidisque $D$ d'une courbe $C$ tracée sur un bidisque plus petit telle que $\adhz CD=D$, \emph{cf.} par exemple \cite{ducros2018}, 4.4 pour un exemple un peu plus
explicite). Mais on peut en un certain sens
le «rendre vrai par éclatements
et morphismes quasi-étales», comme en
atteste le théorème
\ref{thm-dominant-factor} ci-dessous. Et cela nous permettra
lors de la preuve du théorème \ref{thm-lipshitz}
de procéder à une réduction au cas génériquement plat par récurrence sur la dimension du but, 
comme on le fait couramment en théorie des schémas. 

\begin{theo}\label{thm-dominant-factor}
Soit $f\colon Y\to X$ un morphisme entre espaces
$k$-analytiques $\Gamma$-stricts compacts, avec $X$ réduit. 
On suppose que
$y\mapsto \dim_y f$ prend des valeurs comprises
entre deux entiers $\delta$ et $d$ avec $\delta\leq d$, 
et on fait l'hypothèse que $\pl YX=\emptyset$. 
Il existe alors 
une famille finie de couples
$[(S_i\hookrightarrow Z_i\to X, Y_i)]_i$
où $S_i\hookrightarrow
Z_i\to X$ est un
diagramme appartenant à $\bkg k\Gamma {d-\delta+1}$
et où 
$Y_i$ est un domaine analytique compact et $\Gamma$-strict de $Y\times_X Z_i$, qui satisfait les propriétés suivantes : 

\begin{enumerate}[1]
\item pour tout $i$, l'image du morphisme $Y_i\to Z_i$ est contenue dans $S_i$ ; 
\item la réunion des images des morphismes $Y_i\to Y$ est égale à $Y$. 
\end{enumerate}
\end{theo}

\begin{proof}
Le théorème \ref{flat-eclat}
appliqué avec $\mathscr F=\mathscr O_Y$
et $n=\delta$
assure l'existence d'une famille finie de
couples $[(S_i\hookrightarrow Z_i\to X, Y_i)]_i$
où $S_i\hookrightarrow Z_i\to X$ est
un diagramme appartenant à $\bkg k\Gamma {d-\delta+1}$ et où
$Y_i$ est un domaine analytique compact et $\Gamma$-strict de $Y\times_X Z_i$, 
qui satisfait entre autres les propriétés suivantes : 
\begin{enumerate}[a]
\item pour tout $i$, l'ouvert $Y_i\setminus Y_i\times_{Z_i}S_i$ de $Y_i$ est contenu dans l'image réciproque de 
$Y\setminus \qln YX\delta$ ; 
\item la réunion des images des morphismes $Y_i\times_{Z_i}S_i\to Y$ contient tout point de $\qln YX\delta$
dont l'image sur $X$ n'est pas adhérente à $\mathsf A(X)\cap f(\qln YX\delta)$. 
\end{enumerate}

Or comme $\pl YX$ est vide et comme $Y\to X$ est partout
de dimension relative supérieure ou égale à $\delta$, 
le fermé $\qln YX\delta$ de $Y$ est égal à $Y$ tout entier. Par conséquent, la propriété (a) équivaut
à l'assertion (1) 
et la propriété (b) revient à dire que la réunion des images des  morphismes
$Y_i\times_{Z_i}S_i\to Y$ contient tout point de $Y$ dont l'image sur $X$ n'est pas
adhérente à $f(Y)\cap \mathsf A(X)$. Mais comme $X$ est réduit ,
$f\inv(\mathsf A(X))$ est contenu dans
$ \pl YX$
et ce dernier est vide par hypothèse. 
Il vient $f(Y)\cap \mathsf A(X)=\emptyset$. L'assertion (2) s'en déduit. 
\end{proof}

Notre but est maintenant 
de donner une description générale des images de morphismes entre espaces 
$k$-analytiques compacts. Nous allons commencer par le cas génériquement plat. 

\begin{theo}\label{thm-images-genplat}
Soit $d$ un entier 
et soit $f\colon Y\to X$ un morphisme entre espaces 
$k$-analytiques compacts et $\Gamma$-stricts, à fibres de dimension $\leq d$. On suppose
que $X$ est réduit 
et que $\pl YX$ est dense dans $Y$. 
Il existe alors une famille finie
$(f_i\colon V_i\to X)$ de flèches de $\akg 
k\Gamma {d+2}$ possédant les propriétés suivantes : 
\begin{itemize}[label=$\diamond$]
\item chacune des $f_i$ admet une factorisation $V_i\hookrightarrow Z_i\to X$ où $Z_i\to X$
est
une flèche de $\akg k\Gamma{d+1}$ 
et où $V_i$ est un domaine
analytique compact et $\Gamma$-strict de $Z_i$ ; 
\item l'image $f(Y)$est la réunion des $f_i(V_i)$. 
\end{itemize}
De plus $f(Y)\cap \mathsf A(X)$ est dense dans $f(Y)$. 
\end{theo}

\begin{proof}
En appliquant le théorème \ref{flat-eclat}
avec $\mathscr F=\mathscr O_Y$ et $n=0$, on obtient  
l'existence d'une famille finie de
couples $[(S_i\hookrightarrow
Z_i\to X, Y_i)]_i$ où $S_i\hookrightarrow
Z_i\to X$ est un diagramme appartenant à
$\bkg k\Gamma {d+1}$ et où $Y_i$ est un domaine analytique compact et $\Gamma$-strict de $Y\times_X Z_i$, telle que les propriétés suivantes (entre autres) soient satisfaites : 
\begin{enumerate}[1]
\item pour tout $i$, l'adhérence analytique $Y'_i$ de
$Y_i\setminus (Y_i\times_{Z_i}S_i)$ dans $Y_i$ est plate sur $Z_i$ ; 
\item pour tout $i$, le fermé
$Y_i\times_{Z_i}S_i$ de $Y_i$
est contenu dans l'image réciproque de
$\mathsf Q(Y/X)^{\mathrm{sat}}$ ; 
\item la réunion des images des morphismes $Y_i\to Y$ contient $\mathsf P(Y/X)$.
\end{enumerate}

\subsubsection{}
Puisque
$\pl YX$ est dense dans $Y$, il contient
$\mathsf A(Y)$. Le
lemme  \ref{lem-plat-dimension} assure alors
que $f(\mathsf A(Y))\subset \mathsf A(X)$. Comme $\mathsf A(Y)$ est dense 
dans $Y$, ceci entraîne que l'intersection
$f(Y)\cap \mathsf A(X)$ est dense dans $f(Y)$. 

\subsubsection{}\label{sss-qsat-intvide}
Soit $y\in \mathsf A(Y)$. Par ce qui précède $f(y)\in \mathsf A(X)$ ; comme $X$
est réduit, il en résulte que $f$ est plat en tout point de $f\inv (f(y))$, si bien que
 $y\notin \ql YX^{\mathrm{sat}}$. Il s'ensuit que $Y\setminus \mathsf Q(Y/X)^{\mathrm{sat}}$
 est dense dans $Y$.

\subsubsection{}
Soit $y$ un point de $Y$ n'appartenant pas à $\ql YX^{\mathrm{sat}}$. Le point $y$ est en particulier
situé sur $\pl YX$, et est par conséquent d'après (3)
égal à l'image d'un point $y'$ de $Y_i$ pour un certain $i$. 
Et puisque $y$ n'appartient pas à $\ql YX^{\mathrm{sat}}$, la propriété (2)
assure que $y'$ n'est pas situé sur $Y_i\times_{Z_i}S_i$ ; il appartient
dès lors à $Y'_i$. 

Par ce qui précède, le sous-ensemble $Y\setminus \ql YX^{\mathrm{sat}}$
de $Y$ est contenu dans la réunion des images des morphismes $Y'_i\to Y$. Cette réunion est compacte,
et $Y\setminus \ql YX^{\mathrm{sat}}$ est dense dans $Y$ en vertu de \ref{sss-qsat-intvide}. Il s'ensuit que la réunion des images
des morphismes $Y'_i\to Y$ est égale à $Y$ tout entier ; de ce fait, $f(Y)$ est la réunion des images des morphismes
$Y'_i\to X$. 

\subsubsection{}
Pour tout $i$, la flèche $Y'_i\to X$ admet une factorisation $Y'_i\to Z_i\to X$ avec $Y'_i\to Z_i$ plat ; 
l'image $V_i$ de $Y'_i\to Z_i$ est un domaine analytique $\Gamma$-strict de $Z_i$, et le morphisme
composé $V_i\hookrightarrow Z_i\to X$ est donc une flèche de $\mathfrak A_{k,\Gamma}$, que l'on note $f_i$. Par ailleurs, puisque 
$S_i\hookrightarrow Z_i\to X$ appartient
à $\bkg k\Gamma{d+1}$, le morphisme
$Z_i\to X$ appartient à
$\akg k\Gamma {d+1}$, et $V_i\to Z_i\to X$
appartient à $\akg k\Gamma {d+2}$.
Comme 
$f(Y)$ est la réunion des images des morphismes $Y'_i\to X$, c'est aussi la réunion des $f_i(V_i)$. 
\end{proof}

\subsection{}Nous désignons par $\mathfrak B_{k,\Gamma}$ la classe des morphismes $Y\to X$
d'espaces $k$-analytiques admettant une factorisation 
\[Y=X_n\to X_{n-1}\to \ldots \to X_0=X
\]
où les $X_i$ sont tous compacts, $\Gamma$-stricts et réduits et où $X_i\to X_{i-1}$ est pour tout $i$ ou bien un éclatement, ou bien un morphisme quasi-étale, ou bien une immersion fermée.

\subsection{}
Soit $f\colon Y\to X$ une flèche de $\mathfrak A_{k,\Gamma}$ avec $X$ réduit. C'est alors une flèche de $\mathfrak B_{k,\Gamma}$. En effet, par définition de $\mathfrak A_{k,\Gamma}$ la flèche $f\colon Y\to X$ admet
 une factorisation 
$Y=X_n\to X_{n-1}\to \ldots \to X_0=X$ où les $X_i$ sont tous compacts et  $\Gamma$-stricts et où $X_i\to X_{i-1}$ est pour tout $i$ ou bien un éclatement, ou bien un morphisme quasi-étale. Il suffit pour conclure de s'assurer que chacun des $X_i$ est réduit. Or c'est évident car la source d'un éclatement (resp. d'un morphisme quasi-étale) de but réduit est encore un espace réduit. 

\begin{theo}\label{thm-lipshitz}
Soit $f\colon Y\to X$ un morphisme entre espaces $k$-analytiques compacts et $\Gamma$-stricts, 
avec $X$ réduit. 
Soit $Z$ l'adhérence de $\pl YX$ dans $Y$. 

\begin{enumerate}[1]
\item Il existe une famille finie $(f_i\colon V_i\to X)$ de flèches de $\mathfrak B_{k,\Gamma}$ telles que
$f(Y)$ soit égale à $\bigcup_i f_i(V_i)$. 
\item L'adhérence de $f(Y)\cap \mathsf A(X)$
dans $X$ est égale à $f(Z)$. 
\end{enumerate}
\end{theo}

\begin{rema}\label{rem-implat}
Seul intervient dans l'énoncé le fermé ensembliste $Z$ ; il n'est donc pas nécessaire de spécifier
une structure analytique sur celui-ci. Remarquons toutefois que si l'on en choisit une (par exemple la structure
réduite, ou celle d'adhérence analytique de   $\pl YX$) alors
$\pl ZX$ est dense dans $Z$ (et $f(Z)$ est donc de la forme décrite par
le théorème \ref{thm-images-genplat}). Pour le voir, on commence
par remarquer que l'ouvert
$\pl ZX$ est dense dans $Z$ si et seulement s'il contient $\mathsf A(Z)$, ce qui grâce au caractère réduit de $X$
et en vertu du
le lemme \ref{lem-plat-dimension}
est indépendant de la structure analytique choisie sur $Z$. 
Munissons donc celui-ci 
de la structure qui en fait l'adhérence analytique de $\pl YX$. Dans ce cas $\pl YX$ s'identifie
à un ouvert dense de l'espace $Z$ qui est plat sur $X$, d'où notre assertion. 

\end{rema}
\begin{proof}[Démonstration du théorème \ref{thm-lipshitz}]
On montre les énoncés (1) et (2) séparément. 

\subsubsection{Preuve de \textnormal{(1)}}
Soit $(X_i)$ une famille finie de sous-espaces
analytiques fermés réduits de $X$ recouvrant $X$. Il suffit de montrer le théorème
pour chacun des morphismes $Y\times_X X_i\to X_i$, ce qui permet de se ramener au cas où
$X$ est purement de dimension $n$ pour un certain $n\geq 0$. 
On raisonne maintenant par récurrence sur $n$. 

Si $n=0$ alors $X$ consiste en un ensemble fini de points rigides, et $f(Y)$ aussi. Par conséquent $f(Y)$ est un fermé de Zariski de $X$ (et également un domaine analytique compact et $\Gamma$-strict de $X$), et le théorème
est donc démontré dans ce cas. 

On suppose maintenant que $n>0$. On peut raisonner composante par composante sur $Y$ (en munissant chaque composante 
d'une structure analytique arbitraire) et donc se ramener au cas où celui-ci est irréductible.
On distingue maintenant deux cas.

\paragraph{Supposons que $\pl YX$ est non vide}
Dans ce cas $\pl YX$ est dense dans $Y$ et le théorème est alors une conséquence du théorème
\ref{thm-images-genplat} (et on peut même demander que les $f_i$ soient des flèches
de $\mathfrak A_{k,\Gamma}$). 

\paragraph{Supposons que $\pl YX$ est vide} Nous pouvons alors  appliquer le théorème
\ref{thm-dominant-factor}. Il assure en particulier l'existence d'une famille finie $(Z_i\to X)_i$ de flèches de $\mathfrak A_{k,\Gamma}$ et, pour chaque $i$, d'un domaine analytique
compact et $\Gamma$-strict $Y_i$ de $Y\times_X Z_i$
et d'un diviseur de Cartier 
$S_i$ de $Z_i$ tels que les deux propriétés suivantes soient satisfaites : 

\begin{itemize}[label=$\diamond$] 
\item pour tout $j$, l'image du morphisme $Y_i\to Z_i$ est contenue dans $S_i$ ; 
\item la réunion des images des morphismes $Y_i\to Y$ est égale à $Y$. 
\end{itemize}

L'ensemble $f(Y)$ est alors égal à la réunion des images des morphismes composés $Y_i\to Y \to X$. 
Fixons $i$. Par construction, $Y_{i,\mathrm{red}}\to X$ se factorise
par $S_{i,\mathrm{red}}$. L'espace $Z_i$ est purement de dimension $n$
(parce que c'est le cas de $X$, et en vertu de \ref{sss-composantes-eclate}).  En tant que diviseur de Cartier de l'espace $Z_i$, l'espace $S_i$ est alors
purement de
dimension $n-1$. L'hypothèse de récurrence assure donc l'existence
d'une famille finie $(T_{ij}\to S_{i,\mathrm{red}})_j$
de flèches de $\mathfrak B_{k,\Gamma}$ telles que l'image de 
$Y_{i,\mathrm{red}}\to S_{i,\mathrm{red}}$ soit la réunion des
images des morphismes $T_{ij}\to S_{i,\mathrm{red}}$ pour $j$ variable. 

Il résulte de ce qui précède que $f(Y)$ est la réunion des images des morphismes composés
\[T_{ij}\to S_{i,\mathrm{red}}\to X\] pour $i$ et $j$ variables.
Or ces morphismes composés appartiennent à 
$\mathfrak B_{k,\Gamma}$ par construction, ce qui achève de démontrer (1). 

\subsubsection{}
Montrons maintenant (2).
Soit $x\in  f(Y)\cap \mathsf A(X)$. Comme $X$ est réduit, la fibre $f\inv(x)$ est contenue dans $\pl YX\subset Z$ ; 
puisque cette fibre est non vide par hypothèse, $x\in f(Z)$. Ainsi, $f(Y)\cap \mathsf A(X)=f(Z)\cap \mathsf A(X)$. 

D'autre part, soit $y$ un point de $\mathsf A(Y)$ situé sur $Z$. Il appartient à 
$\pl YX$ par définition de $Z$, et $f(y)$ appartient donc à $\mathsf A(X)$ par le lemme \ref{lem-plat-dimension}. 
Comme $\mathsf A(Y)\cap Z$ est dense
dans $Z$ il en résulte que $f(Z)\cap \mathsf A(X)=f(Y)\cap \mathsf A(X)$ est dense dans $f(Z)$, ce qui achève de
démontrer (2). 
\end{proof}

\subsection{Commentaires}
\label{ss-comment-images}
L'intérêt d'écrire $f(Y)$ comme réunion des $f_i(V_i)$ est que
chacun des morphismes $f_i\colon V_i\to X$ est \emph{a priori} nettement plus simple que $f$, étant composé d'éclatements, morphismes quasi-étales et immersions fermées (notons que ceci entraîne que $\dim V_i\leq \dim X$, indépendamment de la dimension de $Y$).
On peut certes  juger que le gain reste limité, les $f_i(V_i)$
ne semblant malgré tout pas si aisés à appréhender. Mais il y a néanmoins des chances que ce résultat soit à peu près optimal : un certain nombre de travaux de théorie des modèles, dont nous discutons ci-dessous, laissent en effet penser qu'il y a peu d'espoir d'obtenir 
une description beaucoup plus tangible des images de morphismes.

\subsection{Liens avec les travaux de Cluckers,  Lipshitz, Robinson\ldots}
\label{comment-final}
Les images de morphismes arbitraires entre espaces strictement $k$-analytiques compacts ont
déjà fait l'objet de nombreux travaux du point de vue de la théorie des modèles
(\cite{lipshitz1993}, \cite{schoutens1994},
\cite{lipshitz-rob2000}, \cite{cluckers-l2017}\ldots), qui abordent le
problème
sous l'angle de l'élimination des
quantificateurs. 

Plus précisément, faisons l'hypothèse que $k$ est algébriquement clos
non trivialement valué, et considérons
le langage $\mathcal L_k\an$ obtenu à partir du  langage $\mathcal L:=(+,\times, 0,1, x\mapsto x\inv, |)$
des corps valués en lui adjoignant
un symbole
pour chaque série convergente appartenant à $k\{T_1,\ldots, T_n\}$ (pour $n$ variable). 
Le corps $k$ s'interprète naturellement comme une structure pour $\mathcal L_k\an$, et il est
naturel de se demander si, à l'instar de {\sc acvf}, la théorie de $k$ admet l'élimination des quantificateurs
dans le langage $\mathcal L_k\an$. La réponse est négative ; on trouvera un contre-exemple
à la section 4 de \cite{cluckers-l2017}. Mais on peut montrer l'existence
d'une famille $\mathcal F=(\mathcal F_{nm})_{(n,m)}$
où $\mathcal F_{nm}$ est pour tout $(n,m)$ un ensemble de fonctions $k$-analytiques bornées
sur le produit du polydisque unité fermé de dimension $n$ par le polydisque unité ouvert
de dimension $m$, telle que : 

\begin{itemize}[label=$\diamond$] 
\item $\mathcal F_{n0}=k\{T_1,\ldots, T_n\}$ pour tout $n$ ; 
\item dans le langage $\mathcal L^{\mathcal F}\supset \mathcal L_k\an$
obtenu en adjoignant à $\mathcal L$ un symbole par élément de $\mathcal F_{nm}$ (pour $(n,m)$ variable),
la théorie de $k$ admet l'élimination des quantificateurs. 
\end{itemize}

La preuve initiale de ce fait est due à Lipshitz (\cite{lipshitz1993}, thm. 3.8.2), mais la famille
$\mathcal F=(\mathcal F_{nm})$ dont il montre l'existence n'est ni canonique ni vraiment explicite.
Dans le
travail plus récent \cite{cluckers-l2017}, Cluckers et Lipshitz construisent une autre famille 
$(\mathcal F_{nm})$  satisfaisant aux exigences ci-dessus
bien plus aisée à appréhender :
en gros, $\mathcal F_{nm}$ s'obtient en adjoignant à $k\{T_1,\ldots, T_n, S_1,\ldots, S_m\}$ les solutions 
dans $k\{T_1,\ldots, T_n\}[\![S_1,\ldots, S_m]\!]$ de
certains «systèmes polynomiaux henséliens»
à coeffcients dans $k\{T_1,\ldots, T_n,S_1,\ldots, S_m\}$ (voir la section 3 de \cite{cluckers-l2017}
pour davantage de détails) ; désignons par $\mathcal L^{\mathrm h}_k$
le langage $\mathcal L^{\mathcal F}$ lorsque $\mathcal F$ est la famille de Cluckers et Lipshitz. 

Nous pensons (mais nous ne l'avons pas vérifié) que le théorème \ref{thm-lipshitz}
ci-dessus est essentiellement (lorsque $k$ est algébriquement clos non trivialement valué et lorsque $\Gamma=\{1\}$)
une reformulation géométrique de l'élimination des quantificateurs dans $\mathcal L_k^{\mathrm h}$, les
systèmes polynomiaux henséliens de Cluckers et Lipshitz
devant peu ou prou correspondre à nos morphismes quasi-étales. Le contre-exemple de
\cite{cluckers-l2017} évoqué
plus haut suggère qu'on ne peut pas se passer des systèmes polynomiaux henséliens pour avoir élimination des
quantificateurs, et donc qu'on ne peut sans doute pas
se passer des morphismes
quasi-étales pour décrire les images de morphismes et ni sans doute, par voie de conséquence, pour aplatir les faisceaux cohérents.

Signalons pour terminer que si l'on note $\mathcal L^\dagger_k$ le langage obtenu en adjoignant à $\mathcal L$
un symbole pour chaque série \emph{surconvergente}
de $k\{T_1,\ldots, T_n\}$ (pour $n$ variable) alors la théorie de $k$ dans le langage $\mathcal L^\dagger_k$
admet l'élimination des quantificateurs. C'est un résultat établi par Schoutens
dans \cite{schoutens1994}, dont  Florent Martin a donné une formulation et une preuve géométriques 
(\cite{martin2016}, thm. 1.35)  ; notre théorème \ref{thm-lipshitz} est en quelque sorte
au langage $\mathcal L_k^{\mathrm h}$ ce que celui de Martin
est au langage $\mathcal L_k^\dagger$.

\bibliographystyle{smfalpha}
\bibliography{aducros}
 
\end{document}